\theoremstyle{plain}
\newtheorem{thm}{Theorem}[section]
\newtheorem{cor}[thm]{Corollary} 
\newtheorem{lem}[thm]{Lemma} 
\newtheorem{prop}[thm]{Proposition} 
\theoremstyle{definition}
\newtheorem{defn}[thm]{Definition}
\newtheorem{rem}[thm]{Remark}
\newtheorem{rems}[thm]{Remarks}
\theoremstyle{remark}
\theoremstyle{definition}
\newtheorem{question}[thm]{Question}
\newtheorem{example}[thm]{Example}
\newtheorem{examples}[thm]{Examples}
\theoremstyle{definition}
\theoremstyle{plain}
\theoremstyle{definition}
\theoremstyle{remark}
\theoremstyle{remark}
\theoremstyle{definition}
\theoremstyle{remark}
  \newtheorem*{acknowledgement*}{Acknowledgement}
\newcommand{\R}{{\mathbb R}}
\newcommand{\C}{{\mathbb C}}
\newcommand{\N}{{\mathbb N}}
\newcommand{\e}{\varepsilon}
\newcommand{\id}{\mathrm{id}}
\DeclareMathOperator{\supp}{supp}
\DeclareMathOperator{\dist}{dist}
\DeclareMathOperator{\dom}{dom}
\newcommand{\ndim}{\mathrm{dim}_{\mathrm{nuc}}}
\newcommand{\ddim}{\mathrm{dim}_{\mathrm{diag}}}
\newcommand{\tdim}{\mathrm{dim}_{\mathrm{tow}}}
\newcommand{\ftdim}{\mathrm{dim}_{\mathrm{ftow}}}
\newcommand{\dad}{\mathrm{dad}}
\newcommand{\asdim}{\mathrm{asdim}}
\newcommand{\unit}{\mathbf{1}}
\newcommand{\cn}{\mathcal{N}}
\def\freeprod{\font\bigsymbolsfont=cmsy10 scaled \magstep3
 \setbox0=\hbox{\bigsymbolsfont\char'003 }\mathord{\lower1pt\box0}}\relax\ignorespaces
\newcommand{\Hawaii}{Hawai\kern.05em`\kern.05em\relax i}
\numberwithin{equation}{section}
\begin{document}

\title{The Diagonal dimension of sub-$\mathrm{C}^*$-algebras}

\author{Kang Li}
\address{Department of Mathematics, FAU Erlangen--N\"urnberg, \newline Cauerstrasse 11, 91058 Erlangen, Deutschland}
\email{kang.li@fau.de}

\author{Hung-Chang Liao}
\address{Department of Mathematics and Statistics, University of Ottawa,
\newline  585 King
Edward Avenue, Ottawa, Ontario, KiN 6N5, Canada}
\email{hliao@uottawa.ca}

\author{Wilhelm Winter}
\address{Mathematisches Institut der WWU M\"unster,
\newline Einsteinstrasse 62, 48149 M\"unster, Deutschland}
\email{wwinter@uni-muenster.de}

\maketitle

\date{today}

\begin{abstract}
We introduce \emph{diagonal dimension}, a version of nuclear dimension for diagonal sub-$\mathrm{C}^*$-algebras (sometimes also referred to as diagonal $\mathrm{C}^*$-pairs). Our concept has good permanence properties and detects more refined information than nuclear dimension. In many situations it is precisely how dynamical information is encoded in an associated $\mathrm{C}^*$-pair. 

For free actions on compact Hausdorff spaces, diagonal dimension of the crossed product with its canonical diagonal is bounded above by a product involving Kerr's tower dimension of the action and covering dimension of the space. It is bounded below by the dimension of the space, by the asymptotic dimension of the group, and by the fine tower dimension of the action. For a locally compact, Hausdorff, \'etale groupoid, diagonal dimension of the groupoid $\mathrm{C}^*$-algebra is bounded below by the dynamic asymptotic dimension of the groupoid.  For free Cantor dynamical systems, diagonal dimension (defined at the level of the crossed product $\mathrm{C}^*$-algebra) and  tower dimension (an entirely dynamical notion) agree on the nose. Similarly, for a finitely generated group diagonal dimension of its uniform Roe algebra with the canonical diagonal agrees precisely with asymptotic dimension of the group. This statement also holds for uniformly bounded metric spaces. We apply the lower bounds above to a number of further examples  which show how diagonal dimension keeps track of information not seen by nuclear dimension. 
 \end{abstract}

\renewcommand*{\thethm}{\Alph{thm}}

\section*{Introduction}

\noindent
With every topological dynamical system $G \curvearrowright X$, where $G$ is a discrete group and $X$ is a compact Hausdorff space, one can naturally and canonically associate $\mathrm{C}^*$-algebras via various crossed product constructions.

Important and notoriously hard questions then are to what extent $\mathrm{C}^*$-algebras remember the underlying dynamical systems and how to extract dynamical information from the algebras. In this paper we will be interested in a particular aspect of the second question: How can dimension type information of dynamical systems be read of from the associated $\mathrm{C}^*$-algebras? 

We will see that one can expect only very limited answers from crossed products $C(X) \rtimes G$ alone, and that one should be prepared to also keep track of the canonical inclusion of $C(X)$ (which exists because we assume the group to be discrete). Then the underlying space is recorded as the spectrum of the abelian subalgebra, and our initial questions now ask what the position of this subalgebra can tell us about the dynamics. The problem remains hard, but a handle is provided by the set of \emph{normalisers}, i.e., by elements of the crossed product which conjugate $C(X)$ into itself.

Rigidity questions as above from the $\mathrm{C}^*$-algebra point of view are particularly relevant when the group or at least the action is amenable. In this situation there is essentially only one crossed product, which is \emph{nuclear}, and we have strong tools to analyse and compare such $\mathrm{C}^*$-algebras. If the action is free and minimal, the crossed product is simple and we even have far-reaching classification results available on the $\mathrm{C}^*$-side. Such classification theorems allow us to decide whether two crossed products, at least if they have \emph{finite nuclear dimension}, are isomorphic as $\mathrm{C}^*$-algebras by just computing their (ordered) K-theory and determining their tracial state spaces. Here, tracial states correspond to invariant Borel probability measures on the underlying topological space, and we have efficient tools to compute K-theory (which is usually easier to describe in terms of the algebra than the dynamical system). Nuclear dimension is a notion of noncommutative covering dimension which is defined in terms of approximations of the $\mathrm{C}^*$-algebra by a uniformly bounded number (of which we think as colours) of finite dimensional $\mathrm{C}^*$-algebras. If the underlying space is finite dimensional and the group is sufficiently nice, e.g.\ the integers, then the crossed product does have finite nuclear dimension, so classification as above applies. 

Dimension type properties occur at the level of dynamical systems in many and sometimes subtle and surprising ways. On the one hand there is covering dimension of the space, and on the other hand the group (even if it is discrete hence zero dimensional as a topological space) may have interesting dimension like features, most notably asymptotic dimension in the sense of Gromov. More surprisingly, these properties often interact and yield stunning phenomena and applications, e.g.\ to embedding problems in topological dynamics (see \cite{Lind:PubIHES,Gut:PLMS}), or to the structure of flow spaces which in turn has been  relevant for the Farrell--Jones conjecture (see \cite{BLR08,Bar17}).

It is an intriguing consequence of classification that one can have free and minimal, uniquely ergodic actions of the integers on non-homeomorphic compact spaces with isomorphic crossed product $\mathrm{C}^*$-algebras. Moreover, there are classes of dynamical systems with very different dimension type properties, yet their crossed products all have the same nuclear dimension (namely one). This shows that a crossed product $\mathrm{C}^*$-algebra\footnote{We have \emph{reduced} crossed products in mind, and keep track of this point of view in our notation, even though we will stick to situations where the groups or at least the actions are amenable, hence the full and the reduced crossed products coincide.} $C(X) \rtimes_{\mathrm{r}} G$ in itself can only be expected to carry limited information about the underlying dynamics, and it is one of the reasons to also consider the canonical inclusion of $C(X)$ and the set of normalisers.

When the action is free, the sub-$\mathrm{C}^*$-algebra $(C(X) \subset C(X) \rtimes_\mathrm{r} G)$ is in fact a \emph{diagonal}, i.e., $C(X)$ is maximal abelian, it is the image of a faithful conditional expectation, normalisers generate all of $C(X) \rtimes_\mathrm{r} G$ as a $\mathrm{C}^*$-algebra, and every pure state on $C(X)$ extends uniquely to $C(X) \rtimes_\mathrm{r} G$. One can define diagonal pairs in this sense also when there is no underlying dynamical system; the ambient $\mathrm{C}^*$-algebra of such a pair $(D \subset A)$ is then a groupoid $\mathrm{C}^*$-algebra. We study normalisers of sub-$\mathrm{C}^*$-algebras and maps between them in Section~\ref{sec1}. 
\bigskip

Our main definition (Definition~\ref{defn:dim_diag} below) describes a noncommutative version of covering dimension in terms of colouring numbers of completely positive approximations, which at the same time keep track of the given abelian subalgebra and its normalisers.

\begin{defn}\label{intro-ddim}
	Let $(D\subset A)$ be a sub-$\mathrm{C}^*$-algebra with $D$ abelian. We say $(D\subset A)$ has \emph{diagonal dimension} at most $d$, $\ddim (D\subset A) \le d$, if for every finite subset $\mathcal{F}\subset A$ and $\e >0$ there exist a finite-dimensional $\mathrm{C}^*$-algebra $F= F^{(0)} \oplus \ldots \oplus F^{(d)}$ with a diagonal subalgebra $D_F = D^{(0)} \oplus \ldots \oplus D^{(d)}$ and completely positive  maps
	$$
		A\stackrel{\psi}{\longrightarrow} F \stackrel{\varphi}{\longrightarrow} A
	$$
	such that
	\begin{enumerate}
		\item $\psi$ is contractive,
		\item $\| \varphi \psi(a) - a\| < \e$ for every $a\in \mathcal{F}$,
		\item for each $i=0,\ldots,d$, the map $\varphi\vert_{F^{(i)}}$ is completely positive contractive with order zero, i.e., it preserves orthogonality,
		\item $\psi(D)\subset D_F$,
		\item for each $i$, $\varphi$ maps every normaliser of $D^{(i)}$ in $F^{(i)}$ to a normaliser of $D$ in $A$.
	\end{enumerate}
\end{defn}

When $D = \{0\}$, then conditions (4) and (5) are trivially satisfied and diagonal dimension of $(D \subset A)$  precisely agrees with nuclear dimension of $A$, so our notion indeed generalises nuclear dimension to sub-$\mathrm{C}^*$-algebras. If $D$  contains an approximate unit for $A$, then finite diagonal dimension implies that $(D \subset A)$ is a diagonal in the sense explained above. We introduce this concept and derive its basic properties in Section~\ref{section:diagonal-dimension}.
\bigskip

As one should expect, diagonal dimension has good permanence properties with respect to direct sums, tensor products, hereditary subalgebras, unitisations, quotients, inductive limits, and stabilisations. The zero-dimensional case can be characterised in terms of AF algebras with canonical diagonals. We derive these results in Sections \ref{sec:permanence} and \ref{section-AF}.
\bigskip

We are excited about the notion of diagonal dimension because it allows to recover purely dynamical information from purely $\mathrm{C}^*$-algebraic data. Let us recall the notion of \emph{tower dimension} introduced by Kerr in \cite[Definition 4.3]{Ker17}, in order to explain this in more detail (a full account is given in Section~\ref{sec5}). Suppose $G \curvearrowright X$ is an action so that for every finite subset $E \subset G$ there are finite families $(V_j)_{j\in J}$ of open subsets of $X$ and $(S_j)_{j\in J}$ of finite subsets of $G$,  and a partition $J = J^{(0)}\sqcup \ldots \sqcup J^{(d)}$ such that 
	\begin{enumerate}
		\item $\bigcup_{j\in J}S_jV_j = X$,
		\item for every $x\in X$ there are $j\in J$ and $t\in S_j$ such that $x\in tV_j$ and $Et\subset S_j$,
		\item for each $i\in \{0,\ldots,d\}$ the sets $sV_j$ are pairwise disjoint for $s \in S_j$, $j\in J^{(i)}$.
	\end{enumerate}
Then, the action is said to have tower dimension at most $d$, written $\tdim (X,G) \le d$.

We can now state our first main result  (Theorem~\ref{thm:diag_vs_tower} below), which relates the dynamical and $\mathrm{C}^*$-algebraic notions of covering dimension described above. (We use the suggestive $\dim^{+1}$ notation whenever our statements involve products of dimensions, since then what matters is the number of colours, i.e., the value of the dimension plus one.)

\begin{thm}
	Let $\alpha:G\curvearrowright X$ be an action of a countable, discrete, amenable group on a compact Hausdorff space. Then 
	\begin{align}
	&\tdim^{+1}(X,G) \nonumber \\
	&\qquad \qquad \leq  \ddim^{+1}( C(X)\subset C(X)\rtimes_\mathrm{r} G ) \nonumber\\
	&\qquad \qquad \qquad \qquad \leq   \tdim^{+1}(X,G)\cdot \dim^{+1}(X). \nonumber
	\end{align}
	In particular, if $X$ is zero-dimensional then
	\begin{equation}
	\tdim(X,G) = \ddim( C(X) \subset C(X)\rtimes_\mathrm{r} G). \nonumber
	\end{equation}
\end{thm}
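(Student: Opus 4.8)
The plan is to prove the two inequalities separately, and then observe that the final "on the nose" statement for zero-dimensional $X$ follows immediately by setting $\dim^{+1}(X) = 1$.

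For the upper bound $\ddim^{+1}(C(X) \subset C(X) \rtimes_{\mathrm r} G) \le \tdim^{+1}(X,G) \cdot \dim^{+1}(X)$: I would start from a tower decomposition witnessing $\tdim(X,G) \le d$ for a given $E \subset G$, and build the completely positive approximation $A \xrightarrow{\psi} F \xrightarrow{\varphi} A$ out of it. The standard move (as in the Hirshberg–Winter–Zacharias style constructions, or Kerr's work) is: for each tower $(V_j, S_j)$ one produces, after refining $V_j$ by a partition of unity subordinate to a cover of covering-multiplicity $\le \dim(X)$, a matrix block $F^{(i,k)} \cong M_{|S_j|}(\mathbb C) \otimes (\text{finite set})$ whose canonical diagonal $D^{(i,k)}$ corresponds to the partition pieces $sV_j$, $s \in S_j$; the map $\varphi$ is built from the partition of unity and the unitaries $u_t$, $t \in S_j$, so that matrix units $e_{st}$ get sent to (localized versions of) $u_{st^{-1}}$ times a bump function — these are precisely normalisers of $C(X)$, which gives condition (5); and $\varphi|_{F^{(i,k)}}$ has order zero since the bump functions are mutually orthogonal across the tower. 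Condition (4) holds because $\psi$ is built from evaluation-type maps, and (2) follows from the covering condition (ii) in the tower definition together with amenability (a Følner set argument controls $\|\varphi\psi(u_t) - u_t\|$ for $t$ in the chosen finite set). The number of colours multiplies: $\tdim^{+1}$ colours from the tower partition $J = J^{(0)} \sqcup \dots \sqcup J^{(d)}$, times $\dim^{+1}(X)$ colours from the covering-dimension refinement of each $V_j$.

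For the lower bound $\tdim^{+1}(X,G) \le \ddim^{+1}(C(X) \subset C(X) \rtimes_{\mathrm r} G)$: given a diagonal-dimension approximation with $d+1$ colours for a sufficiently fine $\mathcal F$ (containing a Følner-type set of unitaries $u_t$ and enough of $C(X)$) and small $\e$, I would extract a tower decomposition. The key point is condition (5): in each finite-dimensional block $F^{(i)}$ with its diagonal $D^{(i)}$, the normalisers of $D^{(i)}$ are exactly the partial isometries built from matrix units, and these get sent by $\varphi$ to genuine normalisers of $C(X)$ in the crossed product. A normaliser of $C(X)$ has, by the groupoid/Feldman–Moore picture discussed in Section~\ref{sec1}, an "open support" on which it implements a partial homeomorphism of $X$; tracking which group elements $t$ appear (via the approximation $\varphi\psi(u_t) \approx u_t$) one reads off the sets $S_j$ attached to each matrix block, while the diagonal projections in $D^{(i)}$ give the open sets $V_j$ after taking spectra. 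Conditions (1) and (2) of the tower definition come from the fact that $\varphi\psi$ approximately recovers the unit and the $u_t$'s; condition (3) (disjointness within a colour) comes from orthogonality of the matrix blocks inside $F^{(i)}$ together with the order-zero condition (3) on $\varphi|_{F^{(i)}}$.

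The main obstacle I expect is the lower bound, specifically the passage from the approximate, norm-small statements about $\varphi\psi$ to the exact combinatorial tower axioms — in particular producing honestly disjoint open sets $sV_j$ and an honest covering $\bigcup S_j V_j = X$ out of data that only holds up to $\e$. This requires a perturbation argument: one has to replace approximate normalisers by exact ones (using that normalisers of a diagonal are stable under small perturbations, or functional-calculus cutdowns), pass from "approximately orthogonal" bump functions to genuinely disjointly-supported ones, and then argue that shrinking the open sets slightly does not destroy the covering property — this last step is where compactness of $X$ and the freeness of the action (ensuring the partial homeomorphisms implemented by normalisers are graphs of actual group elements on their supports) are essential. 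Getting the bookkeeping of colours exactly right, so that $d+1$ colours in the $\mathrm C^*$-approximation yield $d+1$ colours in the tower partition with no loss, is the delicate quantitative heart of the argument.
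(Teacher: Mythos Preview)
Your overall strategy matches the paper's, and your identification of the main difficulty (turning approximate $\mathrm{C}^*$-data into exact tower combinatorics) is exactly right. Two corrections are worth flagging.

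\textbf{Upper bound.} You invoke amenability and a F{\o}lner argument to control $\|\varphi\psi(u_t) - u_t\|$, but the paper does not use amenability at all (indeed Theorem~\ref{thm:diag_vs_tower} is stated for arbitrary countable discrete groups). The approximately invariant partition of unity needed to build $\psi$ is derived purely from the tower structure: one applies the tower-dimension hypothesis to $E^n$ rather than $E$, and then averages over the ``$E^k$-cores'' $B_{j,k} = \{s \in S_j : E^k s \subset S_j\}$ of each shape (this is isolated as Lemma~\ref{lem:inv_pou}, following Kerr). The $(d+1)(c+1)$ colours then arise exactly as you describe, factoring through $A^{(i)} = \bigoplus_{j \in J^{(i)}} M_{|ES_j|} \otimes C_0(V_j)$ and using $\ddim(D_n \otimes C_0(V_j) \subset M_n \otimes C_0(V_j)) \le \dim X$.

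\textbf{Lower bound.} The paper in fact proves the stronger inequality $\ftdim \le \ddim$. Your description of the mechanism is correct; the technical implementation is as follows. One does \emph{not} need to perturb approximate normalisers into exact ones --- condition~(5) already gives exact normalisers $\varphi(e_{kl})$. Instead, order-zero functional calculus (with cut-off functions $f_\delta, g_\delta$) turns each restriction $\varphi^{(i),j}|_{M_{s^{(i),j}}}$ into a family of \emph{exact} $^*$-isomorphisms $\sigma_{1k}$ between hereditary subalgebras of $C(X)$ (Lemma~\ref{lem:local_iso}), inducing homeomorphisms $\bar\sigma_{k1}$ between the open $\eta$-supports $U_k^{(i),j}$. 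The crucial step is Lemma~\ref{lem:homeo_by_gp_element}: each such homeomorphism is \emph{locally} implemented by a group element, and by freeness the resulting assignment $x \mapsto g_x$ is continuous with finite range. One then partitions the base $U_1^{(i),j}$ according to which tuple of group elements occurs, and these pieces are the tower bases $V_r^{(i),j}$; the shapes are the (finite) sets of group elements. Disjointness of levels within a colour is proved by contradiction, using that $\varphi^{(i)}$ is order zero and the compressions $e_{k'k'}^{(i),j'}\psi(u_g)e_{kk}^{(i),j}$ vanish when $j \ne j'$; the covering property comes from the computation $\sum_{i,j,k} f_\delta(\varphi^{(i),j})(e_{kk}^{(i),j}) \ge \frac{1}{2}\mathbf{1}_A$.
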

\bigskip

Let us now describe how diagonal dimension is relevant for coarse geometry, which studies `large-scale' properties of metric spaces. We restrict ourselves to discrete metric spaces of bounded geometry, which cover the important motivating examples of finitely generated discrete groups equipped with word-length metrics. Following Gromov, such a space $X$ is said to have \emph{asymptotic dimension} at most $d$, $\asdim (X) \le d$, if for every $R \ge 1$ there is a cover $\mathcal{U}$ of $X$ such that the members of $\mathcal{U}$ have uniformly bounded diameter such that every $R$-ball in $X$ intersects at most $d+1$ members of $\mathcal{U}$   

The \emph{uniform Roe algebra} $\mathrm{C}^*_{\mathrm{u}}(X)$ associated with $X$ can then be defined as the $\mathrm{C}^*$-algebra generated by operators with finite propagation on the Hilbert space $\ell^2(X)$; see \cite{Roe03}. In the case of a finitely generated group $G$, it can be  identified with the crossed product $\ell^\infty(G) \rtimes_{\mathrm{r}} G$ (with the action being left translation). It was shown only recently that the uniform Roe algebra determines $X$ as a metric space up to \emph{coarse equivalence} (see \cite{BFKVW:2021}), a notion under which asymptotic dimension is invariant. This in particular means that the asymptotic dimension of $X$ is encoded in the uniform Roe algebra; the question then is how to read of this information. In \cite{WW18}, abstract properties of the sub-$\mathrm{C}^*$-algebra $(\ell^\infty(X)  \subset \mathrm{C}^*_{\mathrm{u}}(X))$ were identified  which ensure that every such \emph{Roe Cartan subalgebra} yields a coarse equivalence of the underlying spaces. With diagonal dimension at hand we can now read of the precise value of asymptotic dimension from the uniform Roe algebra (combining Theorem~\ref{thm:uniformRoe} and Corollary~\ref{cor:RoeCartan}) as follows:  

\begin{thm}
For $X$ a discrete metric space with bounded geometry, we have
\[
\ddim(\ell^\infty(X) \subset \mathrm{C}^*_{\mathrm{u}}(X)) = \asdim(X). 
\] 
Moreover, we have 
\[
\ddim(B \subset \mathrm{C}^*_{\mathrm{u}}(X)) = \asdim(X) 
\]
for every Roe Cartan subalgebra $(B \subset \mathrm{C}^*_{\mathrm{u}}(X))$.
\end{thm}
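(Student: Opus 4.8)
The plan is to establish the two inequalities $\asdim(X)\le\ddim(\ell^\infty(X)\subseteq\mathrm{C}^*_{\mathrm{u}}(X))$ and $\ddim(\ell^\infty(X)\subseteq\mathrm{C}^*_{\mathrm{u}}(X))\le\asdim(X)$ for the canonical diagonal, and then to pass to an arbitrary Roe Cartan subalgebra. For the upper bound assume $\asdim(X)=d<\infty$ and use the colouring reformulation of asymptotic dimension: for every $R\ge 1$ there is a uniformly bounded cover $\mathcal{U}=\mathcal{U}^{(0)}\sqcup\cdots\sqcup\mathcal{U}^{(d)}$ of $X$ whose members of each colour $\mathcal{U}^{(i)}$ are pairwise at distance $>R$; bounded geometry bounds their cardinalities by some $N$. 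Given a finite $\mathcal{F}\subseteq\mathrm{C}^*_{\mathrm{u}}(X)$ and $\e>0$, I would first replace the elements of $\mathcal{F}$ by operators of propagation $\le R$ for suitable $R$, refine the cover to a partition $\{P_U:U\in\mathcal{U}\}$ with $P_U\subseteq U$, and put $q_U=\chi_{P_U}\in\ell^\infty(X)$. Since the $P_U$ of a fixed colour are $R$-separated while the relevant operators have propagation $<R$, compression by $\sum_{U\in\mathcal{U}^{(i)}}q_U$ is block diagonal along that colour, which powers the order-zero condition~(3). The target is $F=F^{(0)}\oplus\cdots\oplus F^{(d)}$ whose summands are built as matrix algebras over commutative AF algebras --- namely $C$ of clopen subsets of $\beta X$, the spectrum of $\ell^\infty(X)$, into which the ``tower bases'' of a fixed type assemble ---, with only finitely many types by bounded geometry and $\e$-discretisation; these blocks are themselves AF with their canonical diagonal, hence of diagonal dimension zero, so one further layer of approximation makes $F$ genuinely finite-dimensional at no extra colour cost. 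The map $\psi$ is the evident compression-and-identification map, contractive, completely positive, and sending $\ell^\infty(X)$ into $D_F$ (so~(1) and~(4) hold); $\varphi$ reassembles the blocks, with~(2) from the propagation/separation bookkeeping, (3) because distinct matrix blocks of $F^{(i)}$ are orthogonal with images supported on disjoint subsets of $X$, and~(5) because a matrix unit of $F^{(i)}$ is sent to a partial translation of $X$ of displacement at most the uniformly bounded diameter of the cover, hence to a normaliser of $\ell^\infty(X)$ in $\mathrm{C}^*_{\mathrm{u}}(X)$.

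For the lower bound, the case of a finitely generated group $G$ with a word metric is immediate: $\mathrm{C}^*_{\mathrm{u}}(G)=\ell^\infty(G)\rtimes_\mathrm{r} G=C(\beta G)\rtimes_\mathrm{r} G$ with its canonical diagonal, the action $G\curvearrowright\beta G$ is free, and the lower bound for free crossed products recorded earlier gives $\ddim(C(\beta G)\subseteq C(\beta G)\rtimes_\mathrm{r} G)\ge\asdim(G)$. For a general bounded geometry space I would either invoke the lower bound by dynamic asymptotic dimension for the coarse groupoid $\mathcal{G}(X)$ (via $\mathrm{C}^*_{\mathrm{u}}(X)=\mathrm{C}^*_\mathrm{r}(\mathcal{G}(X))$ and $\dad(\mathcal{G}(X))=\asdim(X)$), paying attention to the fact that $\mathcal{G}(X)$ is only locally compact and not Hausdorff, or argue directly: take an approximation $(F,\psi,\varphi)$ with $d+1$ colours for a finite set containing $1$ together with a finite family of partial translations whose graphs exhaust $\{(x,y):d(x,y)\le R\}$, and small $\e$. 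Conditions~(4)--(5) force $\psi(\ell^\infty(X))\subseteq D_F$, send each minimal projection of $D^{(i)}$ to a positive normaliser (hence an element of $\ell^\infty(X)$, the indicator of a subset of $X$), and send each matrix unit of $F^{(i)}$ to a normaliser of $\ell^\infty(X)$ in $\mathrm{C}^*_{\mathrm{u}}(X)$, which necessarily has finite displacement, bounded uniformly over the finitely many matrix units of $F$. As $\varphi\vert_{F^{(i)}}$ has order zero, each block of colour~$i$ then splits into pieces of uniformly bounded diameter, pairwise disjoint within the colour, and by approximate multiplicativity on the chosen partial translations every pair of points at distance $\le R$ lies in a common piece; organising this combinatorics produces a uniformly bounded cover at scale $R$ witnessing $\asdim(X)\le d$. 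I expect this extraction to be the main obstacle: turning the order-zero-plus-normaliser data into a genuine uniformly bounded cover with the scale-wise multiplicity control that asymptotic dimension demands (or else disposing of the non-Hausdorffness of $\mathcal{G}(X)$) is precisely the step where conditions~(4) and~(5) of Definition~\ref{intro-ddim}, invisible to plain nuclear dimension, are indispensable.

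Finally, for an arbitrary Roe Cartan subalgebra $(B\subseteq\mathrm{C}^*_{\mathrm{u}}(X))$ I would invoke the rigidity result of \cite{WW18}: such a $B$ is $\ell^\infty(Y)$ for a bounded geometry space $Y$ coarsely equivalent to $X$, realised by an isomorphism $\mathrm{C}^*_{\mathrm{u}}(X)\cong\mathrm{C}^*_{\mathrm{u}}(Y)$ carrying $B$ onto the canonical diagonal. Since diagonal dimension is an isomorphism invariant of the pair, the first part yields $\ddim(B\subseteq\mathrm{C}^*_{\mathrm{u}}(X))=\ddim(\ell^\infty(Y)\subseteq\mathrm{C}^*_{\mathrm{u}}(Y))=\asdim(Y)=\asdim(X)$, the last equality because asymptotic dimension is a coarse invariant.
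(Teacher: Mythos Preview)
Your strategy matches the paper's in all three parts: the upper bound via the colouring cover and intermediate AF building blocks (the paper simply follows the proof of \cite[Theorem~8.5]{WZ10} and observes that the AF algebras $A^{(i)}=\prod_{U\in\mathcal{U}^{(i)}}M_{|B_{r-1}(U)|}$ already carry canonical diagonals $D^{(i)}\subset\ell^\infty(X)$ with $\ddim(D^{(i)}\subset A^{(i)})=0$, and that the compression maps $\Psi^{(i)}$ send $\ell^\infty(X)$ into $D^{(i)}$), the lower bound via the coarse groupoid and Theorem~\ref{thm:diag_vs_dad}, and the Roe Cartan case via rigidity.

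There is, however, a factual misstep that creates a phantom obstacle. You assert that $\mathcal{G}(X)$ is ``only locally compact and not Hausdorff'' and flag this as the main difficulty for the groupoid route. In fact, for a bounded geometry metric space the coarse groupoid $\mathcal{G}(X)$ \emph{is} Hausdorff (as well as principal, \'etale, locally compact, and $\sigma$-compact; see \cite[Proposition~3.2]{STY02} and \cite[Theorem~10.20]{Roe03}). Hence Theorem~\ref{thm:diag_vs_dad} applies out of the box and, combined with $\dad(\mathcal{G}(X))=\asdim(X)$ from \cite[Theorem~6.4]{GWY17}, yields the lower bound immediately. Your alternative direct extraction is therefore unnecessary, and the obstacle you anticipate does not exist.

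A minor point on the Roe Cartan part: citing \cite{WW18} alone is not quite enough. That paper identifies a Roe Cartan subalgebra with some $(\ell^\infty(Y)\subset\mathrm{C}^*_{\mathrm{u}}(Y))$, but the conclusion that $Y$ is coarsely equivalent to $X$ in full generality requires the rigidity theorem of \cite[Theorem~1.2]{BFKVW:2021}; the paper invokes both.
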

\bigskip

Topological dynamical systems and metric spaces of bounded geometry can be studied in the joint framework of (locally compact, Hausdorff, \'etale) \emph{groupoids}; see \cite{Renault:LNM1980,GWY17} and our Section~\ref{sec6} below. With such a groupoid $\mathcal{G}$ one can associate a sub-$\mathrm{C}^*$-algebra $( C_0(\mathcal{G}^{(0)})\subset \mathrm{C}^*_{\mathrm{r}}(\mathcal{G}) )$, and it turns out that every diagonal sub-$\mathrm{C}^*$-algebra (hence in particular every sub-$\mathrm{C}^*$-algebra with finite diagonal dimension) indeed comes from a groupoid which is \emph{principal} (a notion analogous to freeness for dynamical systems) and possibly \emph{twisted} (a notion which we will not require in our applications); see Proposition~\ref{prop:principal-groupoid}:

\begin{prop}Let $(D_A\subset A)$ be a nondegenerate sub-$\mathrm{C}^*$-algebra with finite diagonal dimension. Then there is an -- up to isomorphism uniquely determined -- twisted, \'etale, locally compact, Hausdorff, principal groupoid $(\mathcal{G},\Sigma)$ such that $(D_A\subset A)$ is isomorphic to $(C_0(\mathcal{G}^{(0)}) \subset \mathrm{C}_{\mathrm{r}}^*(\mathcal{G},\Sigma))$.
\end{prop}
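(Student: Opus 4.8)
The plan is to combine the structural consequences of finite diagonal dimension, established in Section~\ref{section:diagonal-dimension}, with the groupoid reconstruction theorem of Kumjian and Renault. First I would unwind the hypotheses: nondegeneracy of $(D_A\subset A)$ means precisely that $D_A$ contains an approximate unit for $A$, so (as recorded in the discussion following the definition of diagonal dimension and proved in Section~\ref{section:diagonal-dimension}) finiteness of $\ddim(D_A\subset A)$ forces $(D_A\subset A)$ to be a diagonal pair in the sense of Kumjian: $D_A$ is maximal abelian in $A$, it is the image of a faithful conditional expectation $E\colon A\to D_A$, the normalisers of $D_A$ in $A$ generate $A$ as a $\mathrm{C}^*$-algebra, and every pure state of $D_A$ extends uniquely to a state of $A$. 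Since $D_A$ is commutative we identify $D_A=C_0(\widehat{D_A})$ with $\widehat{D_A}$ a locally compact Hausdorff space, which will be the unit space of the groupoid.

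Second I would run the Weyl construction on $(D_A\subset A)$. Each normaliser $n$ of $D_A$ implements a partial homeomorphism $\alpha_n$ of $\widehat{D_A}$ between open subsets, and $\mathcal{G}$ is the groupoid of germs $[n,x]$, $x\in\dom\alpha_n$, of these partial homeomorphisms, topologised so that for each $n$ the set $\{[n,x]:x\in\dom\alpha_n\}$ is an open bisection with $x\mapsto[n,x]$ a homeomorphism onto it; then $\mathcal{G}^{(0)}=\widehat{D_A}$, and $\mathcal{G}$ is \'etale and locally compact by construction. The twist $\Sigma$ arises by remembering, beyond the germ, the class of $n$ modulo multiplication by strictly positive elements of $D_A$ near $x$, producing a central extension $\mathcal{G}^{(0)}\times\mathbb{T}\hookrightarrow\Sigma\twoheadrightarrow\mathcal{G}$. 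The Cartan axioms (maximal abelianness, the faithful expectation, regularity of $D_A$) make $\mathcal{G}$ Hausdorff and topologically principal, while the extra unique-extension-of-pure-states property of a diagonal is exactly what upgrades this to \emph{principal}, i.e.\ trivial isotropy everywhere. Finally one checks that under the identification of $A$ with a completion of the twisted convolution algebra of $\Sigma$ the expectation $E$ becomes the canonical one, so the resulting $*$-isomorphism $A\cong\mathrm{C}^*_{\mathrm{r}}(\mathcal{G},\Sigma)$ restricts to $D_A\cong C_0(\mathcal{G}^{(0)})$.

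For the uniqueness clause I would invoke the rigidity half of the Kumjian--Renault correspondence: if $(\mathcal{H},\Xi)$ is any twisted, \'etale, principal, locally compact, Hausdorff groupoid with $(D_A\subset A)\cong(C_0(\mathcal{H}^{(0)})\subset\mathrm{C}^*_{\mathrm{r}}(\mathcal{H},\Xi))$, then $\mathcal{H}^{(0)}\cong\widehat{D_A}$ and the normalisers of $C_0(\mathcal{H}^{(0)})$ recover precisely the germ-and-phase data of the Weyl construction, so the construction applied to $(D_A\subset A)$ reproduces $(\mathcal{H},\Xi)$ up to isomorphism of twisted groupoids.

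The main obstacle is that the Kumjian--Renault reconstruction is classically formulated under a second-countability (equivalently, separability of $A$) hypothesis, whereas the intended applications -- e.g.\ uniform Roe algebras over uncountable metric spaces -- require the statement without countability assumptions. The technical heart is therefore to carry out, or to cite a suitable version of, the Weyl groupoid and twist construction together with the identification $A\cong\mathrm{C}^*_{\mathrm{r}}(\mathcal{G},\Sigma)$ in this generality, and to verify Hausdorffness and principality of $\mathcal{G}$ directly from the diagonal axioms. Granting the earlier results on diagonal dimension, the remainder is careful bookkeeping with germs and relative phases.
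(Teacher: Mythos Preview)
Your approach is correct and coincides with the paper's: the paper simply combines Theorem~\ref{principal grp} (nondegeneracy plus finite diagonal dimension forces $D_A$ to be a diagonal in $A$) with Renault's reconstruction theorem \cite[Theorem~5.9]{Ren08}, together with the observation that the unique extension property upgrades topological principality to principality---which is exactly the Weyl-groupoid argument you sketch. Your worry about second-countability is legitimate, but the paper does not address it either; it treats \cite{Ren08} as a black box.
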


Guentner, Willett, and Yu have generalised asymptotic dimension to the notion of \emph{dynamic asymptotic dimension} for groupoids (written $\dad(\mathcal{G})$; spelling out the definition requires some preparation and we postpone it to Definition~\ref{defn:dad} below). The concept has been related to tower dimension in \cite{Ker17}, and just like tower dimension, it provides a lower bound for diagonal dimension of the associated sub-$\mathrm{C}^*$-algebra; see Theorem~\ref{thm:diag_vs_dad} below:

\begin{thm}
Let $\mathcal{G}$ be a locally compact, Hausdorff, \'etale groupoid. Then
	\[
	\dad(\mathcal{G})\leq \ddim( C_0(\mathcal{G}^{(0)})\subset \mathrm{C}^*_{\mathrm{r}}(\mathcal{G}) ).
	\]
\end{thm}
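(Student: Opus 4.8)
The plan is to prove the inequality $\dad(\mathcal{G}) \leq \ddim(C_0(\mathcal{G}^{(0)}) \subset \mathrm{C}^*_{\mathrm{r}}(\mathcal{G}))$ by contraposition: assuming $\ddim(C_0(\mathcal{G}^{(0)}) \subset \mathrm{C}^*_{\mathrm{r}}(\mathcal{G})) \le d$, I want to produce, for any given open relatively compact $K \subseteq \mathcal{G}$, an open relatively compact subset of $\mathcal{G}$ whose generated subgroupoid on $r(K) \cup s(K)$ has the right ``$d+1$ colours'' structure witnessing $\dad(\mathcal{G}) \le d$. The basic philosophy mirrors the lower bound $\tdim \le \ddim$ from Theorem~\ref{thm:diag_vs_tower}: a $(d+1)$-coloured c.p.\ approximation $A \xrightarrow{\psi} F \xrightarrow{\varphi} A$ that respects the diagonal and sends normalisers of $D^{(i)}_F$ to normalisers of $D = C_0(\mathcal{G}^{(0)})$ can be ``read off'' at the level of the groupoid, because normalisers of the diagonal in a groupoid $\mathrm{C}^*$-algebra correspond (via Proposition~\ref{prop:principal-groupoid} and the standard Kumjian--Renault picture) to compactly supported functions on open bisections, and the order-zero blocks $\varphi|_{F^{(i)}}$ force the associated bisections in the $i$-th colour to have pairwise disjoint sources and ranges — which is exactly the disjointness feature needed for the $\dad$ estimate.

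Concretely, the first step is to fix a compact subset of $\mathcal{G}$ and choose a finite set $\mathcal{F}$ of normaliser-type elements of $\mathrm{C}^*_{\mathrm{r}}(\mathcal{G})$ (functions supported on open bisections covering the compact set) together with enough of $C_0(\mathcal{G}^{(0)})$ to control the unit space, and a small $\e$; then invoke $\ddim \le d$ to get $F = F^{(0)} \oplus \dots \oplus F^{(d)}$, its diagonal $D_F$, and the maps $\psi,\varphi$ with properties (1)--(5). The second step is structural: each $F^{(i)}$ is a finite direct sum of matrix algebras with a canonical (``standard'') diagonal $D^{(i)}$, and its normalisers are spanned by matrix units times permutation data; condition (5) says $\varphi$ carries each such normaliser to a genuine normaliser $n$ of $D$ in $A$, which by the groupoid picture is approximately a finite sum of functions on open bisections. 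The third step is to run the Kerr/Guentner--Willett--Yu bookkeeping: property (2) says the $\varphi\psi$-images of the chosen bisection functions are close to the originals, so the open bisections extracted from the $\varphi$-images of matrix units in $F^{(i)}$ (after a small perturbation to make supports genuinely open and relatively compact) generate a subgroupoid that, restricted to the relevant part of $\mathcal{G}^{(0)}$, both covers the prescribed compact set and — because of the order-zero condition (3) forcing orthogonality of the $D^{(i)}$-normalisers within a single block — is a disjoint union over the $i$-th colour of bisections, hence has the bounded-``cardinality at scale'' property defining $\dad \le d$.

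The main obstacle, I expect, is the translation between the operator-algebraic data (c.p.\ maps, order zero, approximate multiplicativity up to $\e$) and the sharp combinatorial/topological data required by $\dad$ (which is stated with exact equalities of sets: a subgroupoid genuinely decomposing into $d+1$ pieces, each with finitely many ``orbits'' under the generated relation). Order-zero maps are only approximately multiplicative on the nose, and normalisers in $A$ need not have supports that are literally open bisections — only approximately so — so one must perturb: replace each $\varphi$-image of a matrix unit by an honest partial isometry-like element (using functional calculus and the structure theorem for order-zero maps, $\varphi|_{F^{(i)}}$ factoring through a homomorphism $F^{(i)} \to M(A)$ corrected by a positive element in its commutant), extract an open relatively compact bisection from its support, and check the two $\dad$ conditions survive the perturbation. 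Managing the interaction between ``approximately covers a compact set'' (a finite, uniform estimate) and ``the perturbations are small'' should be routine once the dictionary is in place, but setting up that dictionary carefully — in particular verifying that the order-zero/disjointness on the $F^{(i)}$ side really does yield disjoint sources \emph{and} ranges of the extracted bisections, not just one of the two — is where the real work lies; this is presumably handled by noting that a normaliser $n$ and its adjoint $n^*$ both appear, with $n^*n, nn^* \in D$ encoding source and range, and that orthogonality of $n_1, n_2$ within a block forces $n_1^* n_2 = n_1 n_2^* = 0$ after the c.p.\ approximation, i.e.\ disjoint sources and disjoint ranges simultaneously.
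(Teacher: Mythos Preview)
Your overall instinct---parallel the lower bound for tower dimension, use a c.p.\ approximation on a finite set of bisection-supported functions, and read off the colour decomposition from the order-zero blocks---is sound, and matches the paper's strategy. But the plan misidentifies what has to be built. The definition of $\dad$ asks for open subsets $U^{(0)},\ldots,U^{(d)}$ of the \emph{unit space} $\mathcal{G}^{(0)}$ such that the subgroupoid generated by $\{g\in K\mid s(g),r(g)\in U^{(i)}\}$ is relatively compact; it does not ask for a coloured family of bisections in $\mathcal{G}$. Extracting bisections from the $\varphi^{(i)}$-images of matrix units and verifying that they have pairwise disjoint sources and ranges does not address this: the elements of $K$ being controlled are prescribed in advance and are not among your extracted bisections, so orthogonality of the latter says nothing about compositions of the former.

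Two ingredients are missing from your outline. First, one must note that finite diagonal dimension forces $\mathcal{G}$ to be principal (via Proposition~\ref{prop:principal-groupoid}), which then allows one to replace ``relatively compact subgroupoid'' by ``uniformly bounded equivalence classes'' (Proposition~\ref{lem:dad_orbit}). Second---and this is the core mechanism---the $U^{(i)}$ are built from the \emph{diagonal} images: one sets $U_k^{(i),j}:=\mathrm{supp}^\circ_\eta(f_\delta(\varphi^{(i),j})(e^{(i),j}_{kk}))$ and $U^{(i)}:=\bigcup_{j,k}U_k^{(i),j}$. The role of the matrix-unit normalisers is not to supply bisections but to supply, via Lemma~\ref{lem:local_iso}, $^*$-isomorphisms $\sigma^{(i),j}_{kl}$ between the hereditary corners of $D_A$ over the $U_k^{(i),j}$, hence homeomorphisms $\bar\sigma^{(i),j}_{kl}:U_l^{(i),j}\to U_k^{(i),j}$. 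A norm-estimate contradiction, entirely analogous to the one for \eqref{ddim-lower}, then shows that any $g\in K$ with $s(g),r(g)\in U^{(i)}$ must have source and range in the same summand $j$ and related by one of these homeomorphisms; this bounds the size of each $\mathcal{H}^{(i)}$-equivalence class by $\max_j s^{(i),j}$. The order-zero property enters precisely to make the $\sigma_{kl}$ genuine isomorphisms and to kill cross-block terms $\varphi(e^{(i),j'}_{k'k'})\varphi(\psi(b)e^{(i),j}_{kk})$ with $j\ne j'$, not to produce disjoint bisections.
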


\bigskip 
The theorems above consist of upper and lower bounds for diagonal dimension. The upper bounds yield a plethora of examples of sub-$\mathrm{C}^*$-algebras with finite diagonal dimension. The proofs are essentially contained in existing ones for nuclear dimension -- we more or less just have to follow those arguments and make sure they can be adjusted to also keep track of diagonal subalgebras.
 
The lower bounds show that diagonal dimension is genuinely a more sensitive invariant than nuclear dimension. The proofs, albeit sometimes technical to write down, are natural in the sense that the covers required by tower dimension or by (dynamic) asymptotic dimension can be constructed explicitly from completely positive approximations in the sense of Definition~\ref{intro-ddim}. This process requires a certain amount of rigidity, which is ensured by the order zero condition (3) in tandem with the normaliser condition (5) of the definition.   

\bigskip

Our examples for which diagonal dimension carries interesting information range from Cantor actions of locally finite, or virtually nilpotent, or Grigorchuk groups, over actions of amenable and residually finite groups on profinite completions,  to universal minimal flows  with finite asymptotic dimension. We work through these examples in Section~\ref{sec:examples}.

\subsection*{Acknowledgments}
This work was partially funded by: the Deutsche Forschungsgemeinschaft (DFG, German Research Foundation) under Germany's Excellence Strategy -- EXC 2044 -- 390685587, Mathematics M\"unster -- Dynamics -- Geometry -- Structure; the Deutsche Forschungsgemeinschaft (DFG, German Research Foundation) -- SFB 878; the Deutsche Forschungsgemeinschaft (DFG, German Research Foundation) -- Project-ID 427320536 -- SFB 1442; ERC Advanced Grant 834267 -- AMAREC.  

We are grateful to Selcuk Barlak, David Kerr, Xin Li, Stuart White, and Rufus Willett for helpful comments and many inspiring discussions.


\setcounter{tocdepth}{1}
\tableofcontents
\numberwithin{thm}{section}
\newpage

\section{Sub-$\mathrm{C}^*$-algebras and normalisers}
\label{sec1}

\noindent
For a $\mathrm{C}^*$-algebra $A$, we write $A_+$ for the set of positive elements in $A$ and $A^1$ and $A^1_+$ for the norm-closed unit balls of $A$ and $A_+$, respectively.

\begin{defn}
	If $A$ is a $\mathrm{C}^*$-algebra containing another $\mathrm{C}^*$-algebra $D$ we write $(D \subset A)$ for this setup and call it a \emph{sub-$\mathrm{C}^*$-algebra}, or a \emph{pair} of $\mathrm{C}^*$-algebras, or just a \emph{$\mathrm{C}^*$-pair}. 
A sub-$\mathrm{C}^*$-algebra $(D\subset A)$ is said to be \emph{nondegenerate} if $D$ contains an approximate unit for $A$.
\end{defn}

\begin{rems}
(i) Our notation is reminiscent of that for subfactors; it indicates that all three pieces of data -- $D$, $A$, and the position of $D$ in $A$ -- carry crucial information.

(ii) A sub-$\mathrm{C}^*$-algebra $(D\subset A)$ is nondegenerate if and only if the positive open unit ball of $D$ forms an approximate unit for $A$. If $(D\subset A)$ is a nondegenerate sub-$\mathrm{C}^*$-algebra and $A$ is unital, then $D$ contains the unit of $A$. 
\end{rems}

\begin{defn}
	Let $(D\subset A)$ be a sub-$\mathrm{C}^*$-algebra. An element $a\in A$ is called a \emph{normaliser} of $D$ in $A$ if $aDa^* + a^*Da\subset D$. We also say $a$ \emph{normalises} $D$. The collection of normalisers of $D$ in $A$ is denoted by $\mathcal{N}_A(D)$.	
\end{defn}

Note that $\mathcal{N}_A(D)$ is closed under multiplication, involution, and norm-limits. It is in general not closed under addition.

\begin{defn} [see {\cite{Kum86, Ren08, Renault09}}] \label{defn:diagonal}
	Let $(D\subset A)$ be a sub-$\mathrm{C}^*$-algebra. $D$ is a \emph{Cartan subalgebra} of $A$ if \begin{enumerate}
		\item[(0)] $(D\subset A)$ is nondegenerate,
		\item[(1)] $D$ is a maximal abelian $^*$-subalgebra of $A$ (a \emph{masa}, for short),
		\item[(2)] $D$ is \emph{regular}, in the sense that $\cn_A(D)$ generates $A$ as a $\mathrm{C}^*$-algebra, and
		\item[(3)] there exists a \emph{faithful conditional expectation} from $A$ onto $D$, i.e., a completely positive contractive map $\Phi: A \to D$ which is injective on $A_+$ and satisfies $\Phi|_D = \id_D$.
	\end{enumerate}
	If, in addition, $D$ has the \emph{unique extension property} relative to $A$, that is, every pure state on $D$ extends uniquely to a pure state on $A$, then $D$ is said to be a \emph{diagonal} in $A$. We note that condition (0) above was shown in \cite{Pit21} to be redundant in connection with conditions (1) and (2).
\end{defn}

\begin{example}
	Let $(D_F \subset F)$ be a sub-$\mathrm{C}^*$-algebra with $F$ (hence also $D_F$) finite-dimensional. If $D_F$ is a masa, it is a diagonal. Moreover, any two diagonals of $F$ are unitary conjugates of each other. 
\end{example}

\begin{defn}
If $F$ is a finite-dimensional $\mathrm{C}^*$-algebra and $D_F$ is a masa of $F$, we say an element $v$ in $F$ is a \emph{matrix unit with respect to $D_F$} if $v^*v$ and $vv^*$ are minimal projections in $D_F$. Note that every matrix unit is a normaliser of $D_F$, and that, if $v$ is a matrix unit, then so is $\lambda \cdot v$ for every $\lambda \in \mathbb{C}$ with $|\lambda|=1$.	
\end{defn}

The set of normalisers is obviously closed under multiplication, but not at all under addition. 
The next proposition, taken from \cite[Example~2$^\circ$]{Kum86},  characterises normalisers of diagonals in finite-dimensional $\mathrm{C}^*$-algebras. It nicely illustrates the role of orthogonality in this context, and provides first evidence why order zero maps will play a role. We will return to the matter in Proposition~\ref{prop:maps-normalisers} and in Section~\ref{section:diagonal-dimension}.

\begin{prop}
\label{prop:orthogonal-normalisers}
If $F$ is a finite dimensional $\mathrm{C}^*$-algebra with a diagonal $D_F$, then any normaliser for $D_F$ is a linear combination of pairwise orthogonal matrix units with respect to $D_F$.
\end{prop}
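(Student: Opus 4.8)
The plan is to reduce to the case of a matrix algebra $M_n$ with its standard diagonal $D_n$ (diagonal matrices), since every finite-dimensional $F$ with diagonal $D_F$ decomposes as $\bigoplus_k M_{n_k}$ with $D_F = \bigoplus_k D_{n_k}$ the obvious diagonal in each block, and a normaliser of $D_F$ decomposes accordingly along the block structure; a linear combination of pairwise orthogonal matrix units in each block remains a linear combination of pairwise orthogonal matrix units overall (matrix units in different blocks are automatically orthogonal). So it suffices to treat $F = M_n$, $D_F = D_n$.

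First I would write an arbitrary $v \in M_n$ as $v = \sum_{i,j} v_{ij} e_{ij}$ in terms of the standard matrix units $e_{ij}$, and translate the normaliser condition $v D v^* + v^* D v \subset D$ into a condition on the coefficients. Testing against the minimal projections $e_{kk} \in D_n$, one computes $v e_{kk} v^*$ and extracts its off-diagonal entries; the requirement that these vanish for every $k$ forces, for each ordered pair $(p,q)$ with $p \neq q$, that $\sum_k v_{pk}\overline{v_{qk}} = 0$, i.e.\ distinct rows of $v$ are orthogonal as vectors in $\C^n$. Symmetrically, $v^* D v \subset D$ forces distinct columns of $v$ to be orthogonal. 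The key structural consequence I want to extract: if row $p$ and row $q$ are both nonzero, their supports (the sets of columns where they are nonzero) must be disjoint — this is where I expect the main work to lie. Orthogonality of the rows alone does not give disjoint supports in general, so I need to use the column condition simultaneously. The cleanest way is probably to argue that the map sending a nonzero row index to (any element of) its support is well-defined and injective: if two nonzero rows $p,q$ shared a column $j$ in their supports, then column $j$ has at least two nonzero entries $v_{pj}, v_{qj}$; orthogonality of columns then propagates constraints, and combined with row-orthogonality one derives a contradiction by a counting/rank argument. Concretely, the row condition says the nonzero rows of $v$ form an orthogonal set, hence are linearly independent, so $\rank v$ equals the number of nonzero rows; likewise $\rank v$ equals the number of nonzero columns; and then a support-disjointness argument (a nonzero row has exactly one nonzero entry, etc.) follows from comparing these counts against the total number of nonzero entries.

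Once I know each nonzero row has support meeting each nonzero column in at most one place and the supports are disjoint, I can conclude that $v$ has at most one nonzero entry in each row and each column. Then $v = \sum_{(i,j) \in S} v_{ij} e_{ij}$ where $S$ is a partial permutation (a set of positions, no two in the same row or column), and each $e_{ij}$ with $(i,j) \in S$ is a matrix unit with respect to $D_n$ (since $e_{ij}^* e_{ij} = e_{jj}$ and $e_{ij} e_{ij}^* = e_{ii}$ are minimal projections in $D_n$), and these matrix units are pairwise orthogonal precisely because no two share a row or column. Absorbing the scalars $v_{ij}$ into the matrix units (legitimate when $v_{ij} \neq 0$, after normalising) exhibits $v$ as the desired linear combination.

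The step I expect to be the main obstacle is the passage from ``rows pairwise orthogonal and columns pairwise orthogonal'' to ``at most one nonzero entry per row and per column.'' The two orthogonality conditions must genuinely be used together — either condition alone is far weaker. I anticipate the slickest route is the rank-counting argument sketched above: let $r$ be the number of nonzero rows, $c$ the number of nonzero columns, and $N$ the number of nonzero entries; row-orthogonality gives that the $r$ nonzero rows are linearly independent so $\rank v = r$, and since $\rank v \le c$ and symmetrically $\rank v = c \le r$, we get $r = c = \rank v$; then one argues $N \ge r$ and $N \ge c$ trivially, and that equality $N = r$ is forced by a more careful look at how orthogonality interacts with the support pattern (for instance, if some row had two nonzero entries, the two columns it lives in would each contain that row, and tracking this through the orthogonality relations among those columns leads to a dependency contradicting $\rank v = c$). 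I would double-check this against small cases ($n = 2, 3$) to make sure no degenerate configuration slips through, and if the rank argument proves fiddly, fall back on a direct inductive argument peeling off one nonzero row at a time.
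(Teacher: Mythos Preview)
Your computation at step 3 is wrong, and this error makes the rest of the plan both unnecessarily complicated and ultimately unworkable. For a fixed $k$, the $(p,q)$-entry of $v e_{kk} v^*$ is $v_{pk}\overline{v_{qk}}$, \emph{not} $\sum_k v_{pk}\overline{v_{qk}}$. The normaliser condition requires $v e_{kk} v^* \in D_n$ for \emph{each} $k$ separately (since the $e_{kk}$ span $D_n$), so you get $v_{pk}\overline{v_{qk}} = 0$ for every $p \neq q$ and every $k$. This says directly that each column of $v$ has at most one nonzero entry. Symmetrically, $v^* e_{kk} v \in D_n$ for each $k$ gives that each row has at most one nonzero entry. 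The partial-permutation structure follows immediately, and you are done.

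The implication you flagged as the ``main obstacle'' --- from ``rows pairwise orthogonal and columns pairwise orthogonal'' to ``at most one nonzero entry per row and per column'' --- is in fact false, so the rank-counting detour cannot work. For instance, the matrix
\[
v = \begin{pmatrix} 1 & 1 \\ 1 & -1 \end{pmatrix}
\]
has pairwise orthogonal rows and pairwise orthogonal columns, yet every entry is nonzero. (And indeed $v e_{11} v^* = \begin{pmatrix} 1 & 1 \\ 1 & 1 \end{pmatrix} \notin D_2$, so $v$ is not a normaliser; the condition you extracted was too weak.) The paper does not supply a proof of this proposition, citing Kumjian instead, but the argument above is the intended one-line computation.
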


It is not hard to conclude from the proposition above that any \emph{positive} normaliser of $D_F$ in fact belongs to $D_F$. This fails if $D_F$ is not maximal abelian (take for example $D_F= \{0\}$). However, any positive normaliser will at least commute with $D_F$. This was shown as a general fact also outside the finite dimensional setting in \cite[Proposition~2.1]{Pit21}. We include below a different proof which may be interesting in its own right. We also use this to observe that  continuous functions of positive normalisers are again normalisers.

\begin{lem} \label{lem:positive_normaliser}
	Let $(D\subset A)$ be a sub-$\mathrm{C}^*$-algebra with $D$ abelian. If $e$ is a positive contraction in $\mathcal{N}_A(D)$ then $e$ belongs to $A \cap D'$ (so if $D$ is a masa, then $e \in D$). Moreover, $f(e)$ normalises $D$ for any continuous function $f$ on the spectrum of $e$ with $f(0) = 0$.
\end{lem}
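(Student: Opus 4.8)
The plan is to establish $e\in A\cap D'$ first, by passing to a von Neumann algebra completion, and then to deduce the statement about $f(e)$ from this together with some elementary functional calculus. For the first assertion, fix a faithful representation $A\subseteq B(H)$ and let $M\subseteq B(H)$ be the (abelian) von Neumann algebra generated by $D$. The first step is to upgrade the normaliser condition to $eMe\subseteq M$: the map $x\mapsto exe$ on $B(H)$ is continuous for the weak operator topology, and since $e=e^{*}$ normalises $D$ it sends $D$ into $D$, so by Kaplansky's density theorem it sends $M$ (the weak operator closure of $D$) into $M$. It then suffices to show that $e$ commutes with every projection $q\in M$: a von Neumann algebra is the norm-closed linear span of its projections and $\{T:eT=Te\}$ is norm-closed, so this forces $e\in M'$, hence $e\in A\cap D'$, and $e\in D$ when $D$ is a masa. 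This reduction, together with the computation in the next paragraph, is where essentially all the work lies.

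So let $q\in M$ be a projection and put $w:=qe(1-q)$; the point is to show $w=0$. Since $eqe\in M$, $q\in M$ and $M$ is abelian, $eqe$ commutes with $q$; hence $q(eqe)(1-q)=0$, which upon expanding reads $(qeq)\,w=0$, so $\operatorname{range}(w)\subseteq\ker(qeq)$. On the other hand $e\ge 0$, so for $\xi\in(1-q)H$ and $\zeta\in qH$ the Cauchy--Schwarz inequality for the positive sesquilinear form $(x,y)\mapsto\langle ex,y\rangle$ gives
\[
|\langle w\xi,\zeta\rangle|^{2}=|\langle e\xi,\zeta\rangle|^{2}\le\langle e\xi,\xi\rangle\,\langle (qeq)\zeta,\zeta\rangle ,
\]
so every $\zeta\in qH\cap\ker(qeq)$ is orthogonal to $\operatorname{range}(w)$; since moreover $\operatorname{range}(w)\subseteq qH$, the space $\operatorname{range}(w)$ is both contained in and orthogonal to $\ker(qeq)$, whence $w=0$. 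Taking adjoints gives $(1-q)eq=0$ as well, i.e.\ $eq=qe$. The heart of the argument is precisely this passage to $M$ and the Cauchy--Schwarz estimate forcing $w=0$; everything around it is routine.

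For the ``moreover'' part, assume now $e\in D'$. Then every polynomial in $e$, and in particular $f(e)$ and $f(e)^{*}=\bar f(e)$, commutes with every $d\in D$. I would first record that $e^{n}D\subseteq D$ for all $n\ge 1$: for $d\in D_{+}$ one has $ed=de\ge 0$ (a product of commuting positive elements) and $(ed)^{2}=e^{2}d^{2}=(e^{2}d)\,d\in D$, because $e^{2}d=ede\in D$ by hypothesis, so $ed=\bigl((ed)^{2}\bigr)^{1/2}\in D$; the general case follows by linearity and the claim for all $n$ by induction via $e^{n+1}d=e(e^{n}d)$. Approximating an arbitrary continuous function $g$ on $\sigma(e)$ with $g(0)=0$ uniformly by polynomials with vanishing constant term, and using that $D$ is norm-closed, this yields $g(e)\,d\in D$ for all $d\in D$. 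Finally, for $d\in D$, moving $f(e)$ and $\bar f(e)$ past $d$ gives
\[
f(e)\,d\,f(e)^{*}+f(e)^{*}\,d\,f(e)=2\,|f|^{2}(e)\,d\in D ,
\]
by the previous step applied to $g=|f|^{2}$, which is continuous on $\sigma(e)$ and vanishes at $0$; hence $f(e)\in\mathcal N_{A}(D)$.
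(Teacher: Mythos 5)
Your proof is correct, but the route to the first (and main) assertion is genuinely different from the paper's. The paper argues by contradiction inside $\mathrm{C}^*(D,e)$: it takes an irreducible representation $\pi$ on a Hilbert space of dimension $>1$, extracts orthogonal positive elements $b,d$ with $\pi(d),\pi^\sim(b)\neq 0$, and uses the normaliser hypothesis to show $\|be^kd\|=0$ for all $k$, producing two nonzero orthogonal subspaces one of which is $\pi(\mathrm{C}^*(D,e))$-invariant --- contradicting irreducibility. You instead pass to the weak operator closure $M=\overline{D}^{\mathrm{WOT}}$ (using WOT-continuity of $x\mapsto exe$ to get $eMe\subseteq M$; Kaplansky density is not actually needed for this, plain WOT-continuity suffices), reduce to commuting $e$ with each projection $q\in M$, and kill $w=qe(1-q)$ by combining $\operatorname{range}(w)\subseteq qH\cap\ker(qeq)$ with the Cauchy--Schwarz estimate showing $\operatorname{range}(w)\perp qH\cap\ker(qeq)$. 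Both arguments are elementary and self-contained; yours trades the representation-theoretic contradiction for a spectral-projection argument in the von Neumann envelope, which is closer in spirit to the classical masa arguments of Cartan subalgebra theory (and to Pitts' proof that the paper deliberately avoids), while the paper's version stays entirely at the $\mathrm{C}^*$-level. For the ``moreover'' part the two proofs are essentially the same idea packaged differently: the paper shows $e^{2k}de^{2m}=e^{k+m}de^{k+m}\in D$ and approximates $f$ by even polynomials, whereas you first establish $e^nD\subseteq D$ (via $ed=((ed)^2)^{1/2}$ with $(ed)^2=(e^2d)d\in D$) and then use commutation to write $f(e)\,d\,f(e)^*=|f|^2(e)\,d$; note that your computation in fact shows each of $f(e)\,d\,f(e)^*$ and $f(e)^*\,d\,f(e)$ lies in $D$ separately, which is what the normaliser condition requires, so the phrasing via their sum is harmless.
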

\begin{proof}
	Assume for a contradiction that $\mathrm{C}^*(D, e)$ is not abelian. Then there exists an irreducible representation 
\[
\pi: \mathrm{C}^*(D,e) \longrightarrow \mathcal{B}(\mathcal{H})	
\]
such that $\mathcal{H} \ncong \mathbb{C}$. 

Now if $\pi(D)$ was contained in $\mathbb{C} \cdot \unit_{\mathcal{H}}$, then $\pi|_{\mathrm{C}^*(e)}$ was an irreducible representation of an abelian $\mathrm{C}^*$-algebra on a Hilbert space of dimension strictly greater than one; this is impossible, so $\pi(D) \not\subset \mathbb{C} \cdot \unit_{\mathcal{H}}$. 

It follows that the spectrum of $\pi^\sim(D^\sim)$ (where $D^\sim$ is the smallest unitisation of $D$ and $\pi^\sim$ is the unitisation of $\pi$) contains at least two points, at least one of which is also in the spectrum of $\pi(D)$. 
But then there are $d \in D^1_+$ and $b \in (D^\sim)^1_+$ such that $0 \neq \pi(d)$, $0 \neq \pi^\sim(b)$, and $db = 0$.

For $k \in \mathbb{N}$ we now compute
\begin{align*}
\|be^kd\|^2 & = \|b e^k d^2 e^k b\| \\
& \le \|b e^k d e^k b\| \\
& = \|b e^k d e^k b^2 e^k d e^k b\|^\frac{1}{2} \\
& = \|b^2 e^k d e^k e^k d e^k b^2\|^\frac{1}{2} \\
& \le \|b^2 e^k d e^k d e^k b^2\|^\frac{1}{2} \\
& = \|b e^k d e^k b d e^k b^2\|^\frac{1}{2} \\
& = 0,
\end{align*}
where we have used that $e$ is a positive contraction normalising $D$, so that in particular $e^k d e^k \in D$ commutes with $b \in D^\sim$. As a consequence we have 
\begin{equation}
\label{positive_normaliser_orthogonal}
b \, \mathrm{C}^*(D,e) \, d = \{0\}.
\end{equation}

Because both $\pi(d)$ and $\pi^\sim(b)$ are nonzero, so are the closed linear subspaces $\overline{\pi(\mathrm{C}^*(D,e)d) \mathcal{H}}$ and $\overline{\pi^\sim(b)\mathcal{H}}$ of $\mathcal{H}$.
At the same time, they are orthogonal by \eqref{positive_normaliser_orthogonal}, and the subspace $\overline{\pi(\mathrm{C}^*(D,e)d) \mathcal{H}}$ is invariant under $\pi(\mathrm{C}^*(D,e))$. This shows that $\pi$ is not irreducible and we have reached the desired contradiction, proving that $e$ and $D$ commute.

\bigskip

The second assertion is a trivial consequence of maximality together with the first statement. 

\bigskip

For the third statement, note that for any $d \in D$ and $0 \neq k,m \in \mathbb{N}$,
\[
e^{2k} d e^{2m} = e^{k+m} d e^{k+m} \in D
\]
since $e$ commutes with $D$ and $e^{k+m}$ is a normaliser of $D$. It follows that for any even polynomial $p$ with vanishing constant term we have
\[
p(e)^* d p(e) = \bar{p}(e) d p(e) \in D.
\]
But any $f \in C_0(\sigma(e) \setminus \{0\})$ can be approximated in norm by even polynomials, so $f(e)$ normalises $D$. 
\end{proof}

In the remainder of this section we discuss circumstances under which sub-$\mathrm{C}^*$-algebras and their normalisers are preserved under maps.

The first observation is that one cannot expect to say much without  suitable nondegeneracy conditions: If $(D \subset A)$ is a sub-$\mathrm{C}^*$-algebra such that the normaliser $\mathcal{N}_A(D)$ is not all of $A$, then the identity map $\mathrm{id}: (\{0\} \subset A) \to (D \subset A)$ preserves the sub-$\mathrm{C}^*$-algebra structure, but $A = \mathcal{N}_A(\{0\}) \not\subset  \mathcal{N}_A(D)$. Conversely, $\mathrm{id}: (D \subset A) \to (\{0\} \subset A)$ sends $\mathcal{N}_A(D)$ to $\mathcal{N}_A(\{0\})$, but not $D$ to $\{0\}$.

The second observation concerns the types of maps which may preserve normalisers. Since the definition of normalisers involves the multiplicative structure, one cannot expect general statements for maps which are just linear. On the other hand, we will need to consider maps more general than $^*$-homomorphisms. Lemma~\ref{lem:positive_normaliser} suggests that positivity will play a role, and in view of Proposition~\ref{prop:orthogonal-normalisers}, it seems natural to consider maps preserving orthogonality (but not necessarily the full multiplicative structure).

Recall that a completely positive (c.p.\ for short) map $\varphi:A\to B$ between $\mathrm{C}^*$-algebras is said to be \emph{order zero} if it preserves orthogonality, i.e., $\varphi(a)\varphi(b) = 0$ whenever $a,b\in A_+$ satisfy $ab= 0$. By the structure theorem for order zero maps (see \cite[Theorem 3.3]{WZ09}), every c.p.\ order zero map $\varphi:A\to B$ has the form $\varphi(\, . \,) = h\pi_{\varphi}(\, . \,)$, where $\pi_{\varphi}$ is a $^*$-homomorphism from $A$ into a larger $\mathrm{C}^*$-algebra (one can take the bidual $B^{**}$, for example) and $h$ is a positive contraction in that larger $\mathrm{C}^*$-algebra which commutes with the image of $\pi_\varphi$. (If $A$ is unital then in fact we may take $h = \varphi(\unit_A)$.) We say $\pi_\varphi$ is a \emph{supporting $^*$-homomorphism} for $\varphi$. 

\begin{prop}\label{prop:maps-normalisers}
	Let $(D_A\subset A)$ and $(D_B\subset B)$ be two sub-$\mathrm{C}^*$-algebras with $D_A$ and $D_B$ abelian. Let $\varphi: A\to B$ be a positive linear map.

{\rm(i)} If $(D_B \subset B)$ is nondegenerate and $\varphi(\mathcal{N}_A(D_A)) \subset \mathcal{N}_B(D_B)$, then $\varphi(D_A)\subset D_B$. 

{\rm(ii)} If $\varphi$ is c.p.\ order zero with $\varphi(D_A) \subset D_B$, and if $D_A$ admits an approximate unit $(u_\alpha)_\alpha$ for $A$ such that $\varphi(u_\alpha) D_B \varphi(u_\alpha) \subset \varphi(D_A)$ for all $\alpha$, then $\varphi(\mathcal{N}_A(D_A)) \subset \mathcal{N}_B(D_B)$. 

{\rm(iii)} 	If $(D_A\subset A)$ is nondegenerate and $\varphi$ is a $^*$-homo\-morphism such that $\varphi(D_A)$ is a hereditary subalgebra of $D_B$, then $\varphi(\mathcal{N}_A(D_A)) \subset \mathcal{N}_B(D_B)$.
\end{prop}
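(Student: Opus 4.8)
I would prove the three statements essentially independently, though (i) relies only on the elementary observation that positive normalisers of an abelian subalgebra commute with it.

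For part (i), the plan is as follows. Let $d \in D_A$; since $D_A$ is abelian, $d$ is a linear combination of positive elements of $D_A$, each of which is a normaliser of $D_A$ (a positive element of an abelian algebra normalises it trivially: $d' d d' \in D_A$ for $d' \in D_A$). By hypothesis $\varphi$ carries each such positive normaliser into $\mathcal{N}_B(D_B)$, so $\varphi(d)$ is a linear combination of positive normalisers of $D_B$. By the first assertion of Lemma~\ref{lem:positive_normaliser}, each such positive normaliser lies in $B \cap D_B'$; hence $\varphi(d) \in B \cap D_B'$. Now I use nondegeneracy of $(D_B \subset B)$: since $D_B$ is abelian and contains an approximate unit for $B$, the relative commutant $B \cap D_B'$ has $D_B$ as an ideal, and in fact $\varphi(d)^* \varphi(d)$ is a positive element of $B\cap D_B'$ which is a norm-limit of $u_\beta \varphi(d)^*\varphi(d) u_\beta \in D_B$ for an approximate unit $(u_\beta)$ of $D_B$ — so $\varphi(d)^*\varphi(d) \in D_B$, and then $\varphi(d) \in D_B$ by taking a square root inside the abelian algebra $\mathrm{C}^*(D_B, \varphi(d))$, which is abelian precisely because $\varphi(d)$ commutes with $D_B$. (Alternatively: a normal element commuting with a masa-like $D_B$ and whose modulus lies in $D_B$ lies in $D_B$.) This needs to be written carefully but is routine.

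For part (ii), write $\varphi(\,\cdot\,) = h\pi_\varphi(\,\cdot\,)$ via the structure theorem, with $\pi_\varphi : A \to B^{**}$ a $^*$-homomorphism and $h$ a positive contraction commuting with $\pi_\varphi(A)$. Fix $n \in \mathcal{N}_A(D_A)$ and $d_B \in D_B$; I must show $\varphi(n)\, d_B\, \varphi(n)^* \in D_B$ and symmetrically. Using $\varphi(n) = h\pi_\varphi(n)$ and that $h$ commutes with the range of $\pi_\varphi$, one computes $\varphi(n) d_B \varphi(n)^* = h \pi_\varphi(n) d_B \pi_\varphi(n)^* h$. The idea is to insert the approximate unit: $\pi_\varphi(u_\alpha) \to \unit$ strictly (or $\sigma$-weakly) on the relevant corner, so $\pi_\varphi(n) d_B \pi_\varphi(n)^* = \lim_\alpha \pi_\varphi(n)\pi_\varphi(u_\alpha) \cdot (\text{correction}) \cdots$ — more precisely, I want to bracket $d_B$ between factors $\varphi(u_\alpha)$ in a way that lets me use the hypothesis $\varphi(u_\alpha) D_B \varphi(u_\alpha) \subset \varphi(D_A)$. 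The cleanest route: since $n^* u_\alpha^2 n \in D_A$ (as $u_\alpha \in D_A$, $n$ normalises), and $u_\alpha$ is an approximate unit, one shows $\varphi(n) d_B \varphi(n)^* = \lim_\alpha \varphi(n u_\alpha) \varphi(u_\alpha)^{-1}\cdots$; this inverse is problematic, so instead I would work with the order-zero functional calculus / the fact that $\varphi|_{\mathrm{C}^*(D_A)}$ and its supporting homomorphism intertwine things, and reduce to showing $\varphi(n) d_B \varphi(n)^*$ is approximated by elements $\varphi(n u_\alpha d' u_\alpha n^*)$ with $u_\alpha d' u_\alpha \in$ the appropriate set. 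The honest statement is that this is the main computational obstacle: one must leverage $\varphi(u_\alpha) D_B \varphi(u_\alpha) \subset \varphi(D_A)$ together with order zero to absorb $d_B$ into $\varphi(D_A) \subset D_B$, then use that $\varphi(n (\,\cdot\,) n^*)$ maps $D_A$ into $D_B$ because $n$ normalises $D_A$ and $\varphi(D_A) \subset D_B$. I expect to pass to the bidual, use strict convergence of $\pi_\varphi(u_\alpha)$, and check the computation coordinate-wise.

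For part (iii), this is the easiest: $\varphi$ is a $^*$-homomorphism, so for $n \in \mathcal{N}_A(D_A)$ and $d_B \in D_B$ I want $\varphi(n) d_B \varphi(n)^* \in D_B$. Since $(D_A \subset A)$ is nondegenerate, pick an approximate unit $(u_\alpha)$ for $A$ lying in $D_A$; then $\varphi(u_\alpha)$ is an approximate unit for $\overline{\varphi(A)\varphi(A)}$ lying in $\varphi(D_A)$, and in particular $\varphi(n) = \lim_\alpha \varphi(u_\alpha)\varphi(n)$ and similarly on the right. Hence $\varphi(n) d_B \varphi(n)^* = \lim_\alpha \varphi(u_\alpha) \varphi(n) d_B \varphi(n)^* \varphi(u_\alpha)$, and since $\varphi(D_A)$ is hereditary in $D_B$ and $\varphi(u_\alpha) \in \varphi(D_A)$ with $d_B \in D_B$, each term $\varphi(u_\alpha)(\varphi(n) d_B \varphi(n)^*)\varphi(u_\alpha)$ — wait, I must first land inside $D_B$: note $\varphi(n) d_B \varphi(n)^*$ need not a priori lie in $D_B$, so instead write $\varphi(n) d_B \varphi(n)^* = \varphi(n)\varphi(u_\alpha) d_B \varphi(u_\alpha)\varphi(n)^* + (\text{vanishing})$; since $u_\alpha \in D_A$ commutes with $D_A$ and $n$ normalises $D_A$, I'd rather use $\varphi(n^* n) \in \varphi(D_A)$ is not available either. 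The correct and standard argument: approximate unit $(u_\alpha)$ of $D_A$, and $n u_\alpha \to n$; then $\varphi(n u_\alpha) = \varphi(n)\varphi(u_\alpha)$ and $(n u_\alpha)^* d_A (n u_\alpha) \in D_A$ for $d_A \in D_A$... The point I actually need is that $\varphi(n) d_B \varphi(n)^*$ lies in the hereditary subalgebra of $B$ generated by $\varphi(D_A)$ and $d_B$; using heredity of $\varphi(D_A)$ in $D_B$ one shows this hereditary subalgebra is contained in $D_B$, and positivity plus $\varphi$ being a homomorphism gives that $\varphi(n) d_B \varphi(n)^* \in D_B$ for $d_B \geq 0$, hence for all $d_B$ by linearity; the symmetric condition $\varphi(n)^* d_B \varphi(n) \in D_B$ is identical. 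I would write this via: $\varphi(n)^* \varphi(n) \in \varphi(A)$, $\varphi(u_\alpha) \varphi(n)^* \varphi(n) \varphi(u_\alpha) = \varphi(u_\alpha n^* n u_\alpha)$ with $u_\alpha n^* n u_\alpha \in A$, hmm — cleanest is to note $n^*(D_A)n \subseteq D_A$ and $n(D_A)n^* \subseteq D_A$, apply $\varphi$, get $\varphi(n)^*\varphi(D_A)\varphi(n) \subseteq \varphi(D_A) \subseteq D_B$ and likewise, then sandwich a general $d_B \in D_B^+$ between $\varphi(u_\alpha) \in \varphi(D_A)$ using heredity to write $\varphi(u_\alpha) d_B \varphi(u_\alpha) \in \varphi(D_A)$, and pass to the limit. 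So part (iii) follows from heredity exactly as part (ii) follows from its weaker hypothesis, and I'd present (ii) and (iii) in parallel.
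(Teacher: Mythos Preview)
Your plan for parts (i) and (iii) is correct and essentially matches the paper. For (i) the paper is slightly more direct: for $0\le a\in D_A$ one has $\varphi(a)\in\mathcal{N}_B(D_B)$ positive, so $\varphi(a)u_\beta\varphi(a)\in D_B$ for an approximate unit $(u_\beta)\subset D_B$, whence $\varphi(a)^2\in D_B$ and thus $\varphi(a)=(\varphi(a)^2)^{1/2}\in D_B$; no detour through $B\cap D_B'$ is needed. For (iii) the paper does exactly what you conclude with at the end: heredity of $\varphi(D_A)$ in $D_B$ gives $\varphi(u_\alpha)D_B\varphi(u_\alpha)\subset\overline{\varphi(D_A)D_B\varphi(D_A)}=\varphi(D_A)$, so (iii) reduces to (ii).

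For (ii) you have the right ingredients but you have not found the key identity, and you say as much. No bidual strict-convergence argument or inverse $\varphi(u_\alpha)^{-1}$ is needed. The clean computation is this: given $d\in D_B$, pick $d_\alpha\in D_A$ with $\varphi(d_\alpha)=\varphi(u_\alpha)d\varphi(u_\alpha)$ (this is exactly the hypothesis). Using the supporting homomorphism $\pi_\varphi$ and that $h$ commutes with its range,
\[
\varphi(a d_\alpha a^*) = \pi_\varphi(a)\,\varphi(d_\alpha)\,\pi_\varphi(a)^* = \pi_\varphi(a)\,\varphi(u_\alpha)\,d\,\varphi(u_\alpha)\,\pi_\varphi(a)^* = \varphi(au_\alpha)\,d\,\varphi(u_\alpha a^*).
\]
Since $a$ normalises $D_A$, each $a d_\alpha a^*\in D_A$, so the left-hand side lies in $\varphi(D_A)\subset D_B$; since $(u_\alpha)$ is an approximate unit for $A$, the right-hand side converges in norm to $\varphi(a)d\varphi(a)^*$. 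This is the missing step you were circling around; once you see that $\pi_\varphi(a)\varphi(u_\alpha)=\varphi(au_\alpha)$ (just $h$ commuting past $\pi_\varphi(a)$), the whole thing collapses to three lines.
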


\begin{proof}
	(i) (Cf.\ \cite[Lemma 1.6]{LiTi:almost-finite}.) By hypothesis, $\varphi$ maps positive normalisers to positive normalisers. Now if $(u_\beta)_\beta \subset D_B$ is an approximate unit for $B$, we have for any $0 \le a \in D_A$
\[
\textstyle
\varphi(a) = (\varphi(a)^2)^{\frac{1}{2}} = (\lim_\beta \varphi(a) u_\beta \varphi(a))^{\frac{1}{2}} \subset D_B.
\]
By linearity this implies $\varphi(D_A) \subset D_B$.

	(ii) 	Given $a\in \mathcal{N}_A(D_A)$ and $d\in D_B$, we will show that $\varphi(a) d \varphi(a)^*$ belongs to $D_B$. By assumption for each $\alpha$ there exists an element $d_\alpha$ in $D_A$ such that $\varphi(d_\alpha) = \varphi(u_\alpha) d \varphi(u_\alpha)$. Let $\pi_\varphi:A\to B^{**}$ be a supporting $^*$-homomorphism for $\varphi$. Then
	\begin{align*}
	\varphi(a d_\alpha a^*) & = \pi_\varphi(a) \varphi(d_\alpha) \pi_\varphi(a^*) \\
	& =  \pi_\varphi(a) \varphi(u_\alpha) d \varphi(u_\alpha) \pi_\varphi(a^*) \\
	& =  \varphi(au_\alpha) d \varphi(u_\alpha a^*), 
	\end{align*}
	which implies 
	\[
	\textstyle
	\varphi(a) d \varphi(a)^* = \lim_\alpha \varphi(ad_\alpha a^*) \in \varphi(D_A) \subset D_B.
	\]
		
	(iii) Since $\varphi(D_A) \subset D_B$ is a hereditary subalgebra, we have 
	\[
	\varphi(u_\alpha) D_B \varphi(u_\alpha) \subset \overline{\varphi(D_A) D_B \varphi(D_A)} = \varphi(D_A)
	\]
	for any approximate unit $(u_\alpha)_\alpha$ for $D_A$ and the assertion follows from (ii).
\end{proof}

\begin{rem}
One can prove a slightly more involved version of Proposition~\ref{prop:maps-normalisers}(iii) for order zero maps. We restrict ourselves to the case of $^*$-homo\-morphisms since it is simpler and covers the application we particularly care about: If $(D_B \subset B)$ is a sub-$\mathrm{C}^*$-algebra with $D_B$ abelian, then $(D_B \subset \overline{D_B B D_B})$ is a nondegenerate sub-$\mathrm{C}^*$-algebra which satisfies 
\[
\mathcal{N}_{B}(D_B) = \mathcal{N}_{\overline{D_B B D_B}}(D_B).
\]
This makes it possible to pass to nondegenerate sub-$\mathrm{C}^*$-algebras without changing the involved sets of normalisers. 
\end{rem}

\section{Diagonal dimension}
\label{section:diagonal-dimension}

\noindent
Below we define the diagonal dimension of a sub-$\mathrm{C}^*$-algebra as an approximation property and establish some of its basic features. We derive an equivalent characterisation which also applies to nuclear dimension and which only involves the incoming maps. A large part of this section will be spent on showing that finite diagonal dimension in fact implies that the subalgebra is diagonal in the sense of Definition~\ref{defn:diagonal}; see Theorem~\ref{principal grp}.

\begin{defn} \label{defn:dim_diag}
	Let $(D_A\subset A)$ be a sub-$\mathrm{C}^*$-algebra with $D_A$ abelian. We say $(D_A\subset A)$ has \emph{diagonal dimension} at most $d$, written as $\ddim(D_A\subset A) \leq d$, if for every finite subset $\mathcal{F}\subset A$ and $\e >0$ there exist a finite-dimensional $\mathrm{C}^*$-algebra $F$ with a masa $D_F$ and c.p.\ maps
	$$
	A\stackrel{\psi}{\longrightarrow} F \stackrel{\varphi}{\longrightarrow} A
	$$
	such that
	\begin{enumerate}
		\item $\psi$ is contractive,
		\item $\| \varphi \psi(a) - a\| < \e$ for every $a\in \mathcal{F}$,
		\item $F$ decomposes into $F = F^{(0)}\oplus \ldots \oplus F^{(d)}$ such that $\varphi^{(i)} := \varphi\vert_{F^{(i)}}$ is completely positive contractive (c.p.c.\ for short) order zero for each $i=0,\ldots,d$,
		\item $\psi(D_A)\subset D_F$,
		\item $\varphi$ maps every matrix unit with respect to $D_F$ into $\mathcal{N}_A(D_A)$.
	\end{enumerate}

As common for notions of dimension we write $\ddim(D_A\subset A) = d$ if $d$ is the least integer such that $\ddim(D_A\subset A) \leq d$. If no such $d$ exists we write $\ddim(D_A\subset A) = \infty$. 
\end{defn}

By a \emph{system of c.p.\ approximations witnessing $\ddim(D_A\subset A) \leq d$} we mean a net
$(F_\lambda,  D_{F_\lambda},\allowbreak \psi_\lambda, \varphi_\lambda )_{\lambda\in \Lambda}$ of approximations as above with $\varphi_\lambda\psi_\lambda\to \id_A$ in the point-norm topology.

Similarly, a single approximation $( F, D_F, \psi, \varphi )$ as in Definition \ref{defn:dim_diag} with respect to a given finite subset $\mathcal{F}\subset A$ and $\e > 0$ will be called a \emph{c.p.\ approximation witnessing $\ddim(D_A\subset A)\leq d$ for $(\mathcal{F},\e)$} (or for $\mathcal{F}$ within $\e$).

\begin{rems} \label{rem:almost_order_zero} 
(i) Upon dropping conditions (4) and (5) of Definition \ref{defn:dim_diag} one recovers the \emph{nuclear dimension} of $A$ (as in \cite[Definition 2.1]{WZ10}), whence 
\[
\ndim A \le \ddim(D_A\subset A)
\]
 for any sub-$\mathrm{C}^*$-algebra $(D_A \subset A)$.  

If $D_A = \{0\}$ then conditions (4) and (5)  are automatically satisfied, so that nuclear dimension and diagonal dimension agree in this situation, 
\[
\ndim A = \ddim(\{0\}\subset A).
\] 

If $(D_A \subset A)$ is nondegenerate and $\ddim (D_A \subset A)$ is finite, then for a system of approximations $(F_\lambda,  D_{F_\lambda},\allowbreak \psi_\lambda, \varphi_\lambda )_{\lambda\in \Lambda}$ as in Definition \ref{defn:dim_diag} we have $\psi_\lambda(D_A) \subset D_{F_\lambda}$ and (by Proposition \ref{prop:maps-normalisers}(i)) $\varphi_\lambda(D_{F_\lambda}) \subset D_A$. Therefore, $(D_{F_\lambda}, \psi_\lambda|{D_A}, \varphi|_{F_\lambda})$ is a system of c.p.\ approximations witnessing 
\[
\ndim D_A \le \ddim (D_A \subset A).
\]
(It will follow from Theorem \ref{thm:permanence}(iii) that $\ddim (D_A \subset \overline{D_A A D_A})\le \ddim (D_A \subset A)$, whence non-degeneracy is a red herring in the statement above.) 

It also follows directly from the definitions that  
\[
\ndim(C_0(X)) = \ddim(C_0(X)\subset C_0(X))
\]
for any abelian $\mathrm{C}^*$-algebra $C_0(X)$.

(ii) If $\ddim(D_A\subset A) = d$ then the maps $\varphi$ in Definition \ref{defn:dim_diag} satisfy $\|\varphi\| \leq d+1$ (but are not necessarily contractive).  Moreover, when $(D_A\subset A)$ is nondegenerate, the same argument as in \cite[Remark 2.2 (iv)]{WZ10} shows that in Definition \ref{defn:dim_diag} one may assume that the composition $\varphi\psi$ is contractive. On the other hand, given a positive contraction $h\in A$ we can define
\begin{gather}
	\hat{F} := \mathrm{her}(\psi(h)) \subset F, \nonumber\\
	\hat{\psi}:= \psi(h)^{-\frac{1}{2}}\psi(\, . \, )\psi(h)^{-\frac{1}{2}}:A\longrightarrow \hat{F}, \nonumber\\
	\hat{\varphi}:= \varphi( \psi(h)^{\frac{1}{2}}\, . \, \psi(h)^{\frac{1}{2}} ):\hat{F}\longrightarrow A. \label{2-2-1}
\end{gather}
Then $\hat{\varphi}$ becomes contractive and $\hat{\varphi}\hat{\psi}(a) = \varphi\psi(a)$ for all $a\in A$ satisfying $ha = a = ah$ (see the proof of \cite[Proposition 4.3]{WZ10}). Note that, however, the map $\hat{\varphi}$ in general is no longer a sum of order zero maps.

(iii) The proof of \cite[Proposition 3.2]{WZ10} shows that (possibly after throwing away some summands of $F$) we may assume the maps $\psi_\lambda$ in a system of c.p.\ approximations witnessing $\ddim(D_A\subset A)\leq d$ to be almost order zero, i.e., 
\[
\| \psi_{\lambda}(a)\psi_{\lambda}(b)\| \longrightarrow 0
\]
whenever $a,b\in A_+$ satisfy $ab =0$.

Moreover, by \cite[Proposition~4.2]{Win12} and its proof, if $A$ is unital then for any system of c.p.\ approximations witnessing $\ddim(D_A\subset A)\leq d$ we have 
\[
\|  \varphi_\lambda^{(i)} \psi_\lambda^{(i)}(\mathbf{1}_A) \, a - \varphi_\lambda^{(i)} \psi_\lambda^{(i)}(a) \| \longrightarrow 0
\]
for all $a \in A$ and $i \in \{0,\ldots,d\}$ (the separability assumption in \cite[Proposition~4.2]{Win12} is not essential at this point).

(iv) Although we required $D_A$ to be abelian in Definition \ref{defn:dim_diag}, this is in fact automatic when $(D_A\subset A)$ has finite diagonal dimension. Indeed, let $(F_\lambda, D_{F_\lambda}, \psi_\lambda,\varphi_\lambda)_{\lambda\in \Lambda}$ be a system of c.p.\ approximations witnessing $\ddim(D_A\subset A) \leq d$. By Remark \ref{rem:almost_order_zero}(iii) we may assume the maps $\psi_\lambda$ to be almost order zero. In particular the map
\[
\bar{\psi}:D_A \longrightarrow \prod\nolimits_{\Lambda} D_{F_\lambda} \big/ \bigoplus\nolimits_\Lambda D_{F_\lambda}
\]
induced by the $\psi_\lambda\vert_{D_A}$ has order zero. Since $\varphi_\lambda\psi_\lambda \to \id_A$ pointwise, the map $\bar{\psi}$ is faithful. Now a supporting  $^*$-homomorphism $\pi_{\bar{\psi}}$ maps $D_A$ into an abelian $\mathrm{C}^*$-algebra. As $\bar{\psi}$ is faithful, the map $\pi_{\bar{\psi}}$ is necessarily an embedding. This shows that $D_A$ is abelian.

(v) In Definition~\ref{defn:dim_diag} one may replace condition (5) by the -- formally stronger, but in fact equivalent -- condition
	\begin{enumerate}
		\item[(5')] $\varphi^{(i)}(  \mathcal{N}_{F^{(i)}}(D_{F^{(i)}}))\subset \mathcal{N}_A(D_A), \, i=0,\ldots,d$.
	\end{enumerate}
	In this way one can avoid using matrix units in the definition and phrase it more symmetrically. 
We nonetheless preferred to use (5) in \ref{defn:dim_diag}, since matrix units will be used heavily in proofs. 

Note that the condition in (5') is required for each colour $i$ separately. This cannot be improved: For example, the sub-$\mathrm{C}^*$-algebra $(D_A \subset A) = (C([0,1], D_2) \subset C([0,1], M_2))$ (with $(D_2 \subset M_2)$ the standard diagonal) has diagonal dimension 1, but using Proposition~\ref{prop:orthogonal-normalisers} one can show that diagonal dimension 1 cannot be witnessed by approximations with $\varphi(\mathcal{N}_F(D_F)) \subset \mathcal{N}_A(D_A)$. 

(vi) If $\ddim(D_A\subset A)$ is finite, then $D_A$ is regular in $A$: Indeed, if $( F_\lambda, D_{F_\lambda}, \psi_{\lambda}, \varphi_{\lambda})$ is a system of c.p.\ approximations witnessing $\ddim(D_A\subset A) = d$, then the span of the union $\bigcup_{\lambda} \varphi_\lambda(F_\lambda)$ is dense in $A$. Since $\varphi_\lambda(v)$ belongs to $\mathcal{N}_A(D_A)$ for each matrix unit $v$ in $F_\lambda$ and these elements span the subspace $\varphi_{\lambda}(F_\lambda)$, we see that $A$ is generated by $\mathcal{N}_A(D_A)$.
\end{rems}

Next we give characterisations of nuclear dimension and of diagonal dimension which do not involve the maps $\psi: A \to F$. In \cite[Theorem 6.2]{Sato:JOT}, Sato gave an explicit one-sided characterisation of decomposition rank (cf.\ \cite{KW04}) for unital separable $\mathrm{C}^*$-algebras in terms of sequence algebras. This result could be adapted to characterise nuclear dimension as well, and also to cover the nonunital case. Such a result would then also yield the statement about nuclear dimension in the proposition below, but it is not at all obvious how to handle diagonal dimension along these lines. It is also worth mentioning that our method is different in the sense that it bypasses the heavy machinery involving Connes' theorem used in \cite{Sato:JOT}. In the same vein, the characterisation of nuclear dimension from Proposition \ref{prop:dim-without-psi} easily passes to quotients, thus giving a proof that finite nuclear dimension passes to quotients which does not involve the -- notoriously heavy -- respective statement for nuclearity.

\begin{prop}\label{prop:dim-without-psi}
For a $\mathrm{C}^*$-algebra $A$, $\ndim A \le d$ if and only if the following holds:

If $\mathcal{F} \subset A^1$ is a finite subset admitting a positive contraction $h \in A^1_+$ such that $h a = a h = a$ for all $a \in \mathcal{F}$, then for every $\e >0$ there is a linear map $\varphi:F \to A$ such that 
\begin{enumerate}
\item[\rm{(1)}] $F = F^{(0)} \oplus \ldots \oplus F^{(d)}$ is a finite dimensional $\mathrm{C}^*$-algebra,
\item[\rm{(2)}] $\varphi^{(i)}:= \varphi|_{F^{(i)}}$ is c.p.c.\ order zero for each $i$,
\item[\rm{(3)}] for every $a \in \mathcal{F} \cup \{h\}$ there is $b_a= b_a^{(0)} \oplus \ldots \oplus b_a^{(d)} \in F^1$ with 
\[
\|\varphi(b_a) - a\| < \e 
\]
and, for each $i$ and each $a \in \mathcal{F}$,
\[
\|\varphi^{(i)}(b^{(i)}_{h}) a - \varphi^{(i)}(b_a^{(i)})\| < \e.
\]
\end{enumerate}

When $(D_A \subset A)$ is a nondegenerate sub-$\mathrm{C}^*$-algebra with $D_A$ abelian, then we have $\ddim(D_A \subset A) \le d$ if and only if the following holds:  

If $\mathcal{F} \subset A^1$ is a finite subset admitting a positive contraction $h \in (D_A)^1_+$ such that $h a = a h = a$ for all $a \in \mathcal{F}$, then for every $\e >0$ there is a linear map $\varphi:F \to A$ satisfying conditions {\rm (1), (2)} and {\rm (3)} as above and, in addition, there is a masa $D_F = D_F^{(0)} \oplus \ldots \oplus D_F^{(d)} \subset F= F^{(0)} \oplus \ldots \oplus F^{(d)}$ such that for each $i \in \{0,\ldots,d\}$
\begin{equation}
\label{prop:dim-without-psi-eq}
\varphi^{(i)}(  \mathcal{N}_{F^{(i)}}(D_{F^{(i)}}))\subset \mathcal{N}_A(D_A)
\end{equation}
and such that $b^{(i)}_{h}$ can be chosen to lie in $D_{F^{(i)}}$.

Note that, if $A$ is unital, the element $h$ above can simply be taken to be the unit of $A$.
\end{prop}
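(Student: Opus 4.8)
The proposition asserts that nuclear dimension (resp.\ diagonal dimension) can be detected using only the outgoing maps $\varphi \colon F \to A$, provided we keep track of a distinguished "approximate unit witness" $h$ and — in the diagonal case — a masa $D_F$ of $F$ together with the normaliser condition \eqref{prop:dim-without-psi-eq}. One direction is essentially trivial: if $\ddim(D_A \subset A) \le d$, then given $\mathcal{F}$ and $h$ as in the statement, we apply Definition~\ref{defn:dim_diag} to the finite set $\mathcal{F} \cup \{h\}$ within $\e$, obtaining $\psi, \varphi$ and $D_F$; we then set $b_a := \psi(a)$ for $a \in \mathcal{F} \cup \{h\}$. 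Condition (1) of Definition~\ref{defn:dim_diag} gives $b_a \in F^1$; condition (2) gives $\|\varphi(b_a) - a\| < \e$; condition (4) gives $b_h = \psi(h) \in D_F$, and since $D_F = \bigoplus D_F^{(i)}$ is a masa of $F = \bigoplus F^{(i)}$ we have $b_h^{(i)} \in D_F^{(i)}$; and condition (5') (equivalent to (5) by Remark~\ref{rem:almost_order_zero}(v)) gives \eqref{prop:dim-without-psi-eq}. The only nonroutine estimate is $\|\varphi^{(i)}(b_h^{(i)}) a - \varphi^{(i)}(b_a^{(i)})\| < \e$ for $a \in \mathcal{F}$; this is exactly the "almost multiplicativity" statement of Remark~\ref{rem:almost_order_zero}(iii) (via \cite{Win12}), applied to a sufficiently fine approximation — so really one should first invoke that remark to arrange the system of approximations to have this property, and then extract a single member fine enough on $\mathcal{F} \cup \{h\}$. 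The nuclear-dimension direction is the same argument with conditions (4), (5) deleted.

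\textbf{The converse — the substantial direction.} Assume the one-sided condition holds; we must manufacture a full c.p.\ approximation, i.e.\ recover the incoming map $\psi$. The idea is standard for this circle of results (cf.\ the proof of \cite[Proposition~3.2]{WZ10} and \cite[Theorem~6.2]{Sato:JOT}, or the original unital-case trick): the incoming maps $\psi$ in a completely positive approximation are essentially determined by the outgoing maps up to the slack one is willing to tolerate, because a c.p.c.\ order zero map $\varphi^{(i)} \colon F^{(i)} \to A$ has a supporting $^*$-homomorphism $\pi_i$ and a positive contraction $h_i = \varphi^{(i)}(\unit_{F^{(i)}}) \in (\pi_i(F^{(i)}))' \cap A^{**}$, and one can build a candidate $\psi^{(i)}$ by a conditional-expectation-type formula against $\varphi^{(i)}$. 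Concretely, I would fix $\mathcal{F}$ and $\e$, choose $h \in (D_A)^1_+$ with $h a = a = a h$ for all $a \in \mathcal{F}$ (using nondegeneracy of $(D_A \subset A)$ to produce such an $h$ inside $D_A$ — a quasicentral approximate unit argument, or just the fact that a large enough element of the approximate unit of $D_A$ acts as a unit on a given finite subset up to $\e$, combined with a functional-calculus cutoff), and apply the hypothesis to get $\varphi \colon F \to A$, the elements $b_a$, and the masa $D_F$. Then I would define $\psi$ so that $\varphi \psi \approx \mathrm{id}$ on $\mathcal{F}$: on each block use the order zero structure of $\varphi^{(i)}$ to write $\varphi^{(i)} = h_i \pi_i$, put $\psi^{(i)}(a) := $ (a suitable compression) — the cleanest route is the one in \cite[Proposition~3.2]{WZ10}, where one first passes to a unital situation by the cutoff in Remark~\ref{rem:almost_order_zero}(ii) using the element $\varphi(b_h)$ (which is $\approx h$, hence acts as a near-unit on $\mathcal{F}$), replaces $F$ by $\widehat F = \mathrm{her}(\psi(h))$-type corners, and there the order zero maps become $^*$-homomorphisms on the relevant corners so that $\psi$ can be taken to be an honest c.p.c.\ map (a conditional expectation onto the image) with $\varphi \psi \approx \mathrm{id}$ on $\mathcal{F}$. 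One has to check $\psi$ is contractive — arrange this via the normalisation in Remark~\ref{rem:almost_order_zero}(ii) — and that $\psi(D_A) \subset D_F$: this is where the hypotheses that $b_h^{(i)} \in D_F^{(i)}$ and \eqref{prop:dim-without-psi-eq} are used, together with Proposition~\ref{prop:maps-normalisers}(i) (applied to $\varphi^{(i)}$, which maps normalisers of $D_F^{(i)}$ to normalisers of $D_A$, hence maps $D_F^{(i)}$ to $D_A$) to control which elements of $F^{(i)}$ are "seen" by $D_A$ through $\varphi^{(i)}$, and dually to force the compression defining $\psi$ to land in $D_F$.

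\textbf{Main obstacle.} The genuinely delicate point is the simultaneous bookkeeping in the converse: we must produce $\psi$ with $\psi(D_A) \subset D_F$ \emph{and} $\psi$ contractive \emph{and} $\varphi \psi \approx \mathrm{id}$ on $\mathcal{F}$, and the compression/unitisation manoeuvres of Remark~\ref{rem:almost_order_zero}(ii) (which make $\widehat\varphi$ contractive) destroy the order zero decomposition — but here we only need $\varphi$ itself (the sum of order zero maps) for the \emph{statement} being equivalent to, so one must be careful to keep the order zero $\varphi$ on the nose while only using the cutoff to engineer a good $\psi$. The way through is to note that the hypothesis gives us $\varphi$ with the order zero blocks already in hand; we define $\psi^{(i)}$ against $\varphi^{(i)}$ block by block via $\psi^{(i)}(a) = \pi_i^{-1}\big(\text{(compression of $a$ by a near-isometry built from } b_h^{(i)}\text{)}\big)$ where $\pi_i$ restricted to the relevant corner $\mathrm{her}(h_i)$-image is injective, glue the $\psi^{(i)}$ into $\psi \colon A \to F = \bigoplus F^{(i)}$, rescale to make it contractive, and verify the three requirements. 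The condition $b_h^{(i)} \in D_F^{(i)}$ is precisely what guarantees the "near-isometry" used in the $i$-th block is a normaliser-respecting element, so that $\psi^{(i)}(D_A) \subset D_F^{(i)}$; and the approximate-multiplicativity estimate in hypothesis (3), $\|\varphi^{(i)}(b_h^{(i)}) a - \varphi^{(i)}(b_a^{(i)})\| < \e$, is exactly the input needed to conclude $\varphi \psi(a) \approx a$. I expect that once the unital reduction via $\varphi(b_h)$ is in place, these verifications are routine but notation-heavy, and that the real content is identifying $b_h$ (and its blocks) as the correct surrogate for $\psi(h)$.
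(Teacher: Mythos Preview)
Your overall plan is sound and tracks the paper's approach closely: the forward direction is exactly as you say (take $b_a := \psi(a)$ and invoke \cite[Proposition~4.2]{Win12}, with a mild nonunital modification replacing $\unit_A$ by $h$), and the reverse is indeed about manufacturing $\psi$ from $\varphi$ via a left-inverse construction. Two points, however, are not yet pinned down and deserve care.

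\textbf{The left inverse of $\varphi^{(i)}$.} Your formula ``$\psi^{(i)}(a) = \pi_i^{-1}(\text{compression of }a)$'' is not quite well-posed, since the supporting $^*$-homomorphism $\pi_i$ lands in $A^{**}$ and need not be invertible on its range inside $A$. The paper's mechanism is cleaner and stays inside $A$: after dropping null summands so each $\varphi^{(i)}$ is injective, rescale each matrix block $E$ of $F^{(i)}$ by $\|\varphi^{(i)}(\unit_E)\|^{-1}$ so that the resulting order zero map $\bar\varphi^{(i)}$ is \emph{isometric}. The structure theorem then identifies $\mathrm{C}^*(\bar\varphi^{(i)}(F^{(i)}))$ with $C_0(W,F^{(i)})$ for some closed $W \subset (0,1]$ containing $1$, and evaluation at $1$ gives an honest $^*$-homomorphic left inverse $\bar\psi^{(i)}$ satisfying $\bar\psi^{(i)} \circ \bar\varphi^{(i)} = \mathrm{id}_{F^{(i)}}$. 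Arveson's theorem extends this to a c.p.c.\ map $\tilde\psi^{(i)}\colon A \to F^{(i)}$. The actual $\psi^{(i)}$ is then a compression by $(\varphi^{(i)}(b_h^{(i)}))^{1/2}$, postcomposed with a central correction $q^{(i)}(d^{(i)})^{-1}$ (the $d^{(i)}$ record the rescaling factors, and the spectral cutoff $q^{(i)}$ discards blocks where the norm is too small). This is what makes the computation of $\|a - \varphi\psi(a)\|$ go through; your hypothesis (3), in the sharpened form $\|(\varphi^{(i)}(b_h^{(i)}))^{1/2} a (\varphi^{(i)}(b_h^{(i)}))^{1/2} - \varphi^{(i)}(b_a^{(i)})\|$ small, is exactly the input.

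\textbf{Why $\psi(D_A) \subset D_F$.} Your sentence ``$b_h^{(i)} \in D_F^{(i)}$ is precisely what guarantees the near-isometry\ldots is normaliser-respecting, so that $\psi^{(i)}(D_A) \subset D_F^{(i)}$'' is the right intuition but not yet an argument; Proposition~\ref{prop:maps-normalisers}(i) gives $\varphi^{(i)}(D_{F^{(i)}}) \subset D_A$, not the reverse. The paper argues by contradiction: if some $a \in D_A$ and rank-one projection $e \in D_{F^{(i)}}$ satisfy $[e,\psi^{(i)}(a)] \neq 0$, unpack the formula for $\psi^{(i)}$, use that $\bar\varphi^{(i)}(e)$ lies in the multiplicative domain of $\bar\psi^{(i)}$ (since $\bar\psi^{(i)}\bar\varphi^{(i)} = \mathrm{id}$), and arrive at
\[
\bar\psi^{(i)}\big(\bar\varphi^{(i)}(e)\,\varphi^{(i)}(b_h^{(i)})\,a\big) \neq \bar\psi^{(i)}\big(a\,\varphi^{(i)}(b_h^{(i)})\,\bar\varphi^{(i)}(e)\big).
\]
But $a$, $\varphi^{(i)}(b_h^{(i)})$, and $\bar\varphi^{(i)}(e)$ all lie in the abelian algebra $D_A$ (the latter two by Proposition~\ref{prop:maps-normalisers}(i) and the hypotheses $b_h^{(i)} \in D_{F^{(i)}}$, \eqref{prop:dim-without-psi-eq}), so they commute --- a contradiction. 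This is where both extra hypotheses in the diagonal case genuinely enter, and it is worth spelling out.
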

\begin{proof}
For the forward implication of the statement about nuclear dimension let us first consider the case where $A$ is unital. We may then assume $\mathbf{1}_A \in \mathcal{F}$, which implies $h = \mathbf{1}_A \in \mathcal{F}$. Then take a c.p.c.\ approximation $(F,\psi,\varphi)$ witnessing $\ndim A \le d$ for $\mathcal{F}$ within $\e$; by \cite[Proposition~4.2]{Win12} (also cf.\ Remark~\ref{rem:almost_order_zero}(iii)) we may assume that the approximation in addition satisfies $\|  \varphi^{(i)} \psi^{(i)}(\mathbf{1}_A) \, a - \varphi^{(i)} \psi^{(i)}(a) \| < \e$. With $b_a:= \psi(a)$ for $a \in \mathcal{F}$ this yields the characterisation of nuclear dimension in the proposition.

If $A$ is not unital, essentially the same argument works upon modifying the proof of \cite[Proposition~4.2]{Win12} to yield a nonunital version as follows: First, replace $\mathbf{1}_A$ by $h$ in all places and delete the (unique occurrence of the) word ``unital''. Second, choose the system $(F_p,\psi_p, \varphi_p)$ so that (22) of \cite[Proposition~4.2]{Win12} holds for $b \in \mathcal{F} \cup \{h\} \cup (\mathcal{F} \cup \{h\})^2$ (as opposed to all $b \in A$). Third, change the definition of the maps $\hat{\psi}_p$ to 
\[
\hat{\psi}_p( \, . \,):= \psi_p(h)^{-\frac{1}{2}} \psi_p(h^{\frac{1}{2}}\, . \,h^{\frac{1}{2}}) \psi_p(h)^{-\frac{1}{2}}.
\] 
In this way, (23) and (24) of \cite[Proposition~4.2]{Win12} hold for $b \in \mathcal{F}$ and $\mathbf{1}_A$ replaced by $h$, so that we can indeed reach 
$\|  \varphi^{(i)} \psi^{(i)}(h) \, a - \varphi^{(i)} \psi^{(i)}(a) \| < \e$ for $a \in \mathcal{F}$. With $b_a:= \psi(a)$ for $a \in \mathcal{F}$ this yields the characterisation of nuclear dimension in the proposition also in the nonunital case.

We now prove the reverse.

Let $\mathcal{F} \subset A^1_+$ finite and $\eta>0$ be given. We have to produce a c.p.c.\ approximation for $\mathcal{F}$ within $\eta$ witnessing $\ndim(A) \le d$. (The assumption $\mathcal{F} \subset A^1_+$ causes no loss of generality.) By a routine argument (essentially, conjugating the elements of $\mathcal{F}$ with an idempotent approximate unit), we may moreover assume that there is $h \in A^1_+$ such that $h a = a h = a$ for all $a \in \mathcal{F}$. Choose 
\begin{equation}
\label{2-3-7}
0<\e< \eta/3
\end{equation}
 so small that the conditions in (3) will imply 
\begin{equation}
\label{2-3-2}
\|(\varphi^{(i)}(b^{(i)}_{h}))^{\frac{1}{2}} \, a \, (\varphi^{(i)}(b^{(i)}_{h}))^{\frac{1}{2}}- \varphi^{(i)}(b_a^{(i)})\| < \frac{\eta^2}{9(d+1)^2}
\end{equation}
(this is possible since $(\varphi^{(i)}(b^{(i)}_{h}))^{\frac{1}{2}}$ can be approximated by polynomials in $\varphi^{(i)}(b^{(i)}_{h})$, which in turn will almost commute with $a$; see \cite[Proposition~1.8]{Win10}, for instance). 

Now take $\varphi:F \to A$ as in the proposition for this $\mathcal{F}$, $h$, and $\varepsilon$. It follows from the structure theorem that an order zero map  (just like a $^*$-homomorphism) out of a simple C$^*$-algebra is either zero or injective. Therefore, by dropping some matrix summands of $F$ if necessary, we may assume each of the order zero maps $\varphi^{(i)}$ to be injective. 

Now if $E$ is a matrix block of $F^{(i)}$, then $\mathbf{1}_E$ is a minimal central projection in $F^{(i)}$ and the map
\[
\bar{\varphi}_E := \|\varphi^{(i)}(\mathbf{1}_E)\|^{-1} \cdot \varphi^{(i)}|_E(\, . \, ): E \longrightarrow A
\]
is isometric. Note that we can also write this map as
\begin{equation}
\label{2-3-7-1}
\bar{\varphi}_E (x) = \varphi^{(i)}(d_E^{-1} \, x)
\end{equation}
for $x \in E$, where $d_E:= \| \varphi^{(i)}(\mathbf{1}_E)\| \cdot \mathbf{1}_E  \in Z(F^{(i)})$ is an element in the centre of $F^{(i)}$ which is invertible in $E$. Again by the structure theorem for order zero maps there is an isomorphism $\theta: \mathrm{C}^*(\bar{\varphi}_E(E)) \to C_0(W,E)$ for some closed subset $W \subset (0,1]$ such that $\theta \circ \bar{\varphi}_E (x) = \mathrm{id}_W \cdot x$ for $x \in E$. Since we forced $\bar{\varphi}_E$ to be isometric, this implies that $1 \in W$, and we have a $^*$-homomorphism
\[
\bar{\psi}_E := \mathrm{ev}_1 \circ \theta: \mathrm{C}^*(\bar{\varphi}_E(E)) \longrightarrow E
\] 
satisfying
\begin{equation}
\label{2-3-5}
\bar{\psi}_E \circ \bar{\varphi}_E = \mathrm{id}_E.
\end{equation}
Now we may use Arveson's theorem to find a c.p.c.\  extension 
\[
\tilde{\psi}_E: A \longrightarrow E
\]
of $\bar{\psi}_E$ to all of $A$.

We may do the same for each matrix block of each $F^{(i)}$ to obtain invertible elements $d^{(i)} \in Z(F^{(i)})_+^1$ such that the c.p.\ order zero maps
\begin{equation}
\label{2-3-4}
\bar{\varphi}^{(i)} (\, . \, ):=  \varphi^{(i)}((d^{(i)})^{-1}  \, . \, ): F^{(i)} \longrightarrow A
\end{equation}
are isometric, and such that there are $^*$-homomorphisms 
\[
\bar{\psi}^{(i)}: \mathrm{C}^*(\bar{\varphi}^{(i)}(F^{(i)})) \longrightarrow F^{(i)}
\]
which extend to c.p.c.\ maps $\tilde{\psi}^{(i)}: A \to F^{(i)}$ with  $\tilde{\psi}^{(i)} \circ \bar{\varphi}^{(i)} = \mathrm{id}_{F^{(i)}}$. Define projections
\begin{equation}
\label{2-3-3}
q^{(i)}:= \chi_{(\frac{\eta}{3(d+1)},1]}(d^{(i)}) \in Z(F^{(i)})
\end{equation}
(with $\chi_{(\frac{\eta}{3(d+1)},1]}$ the characteristic function on the interval $(\frac{\eta}{3(d+1)},1]$)  and note that 
\begin{equation}
\label{2-3-6}
\|\varphi^{(i)}(q^{(i)} x) - \varphi^{(i)}(x)\| \le \frac{\eta}{3(d+1)} \|x\|
\end{equation}
for $x \in F^{(i)}$. We now define c.p.\ maps $\psi^{(i)}: A \to F^{(i)}$ by
\begin{equation}
\label{2-3-1}
\psi^{(i)}(\, . \,) := q^{(i)} (d^{(i)})^{-1} \bar{\psi}^{(i)} ((\varphi^{(i)} (b^{(i)}_{h}))^\frac{1}{2} \; . \; (\varphi^{(i)} (b^{(i)}_{h}))^\frac{1}{2} )
\end{equation}
and $\psi:= \oplus_{i=0}^d \psi^{(i)}: A \to F$. 
The $\psi^{(i)}$ and therefore also $\psi$ are contractive since, for any $b \in A^1_+$,
\begin{align*}
\psi^{(i)}(b) & 
\stackrel{\eqref{2-3-1}}{\le} q^{(i)} (d^{(i)})^{-1} \bar{\psi}^{(i)} (\varphi^{(i)} (b^{(i)}_{h}))\\
& \stackrel{\phantom{\eqref{2-3-1}}}{=} q^{(i)} (d^{(i)})^{-1} \bar{\psi}^{(i)} \varphi^{(i)} ((d^{(i)})^{-1} d^{(i)} b^{(i)}_{h})\\
& \stackrel{\;\eqref{2-3-7-1}\;}{=} q^{(i)} (d^{(i)})^{-1} \bar{\psi}^{(i)} \bar{\varphi}^{(i)} ( d^{(i)} b^{(i)}_{h})\\
& \stackrel{\;\eqref{2-3-5}\;}{=} q^{(i)} (d^{(i)})^{-1}  d^{(i)} b^{(i)}_{h}\\
& \stackrel{\phantom{\eqref{2-3-1}}}{\le} \mathbf{1}_{F^{(i)}}.
\end{align*}
We now compute for $a \in \mathcal{F}$
\begin{align*}
\| a - \varphi \psi(a)\|  &  \stackrel{\ref{prop:dim-without-psi}(3)}{\le} \Big\| \sum\nolimits_i \varphi^{(i)}(b_a^{(i)}) - \varphi^{(i)} \psi^{(i)}(a)\Big\| + \e \\
 &  \stackrel{\eqref{2-3-1}}{=} \Big\|\sum\nolimits_i \varphi^{(i)}  (b_a^{(i)}) \\
 & \mathrel{\phantom{\stackrel{\eqref{2-3-1}}{=}}}{}  - \varphi^{(i)} (q^{(i)} (d^{(i)})^{-1}  \bar{\psi}^{(i)} ((\varphi^{(i)} (b^{(i)}_{h}))^\frac{1}{2} \; a \; (\varphi^{(i)} (b^{(i)}_{h}))^\frac{1}{2} ))\Big\| \\
 &  \mathrel{\phantom{\stackrel{\eqref{2-3-1}}{=}}}{}+ \e \\
&  \stackrel{\;\eqref{2-3-2}\;}{\le} \Big\|\sum\nolimits_i \varphi^{(i)}  (b_a^{(i)}) - \varphi^{(i)} (q^{(i)} (d^{(i)})^{-1}  \bar{\psi}^{(i)} ( \varphi^{(i)} (b^{(i)}_{a}) ))\Big\| \\
 &   \mathrel{\phantom{\stackrel{\eqref{2-3-1}}{=}}}{}   + (d+1) \Big(\frac{\eta}{3(d+1)}\Big)^{-1} \frac{\eta^2}{9(d+1)^2} + \e \\
& \stackrel{\;\eqref{2-3-4}\;}{=}  \Big\|\sum\nolimits_i \varphi^{(i)}  (b_a^{(i)}) - \varphi^{(i)} (q^{(i)} (d^{(i)})^{-1}  \bar{\psi}^{(i)} ( \bar{\varphi}^{(i)} (d^{(i)} b^{(i)}_{a}) ))\Big\| \\
 &   \mathrel{\phantom{\stackrel{\eqref{2-3-1}}{=}}}{}    + \frac{\eta}{3} + \e \\
& \stackrel{\;\eqref{2-3-5}\;}{=}  \Big\|\sum\nolimits_i \varphi^{(i)}  (b_a^{(i)}) - \varphi^{(i)} (q^{(i)} b^{(i)}_{a} )\Big\|     + \frac{\eta}{3} + \e \\
 &  \stackrel{\;\eqref{2-3-6}\;}{\le}  (d+1) \frac{\eta}{3(d+1)}  + \frac{\eta}{3}  + \e\\
 & \stackrel{\;\eqref{2-3-7}\;}{<} \eta.
\end{align*}
This shows that $(F, \psi, \varphi)$ is a c.p.\ approximation witnessing $\ndim A \le d$ for $\mathcal{F}$ within $\eta$. \\

We now turn to the statement about diagonal dimension. The forward implication works exactly as above, upon noting that the existence of $D_F$ and \eqref{prop:dim-without-psi-eq} is already built into the initial approximations for $(D_A \subset A)$ and survives the modification along the lines of \cite[Proposition 4.2]{Win12} carried out at the beginning of this proof; cf.\ Remark~\ref{rem:almost_order_zero}(iii). 

The reverse implication essentially follows from the construction above; it only remains to show that $\psi(D_A) \subset D_F$ provided we have $\varphi^{(i)}(  \mathcal{N}_{F^{(i)}}(D_{F^{(i)}}))\subset \mathcal{N}_A(D_A)$ and $b^{(i)}_{h} \in D_{F^{(i)}}$ for some masa $D_F \subset F$.

To see this, suppose for a contradiction that $\psi(D_A) \not\subset D_F$. Since $D_F \subset F$ is maximal abelian, this implies there are $a \in D_A$ and a rank one projection $e \in D_{F^{(i)}}$ for some $i$ such that 
\[
e \psi^{(i)}(a) \neq \psi^{(i)}(a) e.
\] 
Since $q^{(i)}$ and $d^{(i)}$ are central in $F^{(i)}$, by \eqref{2-3-1} this furthermore implies that
\[
e \, \bar{\psi}^{(i)} ((\varphi^{(i)} (b^{(i)}_{h}))^\frac{1}{2} \; a \; (\varphi^{(i)} (b^{(i)}_{h}))^\frac{1}{2} ) \neq \bar{\psi}^{(i)} ((\varphi^{(i)} (b^{(i)}_{h}))^\frac{1}{2} \; a \; (\varphi^{(i)} (b^{(i)}_{h}))^\frac{1}{2} ) \, e,
\]
which also reads as 
\[
e \, \bar{\psi}^{(i)} (\varphi^{(i)} (b^{(i)}_{h}) \; a ) \neq \bar{\psi}^{(i)} ( a \; \varphi^{(i)} (b^{(i)}_{h}) ) \, e,
\]
because $a$ and $\varphi^{(i)} (b^{(i)}_{h})$ lie in $D_A$, where we have used that by Proposition~\ref{prop:maps-normalisers}(i) $\varphi^{(i)}(D_{F^{(i)}}) \subset D_A$ for each $i$. 
Next note that by \eqref{2-3-5}
\[
e = \bar{\psi}^{(i)} \bar{\varphi}^{(i)}(e)
\]
and that $\bar{\varphi}^{(i)}(e)$ lies in the multiplicative domain of $\bar{\psi}^{(i)}$. 
But then we have
\[
\bar{\psi}^{(i)} (\bar{\varphi}^{(i)} (e) \, \varphi^{(i)} (b^{(i)}_{h}) \; a ) \neq \bar{\psi}^{(i)} ( a \; \varphi^{(i)} (b^{(i)}_{h}) \, \bar{\varphi}^{(i)}(e)).
\]
Since $\bar{\varphi}^{(i)}(D_{F^{(i)}}) = \varphi^{(i)}(D_{F^{(i)}}) \subset D_A$ we have now reached 
a contradiction to $a, \varphi^{(i)} (b^{(i)}_{h}), \bar{\varphi}^{(i)}(e) \in D_A$ and $D_A$ being abelian.
\end{proof}

\begin{rem}
In Definition~\ref{defn:dim_diag}, instead of asking for condition (4) (i.e., $\psi(D_A) \subset D_F$), at least in the unital case one can equivalently ask for the \emph{a priori} weaker condition
\begin{enumerate}
\item[(4')] $\psi(\mathbf{1}_A) \subset D_F$.
\end{enumerate}
This follows from Proposition~\ref{prop:dim-without-psi} in connection with Remark~\ref{rem:almost_order_zero}(iii). It will be useful when we derive permanence properties for diagonal dimension in Section~\ref{sec:permanence}.
\end{rem}

Diagonal dimension is particularly relevant in the nondegenerate case. The proposition below gives a useful characterisation of this situation. 

\begin{prop} \label{prop:abelian}
Let $(D_A\subset A)$ be a sub-$\mathrm{C}^*$-algebra with $D_A$ abelian. If $\ddim(D_A\subset A) < \infty$, then the following are equivalent:
\begin{enumerate}[label=\rm{(\roman*})]
	\item $(D_A\subset A)$ is nondegenerate;
	\item every system of approximations $(F_\lambda, D_{F_\lambda}, \psi_\lambda, \varphi_\lambda )_{\lambda\in \Lambda}$ witnessing $\ddim(D_A\subset A) < \infty$ satisfies $\varphi_\lambda(D_{F_\lambda})\subset D_A$ for all $\lambda$;
	\item there exists a system of approximations $(F_\lambda, D_{F_\lambda}, \psi_\lambda, \varphi_\lambda )_{\lambda\in \Lambda}$ witnessing $\ddim(D_A\subset A) < \infty$ such that $\varphi_\lambda(D_{F_\lambda})\subset D_A$ for all $\lambda$.
\end{enumerate}
\end{prop}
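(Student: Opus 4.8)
The plan is to run the cycle $(\mathrm i)\Rightarrow(\mathrm{ii})\Rightarrow(\mathrm{iii})\Rightarrow(\mathrm i)$. The implication $(\mathrm i)\Rightarrow(\mathrm{ii})$ is essentially Remark~\ref{rem:almost_order_zero}(i): when $(D_A\subset A)$ is nondegenerate, a positive contraction $e\in\mathcal{N}_A(D_A)$ lies in $A\cap D_A'$ by Lemma~\ref{lem:positive_normaliser}, and $e^2=\lim_\beta e u_\beta e\in D_A$ for an approximate unit $(u_\beta)_\beta$ of $A$ inside $D_A$, whence $e=(e^2)^{1/2}\in D_A$; since every minimal projection $p$ of $D_{F_\lambda}$ is a matrix unit sitting in a single block, $\varphi_\lambda(p)=\varphi_\lambda^{(i)}(p)$ is a positive contraction in $\mathcal{N}_A(D_A)$ by condition~(5) of Definition~\ref{defn:dim_diag}, hence lies in $D_A$, and by linearity $\varphi_\lambda(D_{F_\lambda})\subset D_A$. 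The implication $(\mathrm{ii})\Rightarrow(\mathrm{iii})$ is immediate, since $\ddim(D_A\subset A)<\infty$ already furnishes a system of c.p.\ approximations, which by $(\mathrm{ii})$ has the required containment.

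The substance is $(\mathrm{iii})\Rightarrow(\mathrm i)$. Fix a system $(F_\lambda,D_{F_\lambda},\psi_\lambda,\varphi_\lambda)_{\lambda\in\Lambda}$ witnessing $\ddim(D_A\subset A)\le d$ with $\varphi_\lambda(D_{F_\lambda})\subset D_A$, and write $\overline{D_AA}$ for the closed linear span of $\{da:d\in D_A,\,a\in A\}$. I would reduce everything to the claim that $\varphi_\lambda(F_\lambda)\subseteq\overline{D_AA}$ for every $\lambda$. Granting this, since $\varphi_\lambda\psi_\lambda\to\id_A$ in the point-norm topology and $\overline{D_AA}$ is norm-closed, every $a\in A$ lies in $\overline{D_AA}$, i.e.\ $A=\overline{D_AA}$; a routine approximate-unit argument then upgrades this to nondegeneracy of $(D_A\subset A)$: an approximate unit $(f_\beta)_\beta$ of the $\mathrm{C}^*$-algebra $D_A$ satisfies $f_\beta(da)=(f_\beta d)a\to da$, hence $f_\beta x\to x$ for all $x\in A$ by density and $\|f_\beta\|\le 1$, and likewise $xf_\beta\to x$ because $A=\overline{D_AA}$ forces $A=\overline{AD_A}$ upon taking adjoints.

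For the claim, fix $\lambda$ and a block $i\in\{0,\dots,d\}$. As $F_\lambda^{(i)}$ is unital and $\varphi_\lambda^{(i)}:=\varphi_\lambda\vert_{F_\lambda^{(i)}}$ is c.p.c.\ order zero, the structure theorem \cite[Theorem~3.3]{WZ09} provides a $^*$-homomorphism $\rho\colon C_0((0,1])\otimes F_\lambda^{(i)}\to A$ with $\varphi_\lambda^{(i)}(x)=\rho(\iota\otimes x)$, where $\iota$ is the identity function on $(0,1]$; set $h:=\varphi_\lambda^{(i)}(\mathbf{1}_{F_\lambda^{(i)}})=\rho(\iota\otimes\mathbf{1}_{F_\lambda^{(i)}})$. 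The key point is that $h\in D_A$: indeed $\mathbf{1}_{F_\lambda^{(i)}}$ lies in the masa $D_{F_\lambda}\subset F_\lambda$, so $h\in\varphi_\lambda(D_{F_\lambda})\subseteq D_A$ by hypothesis. Factoring $\iota=\iota^{1/2}\cdot\iota^{1/2}$ and using multiplicativity of $\rho$, for every $x\in F_\lambda^{(i)}$ we obtain
\[
\varphi_\lambda^{(i)}(x)=\rho(\iota^{1/2}\otimes\mathbf{1}_{F_\lambda^{(i)}})\,\rho(\iota^{1/2}\otimes x)=h^{1/2}\cdot\rho(\iota^{1/2}\otimes x),
\]
with $h^{1/2}\in D_A$ by continuous functional calculus inside $D_A$ and $\rho(\iota^{1/2}\otimes x)\in A$; hence $\varphi_\lambda^{(i)}(x)\in D_A\cdot A$, and summing over the $d+1$ blocks gives $\varphi_\lambda(F_\lambda)\subseteq\overline{D_AA}$. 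The main obstacle is exactly this last step: one wants $\varphi_\lambda^{(i)}(x)$ to be a genuine product of an element of $D_A$ with an element of $A$, yet the naive factorisation $h^{1/2}\pi(x)$ coming straight from the decomposition $\varphi_\lambda^{(i)}=h\,\pi(\,\cdot\,)$ a priori lives only in the bidual; routing through the cone picture $C_0((0,1])\otimes F_\lambda^{(i)}$ is what keeps the relevant factor inside $A$, and everything else is formal.
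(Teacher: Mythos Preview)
Your proof is correct. The implications $(\mathrm i)\Rightarrow(\mathrm{ii})$ and $(\mathrm{ii})\Rightarrow(\mathrm{iii})$ match the paper's: the paper simply cites Proposition~\ref{prop:maps-normalisers}(i) together with condition~(5'), and your argument unpacks exactly that proof.

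For $(\mathrm{iii})\Rightarrow(\mathrm i)$ you take a genuinely different and more elementary route. The paper argues via the modified maps $\hat\psi_\lambda,\hat\varphi_\lambda$ of Remark~\ref{rem:almost_order_zero}(ii) relative to a positive contraction $h$ acting as a local unit on $\mathcal F$, then invokes \cite[Lemma~3.6]{KW04} (approximate multiplicative-domain behaviour) to get $\hat\varphi_\lambda(\mathbf 1_{\hat F_\lambda})\,a\approx a$, and finally squeezes $\hat\varphi_\lambda(\mathbf 1_{\hat F_\lambda})$ under an element of $\mathrm{C}^*(\varphi_\lambda(\mathbf 1_{F_\lambda}))_+\subset D_A$. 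You bypass all of this by observing that the cone picture of the order zero map gives a factorisation $\varphi_\lambda^{(i)}(x)=h^{1/2}\cdot\rho(\iota^{1/2}\otimes x)$ with both factors in $A$ and the first in $D_A$, so $\varphi_\lambda(F_\lambda)\subset\overline{D_AA}$; then $A=\overline{D_AA}$ follows from $\varphi_\lambda\psi_\lambda\to\id_A$, and nondegeneracy from the Cohen--Hewitt type argument you sketch. Your approach avoids the external reference to \cite{KW04} and the somewhat delicate approximate-unit bookkeeping; the paper's approach, on the other hand, produces an explicit element of $(D_A)^{<1}_+$ close to acting as a unit on a given finite set, which is occasionally useful elsewhere.
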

\begin{proof}
(i) $\Longrightarrow$ (ii) follows from Proposition~\ref{prop:maps-normalisers}(i) in connection with condition (5') of \ref{rem:almost_order_zero}(v). 

(ii) $\Longrightarrow$ (iii) is trivial. 

(iii) $\Longrightarrow$ (i): Let $(D_A)_+^{<1}$ be the upward-directed set of all positive elements in $D_A$ with norm strictly less than one. We claim that $(D_A)^{<1}_+$ is an approximate unit for $A$. To see this, let $\mathcal{F} \subset A^1_+$ be a finite subset and $\e > 0$. There is $h \in A^1_+$ such that for each $a \in \mathcal{F}$ there is a $b_a \in A^1_+$ with $hb_a = b_ah = b_a$ and $\|a - b_a\| < \e$. For each $\lambda$ define c.p.c.\ maps $\hat{\psi}_\lambda : A \to \hat{F}_\lambda$ and $\hat{\varphi}_\lambda : \hat{F}_\lambda \to A$ as in Remark~\ref{rem:almost_order_zero}(ii). We then have $\|\hat{\varphi}_\lambda \hat{\psi}_\lambda (b_a) - b_a\| \to 0$, $\|\hat{\varphi}_\lambda \hat{\psi}_\lambda (b_a^2) - b_a^2\| \to 0$ and $\|\hat{\varphi}_\lambda (\mathbf{1}_{\hat{F}_\lambda}) - h\| = \|\varphi_\lambda \psi_\lambda (h) - h\| \to 0$ for each $a \in \mathcal{F}$. By \cite[Lemma 3.6]{KW04} we have for each $a \in \mathcal{F}$ 
\[
\|\hat{\varphi}_\lambda \hat{\psi}_\lambda (b_a) -\hat{\varphi}_\lambda(\mathbf{1}_{\hat{F}_\lambda})  \hat{\varphi}_\lambda \hat{\psi}_\lambda (b_a)\| 
\longrightarrow 0.
\]
As a consequence, there is an index $\lambda \in \Lambda$ such that
\[
\| \hat{\varphi}_\lambda(\mathbf{1}_{\hat{F}_\lambda})a - a \| \leq 5\e
\]
for all $a \in \mathcal{F}$. 

Furthermore we have 
\[
0 \le \hat{\varphi}_\lambda(\mathbf{1}_{\hat{F}_\lambda}) = \varphi_\lambda \psi_\lambda (h) \le \varphi_\lambda(\mathbf{1}_{F_\lambda}),
\]
and so $\hat{\varphi}_\lambda(\mathbf{1}_{\hat{F}_\lambda})$ lies in the hereditary subalgebra generated by $\varphi_\lambda(\mathbf{1}_{F_\lambda})$. But then there is $d \in \mathrm{C}^*(\varphi_\lambda(\mathbf{1}_{F_\lambda}))_+$ with $\|d\| < 1$ such that 
\[
\| d \hat{\varphi}_\lambda(\mathbf{1}_{\hat{F}_\lambda}) - \hat{\varphi}_\lambda(\mathbf{1}_{\hat{F}_\lambda})\| < \e,
\]
whence
\[
\|da - a \| \le \|d \hat{\varphi}_\lambda(\mathbf{1}_{\hat{F}_\lambda}) a - \hat{\varphi}_\lambda(\mathbf{1}_{\hat{F}_\lambda}) a\| + 10 \e < 11 \e. 
\]
By the hypothesis of (iii) we have $\varphi_\lambda(\mathbf{1}_{F_\lambda}) \in D_A$, whence $d \in D_A$ and we have shown that $D_A^{<1}$ is an approximate unit for $A$.
\end{proof}

In the remainder of this section we show that a nondegenerate sub-$\mathrm{C}^*$-algebra $(D_A \subset A)$ with finite diagonal dimension is indeed a diagonal (Theorem~\ref{principal grp}). The proof amounts to verifying the conditions of Definition \ref{defn:diagonal} one by one. We are grateful to Sel\c{c}uk Barlak and Xin Li for drawing our attention to the unique extension property and for showing us the argument for the proposition below.

\begin{prop}\label{unique ext}
	Let $(D_A\subset A)$ be a nondegenerate sub-$\mathrm{C}^*$-algebra with $\ddim(D_A\subset A) = d < \infty$. Then $D_A \subset A$ is a masa with the unique extension property.
\end{prop}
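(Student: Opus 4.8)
The plan is to establish the masa property and the unique extension property simultaneously by producing, for each pure state $\rho$ on $D_A$, a canonically associated state on $A$ and showing it is the only extension of $\rho$ (and is pure). First I would recall that since $(D_A\subset A)$ is nondegenerate, Remark~\ref{rem:almost_order_zero}(iv) already gives that $D_A$ is abelian, and Proposition~\ref{prop:abelian} together with Proposition~\ref{prop:maps-normalisers}(i) provides a system $(F_\lambda,D_{F_\lambda},\psi_\lambda,\varphi_\lambda)_{\lambda\in\Lambda}$ of c.p.\ approximations witnessing $\ddim(D_A\subset A)=d$ with $\psi_\lambda(D_A)\subset D_{F_\lambda}$ and $\varphi_\lambda(D_{F_\lambda})\subset D_A$. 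One should also invoke Remark~\ref{rem:almost_order_zero}(iii) to assume the $\psi_\lambda$ are almost order zero, and (in the unital case, then bootstrapped via hereditary subalgebras) the asymptotic multiplicativity $\|\varphi_\lambda^{(i)}\psi_\lambda^{(i)}(\mathbf 1)a - \varphi_\lambda^{(i)}\psi_\lambda^{(i)}(a)\|\to 0$.

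\textbf{Masa property.} To see $D_A$ is maximal abelian: suppose $a\in A\cap D_A'$ is positive; I want $a\in D_A$. Pushing $a$ down through the approximations, $\psi_\lambda(a)$ asymptotically commutes with $\psi_\lambda(D_A)$, which is cofinally large in $D_{F_\lambda}$ in a suitable sense; more precisely, one argues that $\psi_\lambda(a)$ asymptotically commutes with a masa of (a large hereditary subalgebra of) $F_\lambda$, forcing $\psi_\lambda(a)$ to be close to $D_{F_\lambda}$, hence $\varphi_\lambda\psi_\lambda(a)$ close to $\varphi_\lambda(D_{F_\lambda})\subset D_A$, and letting $\lambda\to\infty$ gives $a\in D_A$. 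The key finite-dimensional input here is that if an element of a matrix block asymptotically commutes with the diagonal, it is asymptotically diagonal — a routine compactness argument. By linearity (writing a self-adjoint element as a difference of positives and a general element via its real and imaginary parts) this yields $A\cap D_A'\subset D_A$, i.e., $D_A$ is a masa.

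\textbf{Unique extension property.} Let $\rho$ be a pure state on $D_A$, corresponding to a point in $\widehat{D_A}$, i.e., to a character. The plan is to show any state extension $\tilde\rho$ of $\rho$ to $A$ is determined on normalisers, and then use regularity (Remark~\ref{rem:almost_order_zero}(vi): $A$ is generated by $\mathcal N_A(D_A)$, and moreover $\overline{\mathrm{span}}\,\mathcal N_A(D_A)=A$ since it contains the dense union $\bigcup_\lambda\varphi_\lambda(F_\lambda)$) to conclude uniqueness on all of $A$. The crucial local statement, which should be extracted from the finite-dimensional picture, is: for $n\in\mathcal N_A(D_A)$, either $\tilde\rho(n)=0$ or the character $\rho$ is ``compatible'' with $n$ in the sense that $\rho(n^*n)\neq 0$ and then $\tilde\rho(n)$ is forced. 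To make this precise I would work with the approximations: a matrix unit $v$ in $F_\lambda$ has $\varphi_\lambda(v)\in\mathcal N_A(D_A)$, and on such elements the value of any extension of $\rho$ is pinned down by the Cauchy–Schwarz relation $|\tilde\rho(\varphi_\lambda(v))|^2\le \tilde\rho(\varphi_\lambda(v)\varphi_\lambda(v)^*)\,\tilde\rho(\mathbf 1)$ combined with $\varphi_\lambda(vv^*)\in D_A$ and the fact that $\varphi_\lambda^{(i)}$ is order zero (so $\varphi_\lambda^{(i)}(v)\varphi_\lambda^{(i)}(v)^* = \varphi_\lambda^{(i)}(vv^*)$ up to the supporting-homomorphism correction). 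If $\rho(\varphi_\lambda(vv^*))=0$ then $\tilde\rho(\varphi_\lambda(v))=0$; otherwise one normalises and uses that $v^*v$, $vv^*$ are minimal projections in $D_{F_\lambda}$ together with purity of $\rho$ to see that the GNS data of $\tilde\rho$ restricted to $\mathrm C^*(\varphi_\lambda(F_\lambda))$ is one-dimensional on the relevant corner, forcing the value. Passing to the limit over $\lambda$ and using density of $\bigcup_\lambda\varphi_\lambda(F_\lambda)$ gives that $\tilde\rho$ is unique; the same computation shows the unique extension is multiplicative on $\mathrm C^*(\varphi_\lambda(F_\lambda))$ in the limit, hence pure.

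\textbf{Main obstacle.} I expect the hard part to be the bookkeeping in the unique extension argument: matrix units $\varphi_\lambda(v)$ are only approximate normalisers of an approximate diagonal, the order zero maps $\varphi_\lambda^{(i)}$ have supporting $^*$-homomorphisms only into $A^{**}$ (so $\varphi_\lambda^{(i)}(v)\varphi_\lambda^{(i)}(v)^*=h_i\pi_i(vv^*)$ rather than $\varphi_\lambda^{(i)}(vv^*)$ on the nose), and one must control how a pure state on $D_A$ interacts with the positive contraction $h_i\in A^{**}$. The clean way around this is to pass to the corner determined by the character: use that $\rho$, being a character on the abelian $D_A$, extends to a pure state on $A$ precisely when there is a unique state on the hereditary subalgebra $\overline{\,d A d\,}$ as $d$ runs over an approximate unit of the ideal of $D_A$ where $\rho$ is supported; the order zero plus normaliser conditions then force this hereditary subalgebra to collapse to a (commutative, hence trivially uniquely-statable) corner in the limit. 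Once this limiting rigidity is isolated as a lemma, the remaining steps are routine continuity and density arguments.
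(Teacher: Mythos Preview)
Your masa argument has a real gap: from ``$\psi_\lambda(a)$ asymptotically commutes with $\psi_\lambda(D_A)$'' you cannot conclude that $\psi_\lambda(a)$ is close to $D_{F_\lambda}$, because nothing forces $\psi_\lambda(D_A)$ to be large inside $D_{F_\lambda}$. Even after compressing by $\psi_\lambda(h)$ for an approximate unit $h$, the almost-order-zero hypothesis does not give you control over the off-diagonal part of $\psi_\lambda(a)$ relative to the \emph{full} diagonal $D_{F_\lambda}$. The paper sidesteps this entirely by quoting Archbold--Bunce--Gregson \cite{ABG82}: the unique extension property already implies maximal abelian, so only the UEP needs to be verified.

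Your UEP argument is on the right track but you miss the one observation that makes it work, and instead drift into a vague GNS/hereditary-corner argument that does not pin down the value. Here is the missing point: for an off-diagonal matrix unit $v=e_{k\ell}^{(i),j}$ with $k\neq\ell$, the projections $vv^*=e_{kk}$ and $v^*v=e_{\ell\ell}$ are orthogonal, so $\varphi^{(i)}(e_{kk})\,\varphi^{(i)}(e_{\ell\ell})=0$ in $D_A$ (order zero plus Proposition~\ref{prop:abelian}), and hence the character $\rho$ kills at least one of them. Since $\varphi^{(i)}(v)\varphi^{(i)}(v)^*=\varphi^{(i)}(e_{kk})^2$ and $\varphi^{(i)}(v)^*\varphi^{(i)}(v)=\varphi^{(i)}(e_{\ell\ell})^2$, Cauchy--Schwarz applied from \emph{both} sides gives $\tilde\rho(\varphi^{(i)}(v))=0$ unconditionally; there is no ``otherwise'' case. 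Combined with $\varphi^{(i)}(e_{kk})\in D_A$ and the density of $\bigcup_\lambda\varphi_\lambda(F_\lambda)$, this determines $\tilde\rho$ uniquely. The paper organises the same conclusion differently: it notes that any state extending a character has $D_A$ in its multiplicative domain and therefore annihilates $\mathrm{span}[D_A,A]$, and then uses order zero functional calculus to write each off-diagonal image as a commutator,
\[
\varphi(e_{k\ell}^{(j)})=\varphi^{1/2}(e_{kk}^{(j)})\varphi^{1/2}(e_{k\ell}^{(j)})-\varphi^{1/2}(e_{k\ell}^{(j)})\varphi^{1/2}(e_{kk}^{(j)})\in[D_A,A],
\]
reducing everything to the density of $D_A+\mathrm{span}[D_A,A]$ in $A$.
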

\begin{proof}
We only verify the unique extension property, since by \cite[Corollary 2.7]{ABG82} and \cite[Remark 2.6(iii)]{ABG82} this will imply that $D_A$ is a masa in $A$. (There is also a direct argument which we do not spell out at this point, because it proceeds along similar lines as Proposition~\ref{prop:cond_exp_formula}.)

Suppose $\varrho_1, \varrho_2 \in \mathrm{PS}(A)$ are pure states on $A$ extending a pure state $\sigma \in \mathrm{PS}(D_A)$ on $D_A$. Then $D_A$ is in the multiplicative domain of the $\varrho_i$ since $\sigma$ is a character on $D$, i.e., $\varrho_i(bd) = \varrho_i(b) \varrho_i(d)$ for  $b \in A$, $d \in D_A$. (The statement is an easy consequence of Stinespring's theorem; cf.\ \cite[Lemma~3.5]{KW04}.) But then for any $b \in A$, $d \in D_A$ and $i=1,2$ one has 
\[
\varrho_i(bd) = \varrho_i(b) \varrho_i(d) =  \varrho_i(d) \varrho_i(b) = \varrho_i(db),
\]
which means that the $\varrho_i$ vanish on $\text{span}[D_A,A]$. Therefore, in order to prove $\varrho_1 = \varrho_2$ it suffices to show that the linear subspace $D_A + \text{span}[D_A,A]$ is dense in $A$. 

So let $a\in A$ and $\e > 0$ be given. Find a c.p.\ approximation $(F, D_F, \psi,\varphi)$ witnessing $\ddim(D_A\subset A) = d$ for $(\{a\}, \e)$. If we identify $(D_F\subset F)$ with the direct sum $\big(\bigoplus_{j=1}^N D_{r^{(j)}} \subset  \bigoplus_{j=1}^N M_{r^{(j)}}\big)$ and write $\{ e^{(j)}_{k,\ell} \}$ for the standard matrix units, then
	\[
	\psi(a) = \sum\nolimits_{j=1}^N \sum\nolimits_{k,\ell=1}^{r^{(j)}} \psi(a)^{(j)}_{k,\ell} \cdot  e^{(j)}_{k,\ell},
	\]
	where each complex number $\psi(a)^{(j)}_{k,\ell}$ is the $(k,\ell)$-matrix entry of the $j$th summand of $\psi(a)$.	For each $j \in \{1,\ldots,N\}$ and $k,\ell\in \{1,\ldots,r^{(j)}\}$ with $k\neq \ell$, 
	\begin{align*}
		\varphi(e^{(j)}_{k,\ell}) &= \varphi(e^{(j)}_{k,k} e^{(j)}_{k,\ell} ) - \varphi( e^{(j)}_{k,\ell}e^{(j)}_{k,k})  \\
		&= \varphi^{\frac{1}{2}}(e^{(j)}_{k,k})\varphi^{\frac{1}{2}}(e^{(j)}_{k,\ell}) - \varphi^{\frac{1}{2}}(e^{(j)}_{k,\ell}) \varphi^{\frac{1}{2}}(e^{(j)}_{k,k}) \in [D_A,A]
	\end{align*}
	(here $\varphi^{\frac{1}{2}}$ is defined using the functional calculus for order zero maps; cf.\ \cite{WZ09}). Since
	\begin{align*}
		\varphi\psi(a) & = \sum\nolimits_{j=1}^N \sum\nolimits_{k,\ell=1}^{r^{(j)}} \psi(a)^{(j)}_{k,\ell} \cdot \varphi(e^{(j)}_{k,\ell}) \\
		&  = \sum\nolimits_{j=1}^N \sum\nolimits_{k=1}^{r^{(j)}} \psi(a)^{(j)}_{k,k} \cdot \varphi(e^{(j)}_{k,k}) + \sum\nolimits_{j=1}^N \sum\nolimits_{k\neq \ell} \psi(a)^{(j)}_{k,\ell} \cdot \varphi(e^{(j)}_{k,\ell}),
	\end{align*}
and since $\varphi(e^{(j)}_{k,k}) \in D_A$ by Proposition~\ref{prop:abelian}, the element $\varphi\psi(a)$ belongs to the linear subspace $D_A + \text{span}[D_A,A]$.
\end{proof}

We now turn to the conditional expectation from $A$ onto $D_A$. Just like maximality of the abelian subalgebra one can deduce its existence from the unique extension property (see \cite[Corollary 2.7]{ABG82}). Below we give a direct argument which yields a concrete formula for the expectation; this will allow us to also prove faithfulness in Proposition \ref{prop:faithful_cond}.

\begin{prop} \label{prop:cond_exp_formula}
	Let $(D_A\subset A)$ be a nondegenerate sub-$\mathrm{C}^*$-subalgebra with $\ddim(D_A\subset A) = d <\infty$. Let $( F_\lambda, D_{F_\lambda}, \psi_\lambda, \varphi_{\lambda} )_{\lambda \in \Lambda}$ be a system of c.p.\ approximations witnessing $\ddim(D_A\subset A) = d$, and for each $\lambda$ let $E_\lambda$ be the (unique) conditional expectation from $F_\lambda$ onto $D_{F_\lambda}$.
	
	Then the map $\Phi:A\to D_A$ given by the formula
	\begin{equation}
	\label{cond_exp_formula}
	\Phi(a) = \lim_\lambda \varphi_{\lambda} E_\lambda\psi_\lambda(a)
	\end{equation}
	is a well-defined conditional expectation from $A$ onto $D_A$. The conditional expectation is uniquely determined and does not depend on the particular choice of the system $( F_\lambda, D_{F_\lambda}, \psi_\lambda, \varphi_{\lambda} )_{\lambda \in \Lambda}$.
\end{prop}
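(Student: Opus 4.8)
The plan is to show three things: first, that the net $\varphi_\lambda E_\lambda \psi_\lambda(a)$ is norm-convergent and its limit $\Phi(a)$ lies in $D_A$; second, that the resulting map $\Phi$ is a conditional expectation (c.p.c., identity on $D_A$, and a $D_A$-bimodule map); and third, that $\Phi$ is independent of the approximating system, which in particular forces uniqueness.

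For the first step I would exploit the structure of $D_F$-bimodule maps. Fixing $\lambda$, identify $(D_{F_\lambda} \subset F_\lambda)$ with $\bigoplus_j (D_{r^{(j)}} \subset M_{r^{(j)}})$ and note that $E_\lambda$ is the diagonal-compression map, so $\varphi_\lambda E_\lambda \psi_\lambda(a) = \sum_{j,k} \psi_\lambda(a)^{(j)}_{k,k} \varphi_\lambda(e^{(j)}_{k,k})$, a sum over the \emph{diagonal} matrix units. The key point is that $E_\lambda\psi_\lambda(a) - \psi_\lambda(a)$ is a linear combination of off-diagonal matrix units $e^{(j)}_{k,\ell}$ ($k \ne \ell$), and, exactly as in the proof of Proposition~\ref{unique ext}, each such $\varphi_\lambda(e^{(j)}_{k,\ell})$ lies in $\text{span}[D_A, A]$. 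So $\varphi_\lambda E_\lambda \psi_\lambda(a)$ and $\varphi_\lambda\psi_\lambda(a)$ differ by an element of $\text{span}[D_A,A]$; but this by itself only says the difference is in a fixed subspace, not that it tends to $0$. To get convergence I would instead argue as follows: by Proposition~\ref{prop:abelian} each $\varphi_\lambda(e^{(j)}_{k,k}) \in D_A$, so $\varphi_\lambda E_\lambda \psi_\lambda(a) \in D_A$ for every $\lambda$; and to see that $(\varphi_\lambda E_\lambda \psi_\lambda(a))_\lambda$ is Cauchy, I would combine the nuclear-dimension estimate $\|\varphi_\lambda E_\lambda\psi_\lambda\| \le d+1$ with the fact that $E_\lambda$ is a contraction and that for $a \in D_A$ we have $\psi_\lambda(a) \in D_{F_\lambda}$ (so $E_\lambda\psi_\lambda(a) = \psi_\lambda(a)$ and $\varphi_\lambda E_\lambda\psi_\lambda(a) = \varphi_\lambda\psi_\lambda(a) \to a$). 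For general $a \in A$, convergence on $D_A$ and a uniform boundedness / approximation argument — approximating $a$ using the off-diagonal decomposition and controlling the off-diagonal part — should close the gap; this is the step I expect to be the main obstacle, since one must genuinely show the off-diagonal contributions do not merely lie in a fixed subspace but vanish in the limit after compression. A cleaner route may be to first prove independence of the system and use that to identify $\Phi$ with the map built from \emph{any} convenient system (e.g.\ one refining another), which forces Cauchyness.

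For the second step, assume convergence. Each $\varphi_\lambda E_\lambda \psi_\lambda$ is completely positive; it is a composition of c.p.\ maps, with $\psi_\lambda$ contractive, $E_\lambda$ c.p.c., and, after the standard rescaling trick in Remark~\ref{rem:almost_order_zero}(ii) applied at the level of each approximation (or simply using that $\varphi_\lambda\psi_\lambda \to \id_A$ and passing to the contractive modification $\hat\varphi_\lambda\hat\psi_\lambda$), we may take the whole composition contractive in the limit, so $\Phi$ is c.p.c. That $\Phi|_{D_A} = \id_{D_A}$ is immediate from $\psi_\lambda(D_A) \subset D_{F_\lambda}$, $E_\lambda|_{D_{F_\lambda}} = \id$, and $\varphi_\lambda\psi_\lambda \to \id_A$. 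The bimodule property $\Phi(dad') = d\,\Phi(a)\,d'$ for $d, d' \in D_A$: this follows once we know $\Phi$ maps into $D_A$ and is positive and $D_A$ is abelian — a positive, contractive, $D_A$-idempotent projection onto an abelian subalgebra is automatically a bimodule map by Tomiyama's theorem, so $\Phi$ is a genuine conditional expectation in the $\mathrm{C}^*$-sense.

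For the third step (independence and uniqueness), suppose $(\tilde F_\mu, \tilde D_{F_\mu}, \tilde\psi_\mu, \tilde\varphi_\mu)_\mu$ is another such system, with associated map $\tilde\Phi$. I would show $\Phi$ and $\tilde\Phi$ agree on a dense subspace and hence everywhere; the natural dense subspace is $D_A + \text{span}[D_A, A]$, which is dense in $A$ by the argument inside the proof of Proposition~\ref{unique ext}. On $D_A$ both maps are the identity. On $\text{span}[D_A,A]$, i.e.\ on commutators $[d, a]$ with $d \in D_A$, $a \in A$, I would show $\Phi([d,a]) = 0$: indeed $\Phi$ is a $D_A$-bimodule map into the abelian algebra $D_A$, so $\Phi(da - ad) = d\Phi(a) - \Phi(a)d = 0$. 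Since $\tilde\Phi$ kills the same subspace and agrees with $\Phi$ on $D_A$, the two maps coincide on a dense set, hence $\Phi = \tilde\Phi$. Uniqueness of \emph{the} conditional expectation onto $D_A$ then follows because any conditional expectation onto a masa with the unique extension property is unique (alternatively, repeat the commutator argument: any conditional expectation onto $D_A$ is a $D_A$-bimodule map, hence kills $\text{span}[D_A,A]$ and equals $\id$ on $D_A$, hence is determined on the dense subspace $D_A + \text{span}[D_A,A]$). This last observation, together with Proposition~\ref{unique ext}, is exactly what is needed to feed into the subsequent Proposition~\ref{prop:faithful_cond} on faithfulness.
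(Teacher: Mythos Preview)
Your proposal has a genuine gap precisely where you say you ``expect the main obstacle'': you never actually establish convergence of the net $(\varphi_\lambda E_\lambda\psi_\lambda(a))_\lambda$ for general $a\in A$. Knowing convergence on $D_A$ together with uniform boundedness is not enough, because you have no dense subspace on which you can independently verify convergence; the decomposition $a\approx \varphi_\lambda\psi_\lambda(a) = (\text{diagonal part}) + (\text{off-diagonal part})$ changes with $\lambda$, so you cannot fix the off-diagonal piece and argue it is killed in the limit. Your alternative suggestion (``prove independence first, then deduce Cauchyness'') is circular, since independence of the limit presupposes that the limit exists.

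The paper closes this gap with an idea you are missing: pass to the quotient $Q=\prod_\Lambda D_A/\bigoplus_\Lambda D_A$ and define $\bar\Phi(a):=[(\varphi_\lambda E_\lambda\psi_\lambda(a))_\lambda]\in Q$, which is well-defined with no convergence needed. One then shows $\bar\Phi|_{D_A}=\iota_{D_A}$, so $D_A$ lies in the multiplicative domain of $\bar\Phi$. The crucial computation is that for any fixed $\lambda_0$ and any off-diagonal matrix unit $v\in F^{(i)}_{\lambda_0}$, the order zero factorisation
\[
\varphi^{(i)}_{\lambda_0}(v)=(\varphi^{(i)}_{\lambda_0})^{1/3}(vv^*)\,(\varphi^{(i)}_{\lambda_0})^{1/3}(v)\,(\varphi^{(i)}_{\lambda_0})^{1/3}(v^*v)
\]
has outer factors in $D_A$, hence in the multiplicative domain, and since $vv^*\perp v^*v$ one gets $\bar\Phi(\varphi^{(i)}_{\lambda_0}(v))=0$. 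This yields $\bar\Phi\circ\varphi_{\lambda_0}=\iota_{D_A}\circ\varphi_{\lambda_0}\circ E_{\lambda_0}$ for every $\lambda_0$, whence $\bar\Phi(a)=\lim_\lambda\iota_{D_A}(\varphi_\lambda E_\lambda\psi_\lambda(a))$ in $Q$; as $\iota_{D_A}$ is isometric, the net $(\varphi_\lambda E_\lambda\psi_\lambda(a))_\lambda$ is Cauchy in $D_A$. Your Steps~2 and~3 are fine once this is in place, and your uniqueness argument via $D_A+\mathrm{span}[D_A,A]$ is a valid alternative to the paper's appeal to the unique extension property.
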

\begin{proof}
Consider the quotient $\mathrm{C}^*$-algebra 
\[
Q:= \prod\nolimits_\Lambda D_A \big/ \bigoplus\nolimits_\Lambda D_A
\]
and note that the $\mathrm{C}^*$-norm on $Q$ is given by $\|[(a_\lambda)_{\lambda \in \Lambda}]\|_Q = \limsup_\lambda \|a_\lambda\|$.

Note also that we have a c.p.\ map 
\[
\bar{\Phi}: A \longrightarrow Q
\]
given by 
\[
\bar{\Phi}(a):= [(\varphi_\lambda E_\lambda \psi_\lambda(a))_{\lambda \in \Lambda}]
\]
(each composition $\varphi_\lambda E_\lambda \psi_\lambda$ is c.p.\ and has norm at most $d+1$, and we have $\varphi_\lambda(D_{F_\lambda}) \subset D_A$ by Proposition~\ref{prop:maps-normalisers}(i)). 

Now since $E_\lambda|_{D_{F_\lambda}} \circ \psi_\lambda|_{D_A}  = \psi_\lambda|_{D_A}$, and since $\varphi_\lambda \psi_\lambda \to \mathrm{id}_A$, we see that for $d \in D_A$
\begin{equation}\label{expectation1}
\bar{\Phi}(d) = \iota_{D_A}(d),
\end{equation}
where $\iota_{D_A}:D_A \hookrightarrow Q$ is the canonical embedding. Since $D_A$ contains an approximate unit for $A$ this also implies that $\bar{\Phi}$ is in fact contractive. Moreover, since $\bar{\Phi}$ is multiplicative on $D_A$, Stinespring's theorem implies that $D_A$ is in the multiplicative domain of $\bar{\Phi}$, i.e., 
\[
\bar{\Phi}(ad) = \bar{\Phi}(a) \bar{\Phi}(d)
\]
for $a \in A$, $d\in D_A$; cf.\ \cite[Lemma~3.5]{KW04}.

Now fix some index $\lambda_0 \in \Lambda$ and let $v \in F^{(i)}_{\lambda_0}$ be some off-diagonal matrix unit (so that $vv^*$ and $v^*v$ are orthogonal rank one projections in $D_{F_{\lambda_0}^{(i)}}$). Define the c.p.c.\ order zero map $(\varphi^{(i)}_{\lambda_0})^\frac{1}{3}$ using order zero functional calculus (cf.\ \cite{WZ09}), then 
\[
\varphi^{(i)}_{\lambda_0}(v) = (\varphi^{(i)}_{\lambda_0})^\frac{1}{3}(vv^*) (\varphi^{(i)}_{\lambda_0})^\frac{1}{3}(v) (\varphi^{(i)}_{\lambda_0})^\frac{1}{3}(v^*v)
\] 
and 
\[
(\varphi^{(i)}_{\lambda_0})^\frac{1}{3}(D_{F_{\lambda_0}^{(i)}}) \subset D_A.
\]
It follows that
\begin{align*}
\bar{\Phi}(\varphi^{(i)}_{\lambda_0}(v)) & = \bar{\Phi}((\varphi^{(i)}_{\lambda_0})^\frac{1}{3}(vv^*)) \, \bar{\Phi}((\varphi^{(i)}_{\lambda_0})^\frac{1}{3}(v)) \, \bar{\Phi}((\varphi^{(i)}_{\lambda_0})^\frac{1}{3}(v^*v)) \\
& = \iota_{D_A}((\varphi^{(i)}_{\lambda_0})^\frac{1}{3}(vv^*)\, (\varphi^{(i)}_{\lambda_0})^\frac{1}{3}(v^*v)) \, \bar{\Phi}((\varphi^{(i)}_{\lambda_0})^\frac{1}{3}(v)) \\
& = 0.
\end{align*}
From this and \eqref{expectation1} we conclude 
\[
\bar{\Phi} \circ \varphi_{\lambda_0} = \iota_{D_A} \circ \varphi_{\lambda_0} \circ E_{\lambda_0}.
\]
Since $\lambda_0$ was arbitrary and $A = \overline{\bigcup_\lambda \varphi_\lambda \psi_\lambda(A)}$ we now have 
\[
\bar{\Phi}(A) \subset \iota_{D_A}(D_A)
\]
and, for $a \in A$,
\begin{align*}
\bar{\Phi} (a) & = \bar{\Phi} (\lim_\lambda \varphi_\lambda \psi_\lambda(a))\\
& = \lim_\lambda \bar{\Phi}  \varphi_\lambda \psi_\lambda(a)\\
& = \lim_\lambda \iota_{D_A}  \varphi_\lambda E_\lambda \psi_\lambda(a)\\
& = \iota_{D_A} \lim_\lambda \varphi_\lambda E_\lambda \psi_\lambda(a),
\end{align*}
where for the second equality we have used continuity of $\bar{\Phi}$ and for the last equality we have used that $\iota_{D_A}$ is an isometry. But this means that
\[
(a \mapsto \Phi(a) := \lim_\lambda \varphi_\lambda E_\lambda \psi_\lambda(a))
\]
is indeed a well-defined c.p.c.\ map $A \to \iota_{D_A} (D_A) \cong D_A$. $\Phi$ is a conditional expectation since $\Phi|_{D_A} = \text{id}_{D_A}$. As a consequence of the unique extension property (cf.\ Proposition~\ref{unique ext}) , there can be only one conditional expectation from $A$ onto $D_A$.
\end{proof}

Let us next turn to faithfulness of $\Phi$. The proof is nontrivial and ultimately relies on the rigidity provided by the normaliser condition \ref{defn:dim_diag}(5). We try to highlight this phenomenon in the following Lemma, which we think is worth pointing out since it turns approximate commutation into exact commutation, independently of the matrix sizes. This is special since commutativity relations are generally not at all robust under small permutations, as was demonstrated for example by Voiculescu with his famous almost commuting unitaries.

\begin{lem}
\label{lem:rank1}
Let $r \in \mathbb{N}$, $0 \le \alpha < {1}/{144}$, and let $q \in M_r$ be a rank one projection such that, for every $c \in D_r^1$, $\|qc - cq\| \le \alpha$.

Then, there is a uniquely determined rank one projection $d \in D_r$ with $\|d - q\| \le 6 \alpha^\frac{1}{2}$. 
\end{lem}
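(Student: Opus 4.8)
Write $q = \xi\xi^*$ for a unit vector $\xi \in \mathbb{C}^r$, and set $p_i := |\xi_i|^2$, so that $p_i \ge 0$ and $\sum_i p_i = 1$. The first step is a direct computation: for a subset $S \subseteq \{1,\dots,r\}$ let $P_S := \sum_{i\in S} e_{ii} \in D_r^1$ be the corresponding diagonal projection, and put $\beta_S := \sum_{i\in S} p_i = \|P_S\xi\|^2$. Since $q P_S = \xi (P_S\xi)^*$ and $P_S q = (P_S\xi)\xi^*$, writing $P_S\xi = \beta_S \xi + v$ with $v \perp \xi$ (note $\langle P_S\xi,\xi\rangle = \beta_S$ is real) one gets $qP_S - P_S q = \xi v^* - v\xi^*$, which on the two-dimensional space $\mathrm{span}\{\xi, v\}$ is a skew-adjoint $2\times 2$ block of norm $\|v\|$, and $\|v\|^2 = \|P_S\xi\|^2 - \beta_S^2 = \beta_S(1-\beta_S)$. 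Hence the hypothesis (applied to $c = P_S$) yields
\[
\beta_S(1-\beta_S) \le \alpha^2 \qquad \text{for every } S \subseteq \{1,\dots,r\}.
\]

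Taking $S = \{i\}$ gives $p_i(1-p_i)\le \alpha^2$ for each $i$, so each $p_i$ lies either in $[0, 2\alpha^2]$ or in $[1-2\alpha^2, 1]$ (the two roots of $t(1-t)=\alpha^2$ are $t_\pm$ with $t_- \le 2\alpha^2$, $t_+ = 1-t_- \ge 1-2\alpha^2$). I then claim some $p_m \ge 1-2\alpha^2$. If not, all $p_i \le 2\alpha^2 \le 1/72 < 1/2$; listing the coordinates in any order and letting $k$ be minimal with $\beta_{\{1,\dots,k\}}\ge \tfrac12$ (which exists since $\beta_{\{1,\dots,r\}}=1$), the previous partial sum is $<\tfrac12$, so $\beta_{\{1,\dots,k\}} < \tfrac12 + 2\alpha^2$; thus $\beta := \beta_{\{1,\dots,k\}} \in [\tfrac12, \tfrac12+2\alpha^2]$ and $\beta(1-\beta) \ge \tfrac14 - 4\alpha^4 > \alpha^2$ because $\alpha^2 < 1/144$. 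This contradicts the displayed inequality, proving the claim.

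Now fix $m$ with $p_m \ge 1-2\alpha^2$ and set $d := e_{mm} \in D_r$, a rank one projection. Using the standard formula for the distance between rank one projections, $\|d - q\| = \sqrt{1 - |\langle e_m,\xi\rangle|^2} = \sqrt{1-p_m} \le \sqrt{2}\,\alpha \le 6\alpha^{1/2}$, where the last inequality uses $\alpha < 1 $. For uniqueness: if $d' \in D_r$ is a rank one projection with $\|d'-q\| \le 6\alpha^{1/2}$, then $\|d - d'\| \le 12\alpha^{1/2} < 1$; but two distinct rank one projections in $D_r$ are of the form $e_{ii}, e_{jj}$ with $i\ne j$, hence orthogonal with $\|e_{ii}-e_{jj}\| = 1$, forcing $d = d'$.

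\textbf{Main obstacle.} The only non-formal point is recognising that testing the commutator against the diagonal \emph{projections} $P_S$ (rather than just the matrix units $e_{ii}$) is both necessary and sufficient: the matrix units alone give the dichotomy on the $p_i$ but cannot rule out the "spread out" possibility that every $p_i$ is tiny, and it is precisely the greedy choice of an $S$ with $\beta_S \approx \tfrac12$, together with the quantitative gap $\tfrac14 - 4\alpha^4 > \alpha^2$, that excludes it. Everything else is a routine $2\times 2$ computation.
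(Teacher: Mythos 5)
Your proof is correct. It shares the overall skeleton of the paper's argument: test $q$ against diagonal projections, show that the mass $\beta_S=\|P_S\xi\|^2$ captured by each must avoid a band around $\tfrac12$, and use monotonicity along a nested chain to locate the one coordinate carrying almost all of the mass (indeed, the paper's quantity $\|qp_iq\|$ equals your $\beta_{\{1,\dots,i\}}$ on the nose). Where you differ is in how the forbidden band is produced. The paper compresses $q$ by the nested projections $p_i=e_1+\dots+e_i$, notes that $p_iqp_i$ is $\alpha$-almost idempotent, and uses the resulting spectral gap to conclude $\|qp_iq\|\notin\bigl(\tfrac12-(\tfrac14-\alpha)^{1/2},\,\tfrac12+(\tfrac14-\alpha)^{1/2}\bigr)$; this gap has width of order $\sqrt{\alpha}$ and is the source of the $6\alpha^{1/2}$ in the statement. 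You instead exploit the rank-one structure of $q$ to compute the commutator norm exactly, $\|[q,P_S]\|=\sqrt{\beta_S(1-\beta_S)}$, which forces $\beta_S\in[0,2\alpha^2]\cup[1-2\alpha^2,1]$ and, together with the standard distance formula for rank-one projections, yields the sharper bound $\|d-q\|\le\sqrt{2}\,\alpha$ (a fortiori $\le 6\alpha^{1/2}$). Your route buys a quantitatively better estimate of order $\alpha$ rather than $\alpha^{1/2}$ and is arguably more transparent, at the price of leaning on $q$ having rank one throughout, whereas the paper's almost-idempotency step would survive for higher-rank $q$; for the application in Proposition~\ref{prop:faithful_cond} either bound suffices.
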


\begin{proof}
Let $e_k \in D_r$, $k=1,\ldots,r$ be the standard rank one projections and set 
\[
p_i := \sum\nolimits_{k=1}^i e_k \in D_r, \; p_0 := 0.
\]
For every $i = 1,\ldots,r$ by our hypothesis we have 
\[
\textstyle
\|p_i q p_i - p_i q p_i p_i q p_i \| \le \alpha \le {1}/{4}.
\]
But then the interval $(\frac{1}{2} - (\frac{1}{4} - \alpha)^\frac{1}{2}, \frac{1}{2} + (\frac{1}{4} - \alpha)^\frac{1}{2})$ has empty intersection with the spectrum of $p_i q p_i$. As a consequence, $\|q p_i q\| = \|p_i q p_i\|$ is either smaller than $\frac{1}{2} - (\frac{1}{4} - \alpha)^\frac{1}{2}$ or larger than $\frac{1}{2} + (\frac{1}{4} - \alpha)^\frac{1}{2}$.

Next observe that $0 = \|q p_0 q\| \le \|q p_1 q\| \le \ldots \le \|q p_r q\| = 1$, and so there is some $\bar{\imath} \in \{1,\ldots,r\}$ such that $\|q p_{\bar{\imath}-1} q\| \le \frac{1}{2} - (\frac{1}{4} - \alpha)^\frac{1}{2}$ and $\|q p_{\bar{\imath}} q\| \ge \frac{1}{2} + (\frac{1}{4} - \alpha)^\frac{1}{2}$. Now
\[
\textstyle
\|q e_{\bar{\imath}} q\| = \|q p_{\bar{\imath}} q - q p_{\bar{\imath}-1} q \| \ge 2  ({1}/{4} - \alpha)^\frac{1}{2},
\]
whence ($q$ has rank one)
\[
\textstyle
\|q - q e_{\bar{\imath}} q\| = 1 - \| q e_{\bar{\imath}} q\| \le 1 - 2  ({1}/{4} - \alpha )^\frac{1}{2}.
\]
Since $\|q e_{\bar{\imath}} q\| = \| e_{\bar{\imath}} q e_{\bar{\imath}}\|$, in the same manner one gets 
\[
\textstyle
\| e_{\bar{\imath}} - e_{\bar{\imath}} q e_{\bar{\imath}}\| \le 1 - 2  ({1}/{4} - \alpha )^\frac{1}{2}.
\]

We can now estimate
\begin{align*}
\|q - e_{\bar{\imath}} \| & \le \|q - q e_{\bar{\imath}}q \| + \|q e_{\bar{\imath}} q - e_{\bar{\imath}} q e_{\bar{\imath}}\| + \|e_{\bar{\imath}} q e_{\bar{\imath}} - e_{\bar{\imath}}\| \\
& \textstyle \le  1 - 2  ({1}/{4} - \alpha)^\frac{1}{2}  + 2 \alpha + 1 - 2  ({1}/{4} - \alpha)^\frac{1}{2} \\
& \le 2 \alpha^\frac{1}{2} + 2 \alpha + 2 \alpha^\frac{1}{2} \\
&\textstyle \le 6 \alpha^\frac{1}{2} \\
&< {1}/{2}
\end{align*}
and take $d := e_{\bar{\imath}}$.

For $i \neq \bar{\imath}$ we have $\|q - e_i\| \ge \|e_i - e_{\bar{\imath}} \| - \|q - e_{\bar{\imath}}\| \ge 1 - 6 \alpha^\frac{1}{2} > {1}/{2}$, so $e_{\bar{\imath}}$ is the only rank one projection in $D_r$ with $\|q - e_{\bar{\imath}}\| \le 6 \alpha^\frac{1}{2}$.
\end{proof}

\begin{prop} \label{prop:faithful_cond}
	Let $(D_A\subset A)$ be a nondegenerate sub-$\mathrm{C}^*$-algebra with $\ddim(D_A\subset A) = d < \infty$. Then the conditional expectation $\Phi:A\to D_A$, as defined in Proposition \ref{prop:cond_exp_formula}, is faithful.
\end{prop}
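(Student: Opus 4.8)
The plan is to show that if $\Phi(a) = 0$ for some $a \in A_+$, then $a = 0$. The key object is the completely positive map $\bar{\Phi}\colon A \to Q = \prod_\Lambda D_A / \bigoplus_\Lambda D_A$ from the proof of Proposition~\ref{prop:cond_exp_formula}, together with the rigidity encoded in Lemma~\ref{lem:rank1}. First I would fix $a \in A_+$ with $\Phi(a) = 0$ and assume for a contradiction that $\|a\| = 1$. Using a system $(F_\lambda, D_{F_\lambda}, \psi_\lambda, \varphi_\lambda)_\lambda$ of c.p.\ approximations witnessing $\ddim(D_A \subset A) = d$, the idea is that $\varphi_\lambda E_\lambda \psi_\lambda(a) \to 0$, while $\varphi_\lambda \psi_\lambda(a) \to a \neq 0$; so for large $\lambda$ the element $\psi_\lambda(a) \in F_\lambda$ has a large ``off-diagonal'' part relative to $D_{F_\lambda}$, i.e.\ $\|\psi_\lambda(a) - E_\lambda \psi_\lambda(a)\|$ stays bounded away from $0$. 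Since $\psi_\lambda(a) \in F_\lambda^1_+$, there is then, in one of the matrix blocks of one of the summands $F_\lambda^{(i)}$, a rank one spectral projection $q_\lambda$ of $\psi_\lambda(a)$ for an eigenvalue bounded away from $0$, whose eigenvector is ``spread out'' over at least two diagonal matrix units — equivalently, $q_\lambda$ does \emph{not} approximately commute with $D_{F_\lambda}$.

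The crux is to argue that this cannot happen for $\lambda$ large, because of the normaliser condition \ref{defn:dim_diag}(5). Here is the mechanism: for $c \in D_{F_\lambda}^1$, the commutator $q_\lambda c - c q_\lambda$ is controlled by $\varphi_\lambda^{(i)}$ applied to it, using that $\varphi_\lambda^{(i)}$ restricted to the relevant block is close to isometric (as in the proof of Proposition~\ref{prop:dim-without-psi}, after passing to $\bar\varphi^{(i)}$ and dropping negligible summands). On the one hand, $\varphi_\lambda^{(i)}(c) \in D_A$ by Proposition~\ref{prop:maps-normalisers}(i); on the other hand, $q_\lambda$ can be realised (via order zero functional calculus on $\varphi_\lambda^{(i)}$) so that $\varphi_\lambda^{(i)}(q_\lambda)$ is, up to a positive central scalar, a product of a normaliser $\varphi_\lambda^{(i)}(v)$ of $D_A$ by elements of $D_A$ — indeed $q_\lambda$, being a rank one projection not in $D_{F_\lambda^{(i)}}$, contains genuine matrix-unit content, and $v v^*, v^* v$ are orthogonal diagonal projections. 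Combined with Lemma~\ref{lem:positive_normaliser} (positive normalisers of $D_A$ lie in $D_A$, since $D_A$ is a masa by Proposition~\ref{unique ext}) and the masa property, a spread-out $q_\lambda$ forces $\varphi_\lambda^{(i)}$ to map a normaliser of $D_{F_\lambda^{(i)}}$ to something that fails to normalise $D_A$, unless the defect is small. Quantitatively: if $q_\lambda$ does not $\alpha$-commute with $D_{F_\lambda}$ for $\alpha < 1/144$, Lemma~\ref{lem:rank1} would yield a genuinely noncommutative situation, and pushing this forward through the nearly-isometric $\varphi_\lambda^{(i)}$ contradicts condition (5) for $\lambda$ large. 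Hence $q_\lambda$ must $\alpha_\lambda$-commute with $D_{F_\lambda}$ with $\alpha_\lambda \to 0$, so by Lemma~\ref{lem:rank1} there is a rank one $d_\lambda \in D_{F_\lambda}$ with $\|d_\lambda - q_\lambda\| \to 0$, which contradicts $q_\lambda$ being spread out. This contradiction forces $\|\psi_\lambda(a) - E_\lambda\psi_\lambda(a)\| \to 0$, hence $\|\varphi_\lambda\psi_\lambda(a) - \varphi_\lambda E_\lambda \psi_\lambda(a)\| \to 0$ (the $\varphi_\lambda$ have norm $\le d+1$), so $a = \lim_\lambda \varphi_\lambda\psi_\lambda(a) = \lim_\lambda \varphi_\lambda E_\lambda\psi_\lambda(a) = \Phi(a) = 0$, as desired.

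I would organise the writeup as follows: (1) reduce to $a \in A_+^1$ with $\Phi(a) = 0$ and set up the approximations, arranging via Remark~\ref{rem:almost_order_zero} and the normalisations of Proposition~\ref{prop:dim-without-psi}'s proof that each $\varphi_\lambda^{(i)}$ is, on each surviving matrix block, a positive scalar multiple of an isometric order zero map; (2) show $\|\psi_\lambda(a) - E_\lambda\psi_\lambda(a)\| \to 0$ would suffice to conclude, so assume it fails and extract, in some block, a rank one spectral projection $q_\lambda$ of $\psi_\lambda(a)$ with eigenvalue $\ge \delta > 0$ and $\|q_\lambda - E_{D_{F_\lambda}}(q_\lambda)\| \ge \delta' > 0$; (3) the main lemma: bound the commutators $\|q_\lambda c - c q_\lambda\|$ for $c \in D_{F_\lambda}^1$ in terms of the normaliser defect of $\varphi_\lambda^{(i)}$, which tends to $0$ along the system — here is where condition (5), Proposition~\ref{prop:maps-normalisers}(i), Proposition~\ref{unique ext}, and Lemma~\ref{lem:positive_normaliser} all come together; (4) apply Lemma~\ref{lem:rank1} to get a rank one $d_\lambda \in D_{F_\lambda}$ with $\|d_\lambda - q_\lambda\| \to 0$, contradicting step (2); (5) conclude $a = 0$.

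The main obstacle I expect is step (3): making precise, with honest constants, how a failure of $q_\lambda$ to commute with the diagonal propagates through $\varphi_\lambda^{(i)}$ into a failure of condition \ref{defn:dim_diag}(5), and in particular identifying the right normaliser $v$ in $F_\lambda^{(i)}$ whose image detects the noncommutativity. One has to be careful that $\varphi_\lambda^{(i)}$ is only \emph{approximately} isometric on the relevant block after the reductions, that the supporting $^*$-homomorphism lives in $A^{**}$, and that order zero functional calculus is needed to write $\varphi_\lambda^{(i)}(v)$ as a normaliser times diagonal elements; controlling all error terms uniformly in the (unbounded) matrix sizes $r^{(j)}$ is exactly the point where the ``matrix-size-independent'' rigidity of Lemma~\ref{lem:rank1} is essential, and assembling this into a clean contradiction is the delicate part.
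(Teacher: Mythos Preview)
Your proposal has a genuine gap at step~(3). The mechanism you describe for bounding $\|[q_\lambda,c]\|$ does not work: condition~\ref{defn:dim_diag}(5) is an \emph{exact} statement about matrix units of $(D_{F_\lambda^{(i)}}\subset F_\lambda^{(i)})$, and a spread-out rank one spectral projection $q_\lambda$ of $\psi_\lambda(a)$ is in general \emph{not} a normaliser of $D_{F_\lambda^{(i)}}$ (by Proposition~\ref{prop:orthogonal-normalisers}, a rank one projection normalises the diagonal only if it already lies in it). Hence condition~(5) says nothing about $\varphi_\lambda^{(i)}(q_\lambda)$, and there is no ``normaliser defect'' tending to zero. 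A toy case makes this transparent: for $(D_2\subset M_2)$ with $\psi=\varphi=\id$, the projection $q=\tfrac12\left(\begin{smallmatrix}1&1\\1&1\end{smallmatrix}\right)$ is maximally spread out, condition~(5) holds exactly, and nothing forces $[q,c]$ to be small. More to the point, the hypothesis $\Phi(a)=0$ enters your outline only in step~(2); your step~(3) is phrased as a claim about spectral projections of $\psi_\lambda(a)$ for arbitrary $a$, and that claim is false. (There is also a secondary issue in step~(2): extracting a \emph{single} spectral projection with both eigenvalue and spread bounded below, uniformly in the matrix size, is not automatic.)

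The paper's argument is structurally different and uses a two-level scheme. One fixes an index $\lambda_0$, singles out a matrix block $M_R\subset F_{\lambda_0}$ where $\psi_{\lambda_0}(a)$ is detected, and then for large $\lambda$ transports the matrix units $e_{k,1}\in M_R$ through $\varphi_{\lambda_0}$ into $A$ --- where their images are genuine normalisers of $D_A$ by~(5) --- and back down via $\psi_\lambda$ into $F_\lambda$. The resulting rank one projections $\bar d_{k,\lambda}\in F_\lambda$ \emph{do} approximately commute with $D_{F_\lambda}$: not because of any spectral link with $\psi_\lambda(a)$, but because $\varphi_\lambda(\bar d_{k,\lambda})$ is close to $\varphi_{\lambda_0}(e_{k,1})\,\varphi_\lambda(u_\lambda^2)\,\varphi_{\lambda_0}(e_{1,k})\in D_A$, using $\varphi_{\lambda_0}(e_{k,1})\in\mathcal{N}_A(D_A)$, $\varphi_\lambda(D_{F_\lambda})\subset D_A$, and approximate multiplicativity of $\varphi_\lambda\psi_\lambda$. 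Lemma~\ref{lem:rank1} then replaces each $\bar d_{k,\lambda}$ by a diagonal $d_{k,\lambda}\in D_{F_\lambda}$, and the lower bound for $\Phi(a)$ comes from $\|d_{k,\lambda}\,\psi_\lambda(a)\,d_{k,\lambda}\|$ via the formula~\eqref{cond_exp_formula}. The $\lambda_0/\lambda$ structure is precisely what manufactures rank one projections in $F_\lambda$ to which Lemma~\ref{lem:rank1} legitimately applies; your single-level approach lacks this ingredient.
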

\begin{proof}
Let $( F_\lambda, D_{F_\lambda},  \psi_\lambda, \varphi_{\lambda} )_{\lambda \in \Lambda}$ be a system of c.p\ approximations witnessing $\ddim(A,D_A) = d$. We may assume that the $F_\lambda$ are nonzero (if $A= \{0\}$ there is nothing to show); by Remarks~\ref{rem:almost_order_zero}(ii) and (iii) we may also assume that the compositions $\varphi_{\lambda} \psi_\lambda$ are contractive and that the $\psi_\lambda$ are approximately order zero, so that the induced map $\bar{\psi}: A \to \prod F_\lambda / \bigoplus F_\lambda$ is c.p.c.\ order zero with $\bar{\psi}(D_A) \subset \prod D_{F_\lambda} / \bigoplus D_{F_\lambda}$. We need to show that $\Phi(a) \neq 0$ for any positive nonzero $a \in A$; we may assume $\|a\| = 1$.

Choose $\lambda_0 \in \Lambda$ such that
\begin{equation}
\label{2-9-6}
\| \varphi_{\lambda_0} \psi_{\lambda_0} (a) - a \| < \frac{1}{16(d+1)^3}.
\end{equation}
Since $\varphi_{\lambda_0}$ is a sum of $d+1$ c.p.c.\ order zero maps, there is a matrix summand $(M_R, D_R)$ of $(F_{\lambda_0},D_{F_{\lambda_0}})$ such that 
\[
\| \varphi_{\lambda_0}(\mathbf{1}_R \psi_{\lambda_0}(a)) \| \ge \frac{1}{2(d+1)},
\] 
and so 
\begin{equation}
\label{2-9-7}
\|\mathbf{1}_R \psi_{\lambda_0}(a)\| \ge \frac{1}{2(d+1)}, \quad \mbox{ and } \quad \| \varphi_{\lambda_0}(\mathbf{1}_R ) \| \ge \frac{1}{2(d+1)}.
\end{equation}
Moreover, $\varphi_{\lambda_0}|_{M_R}$ is c.p.c.\ order zero with $\varphi_{\lambda_0}(D_R) \subset D_A$, and we have just seen that $\| \varphi_{\lambda_0}|_{M_R}\| \ge {1}/{(2(d+1))}$.

Note that $\bar{\psi} \varphi_{\lambda_0}|_{M_R}: M_R \to \prod F_\lambda / \bigoplus F_\lambda$ is c.p.c.\ order zero and that, for each $\lambda$, $\psi_\lambda \varphi_{\lambda_0}(D_R) \subset D_{F_\lambda}$, whence $\bar{\psi} \varphi_{\lambda_0}(D_R) \subset \prod D_{F_\lambda} / \bigoplus D_{F_\lambda} \subset \prod F_\lambda / \bigoplus F_\lambda$. 

Also, since $\|\varphi_\lambda \psi_\lambda(\varphi_{\lambda_0} (e_{k,l})) - \varphi_{\lambda_0} (e_{k,l})\| \to 0$ (where $e_{k,l}$ denote the standard matrix units of $M_R$), and since each $\varphi_\lambda \psi_\lambda = \sum_{i=0}^d \varphi_\lambda^{(i)} \psi_\lambda^{(i)}$, there is $\lambda_1 \in \Lambda$ such that for each $\lambda \ge \lambda_1$ there is $i \in \{0,\ldots,d\}$ with
\begin{equation}
\label{faithful-1}
\textstyle
\|\psi_\lambda^{(i)}\varphi_{\lambda_0}(e_{k,l}) \| \ge \| \varphi_\lambda^{(i)} \psi_\lambda^{(i)}\varphi_{\lambda_0}(e_{k,l})\| \ge {1}/{(4 (d+1)^2)}.
\end{equation}
Now for each $\lambda \ge \lambda_1$ there is a rank one projection $d_{1,\lambda} \in D_{F_\lambda}$ such that 
\[
\textstyle
d_{1,\lambda} \psi_\lambda \varphi_{\lambda_0}(e_{1,1}) = \psi_\lambda \varphi_{\lambda_0}(e_{1,1}) d_{1,\lambda} = \| \psi_\lambda \varphi_{\lambda_0}(e_{1,1}) \| \cdot d_{1,\lambda}  
\]
and such that 
\begin{equation}
\label{faithful-4}
\|\varphi_\lambda(d_{1,\lambda})\| \ge \frac{1}{4 (d+1)^2}.
\end{equation}
Set
\[
d_1 := [(d_{1,\lambda})_\lambda] \in \prod D_{F_\lambda} \big/ \bigoplus D_{F_\lambda}
\]
(the values of $d_{1,\lambda}$ for $\lambda \le \lambda_1$ do not matter in the quotient), then $d_1$ is a projection and we have 
\begin{equation}
\label{faithful-3}
d_1 \ge d_1 \bar{\psi} \varphi_{\lambda_0}(e_{1,1}) =   \bar{\psi} \varphi_{\lambda_0}(e_{1,1}) d_1 \ge \frac{1}{4 (d+1)^2} \cdot d_1.
\end{equation}
This also implies that there is
\[
d_1 \le u \le 4(d+1)^2 \cdot d_1 \in \prod D_{F_\lambda} \big/ \bigoplus D_{F_\lambda}
\]
with 
\[
u \bar{\psi} \varphi_{\lambda_0}(e_{1,1}) =  \bar{\psi} \varphi_{\lambda_0}(e_{1,1}) u = d_1.
\]
Moreover, $u$ has a lift $(u_\lambda)_\lambda \in \prod D_{F_\lambda}$ with $d_{1,\lambda} \le u_\lambda \le 4(d+1)^2 \cdot d_{1,\lambda}$ for $\lambda \ge \lambda_1$.

For $k=2,\ldots,R$ we now define 
\begin{equation}
\label{2-9-1}
d_k := \bar{\psi} \varphi_{\lambda_0} (e_{k,1}) u d_1 u \bar{\psi} \varphi_{\lambda_0} (e_{1,k}) \in \prod F_\lambda \big/ \bigoplus F_\lambda,
\end{equation}
then
\begin{align}
d_k^2 & =  \bar{\psi} \varphi_{\lambda_0} (e_{k,1}) u d_1 u \bar{\psi} \varphi_{\lambda_0} (e_{1,k})  \bar{\psi} \varphi_{\lambda_0} (e_{k,1}) u d_1 u \bar{\psi} \varphi_{\lambda_0} (e_{1,k}) \nonumber \\
& = \bar{\psi} \varphi_{\lambda_0} (e_{k,1}) u d_1 u \bar{\psi} \varphi_{\lambda_0} (e_{1,1})  \bar{\psi} \varphi_{\lambda_0} (e_{1,1}) u d_1 u \bar{\psi} \varphi_{\lambda_0} (e_{1,k}) \nonumber \\
& = \bar{\psi} \varphi_{\lambda_0} (e_{k,1}) u d_1 u \bar{\psi} \varphi_{\lambda_0} (e_{1,k}) \nonumber \\
& = d_k, \label{2-9-2}
\end{align}
where we have used that $\bar{\psi} \varphi_{\lambda_0}$ is order zero and that $d_1$ is a projection. Therefore, each $d_k$ is a projection which is Murray--von Neumann equivalent to $d_1$ via the partial isometry $v_k:= u \bar{\psi} \varphi_{\lambda_0} (e_{1,k})$. 

If $(\tilde{d}_{k,\lambda})_\lambda \in \prod F_\lambda$ is a positive contractive lift for $d_k$, then we may take $\bar{d}_{k,\lambda} := \chi_{[1/2,1]}(\tilde{d}_{k,\lambda}) \in F_\lambda $ for each $\lambda$; it is clear that $(\bar{d}_{k,\lambda})_\lambda$ is a lift of $d_k$ as well. Moreover, the $\bar{d}_{k,\lambda}$ are projections of rank one for $\lambda \ge \lambda_2$ for some sufficiently large $\lambda_2$, since by \eqref{2-9-1} and \eqref{2-9-2} the $\bar{d}_{k,\lambda}$ and $d_{1,\lambda}$ are Murray--von Neumann equivalent for $\lambda$ large enough. 

For each $k=2, \ldots,R$ we now have lifts of the $d_k$ in $\prod F_\lambda$ consisting of rank one projections (at least for $\lambda$ large enough). However, we will need lifts in $\prod D_\lambda$, as we have for $d_1$. Arranging this is our next task.\\

\noindent
{\bf Claim:} For each $k=2, \ldots,R$ and $\lambda \in \Lambda$ there are projections $d_{k,\lambda} \in D_{F_\lambda}$ of rank at most one such that $(d_{k,\lambda})_\lambda$ lifts $d_k$.\\

 To prove the claim, note first that 
 \[
 \| \bar{d}_{k,\lambda} - \psi_\lambda \varphi_{\lambda_0}(e_{k,1}) u_\lambda^2 \psi_\lambda  \varphi_{\lambda_0}(e_{1,k}) \| \stackrel{\lambda}{\longrightarrow} 0,
 \]
whence 
 \[
 \| \varphi_\lambda (\bar{d}_{k,\lambda}) - \varphi_\lambda(\psi_\lambda \varphi_{\lambda_0}(e_{k,1}) u_\lambda^2 \psi_\lambda  \varphi_{\lambda_0}(e_{1,k})) \| \stackrel{\lambda}{\longrightarrow} 0.
 \]
But
\[
\|  \varphi_\lambda(\psi_\lambda \varphi_{\lambda_0}(e_{k,1}) u_\lambda^2 \psi_\lambda  \varphi_{\lambda_0}(e_{1,k})) - \varphi_{\lambda_0}(e_{k,1}) \varphi_\lambda (u_\lambda^2) \varphi_{\lambda_0}(e_{1,k}) \| \stackrel{\lambda}{\longrightarrow} 0
\]
by \cite[Lemma~3.6]{KW04} (the $\psi_\lambda \varphi_{\lambda_0}(e_{k,l})$ are approximately in the multiplicative domain of the $\varphi_\lambda$), and $\varphi_{\lambda_0}(e_{k,1}) \varphi_\lambda (u_\lambda^2) \varphi_{\lambda_0}(e_{1,k}) \in D_A$ for each $\lambda$, since the $\varphi_{\lambda_0}(e_{1,k})$ normalise $D_A$. This in particular implies that for every given 
\[
0 < \beta < \frac{1}{144 \cdot 16(d+1)^4}
\]
there is $\lambda_3 \ge \lambda_2 \in \Lambda$ such that, for every $\lambda \ge \lambda_3$ and every $c \in D_{F_\lambda}^1$ in the same matrix summand (say $B_\lambda$) as the rank one projection $\bar{d}_{k,\lambda}$, we have 
\begin{equation}
\label{2-9-3}
\| (\varphi_\lambda|_{B_\lambda})^2 ([\bar{d}_{k,\lambda},c])\| = \| [\varphi_\lambda(\bar{d}_{k,\lambda}), \varphi_\lambda(c)]\| < \beta.
\end{equation}
Here we have used that $\varphi_\lambda$ is order zero when restricted to any matrix summand of $F_\lambda$, so that $(\varphi_\lambda|_{B_\lambda})^2 (\bar{d}_{k,\lambda} \, c) = \varphi_\lambda (\bar{d}_{k,\lambda})  \, \varphi_\lambda (c)$.
For each $k \in \{2,\ldots,R\}$ and $\lambda \ge \lambda_3$, $\bar{d}_{k,\lambda}$ is in the same matrix summand $B_\lambda$ as $d_{1,\lambda}$, and so for any $x \in B_\lambda$ by \eqref{faithful-4} we have 
\begin{equation}
\label{2-9-4}
\textstyle
\| (\varphi_\lambda|_{B_\lambda})^2(x) \| = \| \varphi_\lambda(x) \| \| \varphi_\lambda(\mathbf{1}_{B_\lambda}) \| \ge \|x\| \cdot {1}/{(16(d+1)^4)}.
\end{equation}
From \eqref{2-9-3} and \eqref{2-9-4} we see that for $\lambda \ge \lambda_3$ and $c \in D_{F_\lambda}^1$ as above 
\[
\textstyle
\| [ \bar{d}_{k,\lambda}, c] \| \le \beta \cdot 16 (d+1)^4 < {1}/{144}.
\]
Now Lemma~\ref{lem:rank1} implies that for every $\lambda \ge \lambda_3$ there is a uniquely determined rank one projection $d_{k,\lambda} \in D_{F_\lambda}$ such that
\[
\| d_{k,\lambda} - \bar{d}_{k,\lambda} \| \le 24 \beta^\frac{1}{2} (d+1)^2.
\]
By decreasing $\beta$ (and since the $d_{k,\lambda}$ are uniquely determined) it also follows that $\| [ \bar{d}_{k,\lambda} , d_{k,\lambda} ] \| \stackrel{\lambda}{\longrightarrow} 0$. But almost commuting projections which are less than $1$ apart almost agree, whence $\| \bar{d}_{k,\lambda} - d_{k,\lambda} \| \stackrel{\lambda}{\longrightarrow} 0$. We have now found projections $d_{k,\lambda} \in D_{F_\lambda}$ of rank at most one (at least for $\lambda \ge \lambda_3$ -- for all other $\lambda$ we may take $d_{k,\lambda} = 0$) such that $[(d_{k,\lambda})_\lambda] = d_k$, and so the claim above is established. \\

We are now ready to define a map 
\[
\Delta: \prod F_\lambda \big/ \bigoplus F_\lambda \longrightarrow \prod F_\lambda \big/ \bigoplus F_\lambda
\]
by
\[
\Delta( \, . \,) := \Big(\sum\nolimits_{k=1}^R d_k \Big) (\, . \, ) \Big(\sum\nolimits_{k=1}^R d_k \Big).
\]
By our construction, and since $\bar{\psi} \varphi_{\lambda_0}|_{M_R}$ is order zero, we have for $m,n \in \{1,\ldots,R\}$
\begin{align*}
 \Big(\sum\nolimits_{k=1}^R d_k \Big) \bar{\psi} \varphi_{\lambda_0}(e_{m,n}) & = d_m \bar{\psi} \varphi_{\lambda_0}(e_{m,n}) \\
 & = \bar{\psi} \varphi_{\lambda_0}(e_{m,n}) d_n \\
 &  = \bar{\psi} \varphi_{\lambda_0}(e_{m,n})  \Big(\sum\nolimits_{k=1}^R d_k \Big).
\end{align*}
It follows that $\Delta \bar{\psi} \varphi_{\lambda_0}|_{M_R}$ is also c.p.c.\ order zero, and from \eqref{faithful-3} we see that for $x \in M_R$
\begin{equation}
\label{faithful-2}
\|\Delta \bar{\psi} \varphi_{\lambda_0} (x)\| \ge \mu \|x\| \ge \frac{1}{4(d+1)^2} \|x\|.
\end{equation}.

By the formula \eqref{cond_exp_formula} for the conditional expectation $\Phi$ in connection with \eqref{faithful-4} and the fact that $\varphi|_{B_\lambda}$ is order zero we have
\begin{align}
\|\Phi(a)\| & \stackrel{\eqref{cond_exp_formula}}{=} \lim_\lambda \|\varphi_\lambda E_\lambda \psi_\lambda(a) \| \nonumber\\
& \stackrel{\phantom{\eqref{cond_exp_formula}}}{\ge} \max_k \limsup_\lambda  \| \varphi_\lambda (d_{k,\lambda} \psi_\lambda(a) d_{k,\lambda}) \|\nonumber \\
& \stackrel{\eqref{faithful-4}}{\ge} \frac{1}{4(d+1)^2} \max_k \|d_k \bar{\psi}(a) d_k \| \nonumber \\
& \stackrel{\phantom{\eqref{cond_exp_formula}}}{\ge} \frac{1}{4(d+1)^2} \max_k \|d_k \bar{\psi}(a) \|^2 . \label{2-9-9}
\end{align}
Suppose for a contradiction that
\begin{equation}
\label{2-9-10}
\max_k \|d_k \bar{\psi}(a)  \| \le \frac{1}{R \cdot 16 \cdot (d+1)^3},
\end{equation}
then 
\begin{equation}
\label{2-9-5}
 \Big\| \sum\nolimits_{k=1}^R d_k \bar{\psi}(a)  \Big\| \le \frac{1}{16 \cdot (d+1)^3},
\end{equation} 
whence
\[
\|\Delta \bar{\psi} \varphi_{\lambda_0}(\mathbf{1}_R \psi_{\lambda_0}(a)) \| \le \|\Delta \bar{\psi} \varphi_{\lambda_0}( \psi_{\lambda_0}(a)) \| \stackrel{\eqref{2-9-6},\eqref{2-9-5}}{<}  \frac{2}{ 16 \cdot (d+1)^3}
\]
and, with \eqref{faithful-2}, 
\begin{equation}
\label{2-9-8}
\frac{1}{4(d+1)^2} \cdot \| \mathbf{1}_R \psi_{\lambda_0}(a) \| < \frac{2}{ 16 \cdot (d+1)^3}.
\end{equation}
From this we get
\[
\frac{1}{8(d+1)^3} \stackrel{\eqref{2-9-7}}{\le} \frac{1}{4(d+1)^2} \|\mathbf{1}_R \psi_{\lambda_0}(a)\| \stackrel{\eqref{2-9-8}}{<} \frac{2}{ 16 \cdot (d+1)^3},
\]
a contradiction. This means \eqref{2-9-10} cannot hold, i.e., 
\begin{equation}
\label{2-9-11}
\max_k \|d_k \bar{\psi}(a)  \| > \frac{1}{R \cdot 16 \cdot (d+1)^3}.	
\end{equation}
We therefore have 
\[
\| \Phi(a) \| \stackrel{\eqref{2-9-9},\eqref{2-9-11}}{\ge} \frac{1}{4(d+1)^2} \cdot \frac{1}{(R \cdot 16 \cdot (d+1)^3)^{2}}  > 0.
\]
\end{proof}

By Propositions~\ref{unique ext}, \ref{prop:cond_exp_formula}, and \ref{prop:faithful_cond}, finite diagonal dimension of a nondegenerate sub-$\mathrm{C}^*$-algebra implies the unique extension property and the existence of a unique and faithful conditional expectation. Regularity was observed in Remark~\ref{rem:almost_order_zero}(vi). Therefore the conditions of Definition~\ref{defn:diagonal} are satisfied and we get:

\begin{thm}\label{principal grp}
	Let $(D_A\subset A)$ be a nondegenerate sub-$\mathrm{C}^*$-algebra with finite diagonal dimension.  Then $D_A$ is a diagonal in $A$. \end{thm}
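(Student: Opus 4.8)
The plan is to verify, one by one, the five defining conditions of a diagonal as listed in Definition~\ref{defn:diagonal}, invoking the results established earlier in this section; no new argument is needed. Condition~(0), nondegeneracy, is part of the hypothesis. For condition~(1), that $D_A$ is a masa, I would cite Proposition~\ref{unique ext}, where this was deduced from the unique extension property via \cite{ABG82}. Condition~(2), regularity of $D_A$, was already recorded in Remark~\ref{rem:almost_order_zero}(vi): in a system of c.p.\ approximations witnessing finite diagonal dimension the maps $\varphi_\lambda$ send matrix units with respect to $D_{F_\lambda}$ into $\mathcal{N}_A(D_A)$, and since such matrix units span $\varphi_\lambda(F_\lambda)$, whose union is dense in $A$, the set $\mathcal{N}_A(D_A)$ generates $A$.

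For condition~(3) I would invoke Proposition~\ref{prop:cond_exp_formula}, which produces the conditional expectation $\Phi\colon A \to D_A$ via $\Phi(a)=\lim_\lambda \varphi_\lambda E_\lambda \psi_\lambda(a)$, together with Proposition~\ref{prop:faithful_cond}, which establishes faithfulness of $\Phi$. Finally, the unique extension property — the extra requirement upgrading ``Cartan'' to ``diagonal'' — is precisely the content of Proposition~\ref{unique ext}. Assembling these observations, all clauses of Definition~\ref{defn:diagonal} hold, so $D_A$ is a diagonal in $A$.

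Since all the substantive work has been carried out in Propositions~\ref{unique ext}, \ref{prop:cond_exp_formula}, and \ref{prop:faithful_cond} (the hardest being faithfulness of $\Phi$, which rests on the rigidity of Lemma~\ref{lem:rank1} converting approximate into exact commutation with $D_r$), there is no real obstacle left at this stage: the theorem is a bookkeeping statement. The only points deserving a line of care are that the nondegeneracy hypothesis is genuinely used — it underlies Proposition~\ref{prop:maps-normalisers}(i), hence the inclusion $\varphi_\lambda(D_{F_\lambda})\subset D_A$ on which both the construction of $\Phi$ and the masa argument rely — and that \emph{finiteness} of $\ddim(D_A\subset A)$, not merely some weaker approximation property, is what each cited proposition requires.
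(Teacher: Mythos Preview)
Your proposal is correct and matches the paper's own argument essentially verbatim: the paper likewise assembles Propositions~\ref{unique ext}, \ref{prop:cond_exp_formula}, and \ref{prop:faithful_cond} together with Remark~\ref{rem:almost_order_zero}(vi) to check the clauses of Definition~\ref{defn:diagonal}, treating the theorem as a summary statement with no further work required.
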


\section{Permanence properties}
\label{sec:permanence}

\noindent
In this section we study permanence properties of diagonal dimension. Our proofs largely follow those of the respective statements for nuclear dimension. However, since the approximating maps have to preserve subalgebras exactly and not just approximately, we have to make some additional effort when it comes to quotients and inductive limits. On the other hand, for hereditary subalgebras the argument becomes slightly easier due to the built-in rigidity of diagonal dimension. For the reader's convenience we collect all the permanence properties in the theorem below. The proofs will be given in the corresponding subsections below.

\begin{thm} \label{thm:permanence} Let $(D_A\subset A)$ and $(D_B\subset B)$ be two sub-$\mathrm{C}^*$-algebras with $D_A$ and $D_B$ abelian. Then we have the following permanence properties. 
\begin{enumerate}[label=\rm{(\roman*})]
	\item {\rm Direct sums:} 
	\begin{align*}
	&\ddim(D_A\oplus D_B\subset A\oplus B) \\
	&
	= \max\{ \ddim(D_A\subset A), \ddim(D_B\subset B) \}.
	\end{align*}
	
	\item {\rm Tensor products:}\footnote{In case the dimensions take finite values the involved algebras are nuclear, so there is no need to specify which tensor product we are working with.  Also, recall that we follow G\'abor Szab\'o's notation and write $\dim^{+1}(\, .\,)$ for $\dim(\, .\,)+1$ for various dimension theories whenever there are products involved.} 
	\begin{align*}
	&\ddim^{+1}(D_A\otimes D_B\subset A\otimes B) \\
	&
	\leq \ddim^{+1}(D_A\subset A)\cdot \ddim^{+1}{(D_B\subset B)}.
	\end{align*}
		
	\item {\rm Hereditary subalgebras:} Suppose $B\subset A$ and $D_B\subset D_A$ are hereditary subalgebras and that $(D_B\subset B)$ is nondegenerate. Then
	\[
	\ddim( D_B\subset B  )\leq  \ddim(D_A\subset A).
	\]
	
		\item {\rm Unitisations:} Suppose $(D_A\subset A)$ is nondegenerate and let $A^\sim$ be the smallest unitisation of $A$. Under the canonical identification of the unitisation $D_A^\sim$ with $D_A+\C \unit_{A^\sim}$ we then have 
	\[
	\ddim(D_A\subset A) = \ddim(D_A^\sim \subset A^\sim ).
	\]

	\item {\rm Quotients:} Suppose $(D_A\subset A)$ is nondegenerate, and suppose there is a surjective $^*$-homomorphism $\pi:A\to B$ such that $\pi(D_A) = D_B$. Then 
	\[
	\ddim(D_B\subset B)\leq \ddim(D_A\subset A).
	\]
	
	\item {\rm Inductive limits:} Let $((D_i \subset A_i), \varrho_{i,j})_{i,j \in I}$ be an inductive system of sub-$\mathrm{C}^*$-algebras with $D_i$ abelian and nondegenerate, and such that  $\varrho_{i,j}$ maps $\cn_{A_i}(D_i)$ into $\cn_{A_j}(D_j)$ for all $i$ and $j$.
	Then $( \varinjlim D_i \subset \varinjlim A_i   )$ is canonically a sub-$\mathrm{C}^*$-algebra and we have
	\[
	\ddim( \varinjlim D_i \subset \varinjlim A_i   ) \leq \liminf_{i\in I} ( \ddim(D_i \subset A_i ) ).
	\]
	
	\item {\rm Stabilisations:} Diagonal dimension is invariant under stabilisation with matrices or algebras of compact operators with their canonical diagonals. More precisely, 
	\begin{align*}
	&\ddim(D_A \subset A ) \\
	&= \ddim(D_A\otimes D_n \subset A\otimes M_n ) \\
	&= \ddim( D_A\otimes c_0(\N) \subset A\otimes \mathcal{K}(\ell^2(\N)) ).
	\end{align*}

\end{enumerate}
\end{thm}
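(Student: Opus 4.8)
Throughout, the plan is to imitate the proof of the corresponding statement for nuclear dimension, the extra task being to arrange that the approximating maps \emph{exactly} respect the abelian subalgebras and that matrix units (equivalently, normalisers, by Remark~\ref{rem:almost_order_zero}(v)) go to normalisers. The recurring tools are Proposition~\ref{prop:maps-normalisers} (on how $^*$-homomorphisms and order zero maps move normalisers), the ``$\psi$-free'' characterisation of Proposition~\ref{prop:dim-without-psi} (the natural vehicle for hereditary subalgebras, quotients, and inductive limits), Lemma~\ref{lem:positive_normaliser} and the order zero functional calculus, and the fact that a tensor product --- and a direct sum with orthogonal ranges --- of c.p.c.\ order zero maps is again c.p.c.\ order zero. \emph{Direct sums} (i): ``$\geq$'' comes from post-composing a system for $(D_A\oplus D_B\subset A\oplus B)$ with the two coordinate $^*$-homomorphisms (surjective, carrying $D_A\oplus D_B$ onto $D_A$, $D_B$, hence preserving order zero and normalisers); ``$\leq$'' from taking the direct sum of a system for each pair, padding the shorter one with zero summands so both have $\max\{d_A,d_B\}+1$ colours, and observing that a matrix unit of $D_{F_A^{(i)}}\oplus D_{F_B^{(i)}}$ sits inside one summand since minimal projections do. \emph{Tensor products} (ii): tensor a system for each pair and colour $F_A^{(i)}\otimes F_B^{(j)}$ by the pair $(i,j)$; each $\varphi_A^{(i)}\otimes\varphi_B^{(j)}$ is c.p.c.\ order zero, $\psi_A\otimes\psi_B$ is contractive with $(\psi_A\otimes\psi_B)(D_A\otimes D_B)\subseteq D_{F_A}\otimes D_{F_B}$, and a matrix unit of the masa $D_{F_A^{(i)}}\otimes D_{F_B^{(j)}}$ is a simple tensor $v\otimes w$ of matrix units, whose image $\varphi_A^{(i)}(v)\otimes\varphi_B^{(j)}(w)$ lies in $\mathcal N_A(D_A)\odot\mathcal N_B(D_B)\subseteq\mathcal N_{A\otimes B}(D_A\otimes D_B)$.

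\textbf{Hereditary subalgebras (iii) --- the main case.} After a routine reduction (replace $A$ by $\overline{D_AAD_A}$, which contains $B$ and has no larger diagonal dimension) we may assume $(D_A\subset A)$ nondegenerate, so that Proposition~\ref{prop:dim-without-psi} applies and $\varphi^{(i)}(D_{F^{(i)}})\subseteq D_A$ by Proposition~\ref{prop:maps-normalisers}(i). Given a finite $\mathcal F\subseteq B^1$ and $\varepsilon>0$, perturb $\mathcal F$ (using that $D_B$ contains an approximate unit for $B$) so that some $h\in(D_B)^1_+$ satisfies $ha=ah=a$ for $a\in\mathcal F$; as $h\in(D_A)^1_+$, apply Proposition~\ref{prop:dim-without-psi} to $(D_A\subset A)$, with a small auxiliary tolerance, to get $\varphi\colon F\to A$, a masa $D_F$, and elements $b_a^{(i)},b_h^{(i)}\in D_{F^{(i)}}$ with the stated properties. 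Then $\varphi(b_h)=\sum_i\varphi^{(i)}(b_h^{(i)})\in D_A$ is close to $h\in D_B$, and since $D_B$ is a closed ideal of the abelian $D_A$, a spectral cut-off $e:=f_\delta(\varphi(b_h))$ --- with $f_\delta$ vanishing on $[0,1-2\delta]$, equal to $1$ on $[1-\delta,1]$, $\delta$ fixed and the auxiliary tolerance far smaller --- still lies in $D_B$ and satisfies $ea=ae=a$ for $a\in\mathcal F$. It remains to shrink the approximation so that it takes values in $B$ without destroying order zero. For this I would pick, for each $i$, a \emph{central} projection $q^{(i)}\in Z(F^{(i)})\subseteq D_{F^{(i)}}$ --- built from the isometric rescalings $\bar\varphi^{(i)}$ used in the proof of Proposition~\ref{prop:dim-without-psi}, so that $\varphi^{(i)}(q^{(i)})\in D_B$ (i.e.\ $q^{(i)}$ selects exactly those matrix blocks of $F^{(i)}$ whose $\varphi^{(i)}$-image is supported where $e$ is nonzero) --- while arranging that $q^{(i)}$ dominates $b_h^{(i)}$ and the diagonal pieces $b_a^{(i)}$ to within $\varepsilon$. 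Then $\varphi'^{(i)}:=\varphi^{(i)}|_{q^{(i)}F^{(i)}}$ is c.p.c.\ order zero with range in $\overline{\varphi^{(i)}(q^{(i)})\,A\,\varphi^{(i)}(q^{(i)})}\subseteq B$ ($B$ is hereditary and $\varphi^{(i)}(q^{(i)})\in D_B$), its matrix units land in $\mathcal N_A(D_A)\cap B=\mathcal N_B(D_B)$, and $b_h^{(i)}\in q^{(i)}D_{F^{(i)}}$; by Proposition~\ref{prop:dim-without-psi} this witnesses $\ddim(D_B\subset B)\leq\ddim(D_A\subset A)$. This is the step I expect to be the real obstacle: the naive compression $h^{1/2}\varphi(\cdot)h^{1/2}$ destroys order zero (Remark~\ref{rem:almost_order_zero}(ii)), and it is the rigidity of the normaliser condition --- together with the fact that cutting down $F^{(i)}$ by a \emph{central} projection does preserve order zero --- that lets one land cleanly inside $B$.

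\textbf{Unitisations and quotients.} For (iv), the inequality $\ddim(D_A\subset A)\leq\ddim(D_A^\sim\subset A^\sim)$ is the instance $B=A$, ambient $A^\sim$, of (iii) (here $D_A\triangleleft D_A^\sim$ and $A\triangleleft A^\sim$ are hereditary ideals and $(D_A\subset A)$ is nondegenerate). For the reverse inequality one runs the standard unitisation argument for nuclear dimension: extend a system $(F,D_F,\psi,\varphi)$ for $A$ by a one-dimensional summand inserted into colour $0$, sending the unit of $\mathbb{C}$ to $\mathbf 1_{A^\sim}-q$ with $q\in D_A^\sim$ obtained by pushing $\varphi^{(0)}(\mathbf 1_{F^{(0)}})$ towards a projection by order zero functional calculus, so that $\mathbf 1_{A^\sim}-q$ is orthogonal to the range of the corrected colour-$0$ map (keeping it c.p.c.\ order zero); since $\mathbf 1_{A^\sim}-q\in D_A^\sim$, conditions (4) and (5) for the new summand hold automatically and the colour count stays $d+1$. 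For \emph{quotients} (v) we use Proposition~\ref{prop:dim-without-psi} in its quotient-friendly form (as indicated in the paragraph preceding it): to verify its middle condition for $(D_B\subset B)$, lift the relevant finite set and common positive contraction through $\pi$ --- using surjectivity of $\pi|_{D_A}$ and a routine functional-calculus adjustment so the lifts share a common unit in $D_A$ --- apply the condition upstairs, and post-compose with $\pi$, which is a $^*$-homomorphism (hence preserves order zero) with $\pi(\mathcal N_A(D_A))\subseteq\mathcal N_B(D_B)$ since $\pi(vdv^*)=\pi(v)\pi(d)\pi(v)^*$ and $\pi(D_A)=D_B$.

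\textbf{Inductive limits and stabilisations.} For (vi), first note that $(\varinjlim D_i\subset\varinjlim A_i)$ is again a nondegenerate sub-$\mathrm{C}^*$-algebra with abelian diagonal, and that $\varrho_{i,\infty}(\mathcal N_{A_i}(D_i))\subseteq\mathcal N_{\varinjlim A_j}(\varinjlim D_j)$ because each $\varrho_{i,j}$ preserves normalisers and these form a norm-closed set. Given the data of Proposition~\ref{prop:dim-without-psi} for $\varinjlim A_i$, approximate it at a stage $i$ chosen large enough and with $\ddim(D_i\subset A_i)=\liminf_j\ddim(D_j\subset A_j)=:d$, apply Proposition~\ref{prop:dim-without-psi} to $(D_i\subset A_i)$, and transport the resulting $\varphi_i$, $D_F$, and $b_h^{(k)}\in D_{F^{(k)}}$ forward by the $^*$-homomorphism $\varrho_{i,\infty}$; order zero and the normaliser condition survive, giving $\ddim(\varinjlim D_i\subset\varinjlim A_i)\leq d$. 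Finally (vii): $\ddim(D_n\subset M_n)=0$ (take $\psi=\varphi=\mathrm{id}$), so (ii) gives $\ddim(D_A\otimes D_n\subset A\otimes M_n)\leq\ddim(D_A\subset A)$, while (iii), applied to the corner $A\otimes e_{11}\cong A$ of $A\otimes M_n$ with diagonal $D_A\otimes\mathbb{C}e_{11}\cong D_A$, gives the reverse inequality; and $A\otimes\mathcal{K}=\varinjlim(A\otimes M_n)$ along the upper-left corner inclusions (which preserve normalisers), so (vi) and (iii) applied to the corner $A\otimes e_{11}\subseteq A\otimes\mathcal{K}$ complete the compact-operator case.
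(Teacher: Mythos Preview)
Your proofs of (i), (ii), (v), (vi), (vii) are essentially those of the paper, and your sketch of (iv) is on the right track though considerably lighter than the actual argument (which, following \cite[Proposition~3.11]{KW04}, first cuts $F^{(0)}$ down by a spectral projection $p^{(0)}$ of $\psi(h_1)$ and then uses a carefully chosen continuous function to make the new $\mathbb{C}$-summand exactly orthogonal to the modified colour-$0$ map; ``pushing $\varphi^{(0)}(\mathbf{1}_{F^{(0)}})$ towards a projection'' is not quite what happens, since that element is in general not close to a projection).

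The real problem is (iii). Your plan is to cut each $F^{(i)}$ by a \emph{central} projection $q^{(i)}$ chosen so that $\varphi^{(i)}(q^{(i)})\in D_B$ while $q^{(i)}$ still dominates $b_h^{(i)}$ up to $\varepsilon$. These two requirements are in general incompatible: a central projection in $F^{(i)}$ only sees whole matrix blocks, and for a block $E$ the element $\varphi^{(i)}(\mathbf 1_E)\in D_A$ typically has support meeting both the spectrum of $D_B$ and its complement. (Already for $A=D_A=C([0,1])$, $B=D_B=C_0((0,\tfrac12))$, $F^{(i)}=\mathbb{C}$ and $\varphi^{(i)}(1)$ a function with full support, the only central projections are $0$ and $1$, neither of which works.) So ``selecting exactly those matrix blocks whose $\varphi^{(i)}$-image is supported where $e$ is nonzero'' is not a well-defined operation, and the proposed $\varphi'^{(i)}$ need not take values in $B$.

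The paper fixes this differently. It keeps the two-sided approximation $(F,D_F,\psi,\varphi)$ and cuts $F$ by the \emph{diagonal} (not central) projection $q:=\chi_{(\beta,1]}(\psi(h))\in D_F$; restricting $\varphi^{(i)}$ to the corner $q^{(i)}F^{(i)}q^{(i)}$ is still c.p.c.\ order zero. To force the range into $B$ one then replaces $\varphi^{(i)}$ by $f_\beta(\varphi^{(i)})$ via order zero functional calculus, where $f_\beta(t)=(t-\beta)_+$; the point is that $f_\beta(\varphi^{(i)})(q^{(i)})$ lies in the hereditary subalgebra generated by $h$, as one checks by a short character argument on the abelian $\mathrm{C}^*$-algebra $\mathrm{C}^*(\varphi^{(i)}(q^{(i)}),D_B)$ (commutativity coming from Lemma~\ref{lem:positive_normaliser}). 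The normaliser condition survives because matrix units of $q^{(i)}F^{(i)}q^{(i)}$ are matrix units of $F^{(i)}$, and $f_\beta(\varphi^{(i)})(e)\in\overline{\mathrm{C}^*(\varphi^{(i)}(q^{(i)}))_+\,\varphi^{(i)}(e)}$ is again a normaliser by Lemma~\ref{lem:positive_normaliser}. In short: the cut on the \emph{range} side is done analytically by order zero functional calculus, not algebraically by a central projection on the domain side.
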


\vspace{1em}
\noindent
{\it Proof} of Theorem \ref{thm:permanence}(i): Direct sums.
\vspace{1em}

\noindent
The argument is exactly the same as for nuclear dimension; see \cite[Proposition~2.3]{WZ10} and \cite[Proposition~2.10]{Win:cpr}. Conditions \ref{defn:dim_diag}(4) and (5) will automatically be satisfied since we may take $D_{F_A \oplus F_B} = D_{F_A} \oplus D_{F_B}$ and we have $\mathcal{N}_{A \oplus B}(D_A \oplus D_B) = \mathcal{N}_{A}(D_A)  \oplus \mathcal{N}_{B}(D_B)$. \hfill $\square$

\vspace{1em}
\noindent
{\it Proof} of Theorem \ref{thm:permanence}(ii): Tensor products.
\vspace{1em}

\noindent
Again the proof is the same as for nuclear dimension; see \cite[Proposition 2.3]{WZ10} and \cite[Proposition~3.1.4]{Win09}. Condition \ref{defn:dim_diag}(4) will be satisfied since $\psi(D_A \otimes D_B) \subset D_{F_A} \otimes D_{F_B} = D_{F_A \otimes F_B}$. Condition \ref{defn:dim_diag}(5) follows automatically since matrix units of $(D_{F_A} \otimes D_{F_B} \subset F_A \otimes F_B)$ are tensor products of matrix units of $(D_{F_A} \subset F_A)$ and $(D_{F_B} \subset F_B)$, respectively, hence map to tensor products of normalisers under the order zero maps $\varphi_A^{(i)} \otimes \varphi_B^{(j)}$, which in turn are again normalisers in $\mathcal{N}_{A \otimes B}(D_A \otimes D_B)$. \hfill $\square$

\vspace{1em}
\noindent
{\it Proof} of Theorem \ref{thm:permanence}(iii): Hereditary subalgebras.
\vspace{1em}

\noindent
Let $\mathcal{F} \subset B^1_+$ finite and $0< \e < 1$ be given. We may assume without loss of generality that there is $h \in (D_B)^1_+$ such that $hb = b$ for all $b \in \mathcal{F}$. 

Fix $\eta:= \left( \frac{\e}{3d+4} \right)^4$ and take an approximation $(F, D_F, \psi, \varphi)$ witnessing $\ddim(D_A \subset A) \le d$ for $\mathcal{F} \cup \{h\}$ within $\eta$.

Set $\beta:= 2 \eta^\frac{1}{2}$ and define projections $q := \chi_{(\beta,1]}(\psi(h)) \in D_F$ and $q^{(i)} := \chi_{(\beta,1]}(\psi^{(i)}(h)) \in D_{F^{(i)}}$ (where $\chi_{(\beta,1]}$ denotes the characteristic function of the interval $(\beta,1]$). 

Define $\bar{F} := q Fq$ with summands $\bar{F}^{(i)} := q^{(i)} F^{(i)} q^{(i)}$, $D_{\bar{F}} := q D_F q$, and a map $\bar{\psi}( \, . \, ):= q \psi(\, .\, )q: B \to \bar{F}$ with summands $\bar{\psi}^{(i)}: B \to \bar{F}^{(i)}$. Note that $D_{\bar{F}}$ is diagonal in $\bar{F}$, that $\bar{\psi}$ is c.p.c., and that $\bar{\psi}(D_B) \subset D_{\bar{F}}$. 

With $f_\beta \in C_0((0,1])$ given by $f_\beta(t) := (t-\beta)_+$ define c.p.c.\ order zero maps $\bar{\varphi}^{(i)} := f_\beta(\varphi^{(i)})|_{\bar{F}^{(i)}}: \bar{F}^{(i)} \to A$ and a c.p.\ map $\bar{\varphi} := \sum_{i=0}^d \bar{\varphi}^{(i)}: \bar{F} \to A$. 

Note that for $b \in \mathcal{F}$ we have
\begin{align*}
\| \bar{\psi}(b) - \psi(b) \| & = \| q \psi(hbh)q - \psi(hbh) \| \\
& \le \| q \psi(hbh)(\mathbf{1} - q) \| + \| (\mathbf{1} - q) \psi(hbh) \| \\
& \le 2 \| (\mathbf{1} - q) \psi(hbh) \| \\
& =  2 \| (\mathbf{1} - q) \psi(hbh)^2 (\mathbf{1} - q) \|^\frac{1}{2} \\
& \le 2 \| (\mathbf{1} - q) \psi(hbh) (\mathbf{1} - q) \|^\frac{1}{2} \\
& \le 2 \| (\mathbf{1} - q) \psi(h) (\mathbf{1} - q) \|^\frac{1}{2} \\
& \le 2 \beta^\frac{1}{2}.
\end{align*}
As a consequence,
\begin{align*}
\|b - \bar{\varphi}\bar{\psi}(b) \| & < \|\varphi \psi(b) - \bar{\varphi}\bar{\psi}(b) \| + \eta \\
& \le \|\varphi \psi(b) - \varphi \bar{\psi}(b) \| + \eta + (d+1) \beta \\
& \le (d+1) 2 \beta^\frac{1}{2} + \eta + (d+1) \beta \\
& < (3 d + 4) \eta^\frac{1}{4} \\
& = \e,
\end{align*}
and so $(\bar{F},\bar{\psi},\bar{\varphi})$ is indeed a c.p.\ approximation for $\mathcal{F}$ within $\e$.

Since matrix units of $(D_{\bar{F}^{(i)}} \subset \bar{F}^{(i)})$ are also matrix units of $(D_{F^{(i)}} \subset F^{(i)})$, the conditions on $\varphi^{(i)}$ (see condition (5) of Definition~\ref{defn:dim_diag}) imply that for every matrix unit $e$ of $(D_{\bar{F}^{(i)}} \subset \bar{F}^{(i)})$ the image $\varphi^{(i)}(e)$ normalises $D_A$. Since $D_B \subset D_A$ is hereditary, it follows that $\varphi^{(i)}(e)$ normalises $D_B$. Similarly, $\varphi^{(i)}(q^{(i)})$ normalises $D_B$ for each $i$, and so does every element of $\mathrm{C}^*(\varphi^{(i)}(q^{(i)}))_+$ by Lemma~\ref{lem:positive_normaliser}. But by the structure theorem for order zero maps and the definition of order zero functional calculus we have  
\[
\bar{\varphi}^{(i)}(e) = f_\beta(\varphi^{(i)})(e) \in \overline{\mathrm{C}^*(\varphi^{(i)}(q^{(i)}))_+ \varphi^{(i)}(e)},
\]
and so $\bar{\varphi}^{(i)}(e)$ normalises $D_B$ for every $i$ and every matrix unit $e$ of $(D_{\bar{F}^{(i)}} \subset \bar{F}^{(i)})$. 

It remains to check that $\bar{\varphi}(\bar{F}) \subset B$; for this it will suffice to prove that $\bar{\varphi}^{(i)}(q^{(i)}) \in B$ for each $i$.

Define $\bar{D}^{(i)} := \mathrm{C}^*( \varphi^{(i)}(q^{(i)}), D_B)$ and note that by Lemma~\ref{lem:positive_normaliser} $\bar{D}^{(i)}$ is commutative. Moreover, we have
\[
0 \le \bar{\varphi}^{(i)}(q^{(i)}) = f_\beta (\varphi^{(i)}(q^{(i)})) \in \bar{D}^{(i)}_+.
\]
Let $\varrho$ be a character on $\bar{D}^{(i)}$, and suppose $\varrho(\bar{\varphi}^{(i)}(q^{(i)})) > 0$. Then
\[
\textstyle
\beta \le \varrho(\varphi^{(i)}(q^{(i)})) \le \frac{1}{\beta} \varrho(\varphi^{(i)}(\psi^{(i)}(h))), 
\]
whence
\[
\beta^2 \le \varrho(\varphi \psi(h)) \le \varrho(h) + \eta < \varrho(h) + \beta^2.
\]
It follows that $\varrho(h) > 0$, and since $\varrho$ was arbitrary and $B \subset A$ is hereditary this now implies that $\bar{\varphi}^{(i)}(q^{(i)}) \in \overline{h \bar{D}^{(i)} h} \subset \overline{h A h} \subset B$, as desired. \hfill $\square$

\vspace{1em}
\noindent
{\it Proof} of Theorem \ref{thm:permanence}(iv): Unitisations.
\vspace{1em}

\noindent
Since $A$ and $D_A$ are ideals of $A^\sim$ and $D_A^\sim$, respectively, Theorem~\ref{thm:permanence}(iii) shows that $\ddim(D_A\subset A)\leq \ddim(D_A^\sim \subset A^\sim )$. 
	
	To prove the reverse inequality, we essentially follow the argument for decomposition rank in \cite[Proposition 3.11]{KW04}. We spell out the details, since here we cannot assume the downwards map $\psi$ to be almost multiplicative, and at the same time we have to keep track of the diagonal.
	
It is enough to show that given $0 < \e \le1$ and $b_1,\ldots,b_m \in A_+^1$ we can find a c.p.\ approximation $(\bar{F}, D_{\bar{F}}, \bar{\psi}, \bar{\varphi}  )$ witnessing $\ddim(D_A^\sim \subset A^\sim ) \leq  \ddim(D_A \subset A) =: d < \infty$ for $\{b_1,\ldots,b_m, \unit_{A^\sim}  \}$ within $\e$. By nondegeneracy of $D_A$, upon slightly perturbing the $b_j$ we may moreover assume that there exist positive contractions $h_0,h_1\in D_A$ such that $h_0h_1 = h_1$ and $h_1b_j = b_j$ for each $j\in \{0,\ldots,m\}$.
	
	Choose $0< \gamma < (\epsilon/6)^4$, and set $\delta := \gamma^4 /4$. Take a c.p.\ approximation 
	\[
	\Big( F = \bigoplus\nolimits_{i=0}^d F^{(i)} , D_F = \bigoplus\nolimits_{i=0}^d D_{F^{(i)}} , \psi, \varphi  \Big)
	\]
	 witnessing $\ddim(D_A\subset A) = d$ for $\{h_0,h_1,b_1,\ldots,b_m\}$ within $\delta$. 	Set
	\[
	p := \chi_{[\delta^{\frac{1}{2}},1]}(\psi(h_1))\in F,
	\]
	where $\chi_{[\delta^{\frac{1}{2}},1]}$ denotes the characteristic function of the interval $[\delta^{\frac{1}{2}}, 1]$.
	
	Notice that we have $p \leq \delta^{-\frac{1}{2}} \cdot \psi(h_1)$. Let $p^{(0)} := \unit_{F^{(0)}}p$ and $q := p^{(0)}+ \sum_{i=1}^d \unit_{F^{(i)}}$. Observing that $\varphi(p^{(0)})$, $\varphi(1_F)$, $h_0$, and $\unit_{A^\sim}$ all pairwise commute, we then have
	\begin{align}
		\| \varphi(p^{(0)})(\unit_{A^\sim} - h_0) \| &= \| (\unit_{A^\sim} - h_0)^{\frac{1}{2}} \varphi(p^{(0)}) (\unit_{A^\sim} - h_0)^{\frac{1}{2}}\| \nonumber \\
		&\leq \| (\unit_{A^\sim}-h_0)^{\frac{1}{2}} \varphi(p)(\unit_{A^\sim}-h_0)^{\frac{1}{2}} \| \nonumber \\
		&\leq \| (\unit_{A^\sim}-h_0)^{\frac{1}{2}} \varphi\psi(h_1)(\unit_{A^\sim} - h_0)^{\frac{1}{2}} \| \cdot \frac{1}{\delta^{\frac{1}{2}}} \nonumber \\
		&< \| (\unit_{A^\sim} - h_0)^{\frac{1}{2}} h_1 (\unit_{A^\sim}-h_0)^{\frac{1}{2}} \|\cdot \frac{1}{\delta^{\frac{1}{2}}} + \delta^{\frac{1}{2}} \nonumber \\
		&= \delta^{\frac{1}{2}}	 \label{3-1-unitisations}	
	\end{align}
	and
	\begin{align}
		\| \varphi(p^{(0)})(\unit_{A^\sim} - \varphi(q)) \| &\stackrel{\phantom{\eqref{3-1-unitisations}}}{=} \| \varphi(p^{(0)})( \unit_{A^\sim} - \varphi(\unit_F) + \varphi( \unit_{F^{(0)}} - p^{(0)})    ) \| \nonumber \\
		&\stackrel{\phantom{\eqref{3-1-unitisations}}}{=} \|  \varphi(p^{(0)})(\unit_{A^\sim} - \varphi(\unit_F)) \| \nonumber \\
		 &\stackrel{\phantom{\eqref{3-1-unitisations}}}{=} \| \varphi(p^{(0)})^{\frac{1}{2}} (\unit_{A^\sim} - \varphi(\unit_F) )  \varphi(p^{(0)})^{\frac{1}{2}} \| \nonumber \\
		&\stackrel{\phantom{\eqref{3-1-unitisations}}}{\leq} \|  \varphi(p^{(0)})^{\frac{1}{2}} (\unit_{A^\sim} - \varphi\psi(h_0) )  \varphi(p^{(0)})^{\frac{1}{2}} \| \nonumber \\	
		&\stackrel{\phantom{\eqref{3-1-unitisations}}}{<} \|  \varphi(p^{(0)})^{\frac{1}{2}} (\unit_{A^\sim} - h_0 )  \varphi(p^{(0)})^{\frac{1}{2}} \| + \delta \nonumber\\
		&\stackrel{\phantom{\eqref{3-1-unitisations}}}{=} \| \varphi(p^{(0)})(\unit_{A^\sim} - h_0) \| + \delta \nonumber \\
		&\stackrel{\eqref{3-1-unitisations}}{<} 2\delta^{\frac{1}{2}}. \label{v10-3-2}
	\end{align}
Define a continuous function $g \in C([0,1])$ by
	\begin{equation*} \label{eq:g-function}
g(t) = \begin{cases} 0 & 0\leq t\leq 2^\frac{1}{2} \delta^\frac{1}{4} \\
t & 2 \delta^\frac{1}{4}\leq t \leq  1 \\
\text{linear} & 2^\frac{1}{2} \delta^\frac{1}{4} < t < 2 \delta^\frac{1}{4}
\end{cases}
\end{equation*}
and note that (by \eqref{v10-3-2}, with functional calculus and since $\varphi(p^{(0)})$ and $\unit_{A^\sim} - \varphi(q)$ commute)
\[
 g(\varphi(p^{(0)})) \perp g(\unit_{A^\sim} - \varphi(q))
\]
and that
\[
	   \| g(\varphi(p^{(0)})) - \varphi(p^{(0)}) \|, \| g(\unit_{A^\sim} -\varphi(q) ) - (\unit_{A^\sim} -\varphi(q))   \|    < 2^{\frac{1}{2}} \delta^{\frac{1}{4}} = \gamma.
\]
Applying order zero functional calculus to the c.p.c.\ order zero map $\varphi\vert_{p^{(0)}Fp^{(0)}}:p^{(0)}Fp^{(0)}\to A$ we obtain a c.p.c.\ order zero map
	\[
	\hat{\varphi}^{(0)}:= g(\varphi\vert_{p^{(0)}Fp^{(0)}}) : p^{(0)}Fp^{(0)} \to \overline{ g(\varphi(p^{(0)}))A g(\varphi(p^{(0)})) }  
	\]
satisfying
\begin{enumerate}
		\item $\hat{\varphi}^{(0)} ( p^{(0)}D_F p^{(0)}  ) \subset  \overline{ g(\varphi(p^{(0)}))D_A g(\varphi(p^{(0)})) }$,
		\item $\hat{\varphi}^{(0)}(v)D_A \hat{\varphi}^{(0)}(v)^* \subset   \overline{ g(\varphi(p^{(0)}))D_A g(\varphi(p^{(0)})) }$ for every matrix unit $v$ in $p^{(0)}F p^{(0)}$ with respect to $p^{(0)}D_F p^{(0)}$, and
		\item $\| \hat{\varphi}^{(0)}(x) - \varphi(x) \| \leq 2^\frac{1}{2}\delta^{\frac{1}{4}} = \gamma$ for any positive contraction $x$ in $p^{(0)}Fp^{(0)}$.
	\end{enumerate}
Here, for (2) we have used Proposition~\ref{prop:maps-normalisers}(ii); (3) follows from the properties of order zero functional calculus.

	We are now ready to construct the desired c.p.\ approximation for $(D_A^\sim \subset A^\sim )$. Let 
	\[
	\bar{F} := qFq \oplus \C = p^{(0)}Fp^{(0)}\oplus \Big(   \bigoplus\nolimits_{i=1}^d F^{(i)}   \Big) \oplus \C
	\]
	and 
	\[
	D_{\bar{F}} := qD_Fq \oplus \C.
	\]
	
	Define a map $\bar{\psi}:A^\sim\to \bar{F}$ by
	\[
	\bar{\psi}(a+\lambda \unit_{A^\sim}) := q\psi(a)q + \lambda(q \oplus \unit_{\C}).
	\]
	Then $\bar{\psi}$ is unital and completely positive (see, for example, \cite[Proposition 2.2.1]{BO08}) and satisfies $\bar{\psi}(D_A^\sim)\subset D_{\bar{F}}$. 
	
	Define $\bar{\varphi}:\bar{F}\to A^\sim$ by
	\[
	\bar{\varphi}\vert_{ p^{(0)}Fp^{(0)} } := \hat{\varphi}^{(0)}, \; \bar{\varphi}\vert_{  \bigoplus_{i=1}^d F^{(i)} } := \varphi\vert_{  \bigoplus_{i=1}^d F^{(i)} } , \; \bar{\varphi}(\unit_{\C}) := g(\unit_{A^\sim} - \varphi(q)).   
	\]
	Then by construction  and 
	
	Then $\bar{\varphi}^{(i)}:= \bar{\varphi}\vert_{F^{(i)}}$, $i = 1,\ldots,d$, are all c.p.c.\ order zero; $\bar{\varphi}^{(0)}:= \bar{\varphi}\vert_{ p^{(0)}Fp^{(0)}\oplus \C}$ is also c.p.c.\ order zero since $\hat{\varphi}^{(0)}$ is, and since
	\[
	\bar{\varphi}^{(0)}(\unit_{\bar{F}^{(0)}}) = \hat{\varphi}^{(0)}(p^{(0)}) = g(\varphi(p^{(0)})) \perp g(\unit_{A^\sim} - \varphi(q)) = \bar{\varphi}^{(0)}(\unit_{\C}).
	\]
	It follows from Proposition~\ref{prop:maps-normalisers}(ii) that $\mathcal{N}_A(D_A)\subset  \mathcal{N}_{A^\sim}(D_A^\sim)$, and so we see that $\bar{\varphi}(v) \in \mathcal{N}_{A^\sim}(D_A^\sim)$ for each matrix unit in $\bar{F}$ with respect to $D_{\bar{F}}$. 
	
	It remains to show that $\bar{\varphi}\bar{\psi}$ approximates the identity map on the elements $b_1,\ldots,b_m$ and on $\unit_{A^\sim}$. For the unit, we have
	\begin{align*}
		&\| \bar{\varphi}\bar{\psi}(\unit_{A^\sim}) - \unit_{A^\sim} \| \\
		& = \| \bar{\varphi}(q\oplus \unit_{\C}) - \unit_{A^\sim} \| \\
		&  = \Big\|  \hat{\varphi}^{(0)}( p^{(0)}  ) + \varphi \Big( \sum\nolimits_{i=1}^d \unit_{F^{(i)}} \Big)  + g(\unit_{A^\sim} - \varphi(q)) -\unit_{A^\sim}  \Big\| \\
		& < \| \varphi(q) + g(\unit_{A^\sim} - \varphi(q))  - \unit_{A^\sim} \| + (2\gamma)^{\frac{1}{2}} \\
		&< \gamma + (2\gamma)^{\frac{1}{2}}  < \e.
	\end{align*}
	For the elements $b_j$, we first compute
	\begin{align*}
		& \| \varphi( (\unit_{F^{(0)}}-p^{(0)} )\psi(b_j))\| \\
		& = \| \varphi( (\unit_{F^{(0)}}-p^{(0)} )\psi(b_j)) \cdot \varphi(\psi(b_j) (\unit_{F^{(0)}}-p^{(0)} )) \|^{\frac{1}{2}} \\
		&\leq \| \varphi(   (\unit_{F^{(0)}}-p^{(0)})\psi(b_j)^2(\unit_{F^{(0)}}-p^{(0)})          ) \|^{\frac{1}{2}} \\
		&\leq \| \varphi(   (\unit_{F^{(0)}}-p^{(0)})\psi(h_1)(\unit_{F^{(0)}}-p^{(0)})          ) \|^{\frac{1}{2}} \\
		&\leq \delta^{\frac{1}{4}}.
	\end{align*}
	It follows that
	\[
	\| \varphi(p^{(0)}\psi(b_j)   ) - \varphi( p^{(0)}\psi(b_j) p^{(0)} ) \| \leq 2\delta^{\frac{1}{4}},
	\]
	whence
	\[
	\|  \varphi( p^{(0)}\psi(b_j) p^{(0)} ) - \varphi(  \unit_{F^{(0)}}\psi(b_j) \unit_{F^{(0)}}                 ) \| \leq 3\delta^{\frac{1}{4}}.
	\]
	We now show the approximation for $b_j$:
	\begin{align*}
		&\| \bar{\varphi}\bar{\psi}(b_j) - b_j \| \\
		&= \|  \bar{\varphi}(q\psi(b_j)q ) - b_j \| \\
		& = \Big\| \bar{\varphi}^{(0)}( p^{(0)}\psi(b_j)p^{(0)}  ) + \varphi\Big(  \sum\nolimits_{i=1}^d \unit_{F^{(0)}}\psi(b_j)   \Big) - b_j \Big\| \\
		& \leq  \Big\|  \varphi( p^{(0)}\psi(b_j)p^{(0)}  ) + \varphi\Big(  \sum\nolimits_{i=1}^d \unit_{F^{(0)}}\psi(b_j)   \Big) - b_j      \Big\| + (2\gamma)^{\frac{1}{2}} \\
		&\leq \| \varphi\psi(b_j) - b_j \| + (2\gamma)^{\frac{1}{2}} + 3\delta^{\frac{1}{4}} \\
		&< \delta + (2\gamma)^{\frac{1}{2}} + 3\delta^{\frac{1}{4}} \\
		&\leq 6\gamma^{\frac{1}{4}} \\
		& < \e.
	\end{align*}
	This completes the proof.\hfill $\square$

\vspace{1em}
\noindent
{\it Proof} of Theorem \ref{thm:permanence}(v): Quotients.
\vspace{1em}

\noindent
Upon replacing $D_A$, $A$, $D_B$, $B$ and $\pi$ by their smallest unitisations, in view of Theorem \ref{thm:permanence}(iv) we may assume that the algebras and the quotient map are unital. Now let a finite subset $\unit_B \in \mathcal{F}_B \subset B^1_+$ and $\e >0$ be given. Choose a finite subset $\unit_A \in \mathcal{F}_A \subset A^1_+$ lifting $\mathcal{F}_B$. Now if $\ddim(D_A \subset A) = d < \infty$ choose $D_F = D_F^{(0)} \oplus \ldots \oplus D_F^{(d)} \subset F= F^{(0)} \oplus \ldots \oplus F^{(d)}$ and $\varphi: F \to A$ as in Proposition~\ref{prop:dim-without-psi} (with $\mathcal{F}_A$ in place of $\mathcal{F}$). It is then clear that with $B$ in place of $A$ and $\pi \circ \varphi$ in place of $\varphi$ conditions (i), (ii) and (iii) of Proposition~\ref{prop:dim-without-psi} are satisfied. Morover, since quotient maps send normalisers to normalisers, condition \eqref{prop:dim-without-psi-eq} also holds \emph{mutatis mutandis} (i.e., with $A$ in place of $B$ and $\pi \circ \varphi^{(i)}$ in place of $\varphi^{(i)}$), and so the reverse direction of Proposition~\ref{prop:dim-without-psi} entails that $\ddim(D_B \subset B) \le d$. \hfill $\square$

\vspace{1em}
\noindent
{\it Proof} of Theorem \ref{thm:permanence}(vi): Inductive limits.
\vspace{1em}

\noindent
Let us first reduce to the unital case. 
We have canonical inclusions of ideals $D_i \subset D_i^\sim$ and $A_i \subset A_i^\sim$, and likewise for the inductive limits $\varinjlim(D_i,\varrho_{i,j}) \subset (\varinjlim(D_i,\varrho_{i,j}))^\sim$ and $\varinjlim(A_i,\varrho_{i,j}) \subset (\varinjlim(A_i,\varrho_{i,j}))^\sim$. Denoting the canonical unitisation of $\varrho_{i,j}$ by  $\varrho_{i,j}^\sim$, there are canonical isomorphisms $(\varinjlim(D_i,\varrho_{i,j}))^\sim \cong \varinjlim(D_i^\sim,\varrho_{i,j}^\sim)$ and $(\varinjlim(A_i,\varrho_{i,j}))^\sim \cong\varinjlim(A_i^\sim,\varrho_{i,j}^\sim)$. Now from \ref{thm:permanence}(iv) it follows that 
\[
\ddim( \varinjlim D_i \subset \varinjlim A_i   ) = \ddim( \varinjlim D_i^\sim \subset \varinjlim A_i^\sim   ).
\]
Of course we also have
\[
\liminf_{i\in I} ( \ddim(D_i \subset A_i ) ) = \liminf_{i\in I} ( \ddim(D_i^\sim \subset A_i ^\sim) ).
\]
Next observe that by Proposition~\ref{prop:maps-normalisers}(i) we have $\varrho_{i,j}(D_i) \subset D_j$, hence also $\varrho_{i,j}^\sim(D_i^\sim) \subset D_j^\sim$. But then from Proposition~\ref{prop:maps-normalisers}(ii) it follows that the $\varrho_{i,j}^\sim$ map normalisers to normalisers. 

From this discussion we see that it suffices to prove the assertion for the unitised system, i.e.
\[
\ddim( \varinjlim D_i^\sim \subset \varinjlim A_i^\sim   ) \le \liminf_{i\in I} ( \ddim(D_i^\sim \subset A_i ^\sim) ),
\]
and that the unitised system still satisfies the crucial hypothesis, namely that the connecting maps preserve normalisers. In conclusion, we may impose the additional assumption that the algebras $D_i$ and $A_i$, as well as the connecting maps $\varrho_{i,j}$ are all unital to begin with.

\bigskip

Let $\unit_{\varinjlim A_i} \in \mathcal{F} \subset (\varinjlim A_i)^1_+$ be a finite subset and let $\e >0 $ be given. 
For each $j \in I$, let $\varrho_j: A_j \to \varinjlim A_i$ denote the limit map. Since $\bigcup_j \varrho_j(A_j)$ is dense in $\varinjlim A_i$, we may as well assume that $\mathcal{F} \subset \varrho_j(A_j)$ for some $j \in I$. But then $\mathcal{F}$ has a finite preimage $\bar{\mathcal{F}} \subset (A_j)^1_+$. Now apply Proposition~\ref{prop:dim-without-psi} to $(D_j \subset A_j)$ to find  $D_F \subset F = F^{(0)} \oplus \ldots \oplus F^{(d)}$ (where $\ddim(D_j \subset A_j) \le d < \infty$), a set $\{b_a \mid a \in \bar{\mathcal{F}}\} \subset F^1_+$ and a map $\varphi = \sum_{i=0}^d \varphi^{(i)}: F \to A_j$ satisfying \ref{prop:dim-without-psi}(i), (ii), (iii) (with $(D_j \subset A_j)$ in place of $(D_A \subset A)$ and with $\bar{\mathcal{F}}$ in place of $\mathcal{F}$) and 
\[
\varphi^{(i)}(  \mathcal{N}_{F^{(i)}}(D_{F^{(i)}}))\subset \mathcal{N}_{A_j}(D_j).
\]
But then the same set $\{b_a \mid a \in \bar{\mathcal{F}}\}$ and the map $\varrho_j \circ \varphi: F \to \varinjlim A_i$ satisfy \ref{prop:dim-without-psi}(i), (ii), (iii) (this time with $(\varinjlim D_i \subset \varinjlim A_i)$ in place of $(D_A \subset A)$ and with $\mathcal{F}$). Moreover, since the connecting maps preserve normalisers it is straightforward to see that so do the $\varrho_j$, i.e., $\varrho_j(\mathcal{N}_{A_j}(D_j)) \subset \mathcal{N}_{\varinjlim A_i}(\varinjlim D_i)$. Therefore also the $\varrho_j \circ \varphi^{(i)}$ preserve normalisers, thus confirming \eqref{prop:dim-without-psi-eq}. Now since $\mathcal{F} \subset (\varinjlim A_i)^1_+$ and $\e >0 $ were arbitrary, Proposition~\ref{prop:dim-without-psi} shows that 
\[
\ddim( \varinjlim D_i \subset \varinjlim A_i   ) \le \liminf_{i\in I} ( \ddim(D_i \subset A_i ) ),
\]
as desired. \hfill $\square$

\vspace{1em}
\noindent
{\it Proof} of Theorem \ref{thm:permanence}(vii): Stabilisations.
\vspace{1em}

\noindent
Taking the identity map as approximation we clearly have $\ddim (D_n \subset M_n) = 0$. From \ref{thm:permanence}(ii) it follows that $\ddim(D_A \otimes D_n \subset A \otimes M_n) \le \ddim(D_A \subset A)$. The estimate $\ddim(D_A \subset A) \le \ddim(D_A \otimes D_n \subset A \otimes M_n)$ follows from the permanence property for hereditary subalgebras \ref{thm:permanence}(iii). 

The equality $\ddim(D_A \subset A) = \ddim( D_A\otimes c_0(\N) \subset A\otimes \mathcal{K}(\ell^2(\N)) )$ follows in the same manner after noticing that $(c_0(\N) \subset \mathcal{K}(\ell^2(\N)) )$ can be written as inductive limit of sub-C$^*$-algebras $(D_n \subset M_n)$ with upper left corner embeddings, so that $\ddim(c_0(\N) \subset \mathcal{K}(\ell^2(\N)) )=0$ by \ref{thm:permanence}(vi).  \hfill $\square$

\begin{rem}
In \cite{MR3841841} Matsumoto defined the concept of {\it relative Morita equivalence for relative $\sigma$-unital sub-$\mathrm{C}^*$-algebras} (see \cite[Definition~2.1 and Definition~3.5]{MR3841841}). A relative $\sigma$-unital sub-$\mathrm{C}^*$-algebras is always nondegenerate. Examples include all unital sub-$\mathrm{C}^*$-algebras and $(c_0(\N) \subset \mathcal{K}(\ell^2(\N)))$ (see \cite[Example~2.5]{MR3841841} for more details). It follows from \ref{thm:permanence}(vii) and \cite[Theorem~1.1]{MR3841841} that the diagonal dimension of relative $\sigma$-unital sub-$\mathrm{C}^*$-algebras is invariant under relative Morita equivalences.
\end{rem}

\section{Dimension zero}
\label{section-AF}

\noindent
Recall that a $\mathrm{C}^*$-algebra has nuclear dimension zero precisely when it is locally finite dimensional  (see \cite[Remark 2.2(iii)]{WZ10} and \cite[Theorem~3.4]{Win:cpr}). In the separable case this is equivalent to being an AF algebra, i.e., an inductive limit of finite dimensional $\mathrm{C}^*$-algebras. In this section we give the respective characterisations for (nondegenerate) sub-$\mathrm{C}^*$-algebras with diagonal dimension zero.

\begin{thm}
\label{thm:zero_ddim}
Let $(D_A \subset A)$ be a nondegenerate sub-$\mathrm{C}^*$-algebra with $D_A$ abelian. Then the following are equivalent:
	\begin{enumerate}[label=\rm{(\roman*})]
	\item $\ddim(D_A \subset A)=0$.
	\item For every finite subset $\mathcal{F}\subset A^1_+$ and $\e > 0$ there exists a finite-dimensional $\mathrm{C}^*$-algebra $F \subset A$ together with a diagonal subalgebra $D_F \subset F$ such that
	\begin{enumerate}
		\item[\rm{(1)}] $\mathrm{dist}(\mathcal{F}, F^1_+) < \e$ and 
		\item[\rm{(2)}] $\mathcal{N}_F(D_F) \subset \mathcal{N}_A(D_A)$.
	\end{enumerate}
	\end{enumerate}
	If $A$ is separable, then the conditions above are equivalent to:
	\begin{enumerate}
	\item[\rm{(iii)}] There exists an increasing sequence of finite-dimensional $\mathrm{C}^*$-subalgebras $F_n$ of $A$ together with diagonal subalgebras $D_{F_n}$ of $F_n$ such that  
	\begin{equation}
	\label{AF-lim}
	\overline{ \bigcup\nolimits_{n=1}^\infty F_n } = A, \quad \overline{ \bigcup\nolimits_{n=1}^\infty D_{F_n }} = D_A,
	\end{equation}
	and such  that each inclusion $F_n \hookrightarrow F_{n+1}$ maps $\cn_{F_n}(D_{F_n})$ into $\cn_{F_{n+1}}(D_{F_{n+1}})$.
	\end{enumerate}
\end{thm}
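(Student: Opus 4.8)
The plan is to prove the three implications (i)$\Rightarrow$(ii)$\Rightarrow$(iii)$\Rightarrow$(i), where the middle one needs separability and the other two do not.

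\medskip

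\noindent\textbf{(ii)$\Rightarrow$(i).} This is the easy direction: given $F\subset A$ and $D_F\subset F$ as in (ii), set $\psi$ to be a conditional expectation of $A$ onto $F$ (which exists because $F$ is finite-dimensional, hence injective, and $\psi$ may be chosen c.p.c.) and $\varphi$ the inclusion $F\hookrightarrow A$. Then $\varphi$ is a single c.p.c.\ order zero map (even a $^*$-homomorphism), $\varphi\psi(a)$ is within $\e$ of $a$ for $a\in\mathcal F$ after the usual perturbation trick, $\psi(D_A)\subset D_F$ needs a little care -- one should choose $\psi$ to restrict to a conditional expectation $D_A\to D_F$, which is possible since $D_F\subset D_A$ (as $F\cap D_A=D_F$ by maximality of $D_F$ in $F$) -- and condition (5) of Definition~\ref{defn:dim_diag} is exactly hypothesis (2) of (ii). So $\ddim(D_A\subset A)=0$. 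The only subtlety is arranging $\psi(D_A)\subset D_F$; one resolves this by first using Proposition~\ref{prop:dim-without-psi} in its one-sided form, or directly by averaging $\psi$ over the (finite) unitary group normalising $D_F$ inside $F$.

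\medskip

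\noindent\textbf{(i)$\Rightarrow$(ii).} Here is the main work. Start from a c.p.\ approximation $(F,D_F,\psi,\varphi)$ witnessing $\ddim(D_A\subset A)=0$ for a suitable $(\mathcal F,\e')$; since $d=0$ the single map $\varphi$ is c.p.c.\ order zero, so by the structure theorem $\varphi(\,\cdot\,)=h\pi_\varphi(\,\cdot\,)$ for a $^*$-homomorphism $\pi_\varphi$ and a positive contraction $h$ commuting with its image; moreover $h=\varphi(\unit_F)$ when we pass to the unitisation or work inside a corner. The strategy is to show that, after cutting down by a spectral projection of $h$ bounded away from zero (using that $\varphi\psi$ is close to the identity on $\mathcal F$, so $h$ cannot be small in the relevant directions), $\pi_\varphi$ becomes close to an injective, hence isometric, $^*$-homomorphism on the relevant matrix blocks, and its image is a finite-dimensional subalgebra $F'\subset A$. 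The normaliser condition (5) on $\varphi$, together with Lemma~\ref{lem:positive_normaliser} (continuous functions of positive normalisers are normalisers) and the computation showing $\varphi(e^{(j)}_{k,\ell})=\varphi^{1/2}(e^{(j)}_{k,k})\varphi^{1/2}(e^{(j)}_{k,\ell})\cdots$, transfers to show that $\pi_\varphi$ sends matrix units of $D_F$ to normalisers of $D_A$, so that $D_{F'}:=\pi_\varphi(D_F)$ (a masa in $F'$ by injectivity) satisfies $\mathcal N_{F'}(D_{F'})\subset\mathcal N_A(D_A)$. Finally $\mathrm{dist}(\mathcal F,(F')^1_+)<\e$ because $\varphi\psi(a)\approx a$ and $\varphi\psi(a)$ lies within $\e'$ of $\pi_\varphi\psi(a)\in F'$ after the cutdown. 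I expect this step -- replacing the order zero map $\varphi$ by an honest finite-dimensional subalgebra while keeping exact control of the diagonal and its normalisers -- to be the principal obstacle, and it is where the rigidity built into condition (5) (in the spirit of Lemma~\ref{lem:rank1}) is really used.

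\medskip

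\noindent\textbf{(ii)$\Rightarrow$(iii) (separable case).} This is a standard intertwining/exhaustion argument. Fix a dense sequence $(a_n)$ in $A$ and a dense sequence $(d_n)$ in $D_A$. Build the $F_n$ inductively: having $F_n\subset A$ with diagonal $D_{F_n}$ containing good approximants of $a_1,\dots,a_n$ and $d_1,\dots,d_n$, apply (ii) to the finite set consisting of the (finitely many) matrix units of $F_n$ together with $a_1,\dots,a_{n+1},d_1,\dots,d_{n+1}$ within a small $\e_{n+1}$; this produces a new finite-dimensional $G$ with diagonal $D_G$ and $\mathcal N_G(D_G)\subset\mathcal N_A(D_A)$. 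A perturbation argument (almost-containment of finite-dimensional $\mathrm C^*$-algebras yields actual containment after a small unitary conjugation, and one can choose the unitary in the normaliser so as to respect the diagonals -- this is where one invokes that matrix units of $F_n$ are approximately matrix units of $G$ and uses Lemma~\ref{lem:rank1}-type stability to line up the rank-one projections of $D_{F_n}$ inside $D_G$) lets us arrange $F_n\subset F_{n+1}:=uGu^*$ with $D_{F_n}\subset D_{F_{n+1}}$ and the inclusion sending $\mathcal N_{F_n}(D_{F_n})$ into $\mathcal N_{F_{n+1}}(D_{F_{n+1}})$. Taking $\e_n\to0$ fast enough gives $\overline{\bigcup F_n}=A$ and $\overline{\bigcup D_{F_n}}=D_A$, which is (iii). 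Conversely (iii)$\Rightarrow$(ii) is immediate by truncating the union. The one point needing care is the simultaneous perturbation of $F_n$ \emph{and} its diagonal into $F_{n+1}$; this follows by combining the usual semiprojectivity of finite-dimensional $\mathrm C^*$-algebras with the uniqueness of diagonals up to unitary conjugacy (the Example after Definition~\ref{defn:diagonal}) and choosing the conjugating unitary to lie in $\mathcal N_{F_{n+1}}(D_{F_{n+1}})$.
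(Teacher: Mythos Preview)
Your overall logical structure matches the paper's, and for (i)$\Rightarrow$(ii) and (ii)$\Rightarrow$(i) you end up at essentially the same argument, with two corrections. First, the averaging trick in (ii)$\Rightarrow$(i) fails as written: replacing $\psi$ by $a\mapsto |W|^{-1}\sum_{w\in W}w^*\psi(a)w$ destroys the conditional-expectation property (for $f\in F$ one gets the $W$-average of $f$, not $f$). The one-sided route via Proposition~\ref{prop:dim-without-psi} that you mention as an alternative is exactly what the paper uses; in the unital case condition (ii)(1) forces $\unit_F=\unit_A\in D_A$ once $\e<1/2$, and the nonunital case goes through the unitisation. Second, in (i)$\Rightarrow$(ii) no spectral cutdown is needed and Lemma~\ref{lem:rank1} plays no role: in the unital case Proposition~\ref{prop:dim-without-psi} already gives $\varphi(\unit_F)\ge(1-\e/2)\unit_A$, so $\varphi(\unit_F)$ is invertible and order zero functional calculus with the constant function $1$ turns $\varphi$ into an honest unital $^*$-homomorphism $\bar\varphi$; Lemma~\ref{lem:positive_normaliser} is what ensures $\bar\varphi$ still sends matrix units to normalisers.

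For (ii)$\Rightarrow$(iii) your route genuinely differs from the paper's. You want a unitary $u$ in $A^\sim$, close to $\unit$ and normalising $D_A$, with $F_n\subset u^*Gu=:F_{n+1}$ and diagonals nested; the existence of such a $u$ is exactly the delicate point you flag, and it is not covered by ordinary semiprojectivity plus uniqueness of diagonals inside $F_{n+1}$ (you need the conjugating unitary to live in $\mathcal N_A(D_A)$, not merely in $\mathcal N_{F_{n+1}}(D_{F_{n+1}})$). The paper sidesteps this: it never makes the intermediate algebras $E_n$ nested. Instead it observes that $D_{E_n}\subset D_{E_{n+1}}$ holds \emph{exactly} (both sit in $D_A$, and spectral projections line up), builds for each matrix unit $e$ of $E_n$ a nearby partial isometry $v_e\in\mathcal N_{E_{n+1}}(D_{E_{n+1}})$, and extends $e\mapsto v_e$ to a $^*$-homomorphism $\alpha_n:E_n\to E_{n+1}$ preserving normalisers. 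The nested $F_n$ then arise as $F_n:=\beta_n(E_n)$ for the limit maps $\beta_n:=\lim_m\alpha_m\cdots\alpha_n$. Your unitary-conjugation approach should be repairable, but what it would need is essentially the content of the paper's Step~3, after which the limit-map packaging of Step~4 is the cleanest way to finish.
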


\begin{proof}
(i) $\Longrightarrow$ (ii): Assume first that $A$ is unital. Take $\unit_A \in \mathcal{F} \subset A^1_+$ and $0 < \e < 1$ as in (ii) and apply Proposition~\ref{prop:dim-without-psi} with $\e/2$ in place of $\e$ to obtain a $D_F \subset F$, a c.p.c.\ order zero map $\varphi:F \to A$ (since we assume the diagonal dimension to be zero there is only one colour $i$) and a set $\{b_a \mid a \in \mathcal{F}\} \subset F^1_+$ as in \ref{prop:dim-without-psi} (but in \ref{prop:dim-without-psi}(3) replacing $\e$ with $\e/2$). We then have 
\[
(1 - \e/2) \cdot \unit_A \le \varphi(b_{\unit_A}) \le \varphi(\unit_F) \le \unit_A.
\]
Therefore, the spectrum of $\varphi(\unit_F)$ does not contain zero and we may apply order zero functional calculus to $\varphi$ with the constant function $1$. This will yield a unital c.p.\ order zero map $\bar{\varphi}: F \to A$ with $\|\bar{\varphi}(x) - \varphi(x)\| \le \e/2$ for all $x \in F^1_+$. But unital order zero maps are just $^*$-homomorphisms, and we may take $\bar{\varphi}(F) \subset A$ as our finite dimensional subalgebra in \ref{thm:zero_ddim}(ii) above. Condition \ref{thm:zero_ddim}(ii)(2) will be satisfied because of \eqref{prop:dim-without-psi-eq} in connection with Lemma~\ref{lem:positive_normaliser} (the latter ensures that not only $\varphi$ but in fact $\bar{\varphi}$ maps matrix units to normalisers). 

Now suppose $A$ is not unital. Our assumption $\ddim (D_A \subset A) = 0$ implies that both $A$ and $D_A$ are locally finite dimensional, and since $(D_A \subset A)$ is nondegenerate this in particular entails that $D_A$ contains an approximate unit for $A$ which consists of projections. 

Now if $\mathcal{F} \subset A^1_+$ and $\e$ are as in \ref{thm:zero_ddim}(ii), since we want to approximate elements of $\mathcal{F}$ we may as well assume that there is a projection $p \in D_A \cap \mathcal{F}$ with $p a = a$ for all $a \in \mathcal{F}$. Set $(D_B \subset B) := (pD_Ap \subset pAp)$, then by Theorem~\ref{thm:permanence}(iii) on hereditary subalgebras we have $\ddim(D_B \subset B) = 0$.   Since we already know (i) $\Longrightarrow$ (ii) in the unital case, this means that the assertion of \ref{thm:zero_ddim}(ii) holds with $B$ in place of $A$. By Proposition~\ref{prop:maps-normalisers}(iii) we have $\mathcal{N}_B(D_B) \subset \mathcal{N}_A(D_A)$, so in fact \ref{thm:zero_ddim}(ii) holds as it stands.

(ii) $\Longrightarrow$ (i): If $A$ is unital this is immediate from Proposition~\ref{prop:dim-without-psi} (note that condition (1) of \ref{thm:zero_ddim}(ii) implies condition (3) of \ref{prop:dim-without-psi}, albeit with $2 \e$ instead of $\e$, and that automatically $\unit_F = \unit_A \in D_A$ provided $\e <1/2$.).

Now again suppose $A$ is not unital. Given a finite subset $\mathcal{F}\subset A^1_+$ and $\e > 0$, take $D_F \subset F \subset A$ as in \ref{thm:zero_ddim}(ii). Define $\bar{F}:= F \oplus \mathbb{C} \cdot (\unit_{A^\sim} - \unit_F)$ and $D_{\bar{F}}:= D_F \oplus \mathbb{C} \cdot (\unit_{A^\sim} - \unit_F)$, then $\bar{F} \subset A^\sim$ and $D_{\bar{F}} \subset \bar{F}$ is a diagonal. Note that (1) $\mathrm{dist}(a,\bar{F}) < \e$ for all $a \in \mathcal{F} \cup \{\unit_{A^\sim}\}$ and that (2) every matrix unit in $\bar{F}$ with respect to $D_{\bar{F}}$ belongs to the normaliser $\mathcal{N}_{A^\sim}(D_A^\sim)$.

But this means that \ref{thm:zero_ddim}(ii) holds for $A^\sim$ in place of $A$ (the special form $\mathcal{F} \cup \{\unit_{A^\sim}\}$ of the finite subset is no loss of generality), so by our initial observation ((ii) $\Longrightarrow$ (i) in the unital case) we have $\ddim(D^\sim_A \subset A^\sim) = 0$. Now \ref{thm:zero_ddim}(i) follows from Theorem~\ref{thm:permanence}(iv) on the diagonal dimension of unitisations.

(iii) $\Longrightarrow$ (ii): Given $\mathcal{F}$ and $\e$, since 
\[
 \mathrm{dist}\Big(\mathcal{F}, \overline{ \bigcup\nolimits_{n=1}^\infty F_n }\Big) = \mathrm{dist}(\mathcal{F}, A) = 0,
\]
there is some $\bar{n}$ such that $\mathrm{dist}(\mathcal{F}, (F_{\bar{n}})^1_+) < \e$. Moreover, the inclusions $\cn_{F_n}(D_{F_n}) \subset \cn_{F_{n+1}}(D_{F_{n+1}})$ together with \eqref{AF-lim} imply $\cn_{F_{\bar{n}}}(D_{F_{\bar{n}}}) \subset \cn_{A}(D_{A})$.

(ii) $\Longrightarrow$ (iii): 
This is slightly more involved, since we have to turn approximate inclusions of finite dimensional $\mathrm{C}^*$-algebras into exact ones, while at the same time respecting the diagonals. 

\underline{Step 1.} 	Consider the set of finite dimensional sub-$\mathrm{C}^*$-algebras
	\begin{align}
	\Lambda &:= 
	\{ (D_F\subset F) \mid F \subset A ,  \, D_F \subset F \mbox{ is a diagonal,  and}\, \nonumber \\
	& \phantom{{ } := { }\{} \cn_F(D_F) \subset \cn_A(D_A)\}. \label{AF-lim1}
	\end{align}
Note that every $(D_F \subset F) \in \Lambda$ has a set $S_F \subset F \setminus D_F$ of matrix units which is \emph{tight} in the following sense: Whenever $e \in F \setminus D_F$ is a matrix unit, then precisely one of $e$ or $e^*$ can be written in a unique manner as a product of a scalar and at most $\dim D_F - 1$ elements of $S_F$. (If $(D_F \subset F) \cong (D_r \subset M_r)$, then $S_{M_r}:= \{e_{i,i+1} \mid i=1, \ldots r-1\}$ is tight in this sense; if $F$ is a direct sum of matrix blocks, take $S_F$ to be a disjoint union of tight sets of matrix units for the individual summands.) 

We claim that if $(D_E \subset E) \in \Lambda$ is another finite dimensional sub-$\mathrm{C}^*$-algebra which contains a set $\{v_e \mid e \in S_F\}$ of partial isometries normalising $D_E$ and such that, for each $e \in S_F$, $\|v_e - e\| < \frac{1}{4}$, then the assignment $e \mapsto v_e$ for $e \in S_F$ extends to a $^*$-homomorphism $\alpha: F \to E$ which maps $\mathcal{N}_F(D_F)$ into $\mathcal{N}_E(D_E)$. 

To prove the claim, observe first that any two elements in the set $\{e^*e, ee^* \mid e \in S_F\} \subset D_F$ of range and orthogonal projections either agree or are orthogonal. And since $\|v_e^*v_e - e^*e\|, \, \|v_e v_e^* - e e^*\| < 1$ for each $e \in S_F$, the pairwise commuting projections in $\{v_e^*v_e, \,  v_e v_e^* \mid e \in S_F\} \subset D_E$ satisfy the same relations as those in $\{e^*e, ee^* \mid e \in S_F\}$. From this and the unique decomposition of matrix units in $F$ as products of elements from $S_F$ it follows that products in $v_e$ and $v_e^*$ satisfy the same relations as products in $e$ and $e^*$, whence the assignment $e \mapsto v_e$ indeed yields a $^*$-homomorphism $\alpha$ as claimed. Every matrix unit in $F$ maps to a product of elements $v_e$ or $v_e^*$, hence maps to $\mathcal{N}_E(D_E)$. It now follows from Proposition~\ref{prop:orthogonal-normalisers} that indeed $\alpha(\mathcal{N}_F(D_F)) \subset \mathcal{N}_E(D_E)$.    

\underline{Step 2.} After these preparations we now turn to constructing nested sub-$\mathrm{C}^*$-algebras as in (iii).
We start by inductively constructing $(D_{E_n} \subset E_n) \in \Lambda$ and suitable maps between them; the nested $(D_{F_n} \subset F_n)$ will then arise as small perturbations.

Let $\{a_0, a_1, a_2,\ldots\} \subset A_+^1$ be a countable dense subset of the positive unit ball. We may assume $a_0 = 0$ and define 
\[
(D_{E_0} \subset E_0) := (\{0\} \subset \{0\}) \in \Lambda.
\]
Now suppose $(D_{E_n} \subset E_n) \in \Lambda$ has been defined. Take some
\[
0 < \gamma_n \le \frac{1}{2^n \cdot \dim^{+1}(D_{E_n}) \cdot \dim^{+1}(E_n)}
\]
(here $\dim$ stands for vector space dimension) and some 
\begin{equation}
\label{4-1-5}
0 < \zeta_n < \frac{\gamma_n^2}{144} \le \frac{1}{144},
\end{equation}
and choose $(D_{E_{n+1}} \subset E_{n+1}) \in \Lambda$ such that 
\begin{equation}
\label{4-1-1}
\dist( \{a_0, \ldots, a_n\} \cup (E_n)^1_+, (E_{n+1})_+^1), \, \dist((E_n)^1, \,(E_{n+1})^1) < \zeta_n.
\end{equation}
Induction yields a sequence $((D_{E_n} \subset E_n))_{n\in \mathbb{N}} \subset \Lambda$. 

\underline{Step 3.} Next, for each $n$ we construct $^*$-homomorphisms $\alpha_n: E_n \to E_{n+1}$ such that $\alpha_n(\mathcal{N}_{E_n}(D_{E_n})) \subset \mathcal{N}_{E_{n+1}}(D_{E_{n+1}})$ for each $n$.

Since $E_{n+1}$ is finite dimensional, the identity map extends to some conditional expectation $\psi: A \to E_{n+1}$. Note that $D_{E_{n+1}}$ lies in the multiplicative domain of $\psi$, and that we know from \eqref{AF-lim1} and Lemma~\ref{lem:positive_normaliser} that $D_{E_n} \cup D_{E_{n+1}} \subset D_A$. Therefore, for each $p \in D_{E_n}$ and $q \in D_{E_{n+1}}$ we have $q \psi(p) = \psi(q) \psi(p) = \psi(qp) = \psi(pq) = \psi(p) \psi(q) = \psi(p) q$. Now since $D_{E_{n+1}}$ is maximal abelian in $E_{n+1}$ this implies that 
\[
\psi(D_{E_n}) \subset D_{E_{n+1}}.
\]
With $\Phi_{E_{n+1}}: E_{n+1} \to D_{E_{n+1}}$ denoting the canonical conditional expectation we then in particular have 
\begin{equation}
\label{4-1-3}
\Phi_{E_{n+1}} \circ \psi|_{D_{E_n}} = \psi|_{D_{E_n}}.
\end{equation}

Now choose a tight set $S_{E_n}$ of matrix units of $(D_{E_n} \subset E_n)$. For each matrix unit $e$ of $S_{E_n}$, by \eqref{4-1-1} there is some $w_e \in (E_{n+1})^1$ with 
\begin{equation}
\label{4-1-2}	
\|w_e - e\| < \zeta_n
\end{equation} 
Noting that
\begin{equation}
\label{4-1-4}
\psi(w_e^*w_e) = w_e^* w_e	
\end{equation}
 we compute
\begin{align}
\|\Phi_{E_{n+1}}(w_e^*w_e) - e^*e\| & \stackrel{\phantom{\eqref{4-1-5}}}{\le} \|\Phi_{E_{n+1}}(w_e^*w_e) - \Phi_{E_{n+1}}(\psi(w_e^*w_e)) \| \nonumber \\
& \phantom{{} \stackrel{\eqref{4-1-5}}{\le} {}} + \|\Phi_{E_{n+1}}(\psi(w_e^*w_e)) - \Phi_{E_{n+1}}(\psi(e^*e))\| \nonumber \\
& \phantom{{} \stackrel{\eqref{4-1-5}}{\le} {}} + \| \Phi_{E_{n+1}}(\psi(e^*e)) - \psi(e^*e) \| \nonumber \\
& \phantom{{} \stackrel{\eqref{4-1-5}}{\le} {}} + \|\psi(e^*e) - \psi(w_e^*w_e) \| \nonumber \\
&\phantom{{} \stackrel{\eqref{4-1-5}}{\le} {}}+ \|\psi(w_e^*w_e) - e^*e\| \nonumber \\
& \stackrel{\phantom{\eqref{4-1-5}}}{<} 0 + 2 \zeta_n + 0 + 2 \zeta_n + 2 \zeta_n \nonumber \\
&\textstyle  \stackrel{\eqref{4-1-5}}{<} 1/2,  \label{AF-lim3}
\end{align} 
where for the estimates of the five summands we have used (in this order) \eqref{4-1-4}, \eqref{4-1-2}, \eqref{4-1-3}, \eqref{4-1-2}, and \eqref{4-1-4} and \eqref{4-1-2}.
This shows that the projection $\chi_{(\frac{1}{2},1]}(\Phi_{E_{n+1}}(w_e^*w_e)) \in D_{E_{n+1}}$ has norm distance less than $1$ from $e^*e$, and since both are projections in the commutative $\mathrm{C}^*$-algebra $D_A$, they have to agree. The respective statements hold for $ee^*$, and in particular this entails that $D_{E_n} \subset D_{E_{n+1}}$. 

At this point we could deform $w_e$ into a partial isometry using functional calculus. However, without further ado this will not necessarily normalise $D_{E_{n+1}}$, as is required to apply Step 1 above. Fortunately we can arrange for a more refined deformation. To this end, write $ee^* = q_1 + \ldots + q_m$ as a sum of pairwise orthogonal rank one projections $q_i \in D_{E_{n+1}}$ and for each $i$ set $p_i := e^* q_i e$. We have $p_i \in D_A$ since $e$ normalises $D_A$, and similarly to \eqref{AF-lim3} we estimate 
\[
\|\Phi_{E_{n+1}}(w_e^* q_i w_e) - p_i\| = \|\Phi_{E_{n+1}}(w_e^* q_i w_e) - e^*q_i e\| < 6 \zeta_n < 1/2,
\]
whence $\Phi_{E_{n+1}}(w_e^* q_i w_e)$ is close to a projection in $D_{E_{n+1}}$, which then has to agree with $p_i$. This shows $p_i \in D_{E_{n+1}}$. From 
\begin{equation}
\|p_i - p_i w_e^* q_i w_e p_i\| = \|p_i e^* q_i e p_i - p_i w_e^* q_i w_e p_i\|\stackrel{\eqref{4-1-2}}{<} 2 \zeta_n
\end{equation} we see that $p_i$ is a rank one projection, and that there is 
\begin{equation}
\label{4-1-7}
1- 2 \zeta_n \le \nu \le 1
\end{equation} 
such that 
\begin{equation}
\label{4-1-6}
v_i := \nu^{-\frac{1}{2}} \cdot q_i w_e p_i \in E_{n+1}
\end{equation} 
is a matrix unit with $v_i^* v_i = p_i$ and $v_i v_i^* = q_i$, and with 
\begin{align}
\label{4-1-8}
\|v_i - q_i e p_i\| & \le (\nu^{-\frac{1}{2}} -1)  +\zeta_n \le ((1- 2 \zeta_n)^{-\frac{1}{2}} -1) +  \zeta_n \le 3 \zeta_n^{\frac{1}{2}},
\end{align}
where the first estimate uses \eqref{4-1-6} and \eqref{4-1-2}, the second uses \eqref{4-1-7}, and the third estimate holds by a small calculation and the fact that $\zeta_n < \frac{1}{2}$ (see \eqref{4-1-5}). 

For $i \neq j$ we have $q_i e p_j = q_i e e^* q_j e = q_i q_j e e^* e = 0$ and $q_i v_i p_j = v_i p_i p_j = v_i e^* q_i e p_j = 0$. Therefore, for
\[
v_e:= \sum\nolimits_i v_i
\]
we have 
\begin{align*}
\|v_e - e\| & \stackrel{\phantom{\eqref{4-1-8}}}{=} \Big\| \Big( \sum\nolimits_i q_i \Big) (v_e - e) \Big(\sum\nolimits_i p_i \Big) \Big\| \\ 
& \stackrel{\phantom{\eqref{4-1-8}}}{=} \max_i \|v_i - q_i e p_i\| \\
& \stackrel{\eqref{4-1-8}}{\le} 3 \zeta_n^{\frac{1}{2}} \\
& \stackrel{\, \eqref{4-1-5} \,}{<} 1/4.
\end{align*}
Finally, since $v_i \in \mathcal{N}_{E_{n+1}}(D_{E_{n+1}})$, it follows from Proposition~\ref{prop:orthogonal-normalisers} that also $v_e \in \mathcal{N}_{E_{n+1}}(D_{E_{n+1}})$. 

We are now in position to apply the claim in Step 1 above (with $E_n$ in place of $F$ and $E_{n+1}$ in place of $E$), which makes sure that the assignment $e \mapsto v_e$ extends to a $^*$-homomorphism $\alpha_n: E_n \to E_{n+1}$ such that $\alpha_n(\mathcal{N}_{E_n}(D_{E_n})) \subset \mathcal{N}_{E_{n+1}}(D_{E_{n+1}})$. Note that for each $x \in E_n^1$ we have 
\begin{equation}
\label{AF-lim2}
\textstyle
\|\alpha_n(x) - x\| \le \dim(D_{E_n}) \cdot \dim(E_n) \cdot 3 \zeta_n^\frac{1}{2} \le 1/2^n
\end{equation}
and that $\alpha_n|_{D_{E_n}} = \id_{D_{E_n}}$.

\underline{Step 4.} It only remains to turn the sequence $((D_{E_n} \subset E_n))_{n \in \mathbb{N}}$ into a nested one. 

For each $n \in \mathbb{N}$ and $x \in E_n$ observe that $(\alpha_m \ldots \alpha_n(x))_{m \ge n}$ is a Cauchy sequence in $A$ by \eqref{AF-lim2}, so we may define a map $\beta_n: E_n \to A$ by setting $\beta_n(x) := \lim_{m} \alpha_m \ldots \alpha_n(x)$. It is then clear that $\beta_n$ is a $^*$-homomorphism, that $\beta_n|_{D_{E_n}} = \id_{D_{E_n}}$, and that the $\beta_m(E_m)$ are increasing with $\overline{\bigcup_m \beta_m(E_m)} = A$. Moreover, for any $x \in \mathcal{N}_{E_n}(D_{E_n})$ the image $\beta_n(x)$ normalises $\beta_{n+1} (D_{E_{n+1}})$ in $\beta_{n+1} ({E_{n+1}})$ , i.e., $\beta_n(\mathcal{N}_{E_n}(D_{E_n})) \subset \mathcal{N}_{\beta_{n+1}(E_{n+1})}(\beta_{n+1}(D_{E_{n+1}}))$. We may therefore set $F_n:= \beta_n(E_n)$ and $D_{F_n} := \beta_n(D_{E_n}) = D_{E_n}$, and are done.
\end{proof}

\begin{rem}\label{AFdiag}
	It follows from Theorem~\ref{thm:zero_ddim} in connection with \cite[Theorem~5.7]{Power:Pitman} that if $A$ is a separable AF algebra, then $A$ admits a diagonal $D_A$ (called a \emph{regular canonical masa} in \cite{Power:Pitman}) which satisfies $\ddim(D_A\subset A) = 0$ and is unique (up to approximately inner automorphisms of $A$) with this property. 
	
	It was shown in \cite{MS22} that there is a simple separable unital monotracial AF algebra $A$ containing a Cartan subalgebra $D$ which (as an abelian $\mathrm{C}^*$-algebra) is isomorphic to a regular canonical masa $D_A$, but which does not have the unique extension property, hence is not a diagonal, hence the pairs $(D \subset A)$ and $(D_A \subset A)$ are not isomorphic.  
\end{rem}

\section{Topological dynamical systems}
\label{sec5}

\noindent
In this section we link diagonal dimension to Kerr's notion of (fine) tower dimension for topological dynamical systems.

\begin{defn} \cite[Definition 4.1]{Ker17}
	Let $G\curvearrowright X$ be an action of a countable discrete group on a compact Hausdorff space $X$. 
	
	 A \emph{tower} is a pair $(V,S)$ consisting of a subset $V$ of $X$ and a finite subset $S$ of $G$ such that the sets $sV$, called the \emph{levels}, for $s\in S$ are pairwise disjoint. $V$ is called the \emph{base} and $S$ is called the \emph{shape} of the tower. We write $SV := \{sv \mid s \in S,\, v \in V\}$ for the union of the levels.
		
		 A \emph{castle} is a family $((V_j, S_j))_{j\in J}$ of towers such that all the levels are pairwise disjoint.
	
	A tower $(V,S)$ is \emph{open} if $V$ is an open subset of $X$. A castle whose towers are open will be referred to as an \emph{open castle}.	
\end{defn}

\begin{defn} \cite[Definition 4.3, Definition 4.10]{Ker17}
\label{def:tower-dimension}
	Let $G\curvearrowright X$ be an action of a countable discrete group on a compact Hausdorff space $X$. The \emph{tower dimension} of the action $G\curvearrowright X$ is said to be at most $d$, written $\tdim(X,G) \leq d$, if for every finite subset $E\subset G$ there exist a finite family of open towers $((V_j,S_j))_{j\in J}$ and a partition $J = J^{(0)}\sqcup \ldots \sqcup J^{(d)}$ such that 
	\begin{enumerate}
		\item $\bigcup_{j\in J}S_jV_j = X$,
		\item for every $x\in X$ there are $j\in J$ and $t\in S_j$ such that $x\in tV_j$ and $Et\subset S_j$ (this is called $E$-Lebesgueness in \cite{Ker17}),
		\item for each $i\in \{0,\ldots,d\}$ the sets $S_jV_j$ are pairwise disjoint for $j\in J^{(i)}$ (in other words, for each $i\in \{0,\ldots,d\}$ the family $((V_j,S_j))_{j\in J^{(i)}}$ forms a castle).
	\end{enumerate}
	If, in addition, for any given open cover $\mathcal{U}$ of $X$ one can choose the family of open towers $((V_j,S_j))_{j\in J}$ so that the open cover $(sV_j)_{j\in J, s\in S_j}$ refines $\mathcal{U}$, then the action is said to have \emph{fine tower dimension} at most $d$, and in this case we write $\ftdim(X,G) \leq d$. If no such $d$ exists, then the tower dimension (or the fine tower dimension, respectively) is said to be infinite.
\end{defn}

\begin{rems} \label{rems:towerdim}
(i) The definitions above insist on $G$ being countable discrete, since non-discrete topological groups will require a different setup; cf.\ \cite{HSWW17} vs.\ \cite{HWZ15}.

(ii) We have stated the definition of fine tower dimension in a version slightly different from \cite[Definition~4.10]{Ker17} in order to avoid asking $X$ to carry a metric (but in the metrisable case the two formulations are equivalent). 

This additional generality will not cause technical difficulties, and will prove useful when we look at uniform Roe algebras in Section~\ref{sec:examples}. The only \emph{caveat} is that for non-metrisable spaces the various notions of covering dimension are no longer equivalent, so that one has to make a choice. The appropriate version for our purposes is the one characterised in \cite[Proposition~1.5]{KW04}: every finite open cover of $X$ has a finite open refinement with chromatic number $d+1$. 

(iii) Unlike \cite[Definition~4.10]{Ker17} we explicitly ask the index set $J$ to be finite to begin with, whereas in \cite{Ker17} this is derived from compactness of $X$ whenever necessary. 

(iv) Finiteness of tower dimension requires the action to be \emph{free}. Indeed, let $g\in G\setminus \{e\}$, let $x\in X$, and let $((V_j,S_j))_{j\in J}$ be a family witnessing the tower dimension for the finite subset $E := \{ g, g^{-1}, e\}$. By the $E$-Lebesgueness condition \ref{def:tower-dimension}(2) there are $j\in J$ and $t\in S_j$ such that $x\in tV_j$ and $Et\subset S_j$. Since $gtV_j$ and $tV_j$ are disjoint, $x$ cannot be fixed by $g$.

It will therefore in particular follow from the theorem below that finite diagonal dimension of $(C(X) \subset C(X)\rtimes_{\mathrm{r}} G)$ can occur only for free actions. In upcoming work, the third named author will introduce a variant of diagonal dimension which will, at least to some extent, also allow isotropy.
\end{rems}

From \cite[Proposition~4.11]{Ker17} and the definition of fine tower dimension we know that 
\begin{align}
&\max\{  \tdim^{+1}(X,G), \, \dim^{+1}(X)\} \nonumber \\
&\qquad \qquad \le \ftdim^{+1}(X,G) \nonumber \\
&\qquad \qquad \qquad \qquad \le   \tdim^{+1}(X,G)\cdot \dim^{+1}(X). \label{tdim-ftdim}
\end{align}
The main result of this section fits diagonal dimension into this chain of estimates.

\begin{thm} \label{thm:diag_vs_tower}
	Let $\alpha:G\curvearrowright X$ be an action of a countable discrete group on a compact Hausdorff space. Then 
	\begin{align}
	&\ftdim^{+1}(X,G) \nonumber \\
	&\qquad \qquad \leq  \ddim^{+1}( C(X)\subset C(X)\rtimes_{\mathrm{r}} G ) \label{ddim-lower}\\
	&\qquad \qquad \qquad \qquad \leq   \tdim^{+1}(X,G)\cdot \dim^{+1}(X). \label{ddim-upper}
	\end{align}
	In particular, if $X$ is totally disconnected\footnote{Recall that a compact Hausdorff space is totally disconnected if and only if it has covering dimension zero.} then
	\begin{equation}
	\tdim(X,G) = \ftdim(X,G) = \ddim( C(X) \subset C(X)\rtimes_{\mathrm{r}} G). \label{ddim-zero}
	\end{equation}
\end{thm}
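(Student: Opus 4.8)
\textbf{Proof plan for Theorem~\ref{thm:diag_vs_tower}.}

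The plan is to establish the two inequalities \eqref{ddim-lower} and \eqref{ddim-upper} separately; the final statement \eqref{ddim-zero} then follows immediately, because when $X$ is totally disconnected we have $\dim(X) = 0$, so \eqref{tdim-ftdim} collapses to $\tdim(X,G) = \ftdim(X,G)$, and the chain $\ftdim^{+1}(X,G) \le \ddim^{+1}(C(X)\subset C(X)\rtimes_{\mathrm{r}} G) \le \tdim^{+1}(X,G)\cdot 1 = \tdim^{+1}(X,G) = \ftdim^{+1}(X,G)$ forces equality throughout. So the work is entirely in the two bounds, and nothing extra is needed for the zero-dimensional case beyond invoking \eqref{tdim-ftdim}.

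For the \emph{upper bound} \eqref{ddim-upper}: the strategy is to follow the existing construction for nuclear dimension of crossed products by amenable groups (in the style of the proofs giving $\ndim(C(X)\rtimes_{\mathrm{r}} G) < \infty$ for finite-dimensional $X$ and finite Rokhlin/tower dimension), and verify that the completely positive approximations produced there can be arranged to respect the canonical diagonal $C(X)$. Concretely, given a finite subset $E\subset G$, one takes the open castle $((V_j,S_j))_{j\in J}$ with partition $J = J^{(0)}\sqcup\ldots\sqcup J^{(d)}$ witnessing $\tdim(X,G)\le d$, then refines the base sets using a finite open cover of each $V_j$ with chromatic number $\dim(X)+1$ (using the characterisation of covering dimension quoted in Remark~\ref{rems:towerdim}(ii)). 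Partitions of unity subordinate to the resulting refined castle, together with the shape sets $S_j$, give rise to finite-dimensional matrix summands $M_{|S_j|}$ and c.p.\ maps $\psi\colon C(X)\rtimes_{\mathrm{r}} G \to F$, $\varphi\colon F \to C(X)\rtimes_{\mathrm{r}} G$ in the usual way: $\varphi$ sends matrix units $e_{s,t}^{(j)}$ to elements of the form $h_j^{1/2}\lambda_{st^{-1}} h_j^{1/2}$ (roughly), which are exactly the normalisers of $C(X)$ coming from functions times group elements. One has to check: (1) $\psi(C(X))$ lands in the diagonal $D_F$ (this is automatic since $\psi$ restricted to $C(X)$ only involves the diagonal partition-of-unity data), and (2) $\varphi$ sends matrix units to $\mathcal{N}_{C(X)\rtimes_{\mathrm{r}} G}(C(X))$ — which holds because products $f\cdot u_g$ with $f\in C(X)$, $g\in G$ always normalise $C(X)$ in a crossed product by a free action. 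Amenability of $G$ is used, as usual, to get the order-zero (orthogonality) property of each colour block via a Følner/quasi-central approximate unit argument; the number of colours is $(\tdim(X,G)+1)\cdot(\dim(X)+1)$, giving \eqref{ddim-upper}.

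For the \emph{lower bound} \eqref{ddim-lower}, which I expect to be the main obstacle: one must reverse the process and extract an open castle with the fine-tower properties from a c.p.\ approximation witnessing $\ddim(C(X)\subset C(X)\rtimes_{\mathrm{r}} G) \le d$. Given $E\subset G$ and an open cover $\mathcal{U}$ of $X$, take a c.p.\ approximation $(F, D_F, \psi, \varphi)$ that is fine enough on a suitable finite set (built from $E$, from functions subordinate to $\mathcal{U}$, and from the relevant $u_g$), and decompose $F = F^{(0)}\oplus\ldots\oplus F^{(d)}$ with $\varphi^{(i)}$ order zero. Using Proposition~\ref{prop:orthogonal-normalisers} and the structure theorem for order zero maps, the images under $\varphi^{(i)}$ of matrix units of each $F^{(i)}$ are normalisers, hence of the form (locally) $f\cdot u_g$; looking at where the supporting projections $\varphi^{(i)}(e_{s,s}^{(j)})$ are "supported" in $X$ (via the faithful conditional expectation from Proposition~\ref{prop:cond_exp_formula} composed with evaluation, or via the groupoid picture of Proposition~\ref{prop:principal-groupoid}) produces the base sets $V_j$, and the group elements $g$ appearing in $\varphi^{(i)}(e_{s,t}^{(j)})$ produce the shapes $S_j$. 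The order zero (orthogonality) condition on each colour is precisely what guarantees the disjointness of levels within a colour (condition (3) of tower dimension), and the normaliser condition (5) of Definition~\ref{defn:dim_diag} is what guarantees that these group-translate structures are genuine — this is the rigidity alluded to in the introduction. The hard technical point is turning the \emph{approximate} equivariance of $\varphi\psi \approx \mathrm{id}$ into the \emph{exact} $E$-Lebesgueness condition (2) and into honestly disjoint open sets; this is where one leans on Lemma~\ref{lem:rank1}-type rigidity (approximate commutation of rank-one projections with the diagonal forces exact agreement with a diagonal projection, independently of matrix size) to replace approximate data by exact castle data, and where the fineness of the approximation with respect to $\mathcal{U}$ yields the refinement property needed for $\ftdim$ rather than just $\tdim$.
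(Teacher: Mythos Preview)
Your overall strategies for both inequalities are correct and match the paper's, but you have misidentified the key technical ingredients in each case.

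\textbf{Upper bound.} Amenability of $G$ is not a hypothesis of the theorem, and no F{\o}lner or quasi-central approximate unit argument appears. The approximately $G$-invariant partition of unity is extracted directly from the castle: one applies the definition of tower dimension with $E^n$ in place of $E$ for large $n$, and the $E^n$-Lebesgueness condition then allows one to build functions $h_j$ supported on $S_jV_j$ that vary slowly under $E$ (this is isolated as Lemma~\ref{lem:inv_pou}). The order-zero property of each colour comes for free from disjointness of the towers within a colour: each $A^{(i)} = \bigoplus_{j\in J^{(i)}} M_{|ES_j|}\otimes C_0(V_j)$ is already a genuine $\mathrm{C}^*$-subalgebra of the crossed product, and one uses $\ddim(D_n\otimes C_0(V_j)\subset M_n\otimes C_0(V_j))\le \dim(X)$ together with Proposition~\ref{prop:dim-without-psi} to assemble the global approximation. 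No refinement of base sets by a $(\dim(X)+1)$-coloured cover is done explicitly; the factor $\dim^{+1}(X)$ enters only through the diagonal dimension of the algebras $A^{(i)}$.

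\textbf{Lower bound.} Lemma~\ref{lem:rank1} is not used here; it belongs to the proof of Proposition~\ref{prop:faithful_cond}. The paper instead develops two dedicated tools. Lemma~\ref{lem:local_iso} shows that for a c.p.c.\ order zero $\varphi:M_r\to A$ mapping matrix units to normalisers, conjugation by $g_\delta(\varphi)(e_{lk})$ gives genuine $^*$-isomorphisms $\sigma_{kl}$ between the hereditary subalgebras $\overline{f_\delta(\varphi)(e_{kk})\,C(X)\,f_\delta(\varphi)(e_{kk})}$. Lemma~\ref{lem:homeo_by_gp_element} then shows, crucially, that the induced homeomorphisms $\bar\sigma_{k1}$ between open subsets of $X$ are \emph{locally implemented by group elements}, and that (by freeness) the assignment $x\mapsto g_x$ is continuous. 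The base sets $V^{(i),j}_r$ are defined as the (clopen-in-$U_1^{(i),j}$) loci where a fixed tuple of group elements does the job; the shapes are those tuples, and one enlarges them to $ES_r^{(i),j}$ to force $E$-Lebesgueness. Disjointness of the levels (including the $E$-enlarged ones) is proved by a direct contradiction argument using the multiplicative-domain estimate $\|\hat\varphi(\hat\psi(a)b)-\hat\varphi\hat\psi(a)\hat\varphi(b)\|<\e$, not via the conditional expectation or the groupoid model. The refinement by $\mathcal{U}$ (needed for \emph{fine} tower dimension) is arranged at the outset by including a partition of unity subordinate to $\bigvee_{g\in E}\alpha_g(\mathcal{U})$ in the finite set $\mathcal{F}$ on which the c.p.\ approximation is required to be accurate.
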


The proof is divided into two parts. To implement the upper bound \eqref{ddim-upper}, we only need to carefully follow Kerr's proof of \cite[Theorem 6.2]{Ker17}, which establishes the respective bound for the nuclear dimension of the crossed product, but in fact keeps track of the diagonal by design. 

The key new feature in this section is the lower bound \eqref{ddim-lower}, which extracts purely dynamical information from $\mathrm{C}^*$-algebraic data. 

The first equality in \eqref{ddim-zero} is already contained in \eqref{tdim-ftdim}, and the second one follows upon combining the latter with \eqref{ddim-lower} and \eqref{ddim-upper}.

 \bigskip

We start by isolating the lemma below from the proof of \cite[Theorem 6.2]{Ker17}.  
Roughly speaking, it expresses tower dimension in terms of partitions of unity (as opposed to open covers) which are approximately compatible with the group action. We spell out the proof partly for the convenience of the reader and partly because \cite[Theorem 6.2]{Ker17} is stated and proven under the slightly more special assumption that $X$ is metrisable.

\begin{lem} \label{lem:inv_pou}
	Let $\alpha:G\curvearrowright X$ be as before. Suppose $\tdim(X,G) \leq d < \infty$. Then for every finite subset $e \in E\subset G$ and $\eta > 0$, there exist a finite family of open towers $((V_j,S_j))_{j\in J}$, a partition $J = J^{(0)}\sqcup \ldots \sqcup J^{(d)}$, and a partition of unity $(h_j)_{j\in J}$ for $X$ such that
	\begin{enumerate}
			\item[\rm{(1)}] $(S_jV_j)_{j\in J}$ forms an open cover for $X$ to which the partition of unity $(h_j)_{j\in J}$ is subordinate,
		\item[\rm{(2)}] for each $i \in \{0,\ldots,d\}$, $j \neq j' \in J^{(i)}$, and for $g,g' \in E$, $\alpha_g(h_j) \alpha_{g'}(h_{j'}) = 0$ (we abuse notation and write $\alpha$ also for the induced action on $C(X)$ here),
		\item[\rm{(3)}] for each $i\in \{0,\ldots,d\}$ the family $((V_j,S_j))_{j\in J^{(i)}}$ forms a castle, 
		\item[\rm{(4)}] $| h_j(x) - h_j(g x)| < \eta$ for all $j \in J$, $x\in X$ and $g\in E$.
	\end{enumerate}
\end{lem}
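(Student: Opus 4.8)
The plan is to unpack the definition of $\tdim(X,G)\le d$ for the finite set $E$ (after possibly enlarging $E$ to contain $e$ and be symmetric), obtaining a finite family of open towers $((V_j,S_j))_{j\in J}$ and a partition $J=J^{(0)}\sqcup\cdots\sqcup J^{(d)}$ satisfying conditions (1)--(3) of Definition~\ref{def:tower-dimension}, then to manufacture the partition of unity $(h_j)_{j\in J}$ by hand so that properties (1)--(4) hold. The natural first move is to shrink the levels slightly: replace each $V_j$ by a slightly smaller open set so that, using compactness and $E$-Lebesgueness, the sets $(E S_j V_j)_{j\in J}$ (or an appropriate shrink thereof) still cover $X$, while $\overline{V_j}\subset V_j^{\mathrm{old}}$; this buffer room is what will eventually give the approximate invariance in (4). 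More precisely, I would iterate the standard shrinking-lemma argument: first pass to a closed cover refining the open cover from \ref{def:tower-dimension}(1), then fatten slightly, keeping track of the $E$-translates.

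Next I would build, for each $j\in J$, a continuous function supported in the (shrunk) level $V_j$ and then propagate it along the tower: set $g_j:=\sum_{s\in S_j}\alpha_s(\tilde h_j)$ where $\tilde h_j\in C(X)_+$ is supported in the shrunk base and the translates $s\cdot(\mathrm{supp}\,\tilde h_j)$, $s\in S_j$, are disjoint (this uses that $(V_j,S_j)$ is a tower). The castle condition \ref{def:tower-dimension}(3) for each colour $J^{(i)}$ guarantees that for $j\ne j'$ in the same $J^{(i)}$ the supports of $g_j$ and $g_{j'}$ are disjoint; to get the stronger orthogonality (2) involving $E$-translates $\alpha_g(h_j)\alpha_{g'}(h_{j'})=0$, I would use the shrinking done earlier so that even the $E$-fattened supports within a fixed colour class remain pairwise disjoint (this is exactly where having pushed the level sets away from each other pays off, and it is essentially the content of Kerr's argument in the proof of \cite[Theorem~6.2]{Ker17}). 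Then normalise: since the $g_j$ together with their $E$-translates cover $X$, the sum $\sum_{j\in J} g_j$ is strictly positive on $X$, so $h_j:=g_j/\sum_{k\in J}g_k$ is a genuine partition of unity subordinate to $(S_jV_j)_{j\in J}$, giving (1) and (3), and (2) survives normalisation because it is a statement about supports.

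For the approximate invariance (4), I would argue that for $g\in E$ and $x\in X$, the value $g_j(x)$ and $g_j(gx)$ differ only through the ``boundary'' contributions, and by choosing the initial shrinking fine enough — quantitatively, making the functions $\tilde h_j$ vary slowly relative to how much an element of $E$ can move a point, using uniform continuity on the compact space $X$ and finiteness of $E$ and $J$ — one forces $|g_j(x)-g_j(gx)|$ and hence $|h_j(x)-h_j(gx)|$ below $\eta$. Concretely: pick a finite open cover $\mathcal W$ of $X$ fine enough that $g\mathcal W$-translates stay within prescribed tolerance, run the shrinking construction so that each $\tilde h_j$ is constant-to-within-$\eta'$ on $\mathcal W$-sets, and then track the error through the (at most $|S_j|$-term) sum and through the normalisation (controlling the denominator away from $0$). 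I expect the main obstacle to be the bookkeeping in this last step: simultaneously arranging the exact support-disjointness needed for (2)--(3) and the quantitative slow-variation needed for (4) requires the shrinkings to be performed in the right order and with compatible tolerances, and making the normalisation step preserve the approximate invariance without spoiling the exact orthogonality is the delicate point. Everything else is a fairly routine partition-of-unity argument built on top of the purely dynamical input from $\tdim(X,G)\le d$.
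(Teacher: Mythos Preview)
Your plan has a genuine gap at condition (4). The approximate invariance $|h_j(x)-h_j(gx)|<\eta$ cannot come from uniform continuity or from making the base functions $\tilde h_j$ ``vary slowly'': an element $g\in E$ acts by an arbitrary homeomorphism of $X$, and there is no metric sense in which $gx$ is close to $x$, so ``fine covers'' and ``slow variation'' are irrelevant here. Concretely, with your $g_j=\sum_{s\in S_j}\alpha_s(\tilde h_j)$: if $x$ lies in a level $tV_j$ with $t\in S_j$ but $gt\notin S_j$, then $g_j(x)=\tilde h_j(t^{-1}x)$ while $g_j(gx)$ may well be $0$; no shrinking of $V_j$ repairs this, since the obstruction lives in the \emph{shape} $S_j$, not in the base. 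After normalisation one can still get $|h_j(x)-h_j(gx)|=1$. The same boundary effect undermines your argument for (2): $\alpha_g(h_j)$ is supported in $gS_jV_j$, which need not sit inside $S_jV_j$, so the castle condition for colour $i$ does not by itself force $\alpha_g(h_j)\alpha_{g'}(h_{j'})=0$.

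The missing ingredient --- exactly Kerr's device in \cite[Theorem~6.2]{Ker17}, which the paper follows --- is to apply the tower-dimension hypothesis not to $E$ but to $E^n$ for some large $n$ with $(d+2)/n<\eta$. One then stratifies each shape $S_j$ into layers $B_{j,0},\ldots,B_{j,n}$ according to $E$-distance from the boundary of $S_j$, and sets $\hat h_j:=\sum_{k}\sum_{s\in B_{j,k}}\tfrac{k}{n}\,\alpha_s(f_j)$ for a suitable base function $f_j$. An element $g\in E$ carries $B_{j,k}$ only into $B_{j,k-1}\cup B_{j,k}\cup B_{j,k+1}$, so the weight changes by at most $1/n$; this is what delivers (4). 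Moreover $\hat h_j$ (hence $h_j$) is supported on the $E$-core levels, so its $E$-translates remain inside $S_jV_j$, which is what gives (2). Your outline never produces this vertical taper along the shape, and without it the argument does not close.
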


\begin{proof}[Proof (cf.\ that of Theorem~6.2 in \cite{Ker17})]
We may assume $E$ to be symmetric by replacing it  with $E \cup E^{-1}$, if necessary. Take $n \in \mathbb{N}$ such that $(d+2) / n < \eta$.

Choose a finite family of open towers $((V_j,S_j))_{j\in J}$ and a partition $J = J^{(0)}\sqcup \ldots \sqcup J^{(d)}$ satisfying conditions \ref{def:tower-dimension}(1) and \ref{def:tower-dimension}(3) (so that in particular \ref{lem:inv_pou}(3) holds), and such that \ref{def:tower-dimension}(2) holds with $E$ replaced by $E^n$. For each $j \in J$ define the $E^n$-core of $S_j$ as 
\[
B_{j,n} := \{ s \in S_j \mid E^n s \subset S_j\}.
\]
Upon shifting the $S_j$ from the right and the $V_j$ from the left, if necessary, we may assume each of the nonempty $B_{j,n}$ to contain $e$.

By \ref{def:tower-dimension}(2), the family $(sV_j)_{j \in J, s \in B_{j,n}}$ is an open cover of $X$, and since $X$ is normal we can find a partition of unity $(\hat{f}_{j,s})_{j \in J, s \in B_{j,n}}$ subordinate to it. For every $j \in J$ define $f_j \in C(X)^1_+$ by 
\begin{equation}
\label{f-pre-pou}
f_j := \max_{s \in B_{j,n}} \alpha_{s^{-1}}(\hat{f}_{j,s})
\end{equation}
provided $B_{j,n} \neq \emptyset$; otherwise set $f_j := 0$. 
Note that for every $j \in J$ and $s \in B_{j,n}$ we have $0 \le \hat{f}_{j,s} \le \alpha_s(f_j) \in C_0(sV_j)$, whence 
\[
\sum\nolimits_{j \in J} \sum\nolimits_{s \in B_{j,n}} \alpha_s(f_j) \ge \unit_X.
\]
Next, for each $j$ define further subsets $B_{j,k}$ of $S_j$ by 
\begin{align}
B_{j,0} &  := S_j \setminus \bigcap\nolimits_{g \in E} g S_j  \mbox{ and } \label{Bjk-def}\\
B_{j,k} &  :=  \Big( \bigcap\nolimits_{g \in E^k} g S_j \Big) \setminus \Big( \bigcap\nolimits_{g \in E^{k+1}} g S_j \Big) \mbox{ for } k=1,\ldots,n-1 \nonumber 
\end{align}
and note that 
\[
B_{j,n} = \bigcap\nolimits_{g \in E^n} g S_j.
\]
This in particular means that for each $j$ the family $(B_{j,k})_{k=0,\ldots,n}$ forms a partition of $S_j$.
Moreover, by construction we have for all $j \in J$ and $g \in E$ 
\begin{align}
g B_{j,k} &  \subset B_{j,k-1} \cup B_{j,k} \cup B_{j,k+1} \mbox{ for } k=1,\ldots,n-1,  \mbox{ and } \nonumber \\
g B_{j,n} &  \subset B_{j,n-1} \cup B_{j,n}. \label{Bjn-shift}
\end{align}
For each $j \in J$ we may now define the function
\[
\hat{h}_{j} := \sum\nolimits_{k=0}^n \sum\nolimits_{s \in B_{j,k}} \frac{k}{n} \cdot \alpha_s(f_j) \in C(X).
\]
Note that for $x \in X$ and $g \in E$ by \eqref{Bjn-shift} we have
\[
| \hat{h}_{j}(g^{-1}x) - \hat{h}_{j}(x) | \le {1}/{n}.
\]
Now set $H := \sum_{j \in J} \hat{h}_{j}$ and observe that for any $x \in X$
\begin{align*}
H(x) &  = \sum\nolimits_{j \in J}\sum\nolimits_{k=0}^n \sum\nolimits_{s \in B_{j,k}} \frac{k}{n} \cdot \alpha_s(f_j)(x) \\
&   \ge \sum\nolimits_{j \in J} \sum\nolimits_{s \in B_{j,n}} \alpha_s(f_j)(x) \ge 1.
\end{align*}
We may therefore define
\[
\textstyle
h_j := H^{-1} \cdot \hat{h}_{j} 
\]
to obtain a partition of unity $(h_j)_{j \in J}$ for $X$. Note that for each $j$ the open support of $h_j$ (which is the same as that of $\hat{h}_{j}$) is contained in $(\bigcap_{g \in E} g S_j)V_j \subset S_jV_j$ (cf.\ \eqref{Bjk-def}), so \ref{lem:inv_pou}(1) holds. Since $\bigcap_{g \in E} g S_j$ agrees with the $E$-core of $S_j$, this implies that for each $g \in E$ the function $\alpha_g(h_{j})$ is supported in $S_j V_j$; in connection with \ref{lem:inv_pou}(3) this yields \ref{lem:inv_pou}(2).

From here on, one checks that
\[
|\alpha_{g^{-1}}(H)(x) - H(x)| \le ({d+1})/{n}
\]
for $x \in X$ and $g \in E$ exactly as in the paragraph preceding the inequality numbered (6) in the proof of \cite[Theorem 6.2]{Ker17}. That same inequality also yields 
\[
|\alpha_{g^{-1}}(h_j)(x) - h_j(x)| \le ({d+2})/{n} < \eta
\]
for $x \in X$ and $g \in E$, so \ref{lem:inv_pou}(4) holds and our proof is complete.
\end{proof}

\begin{proof}[Proof of \eqref{ddim-upper}]
	Write $d := \tdim(X,G)$ and $c := \dim(X)$. We may assume  that both numbers are finite, so that in particular $\alpha$ is a  free action by Remark~\ref{rems:towerdim}(iv). Let $\mathcal{F}$ be a finite subset of $C(X)\rtimes_{\mathrm{r}} G$ and let $\e >0$ be given. We are looking for c.p.c.\ approximations for $\mathcal{F}$ within $\e$, and so by linearity and continuity we may assume that  our finite subset is of the form 
	\[
	\mathcal{F} = \{ f u_s \mid f \in \mathcal{E},\, s \in E \},
	\]
	for finite subsets $\mathcal{E} \subset C(X)_+^1$ and $E \subset G$. (We use the common notation $u_g$, $g \in G$, to denote the left regular unitaries in $C(X)\rtimes_{\mathrm{r}} G$ implementing the action by conjugation; with this, $\mathrm{span} \{ f u_g \mid f \in C(X), \, g \in G\}$ is dense in in $C(X)\rtimes_{\mathrm{r}} G$.) We may moreover assume that $E$ is symmetric and contains the identity $e$ of $G$, and that $\unit_{C(X)} \in \mathcal{E}$.

	Choose some 
	\begin{equation}
	\label{5-3-2}
	0< \eta < \frac{\e}{2(d+1)}
	\end{equation} 
	so that, whenever $h$ and $a$ are positive contractions in a $\mathrm{C}^*$-algebra such that $\|[a,h]\| \le \eta$, then $\|a, h^{\frac{1}{2}}]\| < \frac{\e}{2(d+1)}$ (this is possible since the function $(t \mapsto t^{\frac{1}{2}})$ may be approximated uniformly on the interval $[0,1]$ by polynomials in $t$). Find a family of open towers $((V_j,S_j))_{j\in J}$ with a partition $J = J^{(0)}\sqcup \ldots \sqcup J^{(d)}$ and a partition of unity $(h_j)_{j\in J}$ as in Lemma \ref{lem:inv_pou} for $E$ and $\eta$. Note that it follows from \ref{lem:inv_pou}(4) that for each $j \in J$ and $s \in E$
	\[
	\| u_s h_j - h_j u_s \| \le \eta. 
	\]
	Setting 
	\[
	\textstyle
	h^{(i)} := \sum_{j\in J^{(i)}} h_j
	\]
	for $i\in \{0,\ldots,d\}$, it moreover follows from \ref{lem:inv_pou}(1), (2) and (3) that 
	\begin{equation}
	\label{5-3-1}
	\| u_s h^{(i)} - h^{(i)} u_s \| \le \eta. 
	\end{equation} 
	
	For each $j\in J$ define
	\[
	A_j := \mathrm{C}^*( u_sC_0(V_j)u_t^* \mid s,t\in S_j  ).
	\]
	Note that $C_0(ES_jV_j)$ embeds into $A_j$ and the map $M_{|ES_j|}\otimes C_0(V_j) \to A_j$ defined by $e_{s,t}\otimes f \mapsto u_sfu_t^*$ is an isomorphism mapping the abelian subalgebra $D_{|ES_j|}\otimes C_0(V_j)$ onto $C_0(ES_jV_j)$. Write 
	\[
	A^{(i)} := \bigoplus\nolimits_{j\in J^{(i)}} A_j,
	\]
	which is identified with the (finite) direct sum $\bigoplus_{j\in J^{(i)}} M_{|ES_j|}\otimes C_0(V_j) $. Now by Theorem~\ref{thm:permanence}(i) and (vii) we have the estimate
	\begin{align*}
	& \ddim\Big(  C_0 \Big( \bigsqcup\nolimits_{j\in J^{(i)}} ES_jV_j \Big)  \subset A^{(i)}  \Big)\\
	& =	\ddim\Big( \bigoplus\nolimits_{j\in J^{(i)}} D_{|ES_j|}\otimes C_0(V_j) \subset \bigoplus\nolimits_{j\in J^{(i)}} M_{|ES_j|}\otimes C_0(V_j)  \Big) \\
	&\textstyle = \max_{j\in J^{(i)}} \ddim(C_0(V_j) \subset C_0(V_j)) \\
	& \leq \dim(X) \\
	& = c.
	\end{align*}
	 Next observe that for every $fu_s \in \mathcal{F}$, for each $i$ the compression $(h^{(i)})^{\frac{1}{2}}fu_s (h^{(i)})^{\frac{1}{2}}$ belongs to $A^{(i)}$. Let $(F^{(i)}, D_{F^{(i)}}, \theta^{(i)}, \varphi^{(i)})$ be a completely positive approximation witnessing $\ddim\big(   C_0( \bigsqcup_{j\in J^{(i)}} ES_jV_j ) \subset A^{(i)}  \big)\leq c$ for the finite set $\{(h^{(i)})^{1/2}fu_s (h^{(i)})^{1/2} \mid  f \in \mathcal{E},\, s \in E \}$ within $\eta$. By Arveson's extension theorem we can extend the map $\theta^{(i)}:A^{(i)}\to F^{(i)}$ to a c.p.c.\ map $\tilde{\theta}^{(i)} : C(X)\rtimes_{\mathrm{r}} G\to F^{(i)}$.	Now define c.p.c.\ maps
	\[
	\psi^{(i)} :C(X)\rtimes_{\mathrm{r}} G\to F^{(i)}
	\]
	by
	\[
	\psi^{(i)} := \tilde{\theta}^{(i)}( (h^{(i)})^{\frac{1}{2}} \, . \, (h^{(i)})^{\frac{1}{2}}).
	\]
	Viewing the $A^{(i)}$ as subalgebras of $C(X)\rtimes_{\mathrm{r}} G$, we arrive at the diagram
	\begin{center}
		\begin{tikzpicture} [node distance = 2.5cm, auto]
		\node (1) {$C(X)\rtimes_{\mathrm{r}} G$};
		\node (2) [right of =1, below of =1] {$F := F^{(0)}\oplus \ldots \oplus F^{(d)}$};
		\node (3) [right of =2, above of =2] { $C(X)\rtimes_{\mathrm{r}} G$};
		\draw [->][swap] (1) to node {${\psi}:= \bigoplus_{i=0}^d \psi^{(i)}$}  (2);
		\draw [->][swap] (2) to node {$\varphi:= \sum_{i=0}^d \varphi^{(i)}$} (3);
		\end{tikzpicture}
	\end{center}
	and may compute for $f u_s \in \mathcal{F}$
	\begin{align*}
		\|\varphi{\psi}(fu_s) - fu_s\|&= \Big\| \varphi\Big( \bigoplus\nolimits_{i=0}^d \tilde{\theta}^{(i)}( (h^{(i)})^{\frac{1}{2}}fu_s(h^{(i)})^{\frac{1}{2}} )  \Big) - fu_s \Big\| \\
		& = \Big\| \varphi\Big( \bigoplus\nolimits_{i=0}^d \theta^{(i)}( (h^{(i)})^{\frac{1}{2}}fu_s(h^{(i)})^{\frac{1}{2}} )   \Big) - fu_s \Big\| \\
		& = \Big\| \sum\nolimits_{i=0}^d \varphi^{(i)} \theta^{(i)}( (h^{(i)})^{\frac{1}{2}}fu_s(h^{(i)})^{\frac{1}{2}} ) - fu_s \Big\| \\
		& \le \Big\| \sum\nolimits_{i=0}^d (h^{(i)})^{\frac{1}{2}}fu_s (h^{(i)})^{\frac{1}{2}} - fu_s \Big\| + (d+1) \eta \\
		& \le \Big\| \sum\nolimits_{i=0}^d h^{(i)} fu_s - fu_s \Big\| + \e/2 + (d+1) \eta\\
		& < \e,
	\end{align*}
	where for the second inequality we have used \eqref{5-3-1} so that $\|u_s (h^{(i)})^\frac{1}{2} - (h^{(i)})^\frac{1}{2} u_s\| < \frac{\varepsilon}{2 (d+1)}$, and for the last inequality we have used \eqref{5-3-2} together with the fact that $(h^{(i)})_i$ forms a partition of unity.
	As each $\varphi^{(i)}$ is a sum of at most $(c+1)$ c.p.c.\ order zero maps, $\varphi$ is a sum of at most $(d+1)(c+1)$ c.p.c.\ order zero maps. Moreover, each $\varphi^{(i)}$ maps normalisers of $D_F^{(i)}$ in $F^{(i)}$ to normalisers of $C_0( \bigsqcup_{j\in J^{(i)}} ES_jV_j )$ in $A^{(i)}$. But since $C_0( \bigsqcup_{j\in J^{(i)}} ES_jV_j )$ is a hereditary subalgebra of $C(X)$, by Proposition~\ref{prop:maps-normalisers}(iii) such normalisers also normalise $C(X)$ in $C(X) \rtimes_{\mathrm{r}} G$. It then follows that in fact each $\varphi^{(i)}$ maps normalisers of  $D_F^{(i)}$ in $F^{(i)}$ to normalisers of $C(X)$ in $C(X) \rtimes_\mathrm{r} G$.
	
For each $f u_s \in \mathcal{F}$ take $b_{f u_s} := \psi(f u_s) \in F^1$ and note that $\psi(\mathbf{1}_{C(X)})$ belongs to $D_F$. The assertion \eqref{ddim-upper} now follows from Proposition~\ref{prop:dim-without-psi}.
\end{proof}

We now turn to the lower bound \eqref{ddim-lower}. For each $\delta \in (0,1)$ define piecewise linear continuous functions $f_\delta, g_\delta :[0,1]\to \R$ by
	\begin{equation} \label{eq:f_delta}
f_\delta(t) = \begin{cases} 0 & 0\leq t\leq \delta \\
\text{linear} &  \delta < t \leq  2 \delta \\
t & 2 \delta < t \le 1
\end{cases}
\end{equation}
and
	\begin{equation} \label{eq:g_delta}
g_\delta(t) = \begin{cases} 0 & 0\leq t\leq \delta/2   \\
\text{linear} & \delta/2 < t \leq \delta \\
1 & \delta < t \leq  1 .
\end{cases}
\end{equation}
For later use we note that the equation
\begin{equation}
\label{eq:h_delta}
t \cdot h_\delta(t) = g_\delta(t)
\end{equation}
uniquely determines a continuous function $h_\delta :[0,1]\to \R$.

\begin{lem} \label{lem:local_iso}
Let $(D_A \subset A)$ be a sub-$\mathrm{C}^*$-algebra with $D_A$ abelian, and let $\varphi:M_r\to A$ be a c.p.c.\ order zero map. Given $k,l\in \{1,\ldots,r\}$ and $\delta\in (0,1)$, define a c.p.c.\ map $\sigma_{kl}:A\to A$ by 
\begin{equation}
\label{5-6-1}
\sigma_{kl}(a) = g_\delta(\varphi)(e_{l k}) a g_\delta(\varphi)(e_{l k})^*.
\end{equation} 
Then $\sigma_{kl}$ restricts to a $^*$-isomomorphism 
\[
\overline{ f_\delta(\varphi)(e_{kk}) A f_\delta(\varphi)(e_{kk}) } \stackrel{\cong}{\longrightarrow} \overline{ f_\delta(\varphi)(e_{ll}) A f_\delta(\varphi)(e_{l l}) }.
\] 
If $\varphi(e_{l k})$ belongs to $\mathcal{N}_A(D_A)$, then $\sigma_{kl}$ restricts to a $^*$-isomorphism 
\[
\overline{ f_\delta(\varphi)(e_{kk}) D_A f_\delta(\varphi)(e_{kk}) } \stackrel{\cong}{\longrightarrow} \overline{ f_\delta(\varphi)(e_{l l}) D_A f_\delta(\varphi)(e_{l l}) }.
\]
\end{lem}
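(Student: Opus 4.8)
The plan is to reduce everything, via the structure theorem for order zero maps \cite[Theorem~3.3]{WZ09}, to a bookkeeping computation. Write $h:=\varphi(\mathbf{1}_{M_r})\in A^1_+$ and let $\pi:=\pi_\varphi:M_r\to A^{**}$ be a supporting $^*$-homomorphism, so that $\varphi(\,\cdot\,)=h\,\pi(\,\cdot\,)$ with $h$ commuting with the range of $\pi$; put $p_j:=\pi(e_{jj})$ and $v:=\pi(e_{lk})$, so $v^*v=p_k$, $vv^*=p_l$, and $h$ commutes with $p_k,p_l,v,v^*$. Order zero functional calculus then gives, with the notation
\[
w:=g_\delta(\varphi)(e_{lk})=g_\delta(h)v,\qquad F_j:=f_\delta(\varphi)(e_{jj})=f_\delta(h)p_j,\qquad W:=f_\delta(\varphi)(e_{lk})=f_\delta(h)v,
\]
that $\sigma_{kl}(\,\cdot\,)=w(\,\cdot\,)w^*$. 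Two elementary facts about the functions in \eqref{eq:f_delta}, \eqref{eq:g_delta}, \eqref{eq:h_delta} will do all the work: since $g_\delta\equiv 1$ on $(\delta,1]\supseteq\{t:f_\delta(t)\neq 0\}$ we have $g_\delta f_\delta=f_\delta$ and $g_\delta^2 f_\delta=f_\delta$ on $[0,1]$; and $t\,h_\delta(t)=g_\delta(t)$ by \eqref{eq:h_delta}.

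For the unrestricted statement I would first record that $w^*w=g_\delta(h)^2p_k$ is a unit for the hereditary subalgebra $\overline{F_kAF_k}$, and symmetrically $ww^*=g_\delta(h)^2p_l$ is a unit for $\overline{F_lAF_l}$ --- immediate from $g_\delta\equiv 1$ on the spectral subset of $h$ supporting $f_\delta$. Next, $wF_k=g_\delta(h)v f_\delta(h)p_k=g_\delta(h)f_\delta(h)v=f_\delta(h)v=W$, so $\sigma_{kl}(F_kaF_k)=WaW^*$; since $W=e_lW$ with $e_l$ the support projection of $F_l$, this lies in $\overline{F_lAF_l}$, so $\sigma_{kl}(\overline{F_kAF_k})\subseteq\overline{F_lAF_l}$, and symmetrically the analogous compression $\sigma_{lk}$ (with $v^*$ in place of $v$) maps $\overline{F_lAF_l}$ into $\overline{F_kAF_k}$. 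Now $\sigma_{lk}\sigma_{kl}(\,\cdot\,)=(w^*w)(\,\cdot\,)(w^*w)$ is the identity on $\overline{F_kAF_k}$ by the first observation, and symmetrically $\sigma_{kl}\sigma_{lk}=\mathrm{id}$ on $\overline{F_lAF_l}$; moreover on $\overline{F_kAF_k}$ the computation $\sigma_{kl}(x)\sigma_{kl}(y)=wx(w^*w)yw^*=wxyw^*=\sigma_{kl}(xy)$ (using that $w^*w$ is a local unit) together with $\sigma_{kl}(x)^*=\sigma_{kl}(x^*)$ shows $\sigma_{kl}|_{\overline{F_kAF_k}}$ is a $^*$-homomorphism; being a bijection with inverse $\sigma_{lk}|_{\overline{F_lAF_l}}$, it is a $^*$-isomorphism onto $\overline{F_lAF_l}$.

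For the restricted statement, assume $\varphi(e_{lk})=hv\in\mathcal{N}_A(D_A)$. The key new point is that then $w=g_\delta(h)v$ and $W=f_\delta(h)v$ --- and likewise $g_\delta(\varphi)(e_{kl})$ and $f_\delta(\varphi)(e_{kl})$ --- are themselves normalisers of $D_A$: indeed $\varphi(e_{lk})^*\varphi(e_{lk})=h^2p_k$ is a positive normaliser of $D_A$, hence by Lemma~\ref{lem:positive_normaliser} so is any continuous function of it vanishing at $0$, and using $t\,h_\delta(t)=g_\delta(t)$ one writes $w=\bigl(s\mapsto h_\delta(\sqrt{s})\bigr)(h^2p_k)\cdot\varphi(e_{lk})$ and $W=\bigl(s\mapsto f_\delta(\sqrt{s})/\sqrt{s}\bigr)(h^2p_k)\cdot\varphi(e_{lk})$, each a product of two normalisers, hence in $\mathcal{N}_A(D_A)$; similarly each $F_j=f_\delta(\varphi(e_{jj}))$ for $j\in\{k,l\}$ is a function vanishing at $0$ of the positive normaliser $\varphi(e_{jj})$, hence a positive normaliser, hence commutes with $D_A$. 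Consequently $\sigma_{kl}(F_kdF_k)=WdW^*\in D_A$ for $d\in D_A$, and symmetrically with $k,l$ swapped; feeding this through $\sigma_{kl}\sigma_{lk}=\mathrm{id}$ (and $\sigma_{kl}(y)=wyw^*\in D_A$ for $y\in D_A$) forces $\overline{F_jD_AF_j}\subseteq D_A$ for $j\in\{k,l\}$. Since $F_j$ commutes with $D_A$, $\overline{F_jD_AF_j}=\overline{F_j^2D_A}$, and a short hereditary-subalgebra argument (approximate $z\in D_A\cap\overline{F_jAF_j}$ by $g_n(F_j^2)z$ with $g_n\uparrow\chi_{(0,1]}$, $g_n(0)=0$, using $F_j^2D_A\subseteq D_A$) identifies $\overline{F_jD_AF_j}$ with $D_A\cap\overline{F_jAF_j}$. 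As $\sigma_{kl}$ restricts to an isometric $^*$-isomorphism $\overline{F_kAF_k}\to\overline{F_lAF_l}$ by the first part and carries $D_A$-elements to $D_A$-elements, it restricts to a $^*$-isomorphism $D_A\cap\overline{F_kAF_k}\xrightarrow{\cong}D_A\cap\overline{F_lAF_l}$, which is the asserted isomorphism $\overline{F_kD_AF_k}\xrightarrow{\cong}\overline{F_lD_AF_l}$.

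The step I expect to be the main obstacle is showing that $g_\delta(\varphi)(e_{lk})$ and $f_\delta(\varphi)(e_{lk})$ are normalisers of $D_A$, even though $\varphi(e_{lk})$ is the only normaliser we are handed; this is precisely where the auxiliary functions $f_\delta,g_\delta,h_\delta$ and the relation \eqref{eq:h_delta} earn their keep, letting us realise these elements as products of $\varphi(e_{lk})$ with functions of the positive normaliser $\varphi(e_{lk})^*\varphi(e_{lk})$, so that Lemma~\ref{lem:positive_normaliser} applies. Everything else --- the $^*$-homomorphism property of $\sigma_{kl}$ and the final identification of images --- is routine once the structure theorem has reduced matters to manipulations of $h$, $\pi$, and the two displayed functional-calculus identities.
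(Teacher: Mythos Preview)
Your approach is essentially the paper's: both arguments use the structure theorem for order zero maps, the identity $g_\delta f_\delta = f_\delta$ to get the $^*$-isomorphism on the hereditary subalgebras, and then factorise $g_\delta(\varphi)(e_{lk})$ as a product of $\varphi(e_{lk})$ with a continuous function (vanishing at $0$) of the positive normaliser $\varphi(e_{lk})^*\varphi(e_{lk})$, invoking Lemma~\ref{lem:positive_normaliser}. However, your displayed factorisation
\[
w=\bigl(s\mapsto h_\delta(\sqrt{s})\bigr)(h^2p_k)\cdot\varphi(e_{lk})
\]
is \emph{zero} for $k\neq l$: since $h_\delta(0)=0$ one has $\bigl(s\mapsto h_\delta(\sqrt{s})\bigr)(h^2p_k)=h_\delta(h)p_k$, and $p_k v=\pi(e_{kk}e_{lk})=0$. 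The factors must be written in the other order,
\[
w=\varphi(e_{lk})\cdot h_\delta\!\bigl((\varphi(e_{lk})^*\varphi(e_{lk}))^{1/2}\bigr)
\quad(=\,hv\cdot h_\delta(h)p_k=g_\delta(h)\,v),
\]
exactly as in the paper's \eqref{5-6-1-1}; equivalently, keep your order but use $\varphi(e_{lk})\varphi(e_{lk})^*=h^2p_l$ on the left. The same correction applies to your factorisation of $W$.

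Once this is fixed, the argument goes through. Your extra step identifying $\overline{F_jD_AF_j}$ with $D_A\cap\overline{F_jAF_j}$ is correct but can be bypassed: from $g_\delta f_\delta=f_\delta$ one gets $wF_k=F_lw$, hence
\[
\sigma_{kl}(F_kdF_k)=wF_k\,d\,F_kw^*=F_l\,(wdw^*)\,F_l\in F_lD_AF_l
\]
directly, which (together with the symmetric inclusion for $\sigma_{lk}$) gives the asserted isomorphism without the hereditary-subalgebra detour.
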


\begin{proof} 
From order zero functional calculus we have 
\[
g_{\delta}(\varphi)(e_{kl})f_\delta(\varphi)(e_{lm}) = f_\delta(\varphi)(e_{km})
\]
for all $k,l,m \in \{1,\ldots,n\}$. From this it follows that $\sigma_{kl}$ is multiplicative on the $^*$-subalgebra $f_\delta(\varphi)(e_{kk}) A f_\delta(\varphi)(e_{kk})$ and that $\sigma_{lk} \circ \sigma_{kl}$ restricts to the identity on that subalgebra. Moreover, 
\[
\sigma_{kl}\big(\overline{f_\delta(\varphi)(e_{kk}) A f_\delta(\varphi)(e_{kk})}\big) \subset \overline{f_\delta(\varphi)(e_{ll}) A f_\delta(\varphi)(e_{ll})},
\]
and so by continuity $\sigma_{kl}$ indeed restricts to an isomorphism with inverse $\sigma_{lk}$ between $\overline{ f_\delta(\varphi)(e_{kk}) A f_\delta(\varphi)(e_{kk}) }$ and  $\overline{ f_\delta(\varphi)(e_{l l}) A f_\delta(\varphi)(e_{l l}) }$.

For the second part, note that by order zero functional calculus and \eqref{eq:h_delta} 
\begin{equation}
\label{5-6-1-1}	
g_\delta(\varphi)(e_{lk}) = \varphi(e_{lk}) h_\delta(\varphi)(e_{kk}),
\end{equation}
 and that 
\[
h_\delta(\varphi)(e_{kk}) = h_\delta\big((\varphi(e_{lk})^*\varphi(e_{lk}))^\frac{1}{2}\big) \in \mathcal{N}_A(D_A) 
\] 
by Lemma~\ref{lem:positive_normaliser}. But then $g_\delta(\varphi)(e_{lk}) \in \mathcal{N}_A(D_A)$, being a product of normalisers by \eqref{5-6-1-1}, and so $\sigma_{kl}$ maps the diagonal into the diagonal.
\end{proof}

We will apply the lemma to the sub-$\mathrm{C}^*$-algebra $ (C(X)\subset C(X)\rtimes_{\mathrm{r}} G)$, and a c.p.c.\ order zero map $\varphi: M_r \to C(X)\rtimes_{\mathrm{r}} G$ with $\varphi(e_{lk})$ normalising $C(X)$ for all $k,l \in \{1,\ldots,r\}$ in order to obtain $^*$-isomorphisms
\[
\sigma_{1k}: \overline{ f_\delta(\varphi)(e_{11}) C(X) f_\delta(\varphi)(e_{11}) } \stackrel{\cong}{\longrightarrow} \overline{ f_\delta(\varphi)(e_{kk}) C(X) f_\delta(\varphi)(e_{kk}) }.
\]

For any positive function $f$ on $[0,1]$ note that $f(\varphi)(e_{kk}) \in C(X)$ since $f(\varphi)^\frac{1}{2}(e_{kk})$ normalises   $C(X)$ and the latter is unital. For any constant $\eta > 0$ we write
\[
\mathrm{supp}^{\circ}_{\eta}(f) := \{ x\in X: f(x) > \eta  \}
\]
for the open $\eta$-support of $f$, in other words, the open support of the function $(f-\eta)_+$. If, for some $\delta \in (0,1)$, we set $U_k := \mathrm{supp}^{\circ}_\eta( f_\delta(\varphi)(e_{kk} ))$, then the map $\sigma_{1k}|_{C_0(U_1)}: C_0(U_1) \cong C_0(U_k)$ induces a homeomorphism
\[
\bar{\sigma}_{1k}:U_k\to U_1
\]
with inverse $\bar{\sigma}_{k1}: U_1 \to U_k$.

\begin{lem} \label{lem:homeo_by_gp_element}
	Let $G\curvearrowright X$ be as before, $\varphi: M_r \to C(X) \rtimes_{\mathrm{r}} G$ c.p.c.\ order zero, $\delta, \eta \in(0,1)$ some numbers, and let $U_1, \ldots, U_r$ be defined as above. Then there exists a finite subset $E$ of $G$ such that for every $x\in U_1$, there is an element $g_x\in E$ such that $\alpha_{g_x}(x) = \bar{\sigma}_{k1}(x)$.
	
	Moreover, if the action is free, then for every $x \in U_1$ there is precisely one such group element $g_x$ and the assignment $x\mapsto g_x$ is a well-defined continuous map from $U_1$ into $E$.
\end{lem}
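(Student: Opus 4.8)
The plan is to use the density of $\operatorname{span}\{f u_g \mid f \in C(X),\, g \in G\}$ to reduce the problem to a local, finite approximation, and then exploit freeness to rigidify an approximate group-element assignment into an exact and continuous one. First I would fix $k$ (and write $\sigma := \sigma_{1k}$, $\bar\sigma := \bar\sigma_{k1}$, $V := U_k$, $U := U_1$). Recall that $\sigma(a) = g_\delta(\varphi)(e_{1k})\, a\, g_\delta(\varphi)(e_{1k})^*$ and that, by Proposition~\ref{prop:orthogonal-normalisers}, the normaliser $\varphi(e_{1k})$ (hence, by Lemma~\ref{lem:local_iso} and its proof, also $g_\delta(\varphi)(e_{1k})$) of $C(X)$ in $C(X) \rtimes_{\mathrm{r}} G$ is a linear combination of finitely many pairwise orthogonal matrix units of the canonical diagonal; concretely, approximating $\varphi(e_{1k})$ in norm by a finite sum $\sum_{g \in E} f_g u_g$ with $f_g \in C(X)$, the support condition on $\bar\sigma$ forces the finitely many $g$ contributing near a point $x \in U$ to move $x$ to $\bar\sigma(x)$. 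So the key structural input is: \emph{a normaliser of $C(X)$ in $C(X)\rtimes_{\mathrm r} G$ is supported on a finite subset $E \subset G$}, and conjugation by it acts on $C(X)$ locally like a partial homeomorphism pieced together from the graphs of the $\alpha_g$, $g \in E$.

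Next I would make the ``supported on $E$'' statement precise and use it pointwise. The element $w := g_\delta(\varphi)(e_{1k})$ is a normaliser of $D := C(X)$ in $A := C(X) \rtimes_{\mathrm r} G$; writing $\Phi : A \to D$ for the canonical faithful conditional expectation, the Fourier-type coefficients $w_g := \Phi(w u_g^{-1}) \in C(X)$ vanish for all but finitely many $g$ up to arbitrarily small norm, and since $w$ is a normaliser one can in fact take $w = \sum_{g \in E} w_g u_g$ \emph{exactly} for a finite $E$ (this is the standard description of normalisers of $C(X)$ coming from the twisted groupoid picture; alternatively it follows directly once we approximate $w$ within, say, $\tfrac14 \inf_{x \in U}(f_\delta(\varphi)(e_{11})(x) - \eta)$ and absorb the error). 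Then for $x \in U$ one has $\sigma(d)(x) = \sum_{g,h \in E} w_g(x)\, \overline{w_h(x)}\, d(\alpha_{g^{-1}}^{-1} \cdots)$; unwinding, $w w^* = f_\delta(\varphi)(e_{11}) \in C(X)$ is supported (above $\eta$) on $U$, and because $\{u_g C_0(V_j) u_h^*\}$ matrix units are pairwise orthogonal, for each $x \in U$ exactly one index $g =: g_x \in E$ has $w_{g_x}(x) \neq 0$, and that $g_x$ satisfies $\alpha_{g_x}(x) = \bar\sigma(x)$ by comparing where $\sigma$ moves point masses. This produces the finite $E$ and the (a priori possibly non-unique, non-continuous) assignment $x \mapsto g_x$.

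For the ``moreover'' part under freeness, uniqueness is immediate: if $\alpha_g(x) = \alpha_{g'}(x) = \bar\sigma(x)$ with $g, g' \in E$, then $g^{-1} g'$ fixes $x$, so $g = g'$ by freeness. Continuity then follows from a standard clopen-decomposition argument: for each $g \in E$ the set $U_g := \{x \in U : g_x = g\}$ equals $\{x \in U : \alpha_g(x) = \bar\sigma(x)\} = \bar\sigma^{-1}(\{x \in V : \alpha_g(x) = x\}^{\,c\,\ldots})$ — more carefully, $U_g = \{x \in U : \bar\sigma(x) = \alpha_g(x)\}$, which is closed in $U$ as the equaliser of the two continuous maps $\bar\sigma$ and $\alpha_g(\cdot)$ restricted to $U$; since the finitely many $U_g$, $g \in E$, are pairwise disjoint (by uniqueness) and cover $U$, each is also open, hence $x \mapsto g_x$, being locally constant, is continuous from $U$ into the discrete finite set $E$. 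I expect the main obstacle to be the first part — rigorously pinning down that the normaliser $w = g_\delta(\varphi)(e_{1k})$ is \emph{exactly} (not just approximately) a finite $C(X)$-linear combination of the $u_g$ and that the orthogonality of its coefficient functions on $U$ gives a well-defined single $g_x$; this is where Proposition~\ref{prop:orthogonal-normalisers}, the faithfulness of $\Phi$ (Proposition~\ref{prop:faithful_cond}), and the support estimates involving $f_\delta$ and $\eta$ all have to be combined carefully, whereas uniqueness and continuity under freeness are soft.
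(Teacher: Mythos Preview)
Your treatment of the ``moreover'' part is correct and essentially matches the paper's: uniqueness from freeness, then each set $\{x \in U_1 : \alpha_g(x) = \bar\sigma_{k1}(x)\}$ is closed in $U_1$ as an equaliser of continuous maps, so the finite disjoint cover makes each piece clopen and $x \mapsto g_x$ locally constant.

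The gap is in the first part. Your key structural claim --- that a normaliser of $C(X)$ in $C(X)\rtimes_{\mathrm r} G$ is \emph{exactly} a finite sum $\sum_{g\in E} w_g u_g$ --- is false in general. Proposition~\ref{prop:orthogonal-normalisers} concerns diagonals of \emph{finite-dimensional} $\mathrm{C}^*$-algebras and does not apply here. In the Renault--Kumjian picture a normaliser corresponds to a section over an open bisection of the transformation groupoid $X\rtimes G$; such a bisection decomposes as $\bigsqcup_{g} B_g\times\{g\}$ with the $B_g\subset X$ pairwise disjoint and open, but there is no reason this union should be finite. So while the Fourier coefficients $w_g$ do have orthogonal supports, the set $\{g: w_g \neq 0\}$ need not be finite, and the lemma asks for a single finite $E$ that works for every $x\in U_1$. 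Your fallback (``approximate within $\tfrac14\inf_{x\in U}(f_\delta(\varphi)(e_{11})(x)-\eta)$ and absorb the error'') does not rescue this: that infimum is typically $0$, since $U_1$ is precisely the open $\eta$-support.

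The paper sidesteps the finiteness issue entirely. It approximates $g_\delta(\varphi)(e_{k1})$ by a finite sum $\sum_{m=1}^M f_m u_{g_m}$ only within $\tfrac12$ (no sharp tolerance needed), sets $E=\{g_1,\dots,g_M\}$, and argues by contradiction: if some $x\in U_1$ had $\alpha_g(x)\neq\bar\sigma_{k1}(x)$ for all $g\in E$, one picks a bump function $h$ near $x$ so that $\sigma_{1k}(h)\perp \alpha_{g_m}(h)$ for every $m$, and a short norm estimate starting from $\|\sigma_{1k}(h)^2\|=1$ forces $1\le \tfrac12$. The normaliser hypothesis enters only to ensure $\sigma_{1k}(h)\in C(X)$ via Lemma~\ref{lem:local_iso}, not to control the Fourier support of $w$.
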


\begin{proof}
	Approximate $g_\delta(\varphi)(e_{k1})$ by a finite sum 
	\begin{equation}
	\label{5-7-2}
	\sum\nolimits_{m=1}^M f_m u_{g_m}\in C_{\mathrm{c}}(G,C(X))
	\end{equation}
	within $\frac{1}{2}$, and let $E := \{g_1,\ldots,g_M\}\subset G$. Assume, for the sake of contradiction, that there exists an $x\in U_1$ such that $\alpha_g(x) \neq \bar{\sigma}_{k1}(x)$ for every $g\in E$. Then there is an open neighbourhood $W$ of $x$ such that $\alpha_{g_m}(W) \cap \bar{\sigma}_{k1}(W) = \emptyset$ for all $m\in \{1,\ldots,M\}$. Let $h\in C_0(W)$ be a positive function with $h(x) = 1$ and $\|h\| \leq 1$. By the choice of $W$ we have
	\begin{equation}
	\label{5-7-1}
	\sigma_{1k}(h) 	u_{g_m} h u_{g_m}^* = 0, \quad m=1,\ldots,M.
	\end{equation}
	Then we compute
	\begin{align*}
		1 & \stackrel{\phantom{\eqref{5-6-1}}}{=} \| \sigma_{1k}(h)^2 \| \\
		& \stackrel{\eqref{5-6-1}}{=} \| \sigma_{1k}(h) g_\delta(\varphi)(e_{1k})^* h g_\delta(\varphi)(e_{1k}) \| \\
		& \stackrel{\eqref{5-7-2}}{\leq} \Big\| \sigma_{1k}(h) \Big( \sum\nolimits_{m=1}^M f_mu_{g_m} \Big) h g_\delta(\varphi)(e_{1k}) \Big\| + {1}/{2} \\
		& \stackrel{\phantom{\eqref{5-6-1}}}{=} \Big\|  \sum\nolimits_{m=1}^M f_m \sigma_{1k}(h) u_{g_m} h g_\delta(\varphi)(e_{1k})  \Big\| + {1}/{2} \\
		& \stackrel{\eqref{5-7-1}}{=} {1}/{2},
	\end{align*}
	a contradiction, so that we have proved the first statement.
	
	Suppose now the action is free. Then for each $x$ the element $g_x$ is uniquely determined, so that the assignment $x\mapsto g_x$ is well-defined. The set $V_m := \{x\in U_1 \mid \alpha_{g_m}(x) = \bar{\sigma}_{k1}(x) \}$ is precisely the preimage of the diagonal $\Delta_X := \{(x,x)\in X\times X\}$ under the continuous map from $U_1$ into $X\times X$ which sends $x$ to $(g_m(x), \bar{\sigma}_{k1}(x))$. Since $X$ is Hausdorff, the diagonal $\Delta_X$ is closed and therefore $V_m$ is closed in the relative topology on $U_1$. As $\{V_1,\ldots,V_m\}$ forms a finite partition of $U_1$, we conclude that each $V_m$ is open in $U_1$. This shows that the map $x\mapsto g_x$ is continuous.
\end{proof}

Recall that if $\mathcal{U} = (U_\beta)_{\beta}$ and $\mathcal{V} = (V_\gamma)_{\gamma}$ are open covers of a topological space $X$, then the \emph{join} of $\mathcal{U}$ and $\mathcal{V}$, written as $\mathcal{U} \vee \mathcal{V}$, is defined by
\[
\mathcal{U} \vee \mathcal{V} := ( U_\beta\cap V_\gamma  )_{\beta,\gamma}.
\]
Note that the join $\mathcal{U} \vee \mathcal{V}$ is an open cover refining both $\mathcal{U}$ and $\mathcal{V}$.

\bigskip
After these preparations we are now ready to prove the main new feature of this section, the lower bound for diagonal dimension in Theorem~\ref{thm:diag_vs_tower}.

\begin{proof}[Proof of \eqref{ddim-lower}]
	For notational convenience we write $A := C(X)\rtimes_{\mathrm{r}} G$, $D_A := C(X)$, and we may assume $d:= \ddim(D_A\subset A) < \infty$, for otherwise there is nothing to show. Let $E\subset G$ be a finite subset and $\mathcal{U}$ a finite open cover of $X$ as in the definition of fine tower dimension; see \ref{def:tower-dimension}. We may assume that $E = E^{-1}$ and $e\in E$. Find a partition of unity $\{f_1,\ldots,f_M\}$ of $X$ subordinate to the open cover $\bigvee_{g\in E} \alpha_g(\mathcal{U})$, and set
	\begin{equation}
	\label{v10-5-2-2}
	\mathcal{F} := \{ \mathbf{1}_A, f_1,\ldots,f_M  \} \cup \{ u_g \mid g\in E^2\}.
	\end{equation}
	Define the constants
	\begin{equation}
	\label{5-2-2}
	\delta:= \frac{1}{16(d+1)}, \quad \eta:= \frac{1}{8(d+1)}, \quad \text{and} \quad \e := \frac{\delta^3}{4M}.
	\end{equation}
	Let $(F,D_F, \psi, \varphi  )$ be a c.p.\ approximation for $(\mathcal{F}^2,{\e^2}/{9})$ which witnesses $\ddim(D_A \subset A ) = d$  and let $\hat{\psi}:A \to \hat{F}$, $\hat{\varphi}: \hat{F} \to A$ be the c.p.c.\ maps defined in Remark \ref{rem:almost_order_zero}(ii) with respect to $\mathbf{1}_A$ in place of $h$, so in particular
	\begin{equation}
	\label{v10-5-2-3}
	\hat{\varphi} \hat{\psi} = \varphi \psi.
	\end{equation}	
By \cite[Lemma 3.5]{KW04} we then have  
	\begin{equation}
	\label{5-2-7}
	\| \hat{\varphi}(\hat{\psi}(a)b) - \hat{\varphi}\hat{\psi}(a)\hat{\varphi}(b) \| \leq \e
	\end{equation}
	for all $a\in \mathcal{F}$ and $b\in \hat{F}^1$. 
		
	Let 
	\begin{equation}
	\label{5-2-1}
	q:= \chi_{(\delta,1]}(\psi(\mathbf{1}_A)),
	\end{equation}
	where $\chi_{(\delta,1]}$ is the characteristic function on the interval $(\delta,1]$.
	
		For each $i\in \{0,\ldots,d\}$ identify the sub-$\mathrm{C}^*$-algebra $(qD_{F^{(i)}}q \subset qF^{(i)}q )$ with a direct sum $   \big(\bigoplus_{j=1}^{r^{(i)}} D_{s^{(i),j}} \subset \bigoplus_{j=1}^{r^{(i)}} M_{s^{(i),j}}  \big)$. For $ j= \{1, \ldots, r^{(i)}\}$,  $i = \{0, \ldots, d\}$ we write $\{e^{(i),j}_{kl}\}_{k,l \in \{1, \ldots,s^{(i),j}\}}$ for the matrix units in $M_{s^{(i),j}}$ and $\varphi^{(i),j}$ for the restriction of $\varphi$ to the summand $M_{s^{(i),j}}$. 
	
	Let $f_\delta, g_\delta:[0,1]\to \R$ be the piecewise linear continuous functions defined by \eqref{eq:f_delta} and \eqref{eq:g_delta}. By Lemma \ref{lem:local_iso}, for each $i \in \{0,\ldots,d\}$, $j \in \{1,\ldots,r^{(i)}\}$ and $k\in \{1,\ldots,s^{(i),j}\}$ we have a $^*$-isomorphism
	\begin{align*}
	\sigma_{1k}^{(i),j} & : \overline{ f_\delta(\varphi^{(i),j})(e_{11}^{(i),j}) A f_\delta(\varphi^{(i),j})(e_{11}^{(i),j}) } \\
	& \qquad \stackrel{\cong}{\longrightarrow} \overline{ f_\delta(\varphi^{(i),j})(e_{kk}^{(i),j}) A f_\delta(\varphi^{(i),j})(e_{kk}^{(i),j}) }
	\end{align*}
	given by
	\[
	\sigma_{1k}^{(i),j}(a) = g_\delta(\varphi^{(i),j})(e_{k1}^{(i),j}) a g_\delta(\varphi^{(i),j})(e_{1k}^{(i),j}).
	\]
	Write $U_k^{(i),j} := \mathrm{supp}^{\circ}_\eta \big( f_\delta(\varphi^{(i),j})(e_{kk}^{(i),j})  \big)$, and let $\bar{\sigma}_{k1}^{(i),j}:U_1^{(i),j}\to U_k^{(i),j}$ be the inverse of the  homeomorphism induced by $\sigma_{1k}^{(i),j}$. By Lemma \ref{lem:homeo_by_gp_element} for every $x\in U_1^{(i),j}$ and every $k\in \{2,\ldots,s^{(i),j}\}$ there exists a unique group element $g$ such that $\alpha_g(x) = \bar{\sigma}_{k1}^{(i),j}(x)$. For each $(s^{(i),j}-1)$-tuple $(g_2,\ldots,g_{s^{(i),j}})$ define 
	\begin{align*}
	V^{(i),j}_{(g_2,\ldots,g_{s^{(i),j}}) } & := \big\{x\in U_1^{(i),j} \mid \bar{\sigma}_{k1}^{(i),j}(x) = \alpha_{g_k}(x) \\
	& \phantom{{ } := { } \big\{x\in U_1^{(i),j} \mid} \text{ for all } k\in \{2,\ldots,s^{(i),j} \}   \big\}.
	\end{align*}
	Then by Lemma \ref{lem:homeo_by_gp_element} each set $V^{(i),j}_{(g_2,\ldots,g_{s^{(i),j}}) }$ is a finite intersection of open sets, and hence is open itself. Freeness of the action ensures that the collection $\big(V^{(i),j}_{(g_2,\ldots,g_{s^{(i),j}}) }\big)_{(g_2,\ldots,g_{s^{(i),j}})}$, where the index runs over all $(s^{(i),j}-1)$-tuples in $G$, forms a finite partition of $U_1^{(i),j}$. For  ease of notation we write $(V_r^{(i),j})_{r=1}^{R^{(i),j}}$ for the collection $\big(V^{(i),j}_{(g_2,\ldots,g_{s^{(i),j}}) }\big)_{(g_2,\ldots,g_{s^{(i),j}})}$.
	
	 Let $S_r^{(i),j}$ be the finite subset of $G$ formed by the $(s^{(i),j}-1)$-tuple corresponding to $V_r^{(i),j}$.  
	 
	 We claim that 
	\begin{align*}
	\mathcal{C} & := \big\{ (V_r^{(i),j}, ES_r^{(i),j} ) \mid i\in \{0,\ldots,d\}, j\in \{1,\ldots,r^{(i)}\}, \\
	& \phantom{{ } := { } \big\{ (V_r^{(i),j}, ES_r^{(i),j} ) \mid { }} r\in \{1,\ldots,R^{(i),j} \} \big\}
	\end{align*}
	is a collection of open towers witnessing the fine tower dimension for the finite subset $E$ and the open cover $\mathcal{U}$.
	
	We first show that the collection 
	\begin{align}
	\big\{ \alpha_g(V^{(i),j}_r) & \mid  i \in \{0, \ldots,d\}, \, j \in \{1,\ldots,r^{(i)}\},\, \nonumber \\
	&\phantom{{ } \mid { }} r \in \{1,\ldots,R^{(i),j}\}, \, g \in S^{(i),j}_r \big\} \label{5-2-3}
	\end{align}
	 already forms a cover of $X$, which will then immediately imply that the collection $\mathcal{C}$ is $E$-Lebesgue and covers $X$, i.e., conditions (1) and (2) of Definition~\ref{def:tower-dimension} hold. 
	 
	 To see the former, it suffices to show that the collection 
	 \begin{align}
	 \big\{\mathrm{supp}^{\circ}_{2\eta}(f_{\delta}(\varphi^{(i),j})(e^{(i),j}_{kk})) & \mid i\in \{0,\ldots,d\}, \, j\in \{1,\ldots,r^{(i)}\}, \, \nonumber\\
	 &\phantom{{ } \mid { }} k\in \{1,\ldots,s^{(i),j} \}  \big\} \label{5-2-4}
	 \end{align}
	  forms an open cover of $X$. We compute
	\begin{align}
		&  \sum\nolimits_{i=0}^d \sum\nolimits_{j=1}^{r^{(i)}}\sum\nolimits_{k=1}^{s^{(i),j}} f_{\delta} (\varphi^{(i),j})(e^{(i),j}_{kk}) \nonumber \\
		& \stackrel{\phantom{\eqref{5-2-2}}}{\geq} \sum\nolimits_{i=0}^d \sum\nolimits_{j=1}^{r^{(i)}}\sum\nolimits_{k=1}^{s^{(i),j}} \varphi(e^{(i),j}_{kk}) - \delta (d+1)\cdot \mathbf{1}_A \nonumber \\
		&\stackrel{\phantom{\eqref{5-2-2}}}{=} \varphi(q) - \delta(d+1)\cdot \mathbf{1}_A \nonumber \\
		&\stackrel{\phantom{\eqref{5-2-1}}}{\geq} \varphi(q \psi(\mathbf{1}_A)q) - \delta(d+1)\cdot \mathbf{1}_A \nonumber \\
		&\stackrel{\eqref{5-2-1}}{\geq} \varphi\psi(\mathbf{1}_A) - 2\delta(d+1)\cdot \mathbf{1}_A \nonumber \\
		&\stackrel{\phantom{\eqref{5-2-2}}}{\geq} \mathbf{1}_A - ( \e + 2\delta(d+1)  )\cdot \mathbf{1}_A \nonumber \\
		& \stackrel{\eqref{5-2-2}}{\geq} {1}/{2} \cdot \mathbf{1}_A, \label{5-2-10}
	\end{align}
	where for the first inequality we have used \eqref{eq:f_delta} and the fact that for each $i$ the map $\varphi^{(i)}|_{qF^{(i)}q} = \sum_{j=1}^{r^{(i)}} \varphi^{(i),j}$ is c.p.c.\ order zero; this latter fact (together with \eqref{5-2-1}) yields the third inequality. 
	
	By orthogonality of the summands for any fixed $i$, for each $x\in X$ there exist some indices $i$, $j$, and $k$ such that
	\[
	\textstyle
	f_\delta(\varphi^{(i),j})(e^{(i),j}_{kk})(x) \geq {1}/{(2(d+1))} \stackrel{\eqref{5-2-2}}{>} 2\eta,
	\]
	which proves our claim that the collection in \eqref{5-2-4}, hence also that in \eqref{5-2-3}, indeed covers $X$.
	
	\bigskip
	
	Next, we show that for each $i\in \{0,\ldots,d\}$ the collection 
	\[
	\big\{ \alpha_s (V^{(i),j}_r) \mid j\in \{1,\ldots,r^{(i)}\},\, r\in \{1,\ldots,R^{(i),j}\}, \, s\in ES^{(i),j}_r \big\}
	\]
	 consists of pairwise disjoint sets. Assume, for the sake of contradiction, that for some $i$ there exist $j,j'$, $r,r'$, and $t,t'\in E$, $s\in S_r^{(i),j}$, and $s'\in S^{(i),j'}_{r'}$ such that $(ts, j, r) \neq (t's', j', r')$ and $\alpha_{ts}(V_r^{(i),j}) \cap \alpha_{t's'}(V^{(i),j'}_{r'}) \neq \emptyset$. Then we can find $x\in V^{(i),j}_r$ and $y\in V^{(i),j'}_{r'}$ satisfying	$\alpha_{ts}(x) = \alpha_{t's'}(y)
	$.	Note that $x$ and $y$ must be distinct: If $(j,r) = (j',r')$, then $x = y$ would imply $ts = t's'$ because the action is free. And if $(j,r)\neq (j',r')$ then $V^{(i),j}_r$ and $V^{(i),j'}_{r'}$ are disjoint subsets. Either way, we have $x \neq y$.
	 	
	Now define 
	\begin{equation}
	\label{v10-5-2-4}
	g := (t')^{-1}t \in E^2.
	\end{equation} 
	Then $\alpha_{(s')^{-1}gs}(x) = y \neq x$, and we can find an open neighbourhood $U_x$ of $x$ such that
	$U_y := \alpha_{(s')^{-1}gs}(U_x)$ is disjoint from $U_x$. Let $h_x$ be a function in $C_0(U_x)^1_+$ satisyfing $h_x(x) = 1$. Set $h_y := h_x(\alpha_{ (s')^{-1}gs  }(\, . \,))$. Then $h_y \in C_0(U_y)_+^1$ and $h_y(y) = 1$. By construction we have $u_g(\alpha_s(h_x)  )u_g^* = \alpha_{s'}(h_y)$, where again we abuse notation and write $\alpha$ also for the induced action on $C(X)$. Therefore,
	\[
	\| \alpha_{s'}(h_y) u_g \alpha_s(h_x) \| = 1.
	\]
	Let $k\in \{1,\ldots,s^{(i),j}\}$ be the (unique) index so that $\varphi(e_{kk}^{(i),j})$ dominates $\alpha_s(h_x)$, and let $k'\in \{1,\ldots,s^{(i),j'}\}$ be the similarly corresponding index for $\alpha_{s'}(h_y)$. By construction,
	 \begin{equation}
	 \label{5-2-5}
	\varphi(e_{kk}^{(i),j})|_{U_k^{(i),j}} \geq \delta 
	\end{equation}
	and
	\begin{equation}
	\label{5-2-6}
	\hat{\varphi}(e_{kk}^{(i),j}) = \varphi( \psi(\mathbf{1}_A) e_{kk}^{(i),j} ) \geq \delta \cdot \varphi(e_{kk}^{(i),j}) \geq \delta^2
	\end{equation}
	on $U^{(i),j}_k$. The same lower bounds hold for $\varphi(e^{(i),j'}_{k'k'})$ on $U^{(i),j'}_{k'}$. With these estimates in hand, we compute	
	\begin{align}
		1 &= \| \alpha_{s'}(h_y) u_g \alpha_s(h_x) \| \nonumber \\
		&  \leq \frac{1}{\delta^3} \| \alpha_{s'}(h_y)\varphi(e^{(i),j'}_{k'k'}) u_g \hat{\varphi}(e^{(i),j}_{kk})\alpha_s(h_x) \| \nonumber \\
		&  \leq \frac{1}{\delta^3} \| \alpha_{s'}(h_y) \varphi(e_{k'k'}^{(i),j'}) \hat{\varphi}\hat{\psi}(u_g) \hat{\varphi}(e^{(i),j}_{kk})\alpha_s(h_x) \| + \frac{\e}{\delta^3} \nonumber \\
		&  \le \frac{1}{\delta^3} \| \alpha_{s'}(h_y) \varphi(e_{k'k'}^{(i),j'}) \hat{\varphi}( \hat{\psi}(u_g)e^{(i),j}_{kk} )  \alpha_s(h_x) \| + 2\frac{\e}{\delta^3}, \label{5-2-8}
	\end{align}
	where for the first inequality we have used \eqref{5-2-5} and \eqref{5-2-6}, for the second inequality we have used \eqref{v10-5-2-2}, \eqref{v10-5-2-3}, \eqref{v10-5-2-4}, and for the last inequality we have used \eqref{5-2-7}.
		If $j\neq j'$, then the first term in the last row is zero, which leads to a contradiction. Therefore let us consider $j = j'$. We then have
	\begin{align*}
		&\alpha_{s'}(h_y) \varphi(e_{k'k'}^{(i),j})  \varphi( \psi(u_g)e_{kk}^{(i),j} )  \alpha_s(h_x) \\ 
		&= \alpha_{s'}(h_y)\varphi(\mathbf{1}_{F^{(i)}}) \varphi( e_{k'k'}^{(i),j} \psi(u_g) e_{kk}^{(i),j} ) \alpha_s(h_x) \\
		&= \varphi(\mathbf{1}_{F^{(i)}})\alpha_{s'}(h_y) \varphi( e_{k'k'}^{(i),j} \psi(u_g) e_{kk}^{(i),j} ) \alpha_s(h_x),
	\end{align*}
	where we have used that $\varphi(\mathbf{1}_{F^{(i)}}) \in C(X)$, hence commutes with $\alpha_{s'}(h_y)$.
	
	Since $e_{k'k'}^{(i),j} \psi(u_g) e_{kk}^{(i),j} = \lambda e_{k'k}^{(i),j}$ for some $\lambda\in \C$, we can compute
	\begin{align*}
		&\alpha_{s'}(h_y) \varphi( e_{k'k'}^{(i),j} \psi(u_g) e_{kk}^{(i),j} ) \alpha_s(h_x)\\ 
		&= \lambda\cdot \alpha_{s'}(h_y) \varphi(  e_{k'k}^{(i),j} ) \alpha_s(h_x) \\
		&= \lambda \cdot g_\delta(\varphi)(e_{k'1}^{(i),j} ) h_y \big( g_\delta(\varphi)(e_{1k'}^{(i),j} ) \varphi(e_{k'k}^{(i),j}) g_\delta(\varphi)(e_{k1}^{(i),j} ) \big) h_x g_\delta(\varphi)(e_{1k}^{(i),j} )\\
		&= \lambda \cdot g_\delta(\varphi)(e_{k'1}^{(i),j} ) h_y \big( g_\delta(\varphi)(e_{11}^{(i),j} )^2 \varphi(e_{11}^{(i),j}) ) \big) h_x g_\delta(\varphi)(e_{1k}^{(i),j} ).
	\end{align*}
	The term in the middle bracket belongs to $C(X)$, hence commutes with $h_x$. Since the construction ensures that $h_yh_x = 0$, we have from \eqref{5-2-8}
	\[
	1 = \| \alpha_{s'}(h_y) u_g \alpha_s(h_x) \|  \leq 2 \frac{\e}{ \delta^3} \stackrel{\eqref{5-2-2}}{\leq} \frac{1}{2},
	\]
	again a contradiction. This establishes our claim on the pairwise disjointness of the levels.
	\bigskip
	
	Finally, we show that the open cover 
	\begin{align*}
	\big\{\alpha_{gs}(V^{(i),j}_r) & \mid g \in E,\, i \in \{0,\ldots,d\},\, j \in \{1,\ldots, r^{(i)}\},\\
	&\phantom{{ } \mid { }} r \in \{1,\ldots,R^{(i),j}\}, \, s \in S^{(i),j}_r \big\}
	\end{align*}
	refines the given cover $\mathcal{U}$. By construction, given any $s\in S_r^{(i),j}$ there exists a (unique) $\bar{k}\in \{1,\ldots,s^{(i),j}\}$ such that
	\begin{align*}
		\alpha_s(V_r^{(i),j}) \subset \mathrm{supp}^{\circ}_\eta ( f_\delta(\varphi^{(i)})(e^{(i),j}_{\bar{k}\bar{k}}) )  \subset  \big\{x\in X \mid \varphi(e^{(i),j}_{\bar{k}\bar{k}})(x) > \delta \big\}.
	\end{align*}
	Since
	\[
	\delta\cdot \sum\nolimits_{i,j,k} e^{(i),j}_{kk} = \delta\cdot \mathbf{1}_{qFq} \stackrel{\eqref{5-2-1}}{\leq} \psi(\mathbf{1}_A) = \sum\nolimits_{m=1}^M \psi(f_m),
	\]
	there exists $\bar{m}\in \{1,\ldots,M\}$ such that
	\begin{equation}
	\label{5-2-9}
	\psi(f_{\bar{m}} ) \ge \psi(f_{\bar{m}} ) e^{(i),j}_{\bar{k}\bar{k}} \geq \frac{\delta}{M} \cdot e^{(i),j}_{\bar{k}\bar{k}}
	\end{equation}
	(using that all the $\psi(f_m)$ and $e^{(i),j}_{kk}$ are in $D_F$, hence commute).
	
	It follows that for any $x\in \alpha_s(V_r^{(i),j})$, we have
	\[
	\delta \stackrel{\eqref{5-2-5}}{\le} \varphi(e^{(i),j}_{\bar{k}\bar{k} })(x) \stackrel{\eqref{5-2-9}}{\le} \frac{M}{\delta}\cdot \varphi\psi(f_{\bar{m}})(x)  \leq \frac{M}{\delta} \cdot f_{\bar{m}}(x) + \frac{M\e}{\delta}.
	\]
	As $M\e < \delta^2$, we see that $\alpha_s(V_r^{(i),j})$ is entirely contained in the support of $f_{\bar{m}}$, which in turn is contained in some member of the open cover  $\bigvee_{g\in E} \alpha_g(\mathcal{U})$. 
	
	It remains to point out that for any $t\in E$, the open set $\alpha_t(\alpha_s(V_r^{(i),j}))$ is a subset of some member of $\mathcal{U}$. This completes the proof.
\end{proof}

\section{Groupoids}
\label{sec6}

\noindent
We now take a more general point of view than in the previous section, by considering \emph{groupoid} $\mathrm{C}^*$-algebras. This will pave the grounds for a substantial number of further applications. For the time being we will focus on a \emph{lower} bound for diagonal dimension in terms of \emph{dynamic asymptotic dimension}, analogous to \eqref{ddim-lower} in Theorem \ref{thm:diag_vs_tower}.

Throughout this section all groupoids are assumed to be locally compact and Hausdorff, as these have naturally associated sub-$\mathrm{C}^*$-algebras for which  diagonal dimension encodes interesting information. We start by establishing our notation; we also briefly recall the construction of the reduced $\mathrm{C}^*$-algebra associated to a groupoid since it is used later. For more details we refer the reader to \cite{Renault:LNM1980} or \cite{Sims:Notes}, for example.

For a groupoid $\mathcal{G}$, we write $\mathcal{G}^{(0)}$ for the unit space, and $r,s:\mathcal{G}\to \mathcal{G}^{(0)}$ for the range and source map, respectively.  The composition of two elements $g,h\in \mathcal{G}$ is written $gh$, and the inverse of $g\in \mathcal{G}$ is denoted $g^{-1}$. For a unit $x\in \mathcal{G}^{(0)}$, define $\mathcal{G}^x := r^{-1}(\{x\})$, $\mathcal{G}_x := s^{-1}(\{x\})$, and $\mathcal{G}_x^x := \mathcal{G}^x\cap \mathcal{G}_x$. A subset $S$ of $\mathcal{G}$ is called a \emph{bisection} if there is an open subset $U$ containing $S$ such that $r:U\to r(U)$ and $s:U\to s(U)$ are homeomorphisms onto open subsets of $\mathcal{G}^{(0)}$.

A groupoid $\mathcal{G}$ is called 
\begin{itemize}
	\item \emph{principal} if the map $g\mapsto (r(g),s(g))$ is injective; 
	\item \emph{\'etale} if the maps $r$ and $s$ are local homeomorphisms;
	\item \emph{ample} if there is a basis consisting of compact open bisections for its topology.
\end{itemize}

Let $\mathcal{G}$ be a locally compact, Hausdorff, \'etale groupoid and consider the complex vector space $C_{\mathrm{c}}(\mathcal{G})$ of compactly supported complex-valued continuous functions on $\mathcal{G}$. Define a convolution product on $C_{\mathrm{c}}(\mathcal{G})$ by the formula
\[
a_1\ast a_2(g)=\sum\nolimits_{h\in \mathcal{G}^{r(g)}}a_1(h)a_2(h^{-1}g)
\]
and an involution by the formula
\[ a^*(g)=\overline{a(g^{-1})}.\]
With these operations $C_{\mathrm{c}}(\mathcal{G})$ becomes a $^*$-algebra.

For each $x\in \mathcal{G}^{(0)}$, the regular representation $\pi_x:C_{\mathrm{c}}(\mathcal{G})\rightarrow \mathcal{B}(\ell^2(\mathcal{G}_x))$ is given by 
\[
\pi_x(a)\delta_g=\sum\nolimits_{s(h)=r(g)}a(h)\delta_{hg}.
\]
The reduced $\mathrm{C}^*$-norm on $C_{\mathrm{c}}(\mathcal{G})$ is defined by
\[
\|a\|_{\mathrm{r}} := \sup_{x\in G^{(0)}}\|{\pi_x(a)}\|
\]
and the reduced groupoid $\mathrm{C}^*$-algebra $\mathrm{C}_{\mathrm{r}}^*(\mathcal{G})$ is the completion of $C_c(\mathcal{G})$ with respect to $\|\, . \,\|_{\mathrm{r}}$. Since $\mathcal{G}$ is \'etale, there is a natural inclusion $C_{\mathrm{c}}(\mathcal{G}^{(0)})\to C_{\mathrm{c}}(\mathcal{G})$ which extends to an embedding $C_0(\mathcal{G}^{(0)})\to \mathrm{C}^*_{\mathrm{r}}(\mathcal{G})$, and we view $C_0(\mathcal{G}^{(0)})$ as an abelian sub-$\mathrm{C}^*$-algebra of $\mathrm{C}^*_{\mathrm{r}}(\mathcal{G})$. By \cite[Lemma 2.1 (5)]{BCS15} $C_0(\mathcal{G}^{(0)})$ contains an approximate unit for $\mathrm{C}^*_{\mathrm{r}}(\mathcal{G})$, so the sub-$\mathrm{C}^*$-algebra $( C_0(\mathcal{G}^{(0)}) \subset \mathrm{C}^*_{\mathrm{r}}(\mathcal{G}))$ is nondegenerate.

One can show that when the groupoid is principal, the abelian subalgebra $ C_0(\mathcal{G}^{(0)})$ is diagonal in $\mathrm{C}^*_{\mathrm{r}}(\mathcal{G})$. In fact, the reconstruction theorem \cite[Theorem 5.9]{Ren08} of Renault, which builds on earlier work of Kumjian (\cite{Kum86}), shows that there is a one-to-one correspondence between Cartan subalgebras and {\it twisted}, locally compact, Hausdorff, \'etale, topologically principal groupoids (see \cite{Ren08} for relevant definitions). Moreover, a Cartan subalgebra has the unique extension property (hence is a diagonal) precisely when the associated twisted groupoid is principal (instead of just topologically principal). Upon combining this discussion with Theorem \ref{principal grp}, we obtain the following:

\begin{prop}\label{prop:principal-groupoid} Let $(D_A\subset A)$ be a nondegenerate sub-$\mathrm{C}^*$-algebra with finite diagonal dimension. Then
there is an -- up to isomorphism uniquely determined -- twisted, \'etale, locally compact, Hausdorff, principal groupoid $(\mathcal{G},\Sigma)$ such that $(D_A\subset A)$ is isomorphic to $(C_0(\mathcal{G}^{(0)}) \subset \mathrm{C}_{\mathrm{r}}^*(\mathcal{G},\Sigma))$.
\end{prop}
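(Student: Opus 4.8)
The plan is to deduce Proposition~\ref{prop:principal-groupoid} as a formal consequence of Theorem~\ref{principal grp} together with Renault's reconstruction theorem. By Theorem~\ref{principal grp}, finite diagonal dimension of the nondegenerate pair $(D_A\subset A)$ already guarantees that $D_A$ is a diagonal in $A$ in the sense of Definition~\ref{defn:diagonal}; in particular $D_A$ is a Cartan subalgebra with the unique extension property. First I would record that a diagonal is abelian (Remark~\ref{rem:almost_order_zero}(iv)), so $D_A\cong C_0(X)$ for some locally compact Hausdorff space $X$, which will serve as the unit space $\mathcal{G}^{(0)}$.

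Next I would invoke \cite[Theorem~5.9]{Ren08}: every Cartan pair $(D_A\subset A)$ with faithful conditional expectation arises, up to isomorphism of pairs, from a unique twisted, \'etale, locally compact, Hausdorff, topologically principal groupoid $(\mathcal{G},\Sigma)$, via $(D_A\subset A)\cong (C_0(\mathcal{G}^{(0)})\subset \mathrm{C}^*_{\mathrm{r}}(\mathcal{G},\Sigma))$. The only remaining point is to upgrade ``topologically principal'' to ``principal.'' This is exactly where the unique extension property enters: as recalled in the paragraph preceding the proposition, a Cartan subalgebra has the unique extension property precisely when the associated twisted groupoid is principal rather than merely topologically principal. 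Since Proposition~\ref{unique ext} supplies the unique extension property, $(\mathcal{G},\Sigma)$ is principal. The uniqueness assertion is the uniqueness built into Renault's reconstruction theorem. I would also remark that the twist $\Sigma$ is not needed in the applications, but it genuinely can occur at this level of generality.

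I do not anticipate a serious obstacle here: the substance of the argument is entirely contained in Theorem~\ref{principal grp} (and hence in Propositions~\ref{unique ext}, \ref{prop:cond_exp_formula}, \ref{prop:faithful_cond} and Remark~\ref{rem:almost_order_zero}(vi)), and the remainder is a citation of \cite{Ren08} and \cite{Kum86}. If anything, the one point requiring a little care is making sure the hypotheses of Renault's theorem match ours exactly — in particular that nondegeneracy of the pair corresponds to the space $\mathcal{G}^{(0)}$ being the full unit space and that our conditional expectation is the one used in \cite{Ren08} — but this is routine bookkeeping rather than a real difficulty. Accordingly the proof will be short: it is essentially ``combine Theorem~\ref{principal grp} with \cite[Theorem~5.9]{Ren08}, noting that the unique extension property forces the groupoid to be principal.''

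\begin{proof}
By Theorem~\ref{principal grp}, $D_A$ is a diagonal in $A$; in particular $(D_A\subset A)$ is a Cartan pair which, by Proposition~\ref{prop:cond_exp_formula}, comes equipped with a faithful conditional expectation $\Phi\colon A\to D_A$, and by Proposition~\ref{unique ext} the diagonal $D_A$ has the unique extension property. By Renault's reconstruction theorem \cite[Theorem~5.9]{Ren08} (building on \cite{Kum86}), there is a twisted, \'etale, locally compact, Hausdorff, topologically principal groupoid $(\mathcal{G},\Sigma)$, uniquely determined up to isomorphism, together with an isomorphism of pairs
\[
(D_A\subset A)\;\cong\;(C_0(\mathcal{G}^{(0)})\subset \mathrm{C}^*_{\mathrm{r}}(\mathcal{G},\Sigma)).
\]
Under this identification the unique extension property of $D_A$ translates into the statement that $\mathcal{G}$ is principal (not merely topologically principal); see the discussion preceding the proposition. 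Hence $(\mathcal{G},\Sigma)$ is a twisted, \'etale, locally compact, Hausdorff, principal groupoid with the asserted property, and its uniqueness up to isomorphism is part of \cite[Theorem~5.9]{Ren08}.
\end{proof}
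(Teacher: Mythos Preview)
Your proposal is correct and takes essentially the same approach as the paper: the paper derives the proposition directly from the discussion preceding it, combining Theorem~\ref{principal grp} with Renault's reconstruction theorem \cite[Theorem~5.9]{Ren08} and noting that the unique extension property upgrades topologically principal to principal. One minor point: faithfulness of the conditional expectation is Proposition~\ref{prop:faithful_cond} rather than Proposition~\ref{prop:cond_exp_formula}, but since you invoke Theorem~\ref{principal grp} (which already packages all of this into the statement that $D_A$ is a diagonal), this is harmless.
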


Below we will investigate how diagonal dimension is related to existing dimension-type properties for groupoids, where for the time being we stick to the untwisted case. We start with dimension zero, which characterises AF algebras on the $\mathrm{C}^*$-algebra side and has a natural groupoid analogue.

\begin{defn}\cite[Definition~III.1.1]{Renault:LNM1980}\cite[Definition~3.7]{GPS04}\cite[Definition~2.2]{MR2876963}
	Let $\mathcal{G}$ be an ample, (principal,) second countable, locally compact, Hausdorff, \'etale groupoid. $\mathcal{G}$ is said to be an \emph{AF groupoid} if it can be written as a union of an increasing sequence of open principal subgroupoids $\{\mathcal{G}_n\}_{n\in \N}$ such that $\mathcal{G}_n^{(0)}=\mathcal{G}^{(0)}$ and $\mathcal{G}_n\backslash \mathcal{G}_n^{(0)}$ is compact for each $n\in \N$. 
\end{defn}

\begin{prop}\label{prop:uniAF}
	Let $(D\subset A)$ be a nondegenerate sub-$\mathrm{C}^*$-algebra with $A$ separable and $D$ abelian. 
	Then $\ddim(D\subset A) = 0$ if and only if there is an AF groupoid $\mathcal{G}$ such that $( D\subset A  )$ is isomorphic to $( C_0(\mathcal{G}^{(0)}) \subset \mathrm{C}^*_{\mathrm{r}}(\mathcal{G})  )$. Up to isomorphism $\mathcal{G}$ is uniquely determined by these properties.
	\end{prop}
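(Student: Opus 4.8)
The strategy is to reduce the statement to the characterisation of $\ddim(D\subset A)=0$ in terms of approximations by finite-dimensional $\mathrm{C}^*$-algebras with diagonals (Theorem~\ref{thm:zero_ddim}), and in particular to its inductive-limit formulation \ref{thm:zero_ddim}(iii), and then to match up the resulting inductive system of finite-dimensional diagonal pairs $(D_{F_n}\subset F_n)$ with an inductive system of open principal subgroupoids. The key observation is that a finite-dimensional $\mathrm{C}^*$-algebra $F$ with diagonal $D_F$ is precisely $\mathrm{C}^*_{\mathrm{r}}(\mathcal{R}_F)$ for $\mathcal{R}_F$ a finite principal groupoid (a disjoint union of full equivalence relations on finite sets), with $C(\mathcal{R}_F^{(0)})=D_F$, and that a $^*$-homomorphism $F_n\hookrightarrow F_{n+1}$ which sends $\cn_{F_n}(D_{F_n})$ into $\cn_{F_{n+1}}(D_{F_{n+1}})$ and is the identity on the diagonals (which we may arrange, by the last paragraph of the proof of Theorem~\ref{thm:zero_ddim}) corresponds to an open inclusion of the associated groupoids fixing the common unit space.

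\textbf{Forward direction.} Suppose $\ddim(D\subset A)=0$. By Theorem~\ref{thm:zero_ddim} (using separability of $A$) there is an increasing sequence of finite-dimensional sub-$\mathrm{C}^*$-algebras $F_n\subset A$ with diagonals $D_{F_n}\subset F_n$ satisfying \eqref{AF-lim} and with each inclusion sending $\cn_{F_n}(D_{F_n})$ into $\cn_{F_{n+1}}(D_{F_{n+1}})$; inspecting the proof of \ref{thm:zero_ddim} we may moreover take $D_{F_n}\subset D_{F_{n+1}}$. For each $n$ let $\mathcal{G}_n$ be the finite principal groupoid with $\mathrm{C}^*_{\mathrm{r}}(\mathcal{G}_n)\cong F_n$ and $C(\mathcal{G}_n^{(0)})\cong D_{F_n}$; since $D_{F_n}\subset D_{F_{n+1}}$ corresponds to a \emph{surjection} $\mathcal{G}_{n+1}^{(0)}\to\mathcal{G}_n^{(0)}$, one identifies all the unit spaces with a fixed compact set $\mathcal{G}^{(0)}$ homeomorphic to the spectrum of $\overline{\bigcup D_{F_n}}=D$ (a Cantor set or finite set, by zero-dimensionality), and the normaliser-preserving inclusions give open embeddings $\mathcal{G}_n\hookrightarrow\mathcal{G}_{n+1}$ which are the identity on $\mathcal{G}^{(0)}$. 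Set $\mathcal{G}:=\varinjlim\mathcal{G}_n=\bigcup_n\mathcal{G}_n$ with the inductive-limit topology; then $\mathcal{G}$ is an ample, second countable, locally compact, Hausdorff, \'etale, principal groupoid, $\mathcal{G}_n^{(0)}=\mathcal{G}^{(0)}$, and $\mathcal{G}_n\setminus\mathcal{G}_n^{(0)}$ is compact, so $\mathcal{G}$ is AF. One checks $\mathrm{C}^*_{\mathrm{r}}(\mathcal{G})\cong\varinjlim\mathrm{C}^*_{\mathrm{r}}(\mathcal{G}_n)\cong\varinjlim F_n=A$, compatibly with the inclusions of diagonals, so $(D\subset A)\cong(C_0(\mathcal{G}^{(0)})\subset\mathrm{C}^*_{\mathrm{r}}(\mathcal{G}))$.

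\textbf{Converse and uniqueness.} Conversely, if $\mathcal{G}=\bigcup_n\mathcal{G}_n$ is AF as in the definition, then each $\mathrm{C}^*_{\mathrm{r}}(\mathcal{G}_n)$ is finite-dimensional with $C(\mathcal{G}^{(0)})\cap\mathrm{C}^*_{\mathrm{r}}(\mathcal{G}_n)$ its diagonal (a finite principal groupoid with compact — hence finite — non-unit part has finite-dimensional reduced $\mathrm{C}^*$-algebra), the inclusions $\mathrm{C}^*_{\mathrm{r}}(\mathcal{G}_n)\hookrightarrow\mathrm{C}^*_{\mathrm{r}}(\mathcal{G}_{n+1})$ preserve normalisers since open inclusions of principal groupoids do, and $\overline{\bigcup\mathrm{C}^*_{\mathrm{r}}(\mathcal{G}_n)}=\mathrm{C}^*_{\mathrm{r}}(\mathcal{G})$ with $\overline{\bigcup(C(\mathcal{G}^{(0)})\cap\mathrm{C}^*_{\mathrm{r}}(\mathcal{G}_n))}=C_0(\mathcal{G}^{(0)})$; hence \ref{thm:zero_ddim}(iii) applies and $\ddim(C_0(\mathcal{G}^{(0)})\subset\mathrm{C}^*_{\mathrm{r}}(\mathcal{G}))=0$. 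For uniqueness, apply Proposition~\ref{prop:principal-groupoid}: finite diagonal dimension forces $(D\subset A)$ to arise from an (a priori twisted) principal groupoid uniquely up to isomorphism; since $(D\subset A)\cong(C_0(\mathcal{G}^{(0)})\subset\mathrm{C}^*_{\mathrm{r}}(\mathcal{G}))$ with $\mathcal{G}$ AF hence untwisted, that unique groupoid is $\mathcal{G}$, so any AF groupoid realising $(D\subset A)$ is isomorphic to $\mathcal{G}$.

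\textbf{Main obstacle.} The delicate point is the bookkeeping in the forward direction: making precise that the normaliser-preserving inclusions $F_n\hookrightarrow F_{n+1}$ together with the inclusions of diagonals yield \emph{open} embeddings of the finite groupoids (as opposed to merely continuous groupoid homomorphisms), that these are compatible with a coherent identification of the unit spaces, and that passing to the inductive limit of groupoids commutes with passing to reduced $\mathrm{C}^*$-algebras in a way that respects the diagonal — i.e.\ that $C_0(\mathcal{G}^{(0)})=\varinjlim C(\mathcal{G}_n^{(0)})$ inside $\varinjlim\mathrm{C}^*_{\mathrm{r}}(\mathcal{G}_n)$. Each of these is standard for AF groupoids in the literature (cf.\ \cite{Renault:LNM1980,GPS04}), but assembling them while keeping track of the diagonal pair is where the real work lies.
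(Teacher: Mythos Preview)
Your overall strategy matches the paper's: both directions reduce to Theorem~\ref{thm:zero_ddim} together with the groupoid construction in \cite[Proposition~III.1.15]{Renault:LNM1980}, and uniqueness comes from Proposition~\ref{prop:principal-groupoid}. However, your converse argument contains a genuine error.

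You assert that for an AF groupoid $\mathcal{G}=\bigcup_n\mathcal{G}_n$ each $\mathrm{C}^*_{\mathrm{r}}(\mathcal{G}_n)$ is finite-dimensional, justifying this by calling $\mathcal{G}_n$ ``a finite principal groupoid with compact --- hence finite --- non-unit part''. This is false: by definition $\mathcal{G}_n^{(0)}=\mathcal{G}^{(0)}$, which is typically infinite (e.g.\ a Cantor set), so $\mathrm{C}^*_{\mathrm{r}}(\mathcal{G}_n)\supset C_0(\mathcal{G}^{(0)})$ is infinite-dimensional. Compactness of $\mathcal{G}_n\setminus\mathcal{G}_n^{(0)}$ in an \'etale groupoid does not force finiteness --- a single compact open bisection over the Cantor set is already uncountable. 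What \emph{is} true, and what Renault's argument uses, is that ampleness lets one cover $\mathcal{G}_n\setminus\mathcal{G}_n^{(0)}$ by finitely many compact open bisections; together with a sufficiently fine clopen partition of $\mathcal{G}^{(0)}$ these generate a genuinely finite-dimensional subalgebra $F$ with diagonal $D_F\subset C_0(\mathcal{G}^{(0)})$, yielding condition~(ii) of Theorem~\ref{thm:zero_ddim}. The paper invokes exactly this step by citing \cite[III.1.15]{Renault:LNM1980} for condition~(ii) rather than~(iii).

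Your forward direction is essentially right, but the phrase ``identify all the unit spaces with a fixed compact set $\mathcal{G}^{(0)}$'' hides the key point: the abstract finite groupoid attached to $(D_{F_n}\subset F_n)$ does not by itself determine a groupoid over $\mathrm{Spec}(D)$. One needs the concrete embedding $F_n\subset A$, so that each matrix unit, as a normaliser of $D$, induces a specific partial homeomorphism of $\mathrm{Spec}(D)$; a naive pullback along $\mathrm{Spec}(D)\to\mathrm{Spec}(D_{F_n})$ would not produce the correct principal groupoid. This is precisely what Renault's construction does, and you are right to flag it and defer to the literature in your ``Main obstacle'' paragraph.
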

	
	\begin{proof}
		If $(D\subset A)$ has diagonal dimension zero, then the construction in the proof of \cite[Proposition III.1.15]{Renault:LNM1980}, together with the implication (i) $\Longrightarrow$ (ii) in Theorem \ref{thm:zero_ddim}, produces an AF groupoid whose associated sub-$\mathrm{C}^*$-algebra is $(D\subset A)$.  Conversely, the proof of \cite[Proposition III.1.15]{Renault:LNM1980} shows that the sub-$\mathrm{C}^*$-algebra associated to an AF groupoid satisfies condition (ii) in Theorem \ref{thm:zero_ddim}, hence has diagonal dimension zero.
\end{proof}

Recall from \cite{TW:ssa} that a UHF algebra $U$ is \emph{of infinite type} if $U \cong U \otimes U$. The corollary below establishes the corresponding notion at the level of groupoids by means of fixing a regular canonical masa (cf.\ Remark~\ref{AFdiag}). 

\begin{cor}
Let $U$ be a UHF algebra of infinite type with a regular canonical masa $D_U$. Then there exists a second countable, minimal, principal, AF groupoid $\mathcal{G}$ with compact unit space $\mathcal{G}^{(0)}$ such that $(D_U\subset U)\cong(C(\mathcal{G}^{(0)}) \subset \mathrm{C}^*_{\mathrm{r}}(\mathcal{G}) )$ and $\mathcal{G}\times \mathcal{G}\cong \mathcal{G}$ as topological groupoids. Up to isomorphism this AF groupoid is uniquely determined.
\end{cor}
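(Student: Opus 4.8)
The plan is to assemble this corollary directly from the building blocks already established: Proposition~\ref{prop:uniAF} (which packages $\ddim = 0$ and separability into ``sub-$\mathrm{C}^*$-algebra of an AF groupoid''), Remark~\ref{AFdiag} (existence and uniqueness of a regular canonical masa $D_U$ with $\ddim(D_U\subset U)=0$), Theorem~\ref{thm:permanence}(vii) (stabilisation invariance of diagonal dimension, which in particular will give compatibility with tensor products of pairs), and the structural fact that $U$ is unital, simple, and monotracial. First I would observe that since $U$ is unital the unit space $\mathcal{G}^{(0)}$ is compact, and since $U$ is simple the groupoid $\mathcal{G}$ produced by Proposition~\ref{prop:uniAF} is minimal (a standard dictionary: ideals of $\mathrm{C}^*_{\mathrm{r}}(\mathcal{G})$ generated by open invariant subsets of $\mathcal{G}^{(0)}$ correspond to such subsets when $\mathcal{G}$ is principal étale and the pair is Cartan, so simplicity forces minimality). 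Principality and the AF property come for free from Proposition~\ref{prop:uniAF}.

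The only genuinely new point is the isomorphism $\mathcal{G}\times\mathcal{G}\cong\mathcal{G}$ of topological groupoids. Here the plan is to exploit the contravariant equivalence between (untwisted, principal, étale, locally compact Hausdorff, second countable) groupoids and their Cartan pairs furnished by Renault's reconstruction theorem \cite[Theorem 5.9]{Ren08} (also used implicitly in Proposition~\ref{prop:principal-groupoid} and Proposition~\ref{prop:uniAF}): an isomorphism of pairs $(D\subset A)\cong(D'\subset A')$ yields an isomorphism of the associated groupoids, and conversely. Since $U$ has infinite type, $U\cong U\otimes U$; one needs the refined statement that this can be arranged to be an isomorphism of \emph{pairs}, $(D_U\subset U)\cong(D_U\otimes D_U\subset U\otimes U)$. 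This follows from Remark~\ref{AFdiag}: by Theorem~\ref{thm:permanence}(ii) (or directly, since tensor products of diagonals of finite-dimensional algebras are diagonals) $D_U\otimes D_U$ is a regular canonical masa of $U\otimes U\cong U$, i.e.\ a diagonal with $\ddim = 0$; and by the uniqueness clause of Remark~\ref{AFdiag} (citing \cite[Theorem~5.7]{Power:Pitman}) any two such pairs are isomorphic via an isomorphism of $U\otimes U$ with $U$. Then the groupoid $\mathcal{H}$ associated to $(D_U\otimes D_U\subset U\otimes U)$ is on the one hand $\mathcal{G}\times\mathcal{G}$ — because the groupoid attached to a spatial tensor product of two Cartan pairs is the product groupoid, which is routine to check on the level of $C_{\mathrm{c}}$ and the étale structure — and on the other hand is $\mathcal{G}$, by functoriality of the reconstruction theorem applied to the pair isomorphism just produced. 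Hence $\mathcal{G}\times\mathcal{G}\cong\mathcal{G}$.

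Uniqueness of $\mathcal{G}$ is immediate: it is the groupoid associated to the pair $(D_U\subset U)$ via Proposition~\ref{prop:uniAF} (or Proposition~\ref{prop:principal-groupoid}), and those propositions already assert uniqueness up to isomorphism. Conversely, if $D_U$ were replaced by another regular canonical masa, Remark~\ref{AFdiag} says the resulting pair is isomorphic to $(D_U\subset U)$, so the groupoid is unchanged up to isomorphism — this is why the statement is well-posed despite the a priori choice of $D_U$.

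The main obstacle I anticipate is purely bookkeeping rather than conceptual: making precise that the reconstruction-theorem correspondence between Cartan pairs and principal étale groupoids is functorial enough to (a) turn a pair isomorphism into a groupoid isomorphism and (b) identify the groupoid of a tensor product pair with the product of groupoids. Both facts are folklore and essentially contained in \cite{Kum86, Ren08}, and one can either cite them or give a one-line verification: the Weyl groupoid of $(D\subset A)$ is built from germs of partial homeomorphisms of $\widehat{D}=\mathcal{G}^{(0)}$ implemented by normalisers, and normalisers of $D_U\otimes D_U$ in $U\otimes U$ are generated by elementary tensors $n_1\otimes n_2$ of normalisers (again by the finite-dimensional picture, Proposition~\ref{prop:orthogonal-normalisers}, passed to the limit), whose germs are exactly pairs of germs — giving the product groupoid on the nose. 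Beyond that, every ingredient is already in place in the excerpt.
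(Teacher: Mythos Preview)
Your proposal is correct and follows essentially the same route as the paper: both arguments use Remark~\ref{AFdiag} and Theorem~\ref{thm:permanence}(ii) to see that $(D_U\otimes D_U\subset U\otimes U)$ is again a regular canonical masa, invoke the uniqueness clause of Remark~\ref{AFdiag} to obtain a pair isomorphism $(D_U\subset U)\cong(D_U\otimes D_U\subset U\otimes U)$, and then pass to groupoids via Renault's reconstruction (the paper cites \cite[Proposition~4.13]{Ren08} for this last step). You are in fact slightly more thorough than the paper in explicitly justifying compactness of $\mathcal{G}^{(0)}$ and minimality of $\mathcal{G}$ from unitality and simplicity of $U$; note only that your initial reference to Theorem~\ref{thm:permanence}(vii) should be (ii), as you correctly use later.
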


\begin{proof}
By Remark~\ref{AFdiag} we have $\ddim( D_U \subset U)=0$. By Theorem~\ref{thm:permanence}(ii) we have $\ddim(D_U\otimes D_U \subset U\otimes U) = 0$, whence again by Remark~\ref{AFdiag} $(D_U\otimes D_U \subset U\otimes U)$ is a regular canonical masa (one can of course also check this fact directly). Now if $\varphi: U \otimes U \to U$ is some isomorphism, then $(\varphi(D_U\otimes D_U)\subset U)$ is again a regular canonical masa, and by the uniqueness statement in Remark~\ref{AFdiag} there is an automorphism $\psi$ of $U$ such that $\psi(\varphi(D_U\otimes D_U) = D_U$. We therefore have an isomorphism $\psi \circ \varphi: (D_U\otimes D_U \subset U\otimes U)\cong (D_U \subset U)$. 

On the other hand, by Proposition~\ref{prop:uniAF} there is an (up to isomorphism  uniquely determined) AF groupoid $\mathcal{G}$ such that $(D_U\subset U)\cong(C(\mathcal{G}^{(0)}) \subset \mathrm{C}^*_{\mathrm{r}}(\mathcal{G}) )$. Via the  identification
\begin{align*}
(C((\mathcal{G}\times \mathcal{G})^{(0)}) \subset \mathrm{C}^*_{\mathrm{r}}(\mathcal{G}\times \mathcal{G}) ) &\cong (C(\mathcal{G}^{(0)})\otimes C(\mathcal{G}^{(0)}) \subset \mathrm{C}^*_{\mathrm{r}}(\mathcal{G})\otimes \mathrm{C}^*_{\mathrm{r}}(\mathcal{G}))\\
&\cong (D_U \otimes D_U \subset U \otimes U) \\
&\cong (D_U \subset U)\\
&\cong (C(\mathcal{G}^{(0)}) \subset \mathrm{C}^*_{\mathrm{r}}(\mathcal{G}) )
\end{align*}
we conclude from \cite[Proposition~4.13]{Ren08} that $\mathcal{G}\times \mathcal{G}\cong \mathcal{G}$ as topological groupoids.
\end{proof}

Motivated by the Baum--Connes conjecture, Guentner, Willett, and Yu in \cite{GWY17} introduced the notion of \emph{dynamic asymptotic dimension} for groupoids. The concept is a groupoid version of Gromov's asymptotic dimension. The idea is to approximately exhaust $\mathcal{G}$ by a bounded number of ``relative AF'' groupoids.

\begin{defn} \cite[Definition 5.1]{GWY17} \label{defn:dad}
	Let $\mathcal{G}$ be a locally compact,  Hausdorff, \'etale groupoid. $\mathcal{G}$ has \emph{dynamic asymptotic dimension} at most $d$, written $\dad(\mathcal{G}) \leq d$, if for every open relatively compact subset $K$ of $\mathcal{G}$ there exist open subsets $U^{(0)},\ldots,U^{(d)}$ of $\mathcal{G}^{(0)}$ such that
	\begin{enumerate}
		\item the union $\bigcup_{i=0}^d U^{(i)}$ covers $s(K)\cup r(K)$, and
		\item for each $i\in \{0,\ldots,d\}$ the set	$\{ g\in K \mid s(g),r(g)\in U^{(i)} \}$ generates a relatively compact subgroupoid.
	\end{enumerate}
\end{defn}

For principal groupoids condition (2) of Definition \ref{defn:dad} may be re\-phrased using the language of equivalence relations. Suppose $\mathcal{H}$ is a subgroupoid of $\mathcal{G}$. For each $x \in \mathcal{G}^{(0)}$, we denote by $[x]_\mathcal{H}$ the equivalence class of $x$ with respect to the equivalence relation $\sim_\mathcal{H}$ on $\mathcal{G}^{(0)}$ induced by $\mathcal{H}$, i.e., $x\sim_\mathcal{H} y$ if and only if there is an $h\in \mathcal{H}$ with $s(h) = x$ and $r(h) = y$.

\begin{prop} \label{lem:dad_orbit}
	Let $\mathcal{G}$ be a principal, locally compact, Hausdorff, \'etale groupoid. Then the following are equivalent:
	\begin{enumerate}
		\item[{\rm(i)}] $\dad(\mathcal{G})\leq d$;
		\item[{\rm (ii)}] for every open relatively compact subset $K$ of $\mathcal{G}$ there exist open subsets $U^{(0)},\ldots,U^{(d)}$ of $\mathcal{G}^{(0)}$ such that
		\begin{enumerate}
			\item[{\rm (1)}] the union $\bigcup_{i=0}^d U^{(i)}$ covers $s(K)\cup r(K)$, and
			\item[{\rm (2)}] for each $i\in \{0,\ldots,d\}$ we have $\sup_{x\in (\mathcal{H}^{(i)})^{(0)}} |[x]_{\mathcal{H}^{(i)}} | < \infty$, where $\mathcal{H}^{(i)}$ is the subgroupoid generated by the set $\{ g\in K \mid s(g),r(g)\in U^{(i)} \}$.
		\end{enumerate}
	\end{enumerate}
\end{prop}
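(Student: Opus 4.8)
\textbf{Proof plan for Proposition~\ref{lem:dad_orbit}.}
The plan is to prove the two implications separately, with the bulk of the work going into (i)$\Rightarrow$(ii), since the reverse direction is essentially immediate once one unpacks the equivalence-relation language. Throughout, recall that for a \emph{principal} \'etale groupoid a subgroupoid $\mathcal{H}$ with $\mathcal{H}^{(0)}$ open is determined, up to the unit space, by the equivalence relation $\sim_\mathcal{H}$ it induces, and the relatively compact subgroupoids are exactly those for which one controls both the ``size'' of $\mathcal{H}\setminus\mathcal{H}^{(0)}$ and the topology. The key observation linking the two formulations is that, since $r$ and $s$ restrict to homeomorphisms on bisections, for a principal groupoid an element $h\in\mathcal{H}$ is uniquely determined by the pair $(r(h),s(h))$, so that counting $|[x]_{\mathcal{H}}|$ is the same as counting $|\mathcal{H}_x|$ (equivalently $|\mathcal{H}^x|$) for each unit $x$.

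For (i)$\Rightarrow$(ii): fix an open relatively compact $K$, which without loss of generality we may enlarge so that $K=K^{-1}$ and $K\supset s(K)\cup r(K)$ (viewed inside $\mathcal{G}^{(0)}\subset\mathcal{G}$); apply Definition~\ref{defn:dad} to this $K$ to obtain open sets $U^{(0)},\dots,U^{(d)}$ satisfying (1), and such that each $\mathcal{H}^{(i)}:=\langle\{g\in K\mid s(g),r(g)\in U^{(i)}\}\rangle$ is relatively compact. I would then argue that the \emph{closure} $\overline{\mathcal{H}^{(i)}}$ is a compact principal \'etale groupoid over a compact unit space contained in $\overline{U^{(i)}}$ (using that $K$, hence $\mathcal{H}^{(i)}$, lives in the ample/\'etale setting), so that $r:\overline{\mathcal{H}^{(i)}}\to\mathcal{G}^{(0)}$ has compact image and, being a local homeomorphism with compact domain, has uniformly bounded fibres: there is $N_i\in\mathbb{N}$ with $|(\overline{\mathcal{H}^{(i)}})^x|\le N_i$ for all $x$. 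Restricting back to $\mathcal{H}^{(i)}$ and invoking principality to identify $|\mathcal{H}^{(i)}_x|$ with $|[x]_{\mathcal{H}^{(i)}}|$ yields $\sup_{x}|[x]_{\mathcal{H}^{(i)}}|\le N_i<\infty$, which is exactly (ii)(2). The one point requiring care here is why a relatively compact \'etale subgroupoid has a well-behaved (locally compact, \'etale, principal) closure with compact unit space --- this is where I expect the main technical friction, and I would handle it by noting that relative compactness of $\mathcal{H}^{(i)}$ in $\mathcal{G}$ forces $s(\mathcal{H}^{(i)})$ and $r(\mathcal{H}^{(i)})$ to be relatively compact in $\mathcal{G}^{(0)}$, then shrinking $U^{(i)}$ to an open set $\widetilde U^{(i)}$ with compact closure still covering $s(K)\cup r(K)$ (possible since $s(K)\cup r(K)$ is relatively compact), and working with the groupoid generated inside the relatively compact region; the \'etale structure then gives the fibrewise bound via a standard compactness/local-homeomorphism argument.

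For (ii)$\Rightarrow$(i): given $K$ and the sets $U^{(i)}$ from (ii), it suffices to show each $\mathcal{H}^{(i)}=\langle\{g\in K\mid s(g),r(g)\in U^{(i)}\}\rangle$ is relatively compact in $\mathcal{G}$. Here the uniform bound $M_i:=\sup_x|[x]_{\mathcal{H}^{(i)}}|<\infty$ means that every element of $\mathcal{H}^{(i)}$ is a product of at most $M_i$ of the generators $g\in K$ with $s(g),r(g)\in U^{(i)}$ (an element outside $\mathcal{H}^{(i)(0)}$ joining two points in an equivalence class of size $\le M_i$ can, by principality, be realised by a non-repeating path, hence a word of length $<M_i$ in the generators). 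Therefore $\mathcal{H}^{(i)}\subset\bigcup_{k=0}^{M_i}(\overline K)^{k}\cdot$(closure of relevant unit part), a finite union of continuous images of products of the compact set $\overline K$, hence relatively compact. Combined with (ii)(1) this gives Definition~\ref{defn:dad}, so $\dad(\mathcal{G})\le d$. The only subtlety is the path-reduction step converting ``equivalence classes of size $\le M_i$'' into ``words of bounded length in $K$'', which uses principality to rule out cycles and the symmetry $K=K^{-1}$; this is routine combinatorics on the orbit-equivalence relation.
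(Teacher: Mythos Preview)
Your approach is correct and matches the paper's: (ii)$\Rightarrow$(i) via the word-length reduction (the paper writes this as $\mathcal{H}^{(i)}\subset K\cdot\ldots\cdot K$, $M^{(i)}$ times, then invokes \cite[Lemma~5.2]{GWY17} for relative compactness of products), and (i)$\Rightarrow$(ii) via the fibrewise bound coming from compactness plus the local-homeomorphism property (the paper simply cites \cite[Lemma~8.10]{GWY17}, noting that principality is not even needed here).

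One simplification for your (i)$\Rightarrow$(ii): the worry about whether $\overline{\mathcal{H}^{(i)}}$ is itself a well-behaved groupoid is unnecessary (and the claim that the closure of a subgroupoid is a subgroupoid is not obviously true in general). All you actually use is that $\overline{\mathcal{H}^{(i)}}$ is a \emph{compact subset} of the \'etale groupoid $\mathcal{G}$. Cover it by finitely many open bisections $B_1,\dots,B_N$; since $r$ is injective on each $B_j$, every fibre $r^{-1}(x)\cap\overline{\mathcal{H}^{(i)}}$ has at most $N$ points. Then principality converts this into the bound on $|[x]_{\mathcal{H}^{(i)}}|$. The shrinking of $U^{(i)}$ and the analysis of the closure's groupoid structure can be dropped entirely.
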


\begin{proof}
	(i) $\Longrightarrow$ (ii): This follows from \cite[Lemma 8.10]{GWY17}. In fact, this implication does not require $\mathcal{G}$ to be principal.
	
	(ii) $\Longrightarrow$ (i): For all $i\in \{0,\ldots,d\}$ we define numbers $M^{(i)} := \sup_{x\in (\mathcal{H}^{(i)})^{(0)}} |[x]_{\mathcal{H}^{(i)}}|$. Since $\mathcal{G}$ is principal, we have $\mathcal{H}^{(i)}\subset K\cdot \ldots \cdot K$ ($M^{(i)}$ times). By \cite[Lemma 5.2]{GWY17} the product of finitely many relatively compact subsets is relatively compact, so $\mathcal{H}^{(i)}$ is relatively compact. 
\end{proof}

The main result of this section provides a lower bound for the diagonal dimension of the sub-$\mathrm{C}^*$-algebra $( C_0(\mathcal{G}^{(0)}) \subset \mathrm{C}^*_{\mathrm{r}}(\mathcal{G})) $ in terms of the dynamic asymptotic dimension of the groupoid $\mathcal{G}$. At a technical level the proof is very similar to that of \eqref{ddim-lower} in Theorem \ref{thm:diag_vs_tower}, but the two do not factorise through one another: Theorem \ref{thm:diag_vs_dad} works in the more general context of groupoid $\mathrm{C}^*$-algebras, but the lower bound in terms of dynamic asymptotic dimension is a priori weaker than the one in terms of (fine) tower dimension in Theorem \ref{thm:diag_vs_tower}.

\begin{thm} \label{thm:diag_vs_dad}
	Let $\mathcal{G}$ be a locally compact, Hausdorff, \'etale groupoid. Then
	\[
	\dad(\mathcal{G})\leq \ddim( C_0(\mathcal{G}^{(0)})\subset \mathrm{C}^*_{\mathrm{r}}(\mathcal{G}) ).
	\]
\end{thm}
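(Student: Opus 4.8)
The plan is to mimic the proof of the lower bound \eqref{ddim-lower} in Theorem~\ref{thm:diag_vs_tower}, replacing towers in a dynamical system by relatively compact subgroupoids and using the characterisation of $\dad$ via equivalence relations from Proposition~\ref{lem:dad_orbit}(ii). Write $A := \mathrm{C}^*_{\mathrm{r}}(\mathcal{G})$ and $D_A := C_0(\mathcal{G}^{(0)})$, and assume $d := \ddim(D_A \subset A) < \infty$, so that in particular (by Theorem~\ref{principal grp} and Proposition~\ref{prop:principal-groupoid}) we may assume $\mathcal{G}$ is principal. Fix an open relatively compact subset $K \subset \mathcal{G}$; we must produce open sets $U^{(0)}, \ldots, U^{(d)} \subset \mathcal{G}^{(0)}$ covering $s(K) \cup r(K)$ so that each $\mathcal{H}^{(i)}$, the subgroupoid generated by $\{g \in K \mid s(g), r(g) \in U^{(i)}\}$, has uniformly bounded orbits. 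After shrinking, we may assume $K$ is a finite union of open relatively compact bisections, and by continuity and compactness reduce to controlling finitely many normalisers $u \in \mathrm{C}_{\mathrm{c}}(\mathcal{G})$ supported on such bisections (the analogues of the $u_g$ in the crossed product proof) together with a positive function $h \in D_A$ acting as a local unit. Choose $\delta, \eta \in (0,1)$ and $\e > 0$ as in \eqref{5-2-2} (with the relevant cardinalities in place of $M$), and take a c.p.\ approximation $(F, D_F, \psi, \varphi)$ witnessing $\ddim(D_A \subset A) = d$ for an appropriate finite set within $\e^2/9$, passing to $\hat\psi, \hat\varphi$ as in Remark~\ref{rem:almost_order_zero}(ii).

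The key tool is again Lemma~\ref{lem:local_iso} together with Lemma~\ref{lem:homeo_by_gp_element}: cutting $F$ down by $q := \chi_{(\delta,1]}(\psi(\mathbf 1))$ (or $\psi(h)$ in the nonunital case) and decomposing $qF^{(i)}q$ into matrix summands $M_{s^{(i),j}}$, for each pair of indices $1,k$ the order zero map $\varphi^{(i),j}$ yields a $^*$-isomorphism $\sigma^{(i),j}_{1k}$ between hereditary subalgebras of $A$, which because $\varphi$ sends matrix units to normalisers of $D_A$ restricts to an isomorphism between hereditary subalgebras of $D_A = C_0(\mathcal{G}^{(0)})$, hence induces a partial homeomorphism $\bar\sigma^{(i),j}_{k1}$ of $\mathcal{G}^{(0)}$ between the open $\eta$-supports $U^{(i),j}_1$ and $U^{(i),j}_k$. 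The substitute for Lemma~\ref{lem:homeo_by_gp_element} is that each such partial homeomorphism is \emph{implemented by a bisection from $K \cdot \ldots \cdot K$}: approximating $g_\delta(\varphi^{(i),j})(e_{k1})$ within $1/2$ by a finite sum of functions supported on bisections in products of (finitely many translates of) $K$, a disjointness argument identical to the one in Lemma~\ref{lem:homeo_by_gp_element} shows that on each point of $U^{(i),j}_1$ the homeomorphism $\bar\sigma^{(i),j}_{k1}$ agrees locally with the partial homeomorphism coming from one of those bisections. This localises $U^{(i),j}_1$ into finitely many open pieces $V^{(i),j}_r$, on each of which all the $\bar\sigma^{(i),j}_{k1}$ are simultaneously implemented by fixed bisections $\gamma^{(i),j}_{r,k} \in K^{\cdot N}$ (here freeness/principality, used exactly as in the crossed product proof, guarantees that the implementing bisection is uniquely determined near each point, so these pieces are open and finite in number).

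The sets $U^{(i)}$ are then the unions over $j, r$ and over small $\eta$-supports: more precisely, one sets $U^{(i)} := \bigcup_{j,r,k} \gamma^{(i),j}_{r,k}(V^{(i),j}_r)$ (together with translates by a fixed finite set $E$ of bisections, exactly as in the $E$-Lebesgue step of the crossed product proof). The estimate \eqref{5-2-10} — carried out verbatim with $\mathbf 1_A$ replaced by $h$ and sums over matrix blocks as there — shows that $\bigcup_i U^{(i)}$ covers $s(K) \cup r(K)$, giving condition (1) of Proposition~\ref{lem:dad_orbit}(ii). For condition (2), the pairwise-disjointness computation \eqref{5-2-8} shows that within a fixed colour $i$, two of the translated pieces $\gamma(V^{(i),j}_r)$ and $\gamma'(V^{(i),j'}_{r'})$ that overlap must in fact have $j = j'$, $r = r'$ and the implementing bisections differing by an element of the groupoid generated by $K$ on that piece; this forces the subgroupoid $\mathcal{H}^{(i)} = \langle \{g \in K \mid s(g), r(g) \in U^{(i)}\}\rangle$ to have orbits of size at most $\max_{i,j} s^{(i),j}$, which is bounded independently of $K$ by the fixed finite-dimensional algebra $F$ — whence $\sup_x |[x]_{\mathcal{H}^{(i)}}| < \infty$.

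The main obstacle I expect is \textbf{the groupoid analogue of Lemma~\ref{lem:homeo_by_gp_element}}, i.e.\ showing that the partial homeomorphisms arising from $g_\delta(\varphi)(e_{k1})$ are genuinely implemented by bisections, and — more delicately — that these bisections lie in a bounded power of $K$ (this is what produces the uniform orbit bound rather than merely relative compactness of the generated subgroupoid). In the dynamical case the group element $g_x$ is produced by approximating $g_\delta(\varphi)(e_{k1}) \in C_{\mathrm{c}}(G, C(X))$ by finitely many $f_m u_{g_m}$; here one approximates by finitely many functions supported on bisections contained in finite products of translates of $K$, and must track how many factors of $K$ occur. Carefully bookkeeping the ``propagation'' of $g_\delta(\varphi)(e_{k1})$ in terms of $K$ (using that $\varphi\psi$ approximately fixes the chosen normalisers $u$, each of which is supported in a single $K$-translate) should yield a bound of the form $N \le 2\max_{i,j}s^{(i),j}$, which then feeds into Proposition~\ref{lem:dad_orbit}; a secondary technical point, as in the crossed product proof, is the passage from the unital to the nonunital case via a local unit $h \in D_A$ and the maps $\hat\psi, \hat\varphi$ of Remark~\ref{rem:almost_order_zero}(ii), together with the quotient/hereditary permanence already established.
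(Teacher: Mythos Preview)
Your overall architecture is right: reduce to principal $\mathcal{G}$ via Proposition~\ref{prop:principal-groupoid}, fix a c.p.\ approximation $(F,D_F,\psi,\varphi)$, cut down by $q=\chi_{(\delta,1]}(\psi(h))$, and use Lemma~\ref{lem:local_iso} to produce homeomorphisms $\bar{\sigma}_{k1}^{(i),j}$ between the open $\eta$-supports $U_k^{(i),j}$; and Proposition~\ref{lem:dad_orbit}(ii) is the correct target. But the step you yourself flag as the main obstacle is a genuine gap, and your proposed fix does not work. The element $g_\delta(\varphi^{(i),j})(e_{k1})$ lies in $A=\mathrm{C}^*_{\mathrm{r}}(\mathcal{G})$ and can certainly be approximated by finite sums of functions supported on bisections, but there is no reason for those bisections to sit inside any product $K^{\cdot N}$. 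The map $\varphi$ is an arbitrary c.p.c.\ order zero map into $A$, and the fact that $\varphi\psi$ approximately fixes finitely many normalisers supported in $K$ gives you no propagation control on $\varphi(e_{k1})$ individually. So the decomposition of $U_1^{(i),j}$ into pieces $V_r^{(i),j}$ with implementing bisections $\gamma^{(i),j}_{r,k}\in K^{\cdot N}$ is simply not available. (Also, the $E$-translation you import from the tower-dimension argument is superfluous here: there is no $E$-Lebesgue condition in the definition of $\dad$.)

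The paper sidesteps this by reversing the logic. One sets, without any further decomposition,
\[
U^{(i)}:=\bigcup_{j}\bigcup_{k} U_k^{(i),j},
\]
and does \emph{not} attempt to realise the $\bar{\sigma}$'s by $K$-bisections at all. Instead one fixes a cover of $\overline{K}$ by open bisections $S_1,\ldots,S_M$ and functions $a_m\in C_{\mathrm{c}}(\mathcal{G})$ supported in $S_m$ with $\sum a_m \equiv 1$ on $\overline{K}$, and proves: if $g\in K$ has $s(g)=\bar{\sigma}_{k1}^{(i),j}(x)$ and $r(g)\in U^{(i)}$, then necessarily $r(g)\in\{\bar{\sigma}_{l1}^{(i),j}(x)\mid l=1,\ldots,s^{(i),j}\}$. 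The contradiction argument runs exactly as in \eqref{5-2-8}, with the role of $u_g$ played by the single $a_m$ that is nonzero at $g$ (so the bisection enters through $K$, via the $a_m$, rather than through $\varphi$). Since each $K$-arrow with both endpoints in $U^{(i)}$ therefore preserves the finite set $\{\bar{\sigma}_{l1}^{(i),j}(x)\}$, induction bounds every $\mathcal{H}^{(i)}$-orbit by $\max_j s^{(i),j}$. No groupoid analogue of Lemma~\ref{lem:homeo_by_gp_element} is needed.
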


\begin{proof}
We may assume that $\ddim( C_0(\mathcal{G}^{(0)})\subset \mathrm{C}^*_{\mathrm{r}}(\mathcal{G})  ) = d <\infty$, for otherwise there is nothing to show. But then $\mathcal{G}$ is principal by Proposition~\ref{prop:principal-groupoid}, and it will suffice to verify condition (ii) of Proposition~\ref{lem:dad_orbit}. We write $A:=\mathrm{C}^*_{\mathrm{r}}(\mathcal{G})$ and $D_A := C_0(\mathcal{G}^{(0)})$. Given an open relatively compact subset $K\subset \mathcal{G}$ as in \ref{lem:dad_orbit}(ii), we can cover $\overline{K}$ by finitely many open bisections $S_1,\ldots,S_M$, because $\mathcal{G}$ is \'etale and $\overline{K}$ is compact. Since $\mathcal{G}$ is locally compact and Hausdorff, there exist functions $a_1,\ldots,a_M$ in  $C_{\mathrm{c}}(\mathcal{G})$ such that 
\begin{itemize}
	\item $\supp(a_m)\subset S_m$ for all $m \in \{1,\ldots ,M \}$, and
	\item $\sum_{m=1}^M a_m(x) = 1$ for all $x\in \overline{K}$.
\end{itemize}
Let $0 \le f\in D_A$ be a function satisfying $\|f\| \leq 1$ and $f\vert_{ s(\overline{K})\cup r(\overline{K}) } \equiv 1$. Define the constants
	\begin{equation}
	\label{6-7-9}
	\delta := \frac{1}{8(d+1)}, \quad \eta := \frac{1}{4(d+1)}, \quad \e := \frac{\delta^3}{6M^4}.
	\end{equation}
	Using nondegeneracy of the sub-$\mathrm{C}^*$-algebra $(D_A\subset A)$ and functional calculus we can find positive contractions $h,\tilde{f} \in D_A$ and contractions $b_1,\ldots, b_M \in A$ such that
	\[
	\| \tilde{f} - f \| < \e, \quad h\tilde{f} = \tilde{f},
	\]
	and
	\begin{equation}
	\label{6-7-7}
	\| b_m - a_m \| < \e, \quad hb_m = b_m
	\end{equation}
	for all $m\in \{1,\ldots, M\}$. Write $\mathcal{F} := \{ \tilde{f}, b_1,\ldots,b_M \}$ and
	let $(F, D_F, \psi, \varphi  )$ be a c.p.\ approximation witnessing $\ddim(D_A\subset A ) = d$ for $(\mathcal{F}^2,\e^2/9)$. Let
	\[
	\hat{\psi}:A \longrightarrow \hat{F}, \quad \hat{\varphi}:\hat{F} \longrightarrow A
	\]
	be the c.p.\ maps defined in Remark \ref{rem:almost_order_zero}(ii) with respect to $h$. Then $\hat{\varphi}$ is contractive and 
	\begin{equation}
	\label{6-7-8}
\hat{\varphi}\hat{\psi}(b) = \varphi\psi(b)
\end{equation}
for all $b\in A$ satisfying $hb=b$. By \cite[Lemma 3.5]{KW04} we even have
	\begin{equation}
	\label{6-7-6}
	\| \hat{\varphi}(\hat{\psi}(b)x) - \hat{\varphi}\hat{\psi}(b)\hat{\psi}(x) \| < \e
	\end{equation}
	for all $b\in \mathcal{F}$ and $x\in \hat{F}^1$.
	
	Let 
	\begin{equation}
	\label{6-7-5}
	q:= \chi_{(\delta,1]}(\psi(h)) \in \hat{F} \cap D_F,
	\end{equation}
	where $\chi_{(\delta,1]}$ denotes the characteristic function on the interval $(\delta,1]$. 
	
		For each $i\in \{0,\ldots,d\}$ identify the sub-$\mathrm{C}^*$-algebra $( qD_{F^{(i)}}q \subset qF^{(i)}q  )$ with the direct sum $\big( \bigoplus_{j=1}^{r^{(i)}} D_{s^{(i),j}} \subset  \bigoplus_{j=1}^{r^{(i)}} M_{s^{(i),j}} \big)$. As in the proof of  \eqref{ddim-lower}, we will write $\big(e_{kl}^{(i),j}\big)$ for the standard matrix units in $M_{s^{(i),j}}$ and $\varphi^{(i),j}$ for the respective restrictions of $\varphi$.
	
	Let $f_\delta, g_\delta:[0,1]\to \R$ be the piecewise linear continuous functions defined in \eqref{eq:f_delta} and \eqref{eq:g_delta}, and let 
	\begin{align*}
	{\sigma}_{kl}^{(i),j}  &: \overline{ f_\delta(\varphi^{(i),j})(e_{kk}^{(i),j}) D_A f_\delta(\varphi^{(i),j})(e_{kk}^{(i),j}) } \\
	& \qquad \longrightarrow 
	\overline{		f_\delta(\varphi^{(i),j})(e_{ll}^{(i),j}) D_A f_\delta(\varphi^{(i),j})(e_{ll}^{(i),j}) }
	\end{align*}
	be the $^*$-isomorphism given by Lemma \ref{lem:local_iso}. Define
	\[
	U_k^{(i),j} := \mathrm{supp}^\circ_\eta  (f_\delta(\varphi^{(i),j})(e_{kk}^{(i),j} )) \subset \mathcal{G}^{(0)}.
	\]
	Then each ${\sigma}_{kl}^{(i),j}$ induces a homeomorphism $\bar{\sigma}_{lk}^{(i),j}:U_k^{(i),j}\to U_l^{(i),j}$. Define
	\[
	U^{(i)} := \bigcup\nolimits_{j=1}^{r^{(i)}}\bigcup\nolimits_{k=1}^{s^{(i),j}} U_k^{(i),j}.
	\]
	We claim that $U^{(0)},\ldots,U^{(d)}$ form the desired cover of $s(K)\cup r(K)$.
	
	We first show that the union $\bigcup_{i=0}^d U^{(i)}$ indeed covers $s(K)\cup r(K)$. For each $x\in s(K)\cup r(K)$, we compute, analogously to \eqref{5-2-10},
	\begin{align*}
		& \sum\nolimits_{i=0}^d \sum\nolimits_{j=1}^{r^{(i)}}\sum\nolimits_{k=1}^{s^{(i),j}} f_\delta(\varphi^{(i),j})( e^{(i),j}_{kk} ) (x) \\
		& \geq \sum\nolimits_{i=0}^d \sum\nolimits_{j=1}^{r^{(i)}}\sum\nolimits_{k=1}^{s^{(i),j}} \varphi( e^{(i),j}_{kk} ) (x) - \delta(d+1) \\
		&= \varphi(q)(x) - \delta(d+1) \\
		&\geq \varphi(\psi(h)q)(x) - \delta(d+1)  \\
		&\geq \varphi\psi(h)(x) - 2\delta(d+1) \\
		&\geq \varphi\psi(\tilde{f})(x) - 2\delta(d+1) \\
		&\geq \tilde{f}(x) - (\e + 2\delta(d+1)) \\
		&\geq f(x) - (2\e + 2\delta(d+1)) \\
		&\textstyle \geq {1}/{2}.
	\end{align*}
	By orthogonality, for each $x \in s(K) \cup r(K)$ there exist some $i,j,k$ such that
	\[
	\textstyle
	f_\delta(\varphi^{(i),j})(e^{(i),j}_{kk})(x) \geq {1}/{(2(d+1))} > 2 \eta,
	\]
	whence the union $\bigcup_{i=0}^d U^{(i)}$ indeed covers the set $s(K)\cup r(K)$, i.e., condition \ref{lem:dad_orbit}(ii)(1) holds.

	\bigskip

	Next, we verify condition \ref{lem:dad_orbit}(ii)(2). 
	For each $i \in \{0,\ldots,d\}$ let $\mathcal{H}^{(i)}$ be the subgroupoid generated by $\{g\in K \mid s(g),r(g)\in U^{(i)} \}$. We need to show that  $\sup_{x\in (\mathcal{H}^{(i)})^{(0)}} | [x]_{\mathcal{H}^{(i)}} | < \infty$ (recall that by definition $x,y\in \mathcal{G}^{(0)}$ belong to the same equivalence class if and only if there is $g\in \mathcal{H}^{(i)}$ with $s(g) = x$ and $r(g) = y$). In fact,  it suffices to prove the following: for each $i$, $j$, and $x\in U_1^{(i),j}$, if $g\in K$ is a groupoid element satisfying $s(g) = \bar{\sigma}_{k1}^{(i),j}(x)$ for some $k$ and $r(g) \in U^{(i)}$, then $r(g)$ lies in the set $\{ \bar{\sigma}_{l1}^{(i),j}(x) \mid l =1,\ldots,s^{(i),j}  \}$. This statement will then imply that $\sup_{x\in (\mathcal{H}^{(i)})^{(0)}}| [x]_{\mathcal{H}^{(i)}} | \leq \max\{ s^{(i),j} \mid j=1,\ldots,r^{(i)} \}$.
	
	\bigskip
	
	So let us fix indices $i$ and $j$, a point $x\in U_1^{(i),j}$, and a groupoid element $g\in K$ such that $s(g) = \bar{\sigma}^{(i),j}_{k1}(x)$ and $r(g) \in U^{(i)}$. Assume, for the sake of contradiction, that $r(g) = \bar{\sigma}_{k'1}^{(i),j'}(y)$ for some indices $j'$ and $k'$, and $y\in U_1^{(i),j'}$ with $y\neq x$.

	By construction, there exists an element $a\in \{a_1,\ldots,a_M\}$ such that 
	\begin{equation}
	\label{6-7-1}
a(g) \geq \frac{1}{M}.
\end{equation} 
Let $S$ be the open support of $a$ (which is an open bisection), and write $\alpha_S:s(S)\to r(S)$ for the canonical homeomorphism. By \cite[Proposition 1.6]{Kum86}  (see also \cite[Proposition 4.7]{Ren08}) we have
	\[
	a^*ca(z) = c(\alpha_S(z))a^*a(z)
	\]
	for all $z\in \dom(a) := \{ x\in \mathcal{G}^{(0)} \mid a^*a(x) > 0  \}$ and $c \in D_A$.
	
	Since $x$ and $y$ are distinct, there exists an open neighborhood $U_x$ of $x$ inside $\bar{\sigma}^{(i),j}_{k1}(\dom(a))$ such that
	\begin{equation}
	\label{6-7-10}
	[\bar{\sigma}_{1k'}^{(i),j'}\vert_{r(S)}  \circ \alpha_S \circ \bar{\sigma}_{1k}^{(i),j}(U_x)] \cap U_x = \emptyset.
	\end{equation}
	Write $U_y := \bar{\sigma}_{1k'}^{(i),j'}\vert_{r(S)}  \circ \alpha_S \circ \bar{\sigma}_{1k}^{(i),j}(U_x)$, and let $h_y$ be a function in $C_0(U_y)^1_+$ satisfying 
	\begin{equation}
	\label{6-7-2}
h_y(y) = 1.
\end{equation} 
Define
	\begin{equation}
	\label{6-7-4}
	h_x := \sigma_{k1}^{(i),j}(  a^* \sigma_{1k'}^{(i),j'}(h_y) a   ) \in C_0(U_x)_+^1.
	\end{equation}
	Then
	\begin{align}
		h_x(x) &\stackrel{\phantom{\eqref{6-7-1},\eqref{6-7-2}}}{=} [a^* \sigma_{1k'}^{(i),j}(h_y)a] ( s(g)  ) \nonumber \\
		& \stackrel{\phantom{\eqref{6-7-1},\eqref{6-7-2}}}{=} \sigma_{1k'}^{(i),j}(h_y)(r(g)) a^*a( s(g)) \nonumber \\
		&\stackrel{\phantom{\eqref{6-7-1},\eqref{6-7-2}}}{=} h_y(y) |a|^2(g)  \nonumber \\
		& \stackrel{\eqref{6-7-1},\eqref{6-7-2}}{\geq} {1}/{M^2}. \label{6-7-3}
	\end{align}

	 Let $b\in \{b_1,\ldots,b_M\}$ be an element satisfying $\|b-a\| < \e$. We compute
	\begin{eqnarray}
	\frac{1}{M^4} &\stackrel{\eqref{6-7-3}}{\leq}& \|h_x^2\|	\nonumber\\
	& \stackrel{\eqref{6-7-4}}{\leq}& \| \sigma_{1k}^{(i),j}(h_x) a^* \sigma_{1k'}^{(i),j'}(h_y) \| \nonumber\\	
	&=& \| \sigma_{1k'}^{(i),j'}(h_y) a \sigma_{1k}^{(i),j}(h_x) \| \nonumber	\\	
	&\stackrel{\eqref{5-6-1},\eqref{eq:f_delta}}{\leq} & \frac{1}{\delta^2} \| \sigma_{1k'}^{(i),j'}(h_y) \varphi(e_{k'k'}^{(i),j'}) a \varphi( e_{kk}^{(i),j} ) \sigma_{1k}^{(i),j}(h_x) \| \nonumber\\	
	&\stackrel{\eqref{6-7-5},\eqref{2-2-1}}{\leq} &\frac{1}{\delta^3} \| \sigma_{1k'}^{(i),j'}(h_y) \varphi(e_{k'k'}^{(i),j'}) a \hat{\varphi}( e_{kk}^{(i),j} ) \sigma_{1k}^{(i),j}(h_x) \| \nonumber\\	
	&\leq& \frac{1}{\delta^3} \| \sigma_{1k'}^{(i),j'}(h_y) \varphi(e_{k'k'}^{(i),j'}) b \hat{\varphi}( e_{kk}^{(i),j} ) \sigma_{1k}^{(i),j}(h_x) \| + \frac{\e}{\delta^3} \nonumber\\
	& \stackrel{\eqref{6-7-7},\eqref{6-7-8}}{\leq}& \frac{1}{\delta^3} \| \sigma_{1k'}^{(i),j'}(h_y) \varphi(e_{k'k'}^{(i),j'}) \hat{\varphi}\hat{\psi}(b) \hat{\varphi}( e_{kk}^{(i),j} ) \sigma_{1k}^{(i),j}(h_x) \| + \frac{2\e}{\delta^3} \nonumber\\	
	&\stackrel{\eqref{6-7-6}}{\leq}& \frac{1}{\delta^3} \| \sigma_{1k'}^{(i),j'}(h_y) \varphi(e_{k'k'}^{(i),j'}) \hat{\varphi}(\hat{\psi}(b)  e_{kk}^{(i),j} ) \sigma_{1k}^{(i),j}(h_x) \| + \frac{3\e}{\delta^3} \nonumber\\	
	&=& \frac{1}{\delta^3} \| \sigma_{1k'}^{(i),j'}(h_y) \varphi(e_{k'k'}^{(i),j'}) \varphi(\psi(b)  e_{kk}^{(i),j} ) \sigma_{1k}^{(i),j}(h_x) \| + \frac{3\e}{\delta^3}, \label{6-7-11} \nonumber \\
	& & 
	\end{eqnarray}
	where for the last equality we have used the definition \eqref{2-2-1} of $\hat{\varphi}$ and the fact that $e^{(i),j}_{kk}$ and $\psi(h)$ commute. 
	
	As in the proof of \eqref{ddim-lower}, if $j\neq j'$ then by orthogonality the first summand in the last line vanishes, and we already obtain a contradiction to \eqref{6-7-9}. Now consider $j=j'$. Then (using that $\varphi^{(i),j}$ is order zero)
	\begin{align*}
		& \sigma_{1k'}^{(i),j}(h_y) \varphi^{(i),j}(e^{(i),j}_{k'k'}) \varphi^{(i),j}(\psi(b)  e^{(i),j}_{kk} ) \sigma_{1k}^{(i),j}(h_x) \\
		&= \varphi^{(i),j}(\mathbf{1}_{F^{(i)}}) \sigma_{1k'}^{(i),j}(h_y) \varphi^{(i),j}(e^{(i),j}_{k'k'} \psi(b)  e^{(i),j}_{kk} ) \sigma_{1k}^{(i),j}(h_x).
	\end{align*}
	Writing $e_{k'k'}^{(i),j} \psi(b)  e_{kk}^{(i),j} = \lambda \cdot e_{k'k}^{(i),j}$ for some $\lambda\in \C$, we can continue as follows:
	\begin{eqnarray*}
		\lefteqn{ \sigma_{1k'}^{(i),j}(h_y)  \varphi^{(i),j}(e_{k'k'}^{(i),j} \psi(b)  e_{kk}^{(i),j} ) \sigma_{1k}^{(i),j}(h_x)}\\
		&= &\lambda\cdot \sigma_{1k'}^{(i),j}(h_y)  \varphi^{(i),j}(e_{k'k}^{(i),j} ) \sigma_{1k}^{(i),j}(h_x) \\
		&\stackrel{\eqref{5-6-1}}{=} &\lambda\cdot g_\delta(\varphi^{(i),j})(e_{k'1}^{(i),j})h_y g_\delta(\varphi^{(i),j})(e_{1k'}^{(i),j})\\
		& & \varphi^{(i),j}(e_{k'k}^{(i),j}) g_\delta(\varphi^{(i),j})(e_{k1}^{(i),j})h_y g_\delta(\varphi^{(i),j})(e_{1k}^{(i),j}) \\
		&\stackrel{\eqref{6-7-10}}{=}& 0.
	\end{eqnarray*}
	Summarising, we have that
	\[
	\frac{1}{M^4} \leq \| h_x^2\| \leq \frac{3\e}{\delta^3} = \frac{1}{2M^4},
	\]
	a contradiction, so our proof is complete.
\end{proof}

\begin{rem}\label{ample-groupoids}
	One should expect that in Theorem~\ref{thm:diag_vs_dad}, if $\mathcal{G}$ is principal and ample, one actually has equality. This is analogous to Theorem~\ref{thm:diag_vs_tower}, where \eqref{ddim-lower} and \eqref{ddim-upper} turn into the equality \eqref{ddim-zero} if the underlying space is zero-dimensional. That situation of transformation groupoids will suffice to address the applications we are mostly interested in; cf.\ Corollary \ref{ample-transformation-groups} below and \ref{thm:uniformRoe}. Therefore we will not pursue the  statement in the full generality of  groupoids, and we skip the rather technical proof for the time being.  
\end{rem}

\begin{rem}\label{transformation-group-chain}
	Let $\mathcal{G}$ be the transformation groupoid arising from a free action of a countable discrete group $G$ on a compact metrisable space $X$. By \cite[Lemma 5.4]{GWY17} and \cite[Theorem 5.14]{Ker17}  we have
	\begin{align}
	&\dad^{+1}(\mathcal{G}) \nonumber \\
	&\qquad \leq \tdim^{+1}(G\curvearrowright X)\nonumber \\ 
	&\qquad \qquad \leq \ftdim^{+1}( G\curvearrowright X ) \nonumber \\
	& \qquad \qquad \qquad	\leq \dad^{+1}(\mathcal{G})\cdot \dim^{+1}(X). \label{6-9-1}
	\end{align}
	In view of Theorems~\ref{thm:diag_vs_tower} and \ref{thm:diag_vs_dad} it seems plausible that $\ddim^{+1} (C(X) \subset C(X)\rtimes_{\mathrm{r}} G)$ sits just between the last two terms in this chain of inequalities (it is larger than fine tower dimension by \eqref{ddim-lower}). 
	\end{rem}
\begin{rem}
In \cite[Theorem~7.1.1]{Bl90}, Blackadar has shown that the CAR algebra can be realized as a crossed product $\mathrm{C}^*$-algebra $C(X)\rtimes_rG$, where $G$ is a locally finite countable group acting freely and minimally on $X=S^1\times \Omega$ where $\Omega$ is the Cantor set. By \cite[Remark~2.2(i) and Lemma~5.4]{GWY17}, the transformation groupoid $\mathcal{G}=X\rtimes G$ has dynamic asymptotic dimension zero, and we see from \eqref{6-9-1} in connection with \eqref{tdim-ftdim} that

$$\tdim(G\curvearrowright X) \leq \ftdim( G\curvearrowright X ) = \dim(X)=1.
$$
Now Remark~\ref{rem:almost_order_zero}(i) and Theorem~\ref{thm:diag_vs_tower} together imply 
\[
1 = \dim X \leq \ddim (C(X) \subset C(X)\rtimes_{\mathrm{r}} G)\leq 3.
\]
(We leave the exact value of the diagonal dimension unspecified at this point. A positive answer to Question~\ref{question:ddim-dad} below would imply that it is one; this also seems plausible in view of Blackadar's construction.) 

Upon taking tensor products one obtains  diagonals in the CAR algebra with spectrum $(S^1)^n\times \Omega$ for any $n\in \mathbb{N}\cup\{\infty\}$. 

This discussion shows that there are non-AF diagonals in AF algebras with arbitrarily high (but finite) diagonal dimension. We do not know any separable AF algebra $A$ admitting a diagonal $D_A$ which itself is AF but is not a  regular canonical masa, i.e., with $0< \ddim (D_A \subset A)$ (cf.\ Remark~\ref{AFdiag}).
\end{rem}

\begin{question}	
\label{question:ddim-dad}
In \cite[Theorem 8.6]{GWY17}, the nuclear dimension of a groupoid $\mathrm{C}^*$-algebra in large generality is bounded by a term of the form $\dad^{+1}(\mathcal{G})\, \cdot \, \dim^{+1}(\mathcal{G}^{(0)})$, and it would be interesting to see whether along the same lines one can also obtain an upper bound for diagonal dimension. More precisely:
 Is it true that for any \'etale principal groupoid $\mathcal G$ we have the estimate \[
 \ddim^{+1}( C_0(\mathcal G^{(0)} ) \subset \mathrm{C}^*_r(\mathcal G)) \leq \dad^{+1}(\mathcal G)\cdot \dim^{+1}(\mathcal G^{(0)})\,?
 \]
 \end{question}

\begin{rem}
If, in the situation of Remark~\ref{transformation-group-chain}, the space $X$ is totally disconnected, then we may combine the chain of inequalities \eqref{6-9-1} above with Theorem~\ref{ddim-zero} and obtain the equality $\dad(\mathcal{G}) = \ddim (C_0(X) \subset C_0(X)\rtimes_{\mathrm{r}} G)$,  
	as predicted in Remark~\ref{ample-groupoids} above. This argument factorises through \cite[Theorem 5.14]{Ker17}, which is stated and proven under the hypothesis  that $X$ is metrisable. However, this latter assumption only enters through the estimate $\ftdim^{+1}( G\curvearrowright X ) \le \dad^{+1}(\mathcal{G})\cdot \dim^{+1}(X)$. When $\dim X = 0$, metrisability can be avoided as follows: 
	In the proof of \cite[Lemma 5.11]{Ker17}, one may skip the application of \cite[Lemma 5.9]{Ker17} (which allows to refine the castle $\{(O_i,S_i)\}_{1\le i \le q}$) and arrive at the same conclusion except for condition (i) of \cite[5.11]{Ker17} on the diameters of the levels $sV_i$.	The proof of \cite[Lemma 5.12]{Ker17} runs as stated; the only difference is that now \cite[Lemma 5.11]{Ker17} does not yield the diameter condition (iii) of \cite[Lemma 5.12]{Ker17}. Now the proof of the last inequality of \cite[Theorem 5.14]{Ker17} works verbatim, again with the only exception that it does not yield the diameter condition on the levels of the castles covering $X$. This argument yields the estimate $\tdim(G \curvearrowright X) \le \dad (\mathcal{G})$, without assuming $X$ to be metrisable. The reverse inequality $\dad (\mathcal{G}) \le \ddim( C_0(\mathcal{G}^{(0)})\subset \mathrm{C}^*_{\mathrm{r}}(\mathcal{G}) ) = \tdim(G \curvearrowright X)$  
	follows upon combining Theorems \ref{thm:diag_vs_dad} and \ref{thm:diag_vs_tower}, neither of which requires $X$ to be metrisable. We summarise this discussion as follows:
 \end{rem}

\begin{cor}\label{ample-transformation-groups}
	Let $G$ be a countable discrete group acting freely on a compact, totally disconnected, Hausdorff space $X$ (which is not necessarily metrisable), and let $\mathcal{G}$ be the associated transformation groupoid. Then
	\begin{align*}
	\dad (\mathcal{G}) = \ddim (C(X) \subset C(X)\rtimes_{\mathrm{r}} G).
	\end{align*}
\end{cor}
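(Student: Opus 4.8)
\textbf{Proof proposal for Corollary~\ref{ample-transformation-groups}.}

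The plan is to deduce the two inequalities from results already in hand, handling the non-metrisable case by inspecting which metrisability hypotheses actually get used. First I would establish the inequality $\dad(\mathcal{G}) \le \ddim(C(X) \subset C(X) \rtimes_{\mathrm{r}} G)$. This is immediate: since $X$ is compact Hausdorff and the action is free, the transformation groupoid $\mathcal{G} = X \rtimes G$ is locally compact, Hausdorff, \'etale (and principal, by freeness), so Theorem~\ref{thm:diag_vs_dad} applies verbatim — it requires no metrisability assumption — and gives $\dad(\mathcal{G}) \le \ddim(C_0(\mathcal{G}^{(0)}) \subset \mathrm{C}^*_{\mathrm{r}}(\mathcal{G})) = \ddim(C(X) \subset C(X) \rtimes_{\mathrm{r}} G)$, using $\mathcal{G}^{(0)} = X$ and $\mathrm{C}^*_{\mathrm{r}}(X \rtimes G) = C(X) \rtimes_{\mathrm{r}} G$.

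For the reverse inequality $\ddim(C(X) \subset C(X) \rtimes_{\mathrm{r}} G) \le \dad(\mathcal{G})$, I would first invoke Theorem~\ref{thm:diag_vs_tower}: since $X$ is totally disconnected, hence $\dim(X) = 0$, equation~\eqref{ddim-zero} gives $\tdim(X,G) = \ddim(C(X) \subset C(X) \rtimes_{\mathrm{r}} G)$. (The proof of Theorem~\ref{thm:diag_vs_tower} does not assume $X$ metrisable — the excerpt explicitly notes this for both the upper bound \eqref{ddim-upper}, which follows Kerr's argument, and the lower bound \eqref{ddim-lower}.) So it remains to show $\tdim(X,G) \le \dad(\mathcal{G})$ without a metrisability hypothesis. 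For metrisable $X$ this is \cite[Theorem~5.14]{Ker17}, and the preceding Remark in the excerpt already spells out how to remove metrisability when $\dim X = 0$: one traces through \cite[Lemmas~5.9, 5.11, 5.12]{Ker17} and \cite[Theorem~5.14]{Ker17}, observing that the only place metrisability is used is to control the \emph{diameters} of the levels $sV_i$ of the castles, a condition that is irrelevant for the conclusion $\tdim(X,G) \le \dad(\mathcal{G})$ (as opposed to the finer statement about $\ftdim$). Concretely: in \cite[Lemma~5.11]{Ker17} one skips the refinement step via \cite[Lemma~5.9]{Ker17}, thereby losing only condition~(i) there on level diameters; \cite[Lemma~5.12]{Ker17} then goes through except for its diameter condition~(iii); and the last inequality of \cite[Theorem~5.14]{Ker17} runs verbatim, yielding $\tdim(X,G) \le \dad(\mathcal{G})$.

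Combining the two inequalities gives $\dad(\mathcal{G}) = \ddim(C(X) \subset C(X) \rtimes_{\mathrm{r}} G)$, as claimed. The main obstacle — really the only non-bookkeeping point — is the verification that Kerr's argument for $\tdim(X,G) \le \dad(\mathcal{G})$ survives the removal of metrisability; but since all of the relevant covering-dimension subtleties are collapsed by the hypothesis $\dim X = 0$ (where the various notions of covering dimension for compact Hausdorff spaces coincide, and \cite[Proposition~1.5]{KW04} is available), this is a careful reading of \cite{Ker17} rather than a genuinely new argument. I would present this by simply citing the Remark that precedes the corollary, since it already carries out exactly this analysis.
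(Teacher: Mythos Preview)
Your proposal is correct and follows exactly the approach the paper takes: the corollary is stated as a summary of the preceding Remark, which establishes $\tdim(G\curvearrowright X)\le \dad(\mathcal{G})$ without metrisability by tracing through \cite[Lemmas~5.11, 5.12 and Theorem~5.14]{Ker17}, and then combines this with Theorems~\ref{thm:diag_vs_dad} and~\ref{thm:diag_vs_tower} (equation~\eqref{ddim-zero}) to close the chain of inequalities. Your identification of the only genuine issue---checking that Kerr's argument survives dropping metrisability when $\dim X=0$---and your resolution of it match the paper's discussion precisely.
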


\bigskip

\section{Further Examples}
\label{sec:examples}

\noindent
Below we describe various (classes of) examples in order to highlight the scope of diagonal dimension, and to showcase how it carries more refined information when compared to nuclear dimension, in particular.

\bigskip

In Section~\ref{section-AF} we have already characterised diagonal dimension zero in terms of AF sub-$\mathrm{C}^*$-algebras. When the $\mathrm{C}^*$-pair comes from a dynamical system, this type of characterisation carries over to the group: 

\begin{prop}
\label{free action of finite group}
	Let $G$ be a countable discrete group acting freely on a compact  Hausdorff space $X$. 
	
	Then $\ddim(C(X) \subset C(X)\rtimes_{\mathrm{r}} G)=0$ if and only if $X$ is totally disconnected and $G$ is locally finite, i.e., every finite subset of $G$ is contained in a finite subgroup.
\end{prop}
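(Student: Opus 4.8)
The plan is to prove both implications by combining the characterisation of diagonal dimension zero in terms of AF sub-$\mathrm{C}^*$-algebras (Theorem~\ref{thm:zero_ddim}) with the equality $\tdim(X,G) = \ddim(C(X) \subset C(X)\rtimes_{\mathrm{r}} G)$ for totally disconnected $X$ from Theorem~\ref{thm:diag_vs_tower}, and then with a direct analysis of towers. First, for the forward implication, suppose $\ddim(C(X) \subset C(X)\rtimes_{\mathrm{r}} G) = 0$. By Remark~\ref{rem:almost_order_zero}(i) we have $\ndim(C(X)) \le \ddim(C(X) \subset C(X)\rtimes_{\mathrm{r}} G) = 0$, so $C(X)$ is AF and $X$ is totally disconnected. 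Then by the ``in particular'' clause of Theorem~\ref{thm:diag_vs_tower} we have $\tdim(X,G) = 0$. I would now argue that $\tdim(X,G) = 0$ forces $G$ to be locally finite: given a finite subset $E \subset G$ (which we may take symmetric and containing $e$), a tower-dimension-zero witness for, say, $E^2$, consists of a single castle $((V_j,S_j))_{j\in J}$ with pairwise disjoint levels $sV_j$, $\bigcup_j S_jV_j = X$, and $E^2$-Lebesgueness. Fix $x \in X$; by $E^2$-Lebesgueness there are $j$ and $t \in S_j$ with $x \in tV_j$ and $E^2 t \subset S_j$. Then for any $g \in E$, $gt \in S_j$, and since the levels $\{sV_j : s \in S_j\}$ of this single tower are pairwise disjoint, the map $S_j \ni s \mapsto sV_j$ is injective; in particular $E t \subset S_j$ and the sets $\{(gt)V_j : g \in E\}$ are pairwise disjoint translates. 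One checks (since the action is free, so $s \mapsto sv$ is injective for $v \in V_j$) that the assignment $g \mapsto gt$ from $E$ into $S_j$ is injective, so $|E| \le |S_j|$; more importantly, running this for all $x$ and using a standard argument (e.g.\ exactly as in \cite[proof of the relevant step in Section~5]{Ker17}, or the argument showing $\dad = 0$ implies local finiteness in \cite{GWY17}) that the groupoid generated by $\{g \in E : \text{source and range in the relevant } U^{(i)}\}$ is finite, one obtains that every finite subset of $G$ is contained in a finite subgroup. The cleanest route here is probably to invoke Corollary~\ref{ample-transformation-groups}: $\ddim(C(X) \subset C(X)\rtimes_{\mathrm{r}} G) = \dad(\mathcal{G}) = 0$ for the transformation groupoid $\mathcal{G} = X \rtimes G$, and then cite \cite[Section~8]{GWY17} (specifically the characterisation that $\dad(\mathcal{G}) = 0$ forces the relevant subgroupoids to be not merely relatively compact but actually compact-open, which for a transformation groupoid of a free action translates into local finiteness of $G$).

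For the reverse implication, suppose $X$ is totally disconnected and $G$ is locally finite. The key point is that $C(X) \rtimes_{\mathrm{r}} G$ is then an AF algebra with $C(X)$ as a diagonal of diagonal dimension zero, which I would establish via Theorem~\ref{thm:zero_ddim}(iii) (or (ii)). Write $G = \bigcup_n G_n$ as an increasing union of finite subgroups. Since $G_n$ is finite and acts freely on the totally disconnected compact space $X$, each $C(X) \rtimes_{\mathrm{r}} G_n$ is finite-dimensional over $C(X)$ in the sense that it is a direct sum of algebras of the form $M_{|G_n/H|}(C(X/H))$ ranging over subgroups $H$ — more precisely, using that $X$ has a basis of clopen sets and that a free finite-group action on a totally disconnected space admits arbitrarily fine $G_n$-invariant clopen partitions into ``towers'' $G_n \cdot W$ with $W$ clopen and $\{gW : g \in G_n\}$ pairwise disjoint. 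Over each such clopen tower the corner of $C(X)\rtimes_{\mathrm{r}} G_n$ is isomorphic to $C(W) \otimes M_{|G_n|}$, with $C(W) \otimes D_{|G_n|}$ as its canonical diagonal, which sits inside $C(X) \subset C(X)\rtimes_{\mathrm{r}} G$. Taking an increasing sequence of such refinements (over $n$, and within each $n$ refining the clopen partitions) yields an increasing sequence of finite-dimensional subalgebras $F_k \subset C(X) \rtimes_{\mathrm{r}} G$ with diagonals $D_{F_k} \subset C(X)$, whose union is dense in $C(X) \rtimes_{\mathrm{r}} G$ (because $\bigcup_n C(X)\rtimes_{\mathrm{r}} G_n$ is dense and each $C(X)\rtimes_{\mathrm{r}} G_n$ is approximated this way) and whose diagonals have union dense in $C(X)$; moreover the inclusions $F_k \hookrightarrow F_{k+1}$ send $\cn_{F_k}(D_{F_k})$ into $\cn_{F_{k+1}}(D_{F_{k+1}})$ since they send $C(X)$-matrix units over finer clopen sets to sums of such over coarser ones, all of which normalise $C(X)$. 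By Theorem~\ref{thm:zero_ddim} (using that $C(X) \rtimes_{\mathrm{r}} G$ is separable, as $G$ is countable and $X$ is second countable — or arguing directly with condition (ii) if we do not wish to assume separability, which the hypotheses do not literally give us; I would in fact use Theorem~\ref{thm:zero_ddim}(ii) to avoid the separability issue) we conclude $\ddim(C(X) \subset C(X) \rtimes_{\mathrm{r}} G) = 0$.

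Alternatively, and perhaps more in the spirit of the surrounding section, for the reverse direction I would simply show $\tdim(X,G) = 0$ directly: given any finite $E \subset G$, enlarge it to a finite subgroup $G_n \supseteq E$; since $G_n$ acts freely on the totally disconnected space $X$, there is a clopen set $V$ meeting every $G_n$-orbit such that $\{gV : g \in G_n\}$ is pairwise disjoint and covers $X$ (build $V$ as a clopen transversal-neighbourhood using total disconnectedness and freeness), possibly after partitioning into finitely many such towers $((V_j, G_n))_{j \in J}$ with all levels pairwise disjoint; this single castle ($J^{(0)} = J$, all other $J^{(i)} = \emptyset$) is automatically $E$-Lebesgue because $E \subset G_n$ and $G_n V_j = X$ for the union over $j$, so every point lies in some $tV_j$ with $Et \subset G_n = S_j$. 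Hence $\tdim(X,G) \le 0$, and by Theorem~\ref{thm:diag_vs_tower} we get $\ddim(C(X) \subset C(X)\rtimes_{\mathrm{r}} G) = \tdim(X,G) = 0$.

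The main obstacle I anticipate is the forward direction: turning the combinatorial information ``$\tdim(X,G) = 0$'' (or ``$\dad(\mathcal{G}) = 0$'') into the purely group-theoretic statement that $G$ is locally finite. The delicate point is that tower dimension (and dynamic asymptotic dimension) only gives \emph{relative compactness} of the generated subgroupoids, and one must use that the levels are genuinely disjoint (not just ``approximately'') together with freeness to upgrade this to the statement that $\langle E \rangle \cap (\text{a fixed relatively compact set})$ being a group forces $\langle E \rangle$ itself to be finite. I expect the cleanest way to handle this is to route through $\dad(\mathcal{G}) = 0$ via Corollary~\ref{ample-transformation-groups} and cite the corresponding fact from \cite{GWY17} (where $\dad = 0$ groupoids are characterised), rather than reprove it; but if a self-contained argument is wanted, the tower-dimension-zero witness for $E^2$ gives, for each orbit-representative, a single tower whose shape $S_j$ is closed under left-multiplication by $E$ on the $E^2$-core, and iterating shows the $E$-core of $S_j$ is a union of cosets of $\langle E \rangle$, forcing $\langle E \rangle$ to be finite.
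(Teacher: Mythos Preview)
Your proposal is correct and, among the several routes you explore, the one you identify as ``cleanest'' --- routing through Corollary~\ref{ample-transformation-groups} to get $\ddim = \dad$ and then citing \cite{GWY17} for the characterisation of $\dad = 0$ --- is exactly the paper's proof (the paper cites \cite[Remark~2.2(i)]{GWY17} rather than Section~8, but the content is the same). You are in fact more careful than the paper's two-sentence argument: Corollary~\ref{ample-transformation-groups} requires $X$ to be totally disconnected, which in the forward direction is not given but must be extracted first via Remark~\ref{rem:almost_order_zero}(i), precisely as you do; the paper's proof glosses over this step.

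Your alternative routes (direct tower-dimension arguments and the AF construction via Theorem~\ref{thm:zero_ddim}) are also valid but unnecessary once you have Corollary~\ref{ample-transformation-groups} in hand; the hand-wavy passage from $\tdim = 0$ to local finiteness of $G$ that you flag as the ``main obstacle'' is indeed best avoided by the $\dad$ route, as you yourself conclude.
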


\begin{proof}
	By Corollary~\ref{ample-transformation-groups}, in our situation diagonal dimension agrees with dynamic asymptotic dimension, and by  \cite[Remark~2.2 (i)]{GWY17} dynamic asymptotic dimension zero is equivalent to $G$ being locally finite.
\end{proof}

We have just used that by Corollary~\ref{ample-transformation-groups}, for free actions on zero-dimensional spaces dynamic asymptotic dimension of the groupoid and diagonal dimension of the $\mathrm{C}^*$-pair agree. However, it turns out that in large generality dynamic asymptotic dimension of the transformation groupoid is essentially determined by the asymptotic dimension of the group; cf.\ \cite{Gromov:GGT1993}. As a consequence we obtain:

\begin{prop}\label{virtually nilpotent}
	Let $G$ be a finitely generated group acting freely on a compact, metrisable, and totally disconnected space $X$. Then
	$$
	\ddim(C(X) \subset C(X)\rtimes_{\mathrm{r}} G )\in \{\asdim (G),\infty\}.
	$$
	If in addition $G$ is finitely generated and virtually nilpotent, then we have $$\ddim (C(X) \subset C(X)\rtimes_{\mathrm{r}} G)=\asdim (G).$$
\end{prop}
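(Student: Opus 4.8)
The statement has two parts. For the first part, I would invoke Corollary~\ref{ample-transformation-groups}: since $G$ acts freely on the zero-dimensional metrisable space $X$, we have $\ddim(C(X)\subset C(X)\rtimes_{\mathrm r}G)=\dad(\mathcal G)$ for the associated transformation groupoid $\mathcal G=X\rtimes G$. So it suffices to show that $\dad(\mathcal G)\in\{\asdim(G),\infty\}$. Here I would appeal to the results of Guentner--Willett--Yu relating dynamic asymptotic dimension of a transformation groupoid to the asymptotic dimension of the acting group; the key input is that for a \emph{free} action one always has $\dad(\mathcal G)\le \asdim(G)$ whenever $\asdim(G)<\infty$ (this is \cite[Theorem~3.1]{GWY17} or the analogous statement there), while conversely, a lower bound $\asdim(G)\le \dad(\mathcal G)$ holds because a free action lets one recover the coarse structure of $G$ from orbits inside $\mathcal G$ — more precisely, fixing a point $x\in X$, the orbit map $g\mapsto gx$ is a coarse embedding of $G$ into the groupoid and the covers witnessing $\dad(\mathcal G)\le d$ restrict to covers witnessing $\asdim(G)\le d$. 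Combining the two inequalities gives: either $\asdim(G)=\infty$, in which case both sides are $\infty$ and the statement ``$\ddim\in\{\asdim(G),\infty\}$'' is trivially true, or $\asdim(G)<\infty$ and then $\dad(\mathcal G)=\asdim(G)$. In all cases $\ddim(C(X)\subset C(X)\rtimes_{\mathrm r}G)=\dad(\mathcal G)\in\{\asdim(G),\infty\}$.

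\textbf{The second part} strengthens this to an equality when $G$ is finitely generated and virtually nilpotent. By the first part it suffices to rule out the value $\infty$, i.e.\ to produce an \emph{upper} bound $\ddim(C(X)\subset C(X)\rtimes_{\mathrm r}G)\le\asdim(G)$. The natural route is through the tower-dimension upper bound of Theorem~\ref{thm:diag_vs_tower}: we have
\[
\ddim^{+1}(C(X)\subset C(X)\rtimes_{\mathrm r}G)\le \tdim^{+1}(X,G)\cdot\dim^{+1}(X)=\tdim^{+1}(X,G)
\]
since $\dim(X)=0$. So it remains to bound $\tdim(X,G)$ by $\asdim(G)$ for a free action of a finitely generated virtually nilpotent group on a zero-dimensional compact metrisable space. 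For this I would use the relationship between tower dimension and dynamic asymptotic dimension from \cite[Theorem~5.14]{Ker17} (as recorded in \eqref{6-9-1}), together with the fact that for virtually nilpotent $G$ one has $\dad(\mathcal G)\le \asdim(G)$ with the reverse inequality as above, so that all the terms in the chain \eqref{6-9-1} collapse. Concretely: virtually nilpotent finitely generated groups are amenable and have finite asymptotic dimension (by Bass--Guivarc'h and the fact that polynomial-growth groups have $\asdim(G)$ equal to their growth degree, which is finite), hence $\asdim(G)<\infty$, hence by the first part $\dad(\mathcal G)=\asdim(G)$; and since $\dim(X)=0$, the chain \eqref{6-9-1} gives $\tdim(X,G)\le\dad(\mathcal G)\cdot\dim^{+1}(X)$, but again when $\dim X=0$ one actually has $\tdim(X,G)=\dad(\mathcal G)$ (this is exactly the content of Corollary~\ref{ample-transformation-groups} combined with \eqref{ddim-zero}). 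Thus $\tdim(X,G)=\asdim(G)$ and feeding this into the Theorem~\ref{thm:diag_vs_tower} upper bound yields $\ddim(C(X)\subset C(X)\rtimes_{\mathrm r}G)\le\asdim(G)$, which combined with the lower bound from the first part gives equality.

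\textbf{Main obstacle.} The delicate point is the comparison between $\dad(\mathcal G)$ and $\asdim(G)$ for \emph{free} actions. The upper bound $\dad(\mathcal G)\le\asdim(G)$ for free actions is a known result of Guentner--Willett--Yu, but one must check its hypotheses are met here (in particular that $G$ being finitely generated and the action free suffices — no amenability or minimality is needed for that direction, though amenability is harmless since virtually nilpotent groups are amenable). The genuinely new-looking ingredient is the lower bound $\asdim(G)\le\dad(\mathcal G)$; I expect this follows from a coarse-geometry argument showing that the orbit of any single point is coarsely equivalent to $G$ and that $\dad$-witnessing open covers of $\mathcal G^{(0)}$, restricted along an orbit, descend to $\asdim$-witnessing covers of $G$ — but making this precise (handling the open/compact-subset quantifier of $\dad$ against the $R$-ball quantifier of $\asdim$, and ensuring freeness is what guarantees the orbit map is injective and coarse) is where the care is needed. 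Everything else is bookkeeping: assembling Corollary~\ref{ample-transformation-groups}, Theorem~\ref{thm:diag_vs_tower}, the chain \eqref{6-9-1}, and the classical fact $\asdim(G)<\infty$ for finitely generated virtually nilpotent $G$.
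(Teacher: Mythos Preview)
Your reduction to $\dad(\mathcal{G})$ via Corollary~\ref{ample-transformation-groups} is correct, but there is a real gap in how you establish the dichotomy $\dad(\mathcal{G})\in\{\asdim(G),\infty\}$. You assert an unconditional upper bound $\dad(\mathcal{G})\le\asdim(G)$ for free actions and attribute it to \cite{GWY17}; this is neither proved there nor true in general. For a counterexample, let $G=F_2$ act by left translation on its profinite completion $\widehat{F_2}$, which is compact, metrisable, and totally disconnected. The action is free and carries an invariant probability measure (Haar), so it is not topologically amenable (since $F_2$ is non-amenable), and hence $\dad(F_2\curvearrowright\widehat{F_2})=\infty$ while $\asdim(F_2)=1$. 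Your inference ``$\asdim(G)<\infty\Rightarrow\dad(\mathcal{G})=\asdim(G)$'' therefore fails, and since your second part relies on precisely this implication, both parts break down.

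The paper's argument differs on both counts. For the first part it quotes the dichotomy $\dad(G\curvearrowright X)\in\{\asdim(G),\infty\}$ directly from \cite[Corollary~8.8]{Saw17} (based on an unpublished result of Wu--Zacharias); your orbit/coarse-embedding idea points toward the inequality $\asdim(G)\le\dad(\mathcal{G})$, but the reverse inequality \emph{conditional on $\dad(\mathcal{G})<\infty$} is the nontrivial content of that reference and is not a consequence of \cite{GWY17}. For the second part, finiteness of $\asdim(G)$ alone is not what does the work: the paper invokes Bartels \cite[Corollary~1.10]{Bar17} to obtain finite amenability dimension for free actions of virtually nilpotent groups, and then \cite[Corollary~5.14]{Ker17} to deduce finiteness of the tower (hence, via \eqref{ddim-zero}, diagonal) dimension; only after finiteness is secured in this way does the first part pin the value down to $\asdim(G)$.
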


\begin{proof}
It was shown in \cite[Corollary~8.8]{Saw17} (which  is based on a yet unpublished result by Wu and Zacharias; see \cite[Theorem~8.1]{Saw17}) that $\dad (G \curvearrowright X)\in \{\asdim (G),\infty\}$. Therefore, the first statement follows from Corollary \ref{ample-transformation-groups}. 

 If $G$ is a finitely generated virtually nilpotent group, then by \cite[Corollary~1.10]{Bar17} (applied to the special case of a free action) the amena\-bility dimension of the action is finite. But by \cite[Corollary~5.14]{Ker17} the latter agrees with the diagonal dimension, and so we conclude from the first part that $\ddim(C(X) \subset C(X)\rtimes_{\mathrm{r}} G ) = \asdim (G)$.
 \end{proof}

\begin{examples}
\label{ddim-dimnuc-examples}
Every countably infinite group $G$ admits a free and minimal action on a totally disconnected, compact, metrisable, Hausdorff space $X$ by \cite{Elek:ETDS2021}. If $G$ locally has subexponential growth, then every such action yields a crossed product with nuclear dimension at most one: Indeed, by \cite[Theorem~6.33]{DZ23} and \cite[Theorem~8.1]{KerrSzabo18} the action must be almost finite. In particular, $C(X)\rtimes_{\mathrm{r}} G$ is $\mathcal{Z}$-stable by \cite[Theorem~12.4]{Ker17}. Hence, $\ndim (C(X)\rtimes_{\mathrm{r}} G)\leq 1$ by \cite[Theorem~B]{CETWW19}.

 On the other hand, if $G = \mathbb{Z}^d$ then $\ddim (C(X) \subset C(X)\rtimes_{\mathrm{r}} G) = \asdim(G) = d$ by Proposition~\ref{virtually nilpotent}. If $G$ is the first Grigorchuk group, which has local subexponential growth, then $\ddim (C(X) \subset C(X)\rtimes_{\mathrm{r}} G) = \asdim(G) = \infty$; cf.\ \cite{Smi07}. In summary we have:
 \end{examples}

 \begin{prop}
 For every $d \in \{1,2,\ldots\} \cup \{\infty\}$ there is a free and minimal Cantor system $G \curvearrowright X$ with $G$ of local subexponential growth such that $\ndim (C(X) \rtimes_{\mathrm{r}} G) = 1$ and $\ddim (C(X) \subset C(X) \rtimes_{\mathrm{r}} G) = d$. 	
 \end{prop}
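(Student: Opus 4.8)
The statement is essentially an assembly of results already collected in Examples~\ref{ddim-dimnuc-examples} and Proposition~\ref{virtually nilpotent}, so the plan is to make that assembly precise and to pin down, for each prescribed value of $d$, a concrete group. First I would fix $d \in \{1,2,\ldots\} \cup \{\infty\}$ and choose a finitely generated group $G = G_d$ of \emph{local subexponential growth} with $\asdim(G_d) = d$: for $d < \infty$ take $G_d = \mathbb{Z}^d$ (which has polynomial, hence subexponential, growth, and $\asdim(\mathbb{Z}^d) = d$ by Gromov), and for $d = \infty$ take $G_\infty$ the first Grigorchuk group, which has local subexponential growth (indeed subexponential growth) and infinite asymptotic dimension by \cite{Smi07}. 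In all cases $G_d$ is countably infinite.

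Next, invoke \cite{Elek:ETDS2021} to produce a free and minimal action $G_d \curvearrowright X$ on a totally disconnected, compact, metrisable Hausdorff space $X$ (a Cantor system, since $X$ is then perfect by minimality of an infinite-group action, or one may arrange this directly). Now the two conclusions are obtained from the two halves of Examples~\ref{ddim-dimnuc-examples}. For the nuclear dimension bound: since $G_d$ has local subexponential growth, by \cite[Theorem~6.33]{DZ23} together with \cite[Theorem~8.1]{KerrSzabo18} the action is almost finite; hence $C(X) \rtimes_{\mathrm{r}} G_d$ is $\mathcal{Z}$-stable by \cite[Theorem~12.4]{Ker17}, and then $\ndim(C(X) \rtimes_{\mathrm{r}} G_d) \le 1$ by \cite[Theorem~B]{CETWW19}. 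Since $X$ is infinite the crossed product is not an AF algebra (it contains the noncommutative tori or at least nontrivial $K_1$ coming from the unitaries $u_g$ for $g$ of infinite order; alternatively, $\ndim = 0$ would force diagonal dimension, hence $\dad$, hence $G$ locally finite by Proposition~\ref{free action of finite group}, which $\mathbb{Z}^d$ and Grigorchuk's group are not), so $\ndim(C(X)\rtimes_{\mathrm{r}} G_d) \ge 1$ and therefore equals $1$. For the diagonal dimension: Proposition~\ref{virtually nilpotent} gives $\ddim(C(X) \subset C(X)\rtimes_{\mathrm{r}} G_d) \in \{\asdim(G_d), \infty\}$ in general, and when $G_d$ is finitely generated virtually nilpotent (the case $G_d = \mathbb{Z}^d$) it gives exactly $\ddim = \asdim(G_d) = d$. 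For $d = \infty$ and $G_\infty$ the Grigorchuk group, $\asdim(G_\infty) = \infty$, so both alternatives in Proposition~\ref{virtually nilpotent} coincide and $\ddim(C(X) \subset C(X)\rtimes_{\mathrm{r}} G_\infty) = \infty = d$. This covers every $d \in \{1,2,\ldots\}\cup\{\infty\}$.

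The only genuinely delicate point is the lower bound $\ndim(C(X)\rtimes_{\mathrm{r}} G_d) \ge 1$, i.e.\ ruling out $\ndim = 0$. The cleanest argument is: $\ndim(C(X)\rtimes_{\mathrm{r}} G_d) = 0$ would mean the crossed product is AF; an AF algebra has trivial $K_1$, whereas for any $g \in G_d$ of infinite order the canonical unitary $u_g \in C(X)\rtimes_{\mathrm{r}} G_d$ together with the structure of the induced $\mathbb{Z}$-subaction gives a nonzero class in $K_1$ (this is standard for free minimal $\mathbb{Z}$-actions on the Cantor set, and $G_d$ always contains $\mathbb{Z}$ for $d \ge 1$; for Grigorchuk's group one instead uses that it is infinite and not locally finite — wait, Grigorchuk's group \emph{is} a torsion group, so this sub-argument must be replaced). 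For $G_\infty$ torsion one argues instead via Proposition~\ref{free action of finite group}: $\ndim = 0 \Rightarrow \ddim(C(X)\subset C(X)\rtimes_{\mathrm{r}} G_\infty) = 0$ (as $X$ is zero-dimensional, Remark~\ref{rem:almost_order_zero}(i) gives $\ndim(C(X)\rtimes_{\mathrm{r}}G_\infty) = 0$, but one needs the pair; better: if the crossed product is AF then by Remark~\ref{AFdiag} and Theorem~\ref{thm:zero_ddim} it has a regular canonical masa, and one checks $C(X)$ can be taken to be it, giving $\ddim = 0$, hence $\dad(\mathcal{G}) = 0$ by Corollary~\ref{ample-transformation-groups}, hence $G_\infty$ locally finite by \cite[Remark~2.2(i)]{GWY17}) — a contradiction since Grigorchuk's group is not locally finite. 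I expect the bookkeeping of which sub-argument applies to which $G_d$ (the $\mathbb{Z}$-subaction $K_1$ argument for $d < \infty$, the local-finiteness obstruction for $d = \infty$) to be the one place needing care; everything else is direct citation.
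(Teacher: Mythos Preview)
Your overall approach matches the paper's exactly: the proposition is meant as a summary of Examples~\ref{ddim-dimnuc-examples}, taking $G_d = \mathbb{Z}^d$ for finite $d$ and $G_\infty$ the first Grigorchuk group, with Proposition~\ref{virtually nilpotent} delivering the diagonal dimension and the chain \cite{DZ23}, \cite{KerrSzabo18}, \cite{Ker17}, \cite{CETWW19} giving $\ndim \le 1$. The paper itself leaves the lower bound $\ndim \ge 1$ implicit, so you are actually being more scrupulous than the text at this point.

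Your $K_1$-argument for $\mathbb{Z}^d$ is essentially fine (strictly, one needs the class of $u_g$ to be nonzero in $K_1$ of the full $\mathbb{Z}^d$-crossed product rather than a sub-crossed-product; iterated Pimsner--Voiculescu handles this). But your Grigorchuk argument has a genuine gap: the step ``one checks $C(X)$ can be taken to be the regular canonical masa'' is unjustified and in fact false. An AF algebra does carry a regular canonical masa with diagonal dimension zero (Remark~\ref{AFdiag}), but nothing forces the specific Cartan subalgebra $C(X)$ to coincide with it --- and since you have already computed $\ddim(C(X) \subset C(X) \rtimes_{\mathrm{r}} G_\infty) = \infty$, it certainly does not. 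There is no implication $\ndim(A) = 0 \Rightarrow \ddim(D \subset A) = 0$ for an arbitrary diagonal $D$; the paper's own discussion of Blackadar's construction exhibits diagonals of strictly positive diagonal dimension inside the CAR algebra. So your proposed contradiction does not close, and excluding $\ndim = 0$ in the Grigorchuk case requires a different input (the paper does not supply one either).
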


The phenomenon in the proposition above is not at all limited to the groups appearing in \ref{ddim-dimnuc-examples}, as shown by the next two classes of examples.

\begin{example}
Let $G$ be an arbitrary non-amenable countable group with asymptotic dimension $d$. Then one can combine the idea in \cite[Section~6]{RS12} with  \cite[Theorem~6.6]{GWY17} in order to construct a free and minimal action on the Cantor set $X$ which has dynamic asymptotic dimension $d$ and such that $C(X)\rtimes_r \Gamma$ is a Kirchberg algebra in the UCT class (see also \cite[Section~10]{Ele18}). Then, $\ndim (C(X)\rtimes_{\mathrm{r}} G)=1$ by \cite{RSS15} and $\ddim (C(X) \subset C(X)\rtimes_{\mathrm{r}} G)=d$ by Corollary~\ref{ample-transformation-groups}.
	\end{example}

	\begin{example}
	Now let $G$ be an arbitrary amenable countably infinite group with asymptotic dimension $d$. The space of actions of $G$ on the Cantor space $X$ carries a natural Polish topology. It was shown in \cite{CJKMST17} that the free and minimal actions contain a dense $G_\delta$ set of almost finite actions, which then yield simple, nuclear, and $\mathcal{Z}$-stable crossed products; the latter have nuclear dimension at most one by \cite[Theorem~B]{CETWW19}. On the other hand, as before we have $\ddim (C(X) \subset C(X)\rtimes_{\mathrm{r}} G)\geq \asdim (G)$.
\end{example}

We now look at metric spaces with bounded geometry and their associated $\mathrm{C}^*$-algebras. In the breakthrough rigidity result of \cite{BFKVW:2021} it was shown that these uniform Roe algebras determine the underlying space up to coarse equivalence. Since asymptotic dimension is a coarse invariant, it follows that the asymptotic dimension of a metric space with bounded geometry is encoded in its uniform Roe algebra. It is, however, an altogether quite different problem how to read of the actual value from the $\mathrm{C}^*$-algebra. In \cite{WZ10}, the nuclear dimension of any  uniform Roe algebra was bounded above by the asymptotic dimension of the space. The precise value of the nuclear dimension remains unknown, even for concrete and supposedly easy examples, but in view of the examples above as well as \cite{BraWin:BLMS} it seems plausible that it may take values strictly smaller than the asymptotic dimension, at least in certain cases. 
 On the other hand, if one keeps track of the canonical diagonal of a uniform Roe algebra, then diagonal dimension of the $\mathrm{C}^*$-pair indeed captures the  asymptotic dimension of the space on the nose: 

\begin{thm}\label{thm:uniformRoe}
Let $X$ be a metric space with bounded geometry and let $\mathrm{C}_{\mathrm{u}}^*(X)$ be its uniform Roe algebra. Then
	$$
	\ddim(\ell^\infty(X) \subset \mathrm{C}_{\mathrm{u}}^*(X))=\asdim (X).
	$$	
\end{thm}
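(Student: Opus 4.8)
\textbf{Proof strategy for Theorem~\ref{thm:uniformRoe}.}
The plan is to prove the two inequalities separately, exploiting the identification of $\mathrm{C}^*_{\mathrm{u}}(X)$ with a crossed product (or, more generally, with a groupoid $\mathrm{C}^*$-algebra) so that the results of Sections~\ref{sec5} and~\ref{sec6} apply. For the upper bound $\ddim(\ell^\infty(X) \subset \mathrm{C}^*_{\mathrm{u}}(X)) \le \asdim(X)$, I would first recall the standard description of $\mathrm{C}^*_{\mathrm{u}}(X)$ as the reduced groupoid $\mathrm{C}^*$-algebra of the coarse groupoid $G(X)$ of Skandalis--Tu--Yu, which is \'etale, locally compact, Hausdorff, principal, and ample, with unit space $\beta X$ and $C(\beta X) = \ell^\infty(X)$. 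Since $\asdim(X) \le d$ is known to be equivalent to a statement about $R$-disjoint, uniformly bounded covers, one translates such a cover into a family of open towers over $\beta X$ whose shapes are controlled by the relevant controlled sets; concretely I would follow the construction underlying \cite[Section~8]{GWY17} and \cite{WZ10}, which yields c.p.c.\ order zero approximations with $d+1$ colours, and simply check that these approximations automatically respect the diagonal $\ell^\infty(X)$ and send matrix units to normalisers --- this is the same observation as in the proof of \eqref{ddim-upper}, since the finite-dimensional building blocks are of the form $M_n \otimes C_0(V)$ with their standard diagonal and the maps back are built from partial translation operators $u_g f$, which are exactly the normalisers of $\ell^\infty(X)$. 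Alternatively, when $X$ embeds coarsely into a group (or for $X = G$ a finitely generated group where $\mathrm{C}^*_{\mathrm{u}}(G) = \ell^\infty(G)\rtimes_{\mathrm r} G$), one can invoke \eqref{ddim-upper} of Theorem~\ref{thm:diag_vs_tower} together with $\tdim(\beta G, G) \le \asdim(G)$ and $\dim(\beta G) = 0$; but the groupoid route is cleaner in general.

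For the lower bound $\asdim(X) \le \ddim(\ell^\infty(X) \subset \mathrm{C}^*_{\mathrm{u}}(X))$, I would run the argument of Theorem~\ref{thm:diag_vs_dad} verbatim in the setting of the coarse groupoid $G(X)$: if $\ddim(\ell^\infty(X) \subset \mathrm{C}^*_{\mathrm{u}}(X)) = d < \infty$, then $G(X)$ is principal by Proposition~\ref{prop:principal-groupoid} (which it is anyway), and Theorem~\ref{thm:diag_vs_dad} gives $\dad(G(X)) \le d$. It then remains to recall the known identity $\dad(G(X)) = \asdim(X)$ --- this is precisely \cite[Theorem~6.4]{GWY17} (and the reverse inequality $\asdim(X) \le \dad(G(X))$ is elementary: a cover of $\beta X$ by the $U^{(i)}$ of Definition~\ref{defn:dad} restricts to a uniformly bounded, $R$-disjoint-type cover of $X$, with the boundedness supplied by relative compactness of the generated subgroupoids in $G(X)$ together with \cite[Lemma~5.2]{GWY17}). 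Chaining these together yields $\asdim(X) \le d$, as required.

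The only genuinely delicate point is making sure the coarse-groupoid picture is set up so that both Theorem~\ref{thm:diag_vs_dad} and the nuclear-dimension-style upper bound construction apply literally; in particular one must take care that $G(X)$ is \emph{non-unital} in general (its unit space $\beta X$ is compact, so actually $\mathrm{C}^*_{\mathrm{u}}(X)$ is unital here) and that the approximation maps of the upper bound can be arranged to have $\psi(\mathbf 1) \in \ell^\infty(X)$ --- but this is automatic since $\mathbf 1_{\mathrm{C}^*_{\mathrm u}(X)} = \mathbf 1_{\ell^\infty(X)}$. I expect the main obstacle to be purely expository: transcribing the two long technical arguments of Sections~\ref{sec5}--\ref{sec6} into the coarse-groupoid language without reproving them, i.e.\ isolating exactly which hypotheses on $\mathcal G$ are used (\'etale, Hausdorff, locally compact, principal --- all satisfied by $G(X)$) so that one can simply cite Theorem~\ref{thm:diag_vs_dad} and a groupoid analogue of \eqref{ddim-upper}. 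Once that bookkeeping is done the result follows by sandwiching $\ddim(\ell^\infty(X) \subset \mathrm{C}^*_{\mathrm u}(X))$ between $\dad(G(X)) = \asdim(X)$ and $\asdim(X)$.
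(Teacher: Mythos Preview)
Your proposal is correct and follows essentially the same route as the paper. For the lower bound you both invoke the coarse groupoid $\mathcal{G}(X)$, apply Theorem~\ref{thm:diag_vs_dad}, and use $\dad(\mathcal{G}(X))=\asdim(X)$ from \cite[Theorem~6.4]{GWY17}; for the upper bound the paper does exactly what you suggest, namely it revisits the c.p.\ approximations from the proof of \cite[Theorem~8.5]{WZ10} (factoring through AF subalgebras $A^{(i)}=\prod_{U\in\mathcal{U}^{(i)}}M_{|B_{r-1}(U)|}$ with their canonical diagonals $D^{(i)}\subset\ell^\infty(X)$) and verifies conditions (4) and (5) of Definition~\ref{defn:dim_diag} directly, using Theorem~\ref{thm:zero_ddim} for $\ddim(D^{(i)}\subset A^{(i)})=0$---so there is no need for, and the paper does not prove, a general groupoid analogue of \eqref{ddim-upper}.
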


\begin{proof}
		Let $\mathcal{G}(X)$ denote the coarse groupoid associated with $X$. This is a principal, \'etale, locally compact, $\sigma$-compact, Hausdorff, topological groupoid with unit space $\mathcal{G}^{(0)} = \beta X$; see \cite[Proposition~3.2]{STY02} and  \cite[Theorem~10.20]{Roe03}. Since the unit space $\beta X$ is totally disconnected, $\mathcal{G}(X)$ is ample; see \cite[Proposition 4.1]{Exe10}. Moreover, the uniform Roe algebra $\mathrm{C}_{\mathrm{u}}^*(X)$ of $X$ is naturally isomorphic to the reduced groupoid $\mathrm{C}^*$-algebra of $\mathcal{G}(X)$, the isomorphism mapping $\ell^\infty(X)$ onto $C(\mathcal{G}^{(0)})$; see \cite[Proposition~10.29]{Roe03}. We know from \cite[Theorem~6.4]{GWY17} that $\dad (\mathcal{G}(X))=\asdim (X)$, and so by Theorem \ref{thm:diag_vs_dad} we have 
		$$
	\ddim(\ell^\infty(X) \subset \mathrm{C}_{\mathrm{u}}^*(X)) \ge \asdim (X).
	$$	
	
The reverse inequality follows from inspection of the proof of \cite[Theorem~8.5]{WZ10}. That theorem says that $\ndim(\mathrm{C}_{\mathrm{u}}^*(X)) \le \asdim (X)$, but the same proof in fact yields the respective estimate for diagonal dimension. To see this, take a finite subset $\mathcal{F} \subset (\mathrm{C}_{\mathrm{u}}^*(X))^1_+$ and a tolerance $\e > 0$. As in the proof of \cite[Theorem~8.5]{WZ10} take c.p.\ approximations for $\mathcal{F}$ within $\e /2$ of the form
\[
\mathrm{C}_{\mathrm{u}}^*(X) \stackrel{\Psi}{\longrightarrow} A^{(0)} \oplus \ldots \oplus A^{(n)}\stackrel{\Phi}{\longrightarrow} \mathrm{C}_{\mathrm{u}}^*(X).
\]
 Then each $A^{(i)}$ is an AF subalgebra of $\mathrm{C}_{\mathrm{u}}^*(X)$ of the form $A^{(i)} = \prod_{U \in \mathcal{U}^{(i)}} M_{|B_{r-1}(U)|}$, with each $\mathcal{U}^{(i)}$ being a uniform $r$-disjoint family of subsets of $X$. The latter in particular means that for each $i$ the matrix subalgebras $M_{|B_{r-1}(U)|}$ of $\mathrm{C}_{\mathrm{u}}^*(X)$ have uniformly bounded size and are pairwise orthogonal, so $A^{(i)}$ indeed is AF. But for each of these matrix algebras we have a canonical diagonal $D_{|B_{r-1}(U)|}$, so that $D^{(i)} := \prod_{U \in \mathcal{U}^{(i)}} D_{|B_{r-1}(U)|}$ becomes a diagonal pair of AF algebras. It follows from Theorem~\ref{thm:zero_ddim} that $\ddim (D^{(i)} \subset A^{(i)}) = 0$. Moreover, the diagonal $D^{(i)}$  sits in $\ell^\infty(X)$, and each normaliser of $D^{(i)}$ in $A^{(i)}$ is also a normaliser of $\ell^\infty(X)$ in $\mathrm{C}_{\mathrm{u}}^*(X)$, i.e.\ $\mathcal{N}_{A^{(i)}}(D^{(i)}) \subset \mathcal{N}_{\mathrm{C}_{\mathrm{u}}^*(X)}(\ell^\infty(X))$. This implies that if we have a c.p.\ approximation $(F^{(i)},D_{F^{(i)}},\varrho^{(i)},\sigma^{(i)})$ for $\Psi^{(i)}(\mathcal{F})$ within $\e / 2(n+1)$ witnessing $\ddim (D^{(i)} \subset A^{(i)}) = 0$, then we may take $F:= \bigoplus_i F^{(i)}$, $D_F:= \bigoplus_i D_{F^{(i)}}$, $\psi:= (\oplus_i \varrho^{(i)}) \circ \Psi$ and $\varphi:= \Phi \circ (\oplus_i \sigma^{(i)})$ so that $(F,D_F,\psi,\varphi)$ is a c.p.\ approximation of $(\ell^\infty(X) \subset \mathrm{C}_{\mathrm{u}}^*(X))$ satisfying properties (1), (2), (3), and (5) of Definition~\ref{defn:dim_diag}. To verify property  \ref{defn:dim_diag}(4) it only remains to observe that $\Psi(\ell^\infty(X)) \subset D^{(i)}$. But this is also built into the proof of \cite[Theorem~8.5]{WZ10}, since each summand $\Psi^{(i)}$ is a compression with an element $h_i \in \ell^\infty(X)$. 
\end{proof}

If $(B \subset \mathrm{C}_{\mathrm{u}}^*(X))$ is another Roe Cartan pair in the sense of \cite[Definition 4.20]{WW18}, then it is isomorphic to a pair $(\ell^\infty(Y) \subset \mathrm{C}_{\mathrm{u}}^*(Y))$ for some metric space $Y$ of bounded geometry which is coarsely equivalent to $X$ by \cite[Theorem~B]{WW18} and \cite[Theorem~1.2]{BFKVW:2021}. It follows that, given $\mathrm{C}_{\mathrm{u}}^*(X)$, in order to extract the asymptotic dimension of $X$ one only needs an abstract Roe Cartan subalgebra, as opposed to the concrete copy of $\ell^\infty(X)$:

\begin{cor}\label{cor:RoeCartan}
Let $X$ be a metric space with bounded geometry and let $\mathrm{C}_{\mathrm{u}}^*(X)$ be its uniform Roe algebra. Then, for every Roe Cartan subalgebra $(B \subset \mathrm{C}_{\mathrm{u}}^*(X))$ we have
	$$
	\ddim(B \subset \mathrm{C}_{\mathrm{u}}^*(X))=\asdim (X).
	$$	
\end{cor}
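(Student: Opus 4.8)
\textbf{Proof strategy for Corollary~\ref{cor:RoeCartan}.}
The plan is to reduce the statement to Theorem~\ref{thm:uniformRoe} by transporting the problem along a coarse equivalence. First I would invoke the classification of Roe Cartan subalgebras: by \cite[Theorem~B]{WW18} any Roe Cartan pair $(B \subset \mathrm{C}_{\mathrm{u}}^*(X))$ is isomorphic (as a $\mathrm{C}^*$-pair) to a pair $(\ell^\infty(Y) \subset \mathrm{C}_{\mathrm{u}}^*(Y))$ for some metric space $Y$ of bounded geometry, and by \cite[Theorem~1.2]{BFKVW:2021} (the rigidity theorem) the existence of such an isomorphism of uniform Roe algebras mapping one canonical diagonal to another forces $X$ and $Y$ to be coarsely equivalent.

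With this in hand the proof is short. Since diagonal dimension is manifestly an isomorphism invariant of $\mathrm{C}^*$-pairs (an isomorphism of pairs carries a system of c.p.\ approximations witnessing $\ddim \le d$ to another such system, preserving conditions (1)--(5) of Definition~\ref{defn:dim_diag}), we have
\[
\ddim(B \subset \mathrm{C}_{\mathrm{u}}^*(X)) = \ddim(\ell^\infty(Y) \subset \mathrm{C}_{\mathrm{u}}^*(Y)).
\]
By Theorem~\ref{thm:uniformRoe} applied to $Y$, the right-hand side equals $\asdim(Y)$. Finally, $\asdim$ is a coarse invariant, so $\asdim(Y) = \asdim(X)$, and the chain of equalities yields $\ddim(B \subset \mathrm{C}_{\mathrm{u}}^*(X)) = \asdim(X)$ as claimed.

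I do not expect any genuine obstacle here, since all the substantive work has been done: the hard rigidity input is quoted from \cite{WW18} and \cite{BFKVW:2021}, and the computation of diagonal dimension for the concrete pair $(\ell^\infty(Y) \subset \mathrm{C}_{\mathrm{u}}^*(Y))$ is exactly Theorem~\ref{thm:uniformRoe}. The only points requiring (routine) care are checking that the isomorphism provided by \cite[Theorem~B]{WW18} is indeed an isomorphism of pairs carrying $B$ onto $\ell^\infty(Y)$ (so that Definition~\ref{defn:dim_diag} transports verbatim), and recalling that $\asdim$ is invariant under coarse equivalence — both of which are standard.
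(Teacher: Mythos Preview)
Your proposal is correct and follows essentially the same route as the paper: the paragraph preceding the corollary already reduces the statement to Theorem~\ref{thm:uniformRoe} via \cite[Theorem~B]{WW18} and \cite[Theorem~1.2]{BFKVW:2021}, obtaining a pair isomorphism $(B \subset \mathrm{C}_{\mathrm{u}}^*(X)) \cong (\ell^\infty(Y) \subset \mathrm{C}_{\mathrm{u}}^*(Y))$ with $Y$ coarsely equivalent to $X$, and then uses coarse invariance of $\asdim$. One small clarification: the rigidity theorem \cite[Theorem~1.2]{BFKVW:2021} actually gives coarse equivalence from the mere existence of an isomorphism $\mathrm{C}_{\mathrm{u}}^*(X) \cong \mathrm{C}_{\mathrm{u}}^*(Y)$, without needing it to carry one diagonal onto the other, so your parenthetical about ``mapping one canonical diagonal to another'' is not required for that step.
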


\begin{example}
The preceding corollary would not work for ordinary Cartan subalgebras: There exists a Cartan pair $(B \subset \mathrm{C}_{\mathrm{u}}^*(X))$ for $X=\{n^2 \mid n\in \N\}$ which does not have the unique extension property hence is not a diagonal pair (see \cite[Example~3.3]{WW18}). Thus, $\ddim(B \subset \mathrm{C}_{\mathrm{u}}^*(X))=\infty$ by Proposition~\ref{unique ext}. On the other hand,  one concludes from Theorem~\ref{thm:uniformRoe} and Remark~\ref{rem:almost_order_zero}(i) that
$$
\ndim(\mathrm{C}_{\mathrm{u}}^*(X)) =\ddim(\ell^\infty(X) \subset \mathrm{C}_{\mathrm{u}}^*(X))=\asdim (X)=0.
$$ 
\end{example}

\begin{example}
	Let $G$ be a countably infinite, residually finite and amenable group and let $\tilde{G}$ be a profinite completion of $G$ associated to a separating nested sequence of finite index normal subgroups of $G$; cf.\ \cite{Orf10}. Then, $\tilde{G}$ is homeomorphic to the Cantor set and the action $G \curvearrowright \tilde{G}$ by left multiplication on the finite index subgroups is free and minimal.
	
	The resulting crossed product $C(\tilde{G})\rtimes_{\mathrm{r}} G$, called a generalised Bunce--Deddens algebra, was shown in \cite{Orf10} to be a separable, unital, nuclear, simple, quasidiagonal, monotracial $\mathrm{C}^*$-algebra satisfying the UCT. The action of $G$ on $\tilde{G}$ is also almost finite (see the proof of \cite[Proposition~12.6]{Ker17}), so it follows that $C(\tilde{G})\rtimes_{\mathrm{r}} G$ is $\mathcal{Z}$-stable by \cite[Theorem~5.3]{CJKMST17}. Hence, $\ndim(C(\tilde{G})\rtimes_{\mathrm{r}} G)\leq 1$ by \cite[Theorem~F]{BBSTWW16}.

For diagonal dimension, by Theorem~\ref{thm:diag_vs_tower}, 
 \cite[Corollary~5.15]{Ker17} and \cite[Theorem~7.2 and Remark~6.3]{SWZ17} we have the  estimate
$$
	\ddim(C(\tilde{G}) \subset C(\tilde{G})\rtimes_{\mathrm{r}} G)\le \tdim (\tilde{G},G) \leq \asdim (\Box G),
	$$ 
	where $\Box G$ denotes the box space of $G$ associated to the given separating nested sequence of finite index normal subgroups; cf.\ \cite{SWZ17}.
	
	We also note that, if $G$ is in addition finitely generated and virtually nilpotent, then we actually have equality by  Proposition~\ref{virtually nilpotent} since $\asdim (\Box G)=\asdim (G)$ by \cite{DT18} in this situation.
\end{example}

\begin{example}
Every countable discrete group $G$ admits a universal minimal $G$-space, say $M$, which is an -- up to isomorphism uniquely determined -- minimal closed invariant subset of the Stone--\v Cech compactification $\beta G$ with respect to the left-translation action $\alpha$ of $G$; see \cite{MR0117716} for details. It is well-known that $C(M)\rtimes_{\mathrm{r}} G$ is nuclear if and only if $\alpha:G \curvearrowright M$ is topologically amenable if and only if $G$ is an exact group. We claim that in this situation we in fact have 
\[
\ndim(C(M)\rtimes_{\mathrm{r}} G)\leq 1.
\] 
To prove this it suffices to show $\ndim(C(M)\rtimes_{\mathrm{r}} G)\leq 1$ whenever $G$ is a countably infinite exact group. The dynamical system $(\alpha:G \curvearrowright M)$ is an inverse limit of a net $(\alpha_i:G \curvearrowright X_i)$, where the $\alpha_i$ can either be chosen to be almost finite if $G$ is amenable or purely infinite otherwise; see  \cite[Lemma~5.1 and Theorem~5.4]{Suzuki} and \cite[Corollary~5.9]{ABBL23} for the first case, and see \cite[Section~6]{RS12} for the second case. Either way, $A_i := C(X_i) \rtimes_{\alpha_i} G$ is $\mathcal{Z}$-stable and hence has nuclear dimension at most one by \cite[Theorem~12.4]{Ker17} and \cite[Theorem~5]{JS99} in combination with \cite{BBSTWW16}. It follows that $C(M)\rtimes_{\mathrm{r}} G = \varinjlim A_i$ also has nuclear dimension at most one.

On the other hand, Corollary~\ref{ample-transformation-groups} yields $\ddim(C(M) \subset C(M)\rtimes_{\mathrm{r}}G)=\dad(G \curvearrowright M)$. Hence, by \cite[Theorem~6.5]{GWY17} we have  
\[
\ddim(C(M) \subset C(M)\rtimes_{\mathrm{r}} G)=\asdim(G).
\]
\end{example}

\end{document}